\documentclass[12pt,a4paper,reqno]{article}
\usepackage[T1]{fontenc}
\usepackage[utf8]{inputenc}
\usepackage{lmodern,amsmath,amssymb,amsthm,mathtools}
\let\standardintop\intop
\let\standardint\int
\let\standardiint\iint
\usepackage{esint}
\let\intop\standardintop
\let\int\standardint
\let\iint\standardiint
\usepackage[a4paper,left=25mm,top=28.4mm,textwidth=160mm,textheight=225mm,
headheight=0pt,headsep=0pt,footskip=12mm]{geometry}
\usepackage{microtype,enumitem,xcolor,indentfirst,needspace}
\usepackage[colorlinks=true,linkcolor=red,anchorcolor=green,
citecolor=blue,urlcolor=blue,bookmarksopen=true,
bookmarksnumbered=true,bookmarksdepth=2]{hyperref}
\hypersetup{pdftitle={The fractional Hénon-Lane-Emden conjecture and some critical nonexistence phenomena},pdfauthor={Kui Li, Haoyang Lu, Zhitao Zhang},bookmarksnumbered=true}
\allowdisplaybreaks[2]
\setlist{itemsep=3pt,topsep=5pt}
\numberwithin{equation}{section}
\newtheorem{theorem}{Theorem}[section]
\newtheorem{proposition}[theorem]{Proposition}
\newtheorem{lemma}[theorem]{Lemma}
\newtheorem{corollary}[theorem]{Corollary}
\theoremstyle{definition}
\newtheorem{definition}[theorem]{Definition}
\theoremstyle{remark}
\newtheorem{remark}[theorem]{Remark}
\newcommand{\R}{\mathbb R}
\newcommand{\Sph}{\mathbb S}

\newcommand{\dist}{\operatorname{dist}}

\newcommand{\avdir}[1]{\fint_{\Sph^{N-1}}#1\,de}

\theoremstyle{plain}
\newtheorem{mainthm}[theorem]{Theorem}
\title{\large\textbf{The fractional H\'enon-Lane-Emden conjecture\\
and some critical nonexistence phenomena}}
\author{Kui Li\thanks{E-mail: \href{mailto:likui@zzu.edu.cn}{\texttt{likui@zzu.edu.cn}}.}~~Haoyang Lu\thanks{E-mail: \href{mailto:luhaoyang25@mails.ucas.ac.cn}{\texttt{luhaoyang25@mails.ucas.ac.cn}}.}~~Zhitao Zhang\thanks{E-mail: \href{mailto:zzt@math.ac.cn}{\texttt{zzt@math.ac.cn}}.}}
\date{}
\begin{document}
\maketitle
\begin{abstract}
We refine the proof method for the H\'enon-Lane-Emden system and extend
it to fractional orders $0<s<1$, proving subcritical nonexistence of
positive classical  solutions. And we find
examples with $0<s<1/2$ and fixed nonzero weights for which existence
and nonexistence occur at different points of the same critical
hyperbola.
\par\smallskip\noindent
\textbf{Keywords:} Fractional H\'enon-Lane-Emden system,
positive entire solutions, critical hyperbola, Liouville theorem,
Kelvin symmetry.
\par\smallskip\noindent
\textbf{AMS Subject Classification (2020):} 35R11, 35B53, 35J91, 45G05.
\end{abstract}

{\hypersetup{linkcolor=black}\tableofcontents}
\clearpage

\section{Introduction}\label{sec:intro}

\subsection{From the H\'enon-Lane-Emden system to fractional orders}

The H\'enon-Lane-Emden system is
\[
\begin{cases}
-\Delta u=|x|^a v^p,\\
-\Delta v=|x|^b u^q,
\end{cases}
\qquad x\in\R^N.
\]
Here $a,b$ are the spatial weights and $p,q$ are the nonlinear powers.
For $N\ge3$, the weighted critical hyperbola is
\[
\frac{N+a}{p+1}+\frac{N+b}{q+1}=N-2.
\]
The problem is to determine when positive entire solutions exist.
For $a=b=0$ this is the Lane-Emden system; for $a=b$, $p=q$,
and $u=v$, it reduces to the scalar H\'enon equation.

The scalar H\'enon equation
$-\Delta u=|x|^a u^p$ was introduced in the study of spherical stellar
systems \cite{henon}. For $a=0$, Gidas-Spruck
\cite{gidas-spruck} proved nonexistence for
$N\ge3$ and $1<p<(N+2)/(N-2)$.
Chen-Li \cite{chen-li} later gave a proof using the Kelvin
transform and moving planes. For $a>-2$, the corresponding critical
exponent is $(N+2+2a)/(N-2)$ when $N\ge3$. Phan-Souplet
\cite{phan-souplet} formulated the weighted scalar conjecture and
proved it for bounded positive solutions in dimension three.
Li-Zhang \cite{li-zhang-monotonicity} obtained a necessary and
sufficient existence condition for positive solutions bounded near
the origin, as part of their study of weighted divergence-form
equations. Their argument uses a monotonicity theorem on the cylinder
$\R\times\Sph^{N-1}$.

For the Lane-Emden system in $N\ge3$, the critical exponent is
replaced by the Sobolev hyperbola
$1/(p+1)+1/(q+1)=(N-2)/N$. Mitidieri \cite{mitidieri}
settled nonexistence in the radial subcritical case and obtained
early nonexistence results for general solutions. Serrin-Zou
\cite{serrin-zou} established the three-dimensional conjecture
under a polynomial growth assumption. The doubling and rescaling
method of Pol\'a\v{c}ik-Quittner-Souplet \cite{pqs} removed
that assumption. Souplet \cite{souplet-four} subsequently proved
the conjecture in dimension four and obtained a further
nonexistence region in higher dimensions, using Rellich-Pohozaev
identities, comparison of the components, spherical Sobolev
inequalities, and feedback estimates.
For $N\ge3$, Lu-Zhang \cite{lu-zhang-disk} established nonexistence
for $p,q>0$, $pq>1$, and $(p,q)$ in a disk tangent to the critical
hyperbola, with the critical point $p=q=(N+2)/(N-2)$ excluded.
Their result requires no growth or decay assumptions.

The spatial weights introduce an additional difficulty because
translations no longer preserve the equations. For the local
H\'enon-Lane-Emden system, Bidaut-V\'eron-Giacomini
\cite[Theorem 1.4]{bidaut-giacomini} characterized positive radial
solutions by the weighted critical hyperbola when $N\ge3$,
$a,b>-2$, $p,q>0$, and $pq>1$. Fazly-Ghoussoub
\cite{fazly-ghoussoub} proved the subcritical nonexistence result
in dimension three for bounded positive solutions with
$a,b\ge0$, $p,q\ge1$, and $pq>1$. Li-Zhang
\cite{li-zhang-local} proved the three-dimensional conjecture
for real weights without boundedness or decay assumptions.
Hang Li \cite{hang-li} obtained further subcritical results for
$a,b>-2$ and $pq>1$ in dimensions four and five, under the additional
restrictions $p,q<4/3$ and $p,q<10/9$, respectively.

Two related extensions are relevant to the treatment of the
weighted origin. Li-Zhang \cite{li-zhang-higher} studied
higher-order Hardy-H\'enon systems by improving regularity at the
origin and establishing an equivalent integral formulation.
Cheng-Li-Zhang \cite{cheng-li-zhang} considered systems with
the weighted diffusion operator
$-\operatorname{div}(|x|^\vartheta\nabla\cdot)$ and proved
Liouville theorems for positive solutions in dimension two and
bounded positive solutions in dimension three, under the
corresponding weighted subcritical condition and admissible
weight assumptions. These results illustrate the roles of
regularity at the origin, scaling, and weighted energy identities
before passing to fractional operators.

Replacing $-\Delta$ by its fractional power gives the system studied here:
\begin{equation}\label{eq:system}
\begin{cases}
(-\Delta)^s u=|x|^a v^p,\\
(-\Delta)^s v=|x|^b u^q,
\end{cases}
\qquad x\in\R^N,\qquad 0<s<1.
\end{equation}
The operator and the solution class are fixed in the next subsection.
For fractional equations and systems, Dou-Zhou \cite{dou-zhou}
established Liouville theorems by moving planes in the integral
formulation. Ma-Li-Zhang \cite{ma-li-zhang} used moving planes
directly for the fractional system with nonnegative weights. In
the notation of \eqref{eq:system}, their nonexistence range includes
\[
a,b\ge0,\qquad
1<p<\frac{N+2s+a}{N-2s},\qquad
1<q<\frac{N+2s+b}{N-2s}.
\]
Li-Yu-Zhang \cite{li-yu-zhang} treated singular weights and
possibly different fractional orders. We use their definition of
classical solutions, their distributional extension across the
origin, and their Riesz representation. Their nonexistence result
\cite[Theorem 1.7]{li-yu-zhang} has the following equal-order form.

For $N\ge2$, $0<s<1$, and $a,b>-2s$, their result excludes
positive classical entire solutions when
\[
0<p\le\frac{N+2s+2a}{N-2s},\qquad
0<q\le\frac{N+2s+2b}{N-2s},
\]
with at least one strict inequality.

The two individual bounds describe a part of the weighted
subcritical region. Theorem \ref{thm:main} treats the full region.

For the scalar fractional H\'enon equation with positive weight,
Barrios-Quaas \cite{barrios-quaas} obtained nonexistence throughout
the scalar subcritical range. They also constructed radial solutions
at the critical exponent. In the range $0<s<1/2$, their construction
requires $0<a<2s(N-1)/(1-2s)$, as stated in
\cite[Theorem 1.3(a)]{barrios-quaas}. For $0<s<1$ and $-2s<a<0$,
\cite[Theorem 1.3(b)]{barrios-quaas} gives a bounded positive radial
weak solution at the critical exponent. For $0<s<1/2$, the construction in Section \ref{sec:construction}
provides classical solutions on the critical diagonal throughout
$-2s<a<2s(N-1)/(1-2s)$.
The introduction of Alarc\'on-Barrios-Quaas
\cite{alarcon-barrios-quaas} records the corresponding large-weight
critical problem as unresolved in that work. The nonexistence result below addresses this range without a radiality assumption. The proof uses the cylinder multiplier formula of
Ao-Chan-DelaTorre-Fontelos-Gonz\'alez-Wei \cite{ao-ode}.

For fractional Lane-Emden systems, Cheng-Gui-Hu \cite{cgh} constructed
positive solutions in the strictly supercritical region for
$p,q>1$. Our existence argument treats $a,b>-2s$, $p,q>0$, and $pq>1$,
including critical equality.
A single radial compactness argument treats all $0<s<1$.
Below one half, the extra inequality excludes the one-dimensional limits
that arise when the radius tends to infinity.
Villavert \cite{villavert} studied related integral systems with coefficient
functions bounded above and below by positive constants. Such
results must be distinguished from existence for the fixed
coefficients in \eqref{eq:system}.

The localized kernel identity and the comparison of level-set intervals
in OpenAI \cite{openai} and Mei-Wei \cite{meiwei} provide the starting point
for the subcritical argument. For the weights, the endpoint identity
$|x|^bu(x)^q=t$ introduces the factor $|x|^{-b/q}$; the short-interval
restriction in Section \ref{sec:pressure} keeps the endpoint and interior
radii comparable. For $2s\le1$, the two differentiated kernel terms must
be added before taking the limit at $x=y$. The difference of the cutoff
vectors supplies the factor $|x-y|$ that restores local integrability.
The estimates near the origin are established in Section \ref{sec:inputs}.

For the strictly supercritical scalar problem, the results of
Hasegawa-Ikoma-Kawakami \cite[Theorem 1.1(ii) and Remark 1.2]{hik-exist}
also give solutions outside the additional inequality in
Theorem \ref{thm:below-half-exist}. That condition is therefore not a
general necessity in the strictly supercritical region.

\subsection{The fractional problem and the main results}

For system \eqref{eq:system}, we consider the equal-order
case $s_1=s_2=s$ in the notation of Li-Yu-Zhang
\cite{li-yu-zhang}.
For the main system results we assume
\begin{equation}\label{eq:parameters-main}
N\ge2,\qquad 0<s<1,\qquad a,b>-2s,\qquad p,q>0,\qquad pq>1.
\end{equation}
In particular, $N>2s$.
We seek an existence criterion for positive entire solutions
continuous at the origin.

We normalize the fractional Laplacian by the Fourier symbol
$|\xi|^{2s}$. For a sufficiently regular function $w$,
\[
(-\Delta)^sw(x)=C(N,s)\lim_{\varepsilon\downarrow0}
\int_{|x-y|>\varepsilon}\frac{w(x)-w(y)}{|x-y|^{N+2s}}\,dy.
\]
Here $C(N,s)>0$ is the fixed normalization constant. This limit is
the principal value, also denoted by $\operatorname{p.v.}$.

To make the integral at large distances finite, we use the class
\[
\mathcal L_{2s}=\left\{w:\R^N\to\R:
\int_{\R^N}\frac{|w(x)|}{1+|x|^{N+2s}}\,dx<\infty\right\}.
\]
We use the definition of positive classical solutions in
Li-Yu-Zhang \cite[Definition 1.3 and (1.6)]{li-yu-zhang}, specialized to
equal fractional orders.

\begin{definition}\label{def:class}
A pair $(u,v)$, defined on all of $\R^N$, is a positive classical
entire solution of \eqref{eq:system} if
\[
u,v\in\mathcal L_{2s}\cap C(\R^N)
\cap C^{1,1}_{\mathrm{loc}}(\R^N\setminus\{0\}),
\]
both components are positive on $\R^N\setminus\{0\}$, and
\eqref{eq:system} holds pointwise there in the principal-value sense.
\end{definition}

The word \emph{entire} includes the origin in the domain of the
functions. Under \eqref{eq:parameters-main}, Lemma \ref{lem:origin}
shows that the equations also hold distributionally on all of
$\R^N$. For negative weights, no finite pointwise identity at the
origin is required. Continuity there excludes singular power solutions.

Definition \ref{def:class} requires neither radial symmetry nor global
boundedness. The integral representation gives $u(0),v(0)>0$;
see Lemma \ref{lem:system-potentials} and \cite[Remark 3.2]{li-yu-zhang}.
Positivity at zero is therefore a consequence of the equations.

We also use the Riesz potential
\begin{equation}\label{eq:riesz-definition}
I_{2s}f(x)=c_{N,s}\int_{\R^N}\frac{f(y)}{|x-y|^{N-2s}}\,dy,
\qquad
c_{N,s}=\frac{\Gamma((N-2s)/2)}{4^s\pi^{N/2}\Gamma(s)},
\end{equation}
where $\Gamma$ is the gamma function. For a nonnegative measurable
function $f$, the potential is defined wherever the integral is finite.
The corresponding integral system is
\begin{equation}\label{eq:integral-system-intro}
\begin{cases}
u=I_{2s}(|x|^av^p),\\
v=I_{2s}(|x|^bu^q),
\end{cases}
\qquad x\in\R^N.
\end{equation}
These identities are required at every point, including the origin,
and both potentials must be finite.
Proposition \ref{thm:representation} proves the equivalence of the
differential and integral formulations in the solution class used here.

The weighted Sobolev critical hyperbola is
\begin{equation}\label{eq:critical-hyperbola-intro}
\frac{N+a}{p+1}+\frac{N+b}{q+1}=N-2s.
\end{equation}
The regions where the left-hand side is greater than or smaller than
$N-2s$ are called subcritical and supercritical, respectively.
We ask whether the critical and supercritical conditions are sufficient
for existence. The answer is affirmative for $1/2\le s<1$, but changes
for $0<s<1/2$.

The results describe three regimes. All nonexistence statements use
Definition \ref{def:class}, with no additional radiality, boundedness,
stability, or finite-energy hypothesis.

\begin{mainthm}\label{thm:main}
Under \eqref{eq:parameters-main}, no positive classical entire solution
exists if
\begin{equation}\label{eq:hyperbola}
\frac{N+a}{p+1}+\frac{N+b}{q+1}>N-2s.
\end{equation}
The same conclusion holds for positive continuous solutions of
the Riesz system \eqref{eq:integral-system-intro}.
\end{mainthm}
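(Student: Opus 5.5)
The plan is to work entirely with the integral formulation. By Proposition \ref{thm:representation} (with Lemma \ref{lem:origin} and Lemma \ref{lem:system-potentials}), any positive classical entire solution satisfies \eqref{eq:integral-system-intro} pointwise with finite potentials, so it suffices to rule out positive continuous solutions of the Riesz system. For those I would use moving spheres, i.e.\ Kelvin transforms centred at arbitrary points, combined with the localized kernel identity and level-set interval comparison mentioned in the introduction. As a first step I would establish the a priori inputs near the origin and at infinity, as in Section \ref{sec:inputs}. From the representation we get the lower bounds $u,v\ge c|x|^{2s-N}$ for large $|x|$. Feeding these back into the equations and bootstrapping through $pq>1$ gives integrability of $|x|^av^p$ and $|x|^bu^q$ against the relevant weights. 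When $a,b<0$, continuity at $0$ together with $a,b>-2s$ makes the weighted densities locally integrable and lets us differentiate the potentials away from $0$.

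The second step is a Pohozaev/Rellich type identity for the Riesz system. Multiplying the first equation by $x\cdot\nabla v$ (weighted by $|x|^b u^q$ appropriately) and the second by $x\cdot\nabla u$, and using the scaling of the kernel $|x-y|^{2s-N}$, one obtains on balls $B_R$:
\[
\Bigl(\frac{N+a}{p+1}+\frac{N+b}{q+1}-(N-2s)\Bigr)\int_{B_R}|x|^a v^{p+1}\;\lesssim\;\text{boundary/tail terms at scale }R.
\]
A cleaner route is to choose the split parameter $\theta$ with $\frac{N+a}{p+1}>\theta$ and $\frac{N+b}{q+1}>N-2s-\theta$, which \eqref{eq:hyperbola} permits. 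Under \eqref{eq:hyperbola} the left coefficient is positive. To conclude, I must show that the right side tends to zero along a sequence $R_k\to\infty$, giving $\int|x|^a v^{p+1}=0$ and a contradiction.

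The decay of the tail terms is the main obstacle, because no finite-energy assumption is available. Here I would follow the localized kernel identity of \cite{openai,meiwei}. Fix a level $t$ and compare the level-set intervals of $|x|^b u^q$ and $|x|^a v^p$ along rays. The endpoint relation $|x|^b u(x)^q=t$ produces the factor $|x|^{-b/q}$, so I restrict to short radial intervals on which the endpoint and interior radii are comparable (Section \ref{sec:pressure}). On these intervals I differentiate the localized kernel with a cutoff. When $2s\le1$ the two differentiated kernel terms must be combined before letting $y\to x$: the difference of the cutoff vectors contributes a factor $|x-y|$, which makes $|x-y|^{2s-N-1}\cdot|x-y|$ integrable. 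Averaging over directions $e\in\Sph^{N-1}$ (via $\avdir{}$) and then integrating over $t$ yields a bound of the tail terms by a power $R^{-\delta}$ times energy on dyadic annuli. A feedback/iteration in $R$ then forces the energy to have sublinear-in-scale growth, and choosing a good sequence $R_k$ makes the boundary terms vanish.

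Finally, with the energy shown to vanish, $v\equiv0$ and hence $u\equiv0$, which contradicts positivity. I expect the delicate parts to be the $2s\le1$ cancellation and keeping uniform constants near the weighted origin when $a$ or $b$ is negative.
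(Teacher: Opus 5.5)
\textbf{There is a genuine gap in how you close the argument.} Your plan follows the Serrin--Zou/Souplet architecture: a Pohozaev identity on $B_R$, then a feedback iteration and a choice of good radii $R_k$ along which the boundary/tail terms vanish. Nothing available here supports that step.

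First, no finite-energy or pointwise decay hypothesis is present. The a priori inputs of Propositions \ref{prop:source} and \ref{prop:inputs} control $\int_{B_R}F$, $\fint_{B_R}u$ and weighted tails of $u,v$. They do not control $\int_{B_R}(|x|^bu^{q+1}+|x|^av^{p+1})$, so your iteration has no growth bound to start from.

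Second, and more importantly, the level-set interval comparison does not make the cutoff terms small by a factor $R^{-\delta}$. By the dilation invariance \eqref{eq:dilation}, the terms $W_F/(q+1)+W_G/(p+1)-(N-2s)T$ at any scale are of the order of the energy in the corresponding annulus, and they need not be small. Lemma \ref{lem:interval}, integrated over levels through the layer identity \eqref{eq:ideal-layer} and the endpoint bound \eqref{eq:endpoint}, actually yields a one-sided inequality. This combination is at most $C(\kappa+\kappa^{-N}\delta)E+C_{\kappa,\delta}(\sqrt E+1)$ (Proposition \ref{prop:boundary-estimate}). In other words, it is nonpositive up to a small multiple of the local energy; it is not small.

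\textbf{The missing idea is single-scale absorption with constants independent of the solution, followed by scaling.} Insert that inequality into \eqref{eq:local-balance} and choose first $\kappa$, then $\delta$, small. This gives $\frac{\gamma}{4}E\le C_{\kappa,\delta}(\sqrt E+1)$, hence $E\le C(N,s,a,b,p,q)$ for every solution. The uniformity comes from \eqref{eq:inputs}, whose constants do not depend on $(u,v)$; solution-dependent constants near the origin are used only to justify limits. Now apply this bound to $(u_R,v_R)$. Its energy on $B_1$ equals $R^{\gamma/(1-S)}$ times the energy of $(u,v)$ on $B_R$, so the latter tends to zero. No decay of boundary terms and no good radii are ever needed.

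Three smaller points:
\begin{enumerate}
\item Kelvin transforms centred away from the origin do not preserve the weights $|x|^a,|x|^b$. Moving spheres therefore cannot be used, and they play no role in the paper.
\item The interval comparison must be done on parallel lines $z+\R e$, where the restricted kernel depends only on $\xi-\xi'$ and $|z-z'|$, with $e$ then averaged near $\omega_x$. It does not work on rays from the origin.
\item The lower bound $u,v\ge c|x|^{2s-N}$ you mention is not what feeds the estimates. The solution-independent upper bounds \eqref{eq:source-bounds} and \eqref{eq:inputs} are.
\end{enumerate}
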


\begin{mainthm}\label{thm:small-exist}\label{cor:sharp}
Suppose \eqref{eq:parameters-main} and $1/2\le s<1$.
A positive classical entire solution exists if and only if
\begin{equation}\label{eq:supercritical}
\frac{N+a}{p+1}+\frac{N+b}{q+1}\le N-2s.
\end{equation}
\end{mainthm}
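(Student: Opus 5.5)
The statement splits into a nonexistence half and an existence half. The nonexistence half is exactly Theorem \ref{thm:main}: under \eqref{eq:parameters-main}, if \eqref{eq:supercritical} fails then \eqref{eq:hyperbola} holds, and no positive classical entire solution exists. I will therefore invoke Theorem \ref{thm:main} for necessity and devote the plan to sufficiency. Sufficiency means constructing, for every $(p,q,a,b)$ with $pq>1$, $a,b>-2s$, and left side of \eqref{eq:critical-hyperbola-intro} at most $N-2s$, a positive classical solution when $1/2\le s<1$. By Proposition \ref{thm:representation} it suffices to produce a positive continuous radial solution of the Riesz system \eqref{eq:integral-system-intro}. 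Such a solution must be finite at every point, lie in $\mathcal L_{2s}$, and be $C^{1,1}_{\mathrm{loc}}$ away from the origin; the regularity should follow from bootstrapping the potentials, since the densities $|x|^av^p$ and $|x|^bu^q$ are locally H\"older off the origin.

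For the construction I would use a single radial shooting/compactness scheme, as announced in the introduction.

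\begin{enumerate}[label=(\roman*)]
\item Solve the Dirichlet-type integral problem on balls $B_R$: find radial positive $(u_R,v_R)$ with $u_R=I^{B_R}_{2s}(|x|^av_R^p)$, using the Green function of $(-\Delta)^s$ on $B_R$. Normalize $u_R(0)=1$, or normalize $\max u_R=1$ after scaling. Existence on each ball follows from a degree or fixed-point argument for the radial cone map. Here $pq>1$ gives superlinearity, and a priori bounds on bounded domains come from blow-up combined with the subcritical Liouville theorem, or one simply works with a fixed point under normalization.
\item Use the scaling $u_\lambda(x)=\lambda^{\alpha}u(\lambda x)$, $v_\lambda(x)=\lambda^{\beta}v(\lambda x)$, where $\alpha=\frac{2s(p+1)+a+bp}{pq-1}$ and $\beta$ is defined symmetrically. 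This turns the normalization into a one-parameter family as $R\to\infty$.
\item Pass to the limit using radial monotonicity, which holds because the Green function decreases in $|x|$ for radial decreasing data, together with local uniform H\"older bounds. This yields a limit solution on $\R^N$.
\end{enumerate}

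The critical and supercritical condition enters through the decay rates. Radial decreasing solutions of the Riesz system must satisfy $u\gtrsim|x|^{-(N-2s)}$, and the potentials must be finite. In the supercritical region, the slow-decay profile $u\sim|x|^{-\alpha}$ with $\alpha<N-2s$ is consistent with finite potentials, while on the hyperbola one gets fast decay $|x|^{2s-N}$ in at least one component. I would check finiteness of $I_{2s}(|x|^av^p)$ at infinity in each case via the exponents.

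The main obstacle is the non-degeneracy of the limit: ruling out that the normalized solutions converge to zero, or lose mass to infinity. I would first try to exclude the bad scenarios by a blow-up analysis at the point where the normalization is attained.
\begin{itemize}
\item If the maximum point escapes to radius $r\to\infty$, the rescaled limit is a solution depending essentially on one variable, i.e. a solution on a half-space or in dimension one.
\item If it concentrates at the origin, the rescaled limit is a positive solution on $\R^N$ with the same weights.
\end{itemize}
The second limit is harmless, since it already is an entire solution, and can be used directly. The one-dimensional limit must be excluded. This is precisely where $s\ge1/2$ is used: for $2s\ge1$ the one-dimensional fractional problem $(-\Delta)^s w=c\,w^p$ is subcritical or has no positive solution, by the Liouville theorem in dimension $1\le 2s$. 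Since this holds for every exponent, no additional inequality is needed, in contrast with the $0<s<1/2$ case. I expect verifying this one-dimensional Liouville input for systems with $pq>1$, and showing that radial limits really reduce to it, to be the delicate step.
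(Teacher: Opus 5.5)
The necessity half of your proposal matches the paper: if \eqref{eq:supercritical} fails, then \eqref{eq:hyperbola} holds and Theorem \ref{thm:main} applies. The sufficiency plan has a genuine gap at step (i). In the critical and supercritical range, the unperturbed ball problem $u=\mathcal G^{(a)}_{s,R}(v^p)$, $v=\mathcal G^{(b)}_{s,R}(u^q)$ has \emph{no} nonzero nonnegative solution on any ball. This is Lemma \ref{lem:ball-pohozaev}. The dilation identity for the Dirichlet Green function, $(x\cdot\nabla_x+y\cdot\nabla_y)G_s=(2s-N)G_s-H_s$ with $H_s>0$, yields
\[
\Bigl(\frac{N+b}{q+1}+\frac{N+a}{p+1}-(N-2s)\Bigr)\int_{B_1}Fu
=\iint_{B_1^2}F(x)G(y)H_s(x,y)\,dx\,dy>0,
\]
which is incompatible with \eqref{eq:supercritical}. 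If you meant a truncated Riesz kernel on $B_R$ instead, the same computation produces a positive boundary term in place of $H_s$, so that variant fails as well.

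Neither the normalization nor the family in (ii) can rescue this. The dilations map solutions on $B_1$ to solutions on $B_R$, so all radii are equivalent. Since $pq\neq1$, an amplitude rescaling also turns any normalized problem $u=\lambda\mathcal G^{(a)}_{s,R}(v^p)$ with $\lambda>0$ back into the unnormalized one. Your proposed source of a priori bounds is unavailable too. In this range a blow-up limit is an entire critical or supercritical solution, which is exactly the object you are constructing. So there is no Liouville theorem to invoke, and a superlinear degree argument has nothing to start from.

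The missing idea is a perturbation that restores solvability on each ball and disappears as $R\to\infty$, together with bounds that do not rely on a Liouville theorem for the full problem. The paper adds $\mu u$ to the first equation. Global bifurcation from $(\lambda_1(R),0)$ then gives an unbounded continuum with $0<\mu<\lambda_1(R)=\lambda_1(1)R^{-2s}$ (Proposition \ref{prop:ball-continuum}). The Pohozaev obstruction is used there constructively: a bounded continuum would produce a solution with $\mu=0$. Along the continuum one selects $u(0)+v(0)=1$, and Theorem \ref{thm:radial-compactness} bounds solutions in terms of this central value. Its blow-up analysis has three cases:
\begin{enumerate}[label=(\alph*)]
\item Off-center profiles are one-dimensional, and for $s\ge1/2$ they are ruled out by the divergent transverse integral \eqref{eq:transverse-riesz}. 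Your whole-line Liouville remark corresponds to this case.
\item Profiles at $\partial B_R$ give a half-line system. The whole-line Liouville theorem does not cover this, and it needs the separate moving-plane argument of Lemma \ref{lem:half-line}.
\item Concentration at the origin is excluded because the rescaled limit has zero central value.
\end{enumerate}
Finally, do not rely on the radial monotonicity in (iii). For $a>0$ the source $|x|^av^p$ vanishes at the origin and is not radially decreasing. Moreover, Riesz potentials of nonnegative radial sources need not decrease in $|x|$. The paper avoids monotonicity and uses radial flattening instead.
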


For the small-order results write
$p_S(a)=(N+2s+2a)/(N-2s)$ and $a_*(N,s)=2s(N-1)/(1-2s)$.
Part~(i) below gives a sufficient existence region. Part~(ii) gives an
exact criterion on the critical diagonal, and part~(iii) locates existence
and nonexistence on one fixed critical curve.

\begin{mainthm}\label{thm:small-orders}
Suppose \eqref{eq:parameters-main} and $0<s<1/2$.
\begin{enumerate}[label=(\roman*),ref=\thetheorem(\roman*),leftmargin=*]
\item\label{thm:below-half-exist}
If
\begin{equation}\label{eq:sufficient-window-intro}
\frac{N+a}{p+1}+\frac{N+b}{q+1}\le N-2s,
\qquad \frac1{p+1}+\frac1{q+1}>1-2s,
\end{equation}
there is a positive classical entire solution.

\item\label{thm:critical-criterion}
On the critical diagonal $a=b$ and $p=q=p_S(a)$, a positive
classical entire solution exists if and only if
\begin{equation}\label{eq:critical-range}
-2s<a<a_*(N,s).
\end{equation}
For $a\ge a_*(N,s)$, including equality, no positive classical
entire pair exists.

\item\label{thm:fixed-critical-curve}
Let
\begin{equation}\label{eq:fixed-weights-main}
\begin{gathered}
N=2,\qquad 0<s<\frac14,\qquad -2s<b<0,\\
a=\frac{4s-b}{1-4s},\qquad p_0=\frac{1+4s}{1-4s}.
\end{gathered}
\end{equation}
Keep these $N,s,a,b$ fixed. On the critical curve
\begin{equation}\label{eq:fixed-critical-curve}
\frac{2+a}{p+1}+\frac{2+b}{q+1}=2-2s,
\end{equation}
there is no positive classical entire solution at $(p,q)=(p_0,1)$.
Every point with $p>p_0$ admits a positive classical entire solution. Both weights are nonzero.
\end{enumerate}
\end{mainthm}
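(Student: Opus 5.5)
The three parts use different tools. Part (i) needs a radial compactness argument, part (ii) follows from (i) plus a cylinder obstruction, and part (iii) is a computation placing the fixed critical curve relative to the window \eqref{eq:sufficient-window-intro}.

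\emph{Part (i).} I would argue by continuation in the radial class. Work with the Riesz formulation \eqref{eq:integral-system-intro}, whose equivalence with Definition \ref{def:class} is Proposition \ref{thm:representation}. For radial data $(u,v)$ on a ball $B_R$, solve the Dirichlet-type problem with $\|u\|_\infty+\|v\|_\infty$ normalized, for instance by a Leray--Schauder degree or fixed-point argument in the positive cone. Then let $R\to\infty$ and rescale so that the supremum is $1$. Compactness can fail in three ways:
\begin{itemize}
\item the solutions may vanish identically in the limit;
\item the maximum point may concentrate at the origin;
\item the maximum point may escape along a radius $r_R\to\infty$.
\end{itemize}
Concentration at the origin produces an entire solution in the strictly subcritical range of a shifted hyperbola, which Theorem \ref{thm:main} excludes. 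Vanishing is excluded by the supercritical inequality, using the same scaling. Escape to infinity produces, after blowing up transversally to the sphere of radius $r_R$, a positive bounded solution of the one-dimensional system $(-\Delta)^s u=c\,v^p$, $(-\Delta)^s v=c'\,u^q$ on $\R$. This limit is meaningful only when $N=1>2s$, that is, $s<1/2$. The second inequality in \eqref{eq:sufficient-window-intro} is exactly the one-dimensional subcritical condition, so the one-dimensional version of the Liouville argument in Theorem \ref{thm:main} excludes this limit.

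I expect the critical equality case to be the main obstacle there. At equality the blow-up at the origin produces the critical entire bubble itself rather than a contradiction. To handle it I would pass to the limit along a sequence of supercritical exponents $(p_k,q_k)\to(p,q)$. Each solution would be normalized at the origin by $u_k(0)=1$. I would then show, using uniform decay from the Riesz representation, that the limit is a nontrivial classical solution.

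\emph{Part (ii).} For the critical diagonal, $p_S(a)+1=2(N+a)/(N-2s)$, so
\[
\frac{2}{p_S(a)+1}=\frac{N-2s}{N+a}>1-2s
\iff a<\frac{2s(N-1)}{1-2s}=a_*(N,s).
\]
Hence part (i) gives existence for $-2s<a<a_*(N,s)$. For nonexistence when $a\ge a_*$, I would use the Emden--Fowler change of variables $w(t,\theta)=e^{-\frac{N-2s}{2}t}u(e^{-t}\theta)$ on $\R\times\Sph^{N-1}$. At the critical exponent this absorbs the weight exactly, giving an autonomous equation $P_s w=z^{p}$, $P_s z=w^{p}$. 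The operator $P_s$ has the multiplier computed by Ao-Chan-DelaTorre-Fontelos-Gonz\'alez-Wei \cite{ao-ode}.

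I would first reduce to angular averages. By Jensen's inequality, the spherical mean of $z^p$ dominates $\bar z^{p}$, so $\bar w=\avdir{w}$ is a positive supersolution of the $t$-only problem. The key inequality to establish is that the Fourier symbol of $P_s$ on the zero angular mode, compared with the one-dimensional symbol $|\xi|^{2s}$, forbids a bounded positive supersolution when $p_S(a)\ge(1+2s)/(1-2s)$. The latter condition is equivalent to $a\ge a_*$. The proof would be a Pohozaev or Hamiltonian identity in $t$, using that the continuity at the origin in Definition \ref{def:class} forces $w,z\to0$ as $t\to+\infty$. The equality case $a=a_*$ requires a sign argument in that identity rather than strict inequality, and I expect it to be the delicate point.

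\emph{Part (iii).} This is a verification. With $p_0+1=2/(1-4s)$, one checks
\[
\frac{(2+a)(1-4s)}{2}+\frac{2+b}{2}=\frac{2-4s-b}{2}+\frac{2+b}{2}=2-2s,
\]
so $(p_0,1)$ lies on \eqref{eq:fixed-critical-curve}. At this point
\[
\frac{1}{p_0+1}+\frac{1}{q+1}=\frac{1-4s}{2}+\frac12=1-2s,
\]
which is exactly the boundary of the window. Moving along the curve to $p>p_0$ forces $q<1$. Since $b<0<a$ and the curve is fixed, a monotonicity check shows that the window inequality becomes strict, so part (i) yields existence at every such point.

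For nonexistence at $(p_0,1)$, the second equation is linear, so $v=I_{2s}(|x|^bu)$. In cylinder variables the pair again becomes autonomous at this critical point. I would run the same zero-mode obstruction as in part (ii), with the roles of $p$ and $q$ decoupled. Here the borderline identity $1/(p+1)+1/(q+1)=1-2s$ plays the part of $a=a_*$.
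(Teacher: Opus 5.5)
\textbf{Part (i).} Your plan fails at its first step. In the whole range $\frac{N+a}{p+1}+\frac{N+b}{q+1}\le N-2s$, the zero-exterior problem on a ball has \emph{no} positive solution. The Green-kernel dilation identity gives the Pohozaev relation \eqref{eq:ball-pohozaev}, whose left coefficient is nonpositive while its right side is strictly positive (Lemma~\ref{lem:ball-pohozaev}). Since $pq\ne1$, the amplitude rescaling $(u,v)\mapsto(tu,t^{q}v)$ absorbs any norm normalization or multiplicative eigenvalue. So no degree or fixed-point argument can produce your normalized ball solutions. Approximating the critical case by supercritical exponents does not help either, because the obstruction covers the closed range.

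The missing idea is a linear perturbation. The paper solves $(-\Delta)^su=\mu u+|x|^av^p$, $(-\Delta)^sv=|x|^bu^q$ in $B_R$ with $0<\mu<\lambda_1(R)$, and takes the global continuum bifurcating from $\lambda_1(R)$. The Pohozaev obstruction is used only to force this continuum to be unbounded in norm rather than ending at $\mu=0$. The paper then selects $u(0)+v(0)=1$. With this central normalization, concentration at the origin is excluded by Fatou's lemma, since the rescaled central values tend to zero. The critical case therefore needs no separate treatment; the bubble is simply the limit.

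Two further corrections to your compactness analysis. Rescaling at the origin by \eqref{eq:dilation} preserves the equation, so there is no ``shifted hyperbola''. Your list of failure modes also omits concentration at $\partial B_R$, which yields a half-line system needing its own Liouville theorem (Lemma~\ref{lem:half-line}). Your exclusion of escape along a radius via the one-dimensional Liouville theorem does match the paper.

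\textbf{Part (ii), nonexistence.} Two ideas are missing. First, Jensen's reduction destroys what you need. A Pohozaev identity holds only for exact solutions, and the averaged supersolution problem has many positive solutions. Every small constant satisfies $\bar w\ge J_N*\bar w^{\,p}$, since $\|J_N\|_1\varepsilon^p\le\varepsilon$. So does $\varepsilon(\cosh t)^{-(N-2s)/2}$, which decays at both ends. The paper instead keeps every angular mode and proves the dilation multiplier positive on each spherical harmonic: $-tJ_N'>(1-2s)J_N$ for $\ell=0$ (Lemma~\ref{lem:constant-mode}), and $q_{\ell,s}>0$ for $\ell\ge1$ via the digamma/cosine-transform computation (Lemmas~\ref{lem:cosine-density} and~\ref{lem:nonconstant-modes}).

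Second, continuity at $0$ controls only one end of the cylinder. The identity needs two-sided decay and $\int_{\mathcal C}W^{p+1}<\infty$, and a profile nondecreasing in $\log|x|$ is exactly what must be excluded. The paper proceeds in three steps:
\begin{itemize}
\item it reduces to $u=v$ (Lemma~\ref{lem:critical-equality});
\item it shows a monotone profile would make $u$ stable, and rules out stable solutions at this critical scaling (Lemma~\ref{lem:critical-unstable});
\item only then does it obtain symmetry and exponential decay at both ends from cylinder moving planes (Lemma~\ref{lem:cylinder-reflection}).
\end{itemize}
The strict multiplier signs give $2/(p+1)>1-2s$, so $a=a_*$ is not a separate delicate case.

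\textbf{Part (iii).} The same gaps recur, plus one more. Since $\alpha\neq\beta$, the cylinder system is $U=\mathcal A_\delta(V^{p_0})$, $V=\mathcal A_{-\delta}U$ with non-even kernels $e^{\pm\delta t}\mathcal K_s$. Neither reflection nor your zero-mode argument applies to it directly. The key step is the factorization $\mathcal A_{-\delta}\mathcal A_\delta=\mathcal T_{s+\delta}\mathcal T_{s-\delta}$ (Lemma~\ref{lem:weighted-cylinder-factorization}), which gives a scalar equation with an even kernel of total order $2s<1/2$. After nonmonotonicity is proved separately (Lemma~\ref{lem:unequal-critical-nonmonotone}), Theorem~\ref{thm:cylinder-product-threshold} gives $2/(p_0+1)>1-4s$, contradicting $2/(p_0+1)=1-4s$. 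Your existence check for $p>p_0$ is correct, but you should also verify $0<q<1$ and $1/(p+1)+1/(q+1)<1$, i.e.\ $q>0$ and $pq>1$.
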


\begin{remark}\label{rem:half-order-distinction}\label{rem:same-critical-curve-example}
The critical hyperbola characterizes existence at and above one half.
Below one half, the diagonal criterion and the fixed-curve result
show the additional dependence on the weights and powers.
The family in Theorem \ref{thm:fixed-critical-curve} has $a>0$, $b<0$,
and does not classify the part $p<p_0$.
\end{remark}

Section \ref{sec:inputs} introduces the common notation and proves the
potential representation and preliminary estimates. Section
\ref{sec:nonexistence} proves Theorem \ref{thm:main}. Section
\ref{sec:construction} defines the ball operators and constructs entire
solutions, proving Theorem \ref{thm:small-exist} and the existence part of
Theorem \ref{thm:small-orders}. Section \ref{sec:critical-criterion} proves
the nonexistence statements on the critical diagonal and on a fixed
critical curve.

\section{Notation and preliminary results}
\label{sec:inputs}\label{sec:representation}

\subsection{Notation and kernel estimates}

We write $B_r=B_r(0)$, $t_+=\max\{t,0\}$, and $t_-=\max\{-t,0\}$.
For a measurable set $A$ with $0<|A|<\infty$, the average integral is
$\fint_A f=|A|^{-1}\int_A f$. The symbols $C^\sigma$, for noninteger
$\sigma>0$, denote the usual H\"older spaces.
An integral written without a domain is over $\R^N$ with respect to Lebesgue measure.
The notation $f\asymp g$ means $cg\le f\le Cg$ with positive constants
independent of the variable in the stated range.
In uniform estimates, $C$ denotes a positive constant depending only
on the fixed parameters $N,s,a,b,p,q$ and the chosen cutoff functions.
Its value may change from line to line. Dependence on a solution,
a radius, or an additional parameter is stated when it is allowed.

For an entire pair, finite weighted energy means
\[
\int_{\R^N}\bigl(|x|^bu^{q+1}+|x|^av^{p+1}\bigr)\,dx<\infty.
\]
We do not assume this condition. The same integral on every bounded ball
is finite because $u,v$ are locally bounded and $a,b>-2s>-N$.

Put
\begin{equation}\label{eq:kernel}
K_s(x)=c_{N,s}|x|^{2s-N}\qquad(x\ne0),
\end{equation}
where $c_{N,s}$ is defined in \eqref{eq:riesz-definition}, so that
$I_{2s}f=K_s*f$.

For $N\ge1$, $0<s<\min\{1,N/2\}$, $D>-2s$, and $r>0$, the kernel satisfies
\begin{equation}\label{eq:weighted-kernel-local}
\sup_{x\in B_r}\int_{B_r}K_s(x-y)|y|^D\,dy
\le C_{N,s,D}r^{D+2s}.
\end{equation}
Indeed, at $x=0$ the integral is
$c_{N,s}|\Sph^{N-1}|r^{D+2s}/(D+2s)$.
For $0<|x|=\rho<r$, substitute $y=\rho z$ in the part $|y|<2\rho$:
\[
\int_{B_r\cap B_{2\rho}}K_s(x-y)|y|^D\,dy
\le C\rho^{D+2s}
\int_{B_2}\frac{|z|^D}{|x/\rho-z|^{N-2s}}\,dz
\le C\rho^{D+2s}.
\]
The two singular points $0$ and $x/\rho$ have distance one;
$D>-N$ and $2s>0$ give integrability at each, uniformly under rotations.
On the remaining region, $|x-y|\ge|y|/2$, so
\[
\int_{B_r\setminus B_{2\rho}}K_s(x-y)|y|^D\,dy
\le C\int_{\min\{2\rho,r\}}^r t^{D+2s-1}\,dt
\le Cr^{D+2s}.
\]
Adding these bounds proves \eqref{eq:weighted-kernel-local}.

\Needspace{6\baselineskip}
Write the two right-hand sides as
\[
F(x)=|x|^b u(x)^q,\qquad G(x)=|x|^a v(x)^p\quad(x\ne0),
\qquad F(0)=G(0)=0.
\]
Thus $(-\Delta)^s u=G$ and $(-\Delta)^s v=F$. We pair $F$ with
$u$ and $G$ with $v$, so $Fu=|x|^bu^{q+1}$ and $Gv=|x|^av^{p+1}$
almost everywhere.
The value at a single point does not affect an integral or a
distributional equation. Integrability near zero follows from polar
coordinates: for $D=a$ or $b$,
\[
\begin{aligned}
\int_{B_r}|y|^D\,dy
&=|\Sph^{N-1}|\int_0^r t^{N+D-1}\,dt
=\frac{|\Sph^{N-1}|}{N+D}r^{N+D},\\
\int_{B_r}K_s(y)|y|^D\,dy
&=c_{N,s}|\Sph^{N-1}|\int_0^r t^{2s+D-1}\,dt
=\frac{c_{N,s}|\Sph^{N-1}|}{2s+D}r^{2s+D}\longrightarrow0.
\end{aligned}
\]
Both denominators are positive since $D>-2s>-N$. Multiplication by
the locally bounded factors $u^q,v^p$ preserves these bounds.
No continuity of $F,G$ at zero is required.

The potential formulation is
\[
\begin{cases}
u=K_s*G,\\
v=K_s*F,
\end{cases}
\qquad x\in\R^N.
\]

\subsection{From the system to the exact potentials}

We use the equality case of the following cited lemma.
Its weight condition $a,b>-N$ is satisfied, since
\eqref{eq:parameters-main} gives $N>2s$ and $a,b>-2s$,
hence $a,b>-2s>-N$.

\begin{lemma}[{Li-Yu-Zhang \cite[Lemma 2.1]{li-yu-zhang}}]
\label{lem:origin}
Every positive classical entire solution satisfies
\[
\begin{cases}
\displaystyle\int_{\R^N}u(-\Delta)^s\phi\,dx=\int_{\R^N}G\phi\,dx,\\[0.4em]
\displaystyle\int_{\R^N}v(-\Delta)^s\phi\,dx=\int_{\R^N}F\phi\,dx,
\end{cases}
\qquad\phi\in C_c^\infty(\R^N).
\]
Thus \eqref{eq:system} holds in the sense of distributions on all
of $\R^N$, with no additional term supported at the origin.
\end{lemma}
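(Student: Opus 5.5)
The plan is to prove the first identity; the second follows by exchanging $(u,G)$ with $(v,F)$. Fix $\phi\in C_c^\infty(\R^N)$ and a radial cutoff $\eta\in C_c^\infty(B_2)$ with $0\le\eta\le1$ and $\eta=1$ on $B_1$. Put $\eta_\varepsilon(x)=\eta(x/\varepsilon)$ and split $\phi=\phi\eta_\varepsilon+\phi(1-\eta_\varepsilon)$.

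\textbf{The part away from the origin.} The function $\psi_\varepsilon=\phi(1-\eta_\varepsilon)$ is smooth with compact support in $\R^N\setminus B_\varepsilon$. There $u\in C^{1,1}_{\mathrm{loc}}$, and $u\in\mathcal L_{2s}$. The standard symmetry of the principal-value operator then gives
\[
\int u(-\Delta)^s\psi_\varepsilon=\int\psi_\varepsilon(-\Delta)^su=\int G\psi_\varepsilon .
\]
To justify the exchange, truncate the kernel at distance $\delta$ and use Fubini. As $\delta\to0$ the truncated operator applied to $u$ converges locally uniformly on $\operatorname{supp}\psi_\varepsilon$, because $u$ is $C^{1,1}$ near that support. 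The truncated operator applied to $\psi_\varepsilon$ is bounded by $C(1+|x|)^{-N-2s}$, and $u\in\mathcal L_{2s}$, so dominated convergence applies. Since $G\phi$ is integrable near $0$ (shown above, using $a>-2s>-N$ and local boundedness of $v$), we get $\int G\psi_\varepsilon\to\int G\phi$.

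\textbf{The part near the origin.} It remains to show $\int u(-\Delta)^s(\phi\eta_\varepsilon)\to0$. By scaling, $(-\Delta)^s(\phi\eta_\varepsilon)(x)=\varepsilon^{-2s}\Psi_\varepsilon(x/\varepsilon)$, where $\Psi_\varepsilon=(-\Delta)^s[\phi(\varepsilon\cdot)\eta]$. The functions $\phi(\varepsilon\cdot)\eta$ are bounded in $C^2$ and supported in $B_2$, so
\[
|\Psi_\varepsilon(z)|\le C(1+|z|)^{-N-2s}
\]
uniformly in $\varepsilon$. For the integral against $u$:
\begin{itemize}
\item On $B_{R\varepsilon}$, continuity and boundedness of $u$ near $0$ bound the contribution by $C\varepsilon^{-2s}\varepsilon^N R^N=CR^N\varepsilon^{N-2s}\to0$, since $N>2s$.
\item Outside, the tail $\varepsilon^{-2s}\int_{|x|>R\varepsilon}|u|(1+|x|/\varepsilon)^{-N-2s}\,dx=\varepsilon^N\int_{|x|>R\varepsilon}|u|\,(\varepsilon+|x|)^{-N-2s}\,dx$ is handled by splitting into $|x|<1$, where $u$ is bounded, and $|x|>1$, where $u\in\mathcal L_{2s}$. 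Both pieces are $O(\varepsilon^{N-2s})$, which tends to $0$.
\end{itemize}

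This is the only delicate step. It is where continuity of $u$ at the origin is used, as opposed to mere membership in $\mathcal L_{2s}$: it excludes the singular profiles $|x|^{2s-N}$, which would produce a Dirac mass at $0$. Boundedness of $u$ near $0$ alone already suffices because $N>2s$. Combining the two parts gives the identity, and therefore no term supported at the origin appears.
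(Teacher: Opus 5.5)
Your proof is correct, and it differs from the paper mainly in that the paper gives no argument at all. It imports the lemma from Li--Yu--Zhang (Lemma 2.1, equal-order case) and only checks that the weight hypothesis $a,b>-N$ of that result holds.

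Your cutoff argument is a complete, self-contained replacement. It has two parts:
\begin{enumerate}
\item Away from the origin, symmetric truncation of the kernel together with $u\in C^{1,1}_{\mathrm{loc}}(\R^N\setminus\{0\})\cap\mathcal L_{2s}$ justifies moving $(-\Delta)^s$ from $\phi(1-\eta_\varepsilon)$ onto $u$. The truncated operators converge uniformly on the support, which lies in $\{|x|\ge\varepsilon\}$, and they are dominated by $C(1+|x|)^{-N-2s}$ on the test-function side.
\item Near the origin, the scaling bound $|(-\Delta)^s(\phi\eta_\varepsilon)(x)|\le C\varepsilon^N(\varepsilon+|x|)^{-N-2s}$ shows that the excised piece contributes only $O(\varepsilon^{N-2s})$.
\end{enumerate}

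This route also makes explicit which hypotheses actually enter. Local boundedness of $u$ at $0$ together with $N>2s$ is what rules out a point mass; as you note, continuity and positivity are not needed. The condition $a>-N$, with local boundedness of $v$, is used only to get $G\phi\in L^1$ and to pass from $\int G\phi(1-\eta_\varepsilon)$ to $\int G\phi$. That is exactly the condition the paper verifies before citing.

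So the citation buys brevity, while your version makes this step of the paper independent of the external reference.
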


\Needspace{7\baselineskip}
The following potential formula uses the Riesz normalization
in \eqref{eq:riesz-definition}. Its extension to the origin is justified below.

\begin{lemma}[{Li-Yu-Zhang \cite[Lemma 3.1]{li-yu-zhang}}]
\label{lem:system-potentials}
Every positive classical entire solution satisfies
\begin{equation}\label{eq:exact-potential}
\begin{cases}
u=I_{2s}(|x|^a v^p),\\
v=I_{2s}(|x|^b u^q),
\end{cases}
\qquad x\in\R^N.
\end{equation}
Both potentials are finite and continuous on $\R^N$.
\end{lemma}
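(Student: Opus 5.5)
The plan is to prove the potential formula away from the origin first, then extend it to $x=0$ by continuity. Positivity of $u(0),v(0)$ will then come directly from the representation.

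Start from Lemma \ref{lem:origin}: $(-\Delta)^s u=G$ in $\mathcal D'(\R^N)$, with $G\ge0$ and $G\in L^1_{\mathrm{loc}}$, since $a>-N$ and $v$ is locally bounded. The first step is to show $I_{2s}G$ is finite at some point, hence almost everywhere, and that $w=u-I_{2s}G$ is $s$-harmonic in $\mathcal D'$ and lies in $\mathcal L_{2s}$. For finiteness I would use a truncation argument. Let $G_R=G\chi_{B_R}$ and $u_R=I_{2s}G_R$. Then $(-\Delta)^s(u-u_R)=G-G_R\ge0$ distributionally, and $u_R\to0$ at infinity. A maximum principle for $\mathcal L_{2s}$ distributional supersolutions then gives $u\ge u_R$ on $\R^N$. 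This uses that $u\ge0$ and $u-u_R$ is lower semicontinuous, applied on large balls together with the exterior condition. Letting $R\to\infty$ and using monotone convergence gives $I_{2s}G\le u<\infty$ everywhere.

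Next, $w=u-I_{2s}G\ge0$ belongs to $\mathcal L_{2s}$ and is $s$-harmonic in the distributional sense. The Liouville theorem for nonnegative $s$-harmonic functions in $\mathcal L_{2s}$ (Fall, or Chen--D'Ambrosio--Li) says $w$ is constant, $w\equiv c\ge0$. I expect this to be the main obstacle: one must rule out $c>0$. The argument will use the system coupling. If $u\ge c>0$, then $v\ge I_{2s}(|x|^bc^q)$. For $b>-2s$ the integral $\int |y|^{b}|x-y|^{2s-N}\,dy$ diverges at infinity, since $b+2s>0$. Hence $v\equiv\infty$, contradicting $v\in C(\R^N)$. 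Thus $c=0$, and the same argument applied to $v$ gives $v=I_{2s}F$. If the maximum-principle step is delicate, I would instead mollify: $u_\varepsilon=u*\eta_\varepsilon$ is smooth and satisfies $(-\Delta)^s u_\varepsilon=G*\eta_\varepsilon$ classically. The comparison is then carried out for $u_\varepsilon$, and one passes to the limit.

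The identity now holds almost everywhere. Since $u$ is continuous, it remains to show $I_{2s}G$ is continuous on $\R^N$, including at $0$. Fix $x_0$ and split $G=G\chi_{B_r(x_0)}+G\chi_{B_r(x_0)^c}$. The far part has a kernel that is smooth near $x_0$ and is dominated by $|y|^{2s-N}G(y)$, which is integrable at infinity by the finiteness just established. It is therefore continuous at $x_0$ by dominated convergence. For the near part, \eqref{eq:weighted-kernel-local} with $D=a$ (using local boundedness of $v^p$) gives
\[
\sup_{x\in B_{r}(x_0)}\int_{B_{2r}(x_0)}K_s(x-y)G(y)\,dy\le C\,r^{\min\{a,0\}+2s}\to0
\]
as $r\to0$; for $x_0\ne0$ the weight is bounded and the bound is simply $Cr^{2s}$. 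This equicontinuity argument yields continuity of $I_{2s}G$. Since two continuous functions agreeing almost everywhere agree everywhere, \eqref{eq:exact-potential} holds at every point, including the origin. Finally, $G>0$ on $\R^N\setminus\{0\}$ gives $u(0)=I_{2s}G(0)>0$.
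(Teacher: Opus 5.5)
Your argument is correct, but it takes a different route from the paper. The paper does not reprove the representation away from the origin. It takes the identity for $x\ne0$ directly from Li--Yu--Zhang's Lemma 3.1 and only supplies the extension to $x=0$. It splits the potential into two parts:
\begin{itemize}
\item the part over $B_r$, which \eqref{eq:weighted-kernel-local} with $D=a,b$ bounds by $C(r^{a+2s}+r^{b+2s})$;
\item the part over $|y|\ge r$, handled by dominated convergence via $K_s(x-y)\le 3^{N-2s}K_s(x_*-y)$ for an auxiliary point $0<|x_*|<r/2$, where the cited identity already guarantees integrability.
\end{itemize}

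You instead rebuild the representation from the distributional equation of Lemma \ref{lem:origin}. Truncated potentials and a maximum principle give $I_{2s}G\le u$ everywhere; your mollified variant reduces this to a global-minimum argument for smooth functions in $\mathcal L_{2s}$ whose liminf at infinity is nonnegative. Fall's Liouville theorem then makes $u-I_{2s}G$ a constant. The coupling with $b+2s>0$ forces that constant to vanish. Finally, continuity of $I_{2s}G$ on all of $\R^N$ upgrades the a.e.\ identity to a pointwise one, origin included.

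Your route is self-contained modulo two standard tools. It also makes explicit that only $p,q>0$ and $a,b>-2s$ enter. One step you leave implicit is $(-\Delta)^sI_{2s}G=G$ in $\mathcal D'$, which you need before invoking Liouville. It follows by Fubini from $\int G(y)(1+|y|)^{2s-N}\,dy<\infty$, which is a consequence of $I_{2s}G(0)\le u(0)<\infty$ and local integrability of $G$. The computation is exactly the one in the proof of Proposition \ref{thm:representation}.

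The paper's route is much shorter because the global work is delegated to the citation. Its origin step needs the identity only at one nearby point $x_*$, rather than continuity of the potential everywhere.
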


The cited result states \eqref{eq:exact-potential} for $x\ne0$.
To include the origin, use local boundedness and $a,b>-2s$.
Substitute $D=a,b$ in \eqref{eq:weighted-kernel-local} and use the
local bounds for $v^p,u^q$ to obtain
\[
\sup_{|x|\le r/2}\int_{B_r}K_s(x-y)(F(y)+G(y))\,dy
\le C_{u,v}(r^{a+2s}+r^{b+2s})\longrightarrow0.
\]
For the source outside $B_r$, fix $0<|x_*|<r/2$. If $|x|<r/2$
and $|y|\ge r$, then
\[
K_s(x-y)\le 3^{N-2s}K_s(x_*-y).
\]
The right-hand kernel times either source is integrable, by the
potential identity at $x_*$. Dominated convergence therefore
gives continuity of this outer contribution at zero. Combining it
with the preceding bound extends both identities to $x=0$.
The positive kernel and the nonzero sources yield $u(0)>0$
and $v(0)>0$.

\subsection{Nonlinear integrals and scaling}

The integral estimates in this and the following two subsections
also apply in dimension one. Their parameter assumptions are
\begin{equation}\label{eq:integral-parameters}
N\ge1,\qquad 0<s<\min\{1,N/2\},\qquad
a,b>-2s,\qquad p,q>0,\qquad pq>1.
\end{equation}
We assume a positive continuous solution of
\eqref{eq:exact-potential} on all of $\R^N$, with both potentials
finite at every point.
For $N\ge2$ this includes the classical system solutions above.
The case $N=1$, necessarily with $s<1/2$, will be used for the
limiting functions in the existence construction.

For $R>0$, define
\begin{equation}\label{eq:dilation}
u_R(x)=R^\alpha u(Rx),\qquad v_R(x)=R^\beta v(Rx),
\end{equation}
where
\begin{equation}\label{eq:scales}
\alpha=\frac{2s(p+1)+a+pb}{pq-1},\qquad
\beta=\frac{2s(q+1)+b+qa}{pq-1}.
\end{equation}
Since $a+2s,b+2s>0$,
\[
\alpha=\frac{(a+2s)+p(b+2s)}{pq-1}>0,\qquad
\beta=\frac{(b+2s)+q(a+2s)}{pq-1}>0,
\]
and $p\beta-\alpha=2s+a$, $q\alpha-\beta=2s+b$.
Substitution $y=R^{-1}z$ in the Riesz integrals gives
\[
\begin{aligned}
I_{2s}(|\,\cdot\,|^a v_R^p)(x)
&=R^{p\beta-a-2s}I_{2s}(|\,\cdot\,|^a v^p)(Rx)=u_R(x),\\
I_{2s}(|\,\cdot\,|^b u_R^q)(x)
&=R^{q\alpha-b-2s}I_{2s}(|\,\cdot\,|^b u^q)(Rx)=v_R(x).
\end{aligned}
\]
Thus $(u_R,v_R)$ solves the same integral system. Moreover,
$F_R(x)=R^{q\alpha-b}F(Rx)$ and
$G_R(x)=R^{p\beta-a}G(Rx)$, hence
\begin{equation}\label{eq:dilated-sources}
F_R(x)=R^{\beta+2s}F(Rx),\qquad
G_R(x)=R^{\alpha+2s}G(Rx).
\end{equation}

For the estimates below, we write that
\begin{equation}\label{eq:gaps-main}
\gamma=\frac{N+a}{p+1}+\frac{N+b}{q+1}-(N-2s),
\qquad S=\frac1{p+1}+\frac1{q+1}.
\end{equation}
The relation between the two above is
\begin{equation}\label{eq:gap}
\gamma=(1-S)(\alpha+\beta+2s-N),
\qquad
1-S=\frac{pq-1}{(p+1)(q+1)}>0.
\end{equation}

\begin{proposition}\label{prop:source}
Under \eqref{eq:integral-parameters}, every positive continuous
solution of \eqref{eq:exact-potential} satisfies, for every $R>0$,
\begin{equation}\label{eq:source-bounds}
\int_{B_R}F\le CR^{N-2s-\beta},\qquad
\int_{B_R}G\le CR^{N-2s-\alpha}.
\end{equation}
Here $C=C(N,s,a,b,p,q)$ is independent of the solution and $R$.

\end{proposition}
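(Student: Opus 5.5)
The plan is to reduce to the unit ball by the scaling \eqref{eq:dilation}, and then close a pair of reverse inequalities between the two source masses using $pq>1$. Since $(u_R,v_R)$ is again a positive continuous solution of \eqref{eq:exact-potential}, \eqref{eq:dilated-sources} together with the substitution $x=Rz$ gives
\[
\int_{B_1}F_R=R^{\beta+2s}\int_{B_1}F(Rz)\,dz=R^{\beta+2s-N}\int_{B_R}F,
\qquad
\int_{B_1}G_R=R^{\alpha+2s-N}\int_{B_R}G .
\]
Hence \eqref{eq:source-bounds} for all $R$ follows once we show $\int_{B_1}F\le C$ and $\int_{B_1}G\le C$ for every such solution, with $C$ depending only on $N,s,a,b,p,q$.

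For the unit-ball bound I would use only the positivity of the kernel. If $x,y\in B_1$, then $|x-y|<2$, so $K_s(x-y)\ge c_{N,s}2^{2s-N}=:c_0$ because $2s<N$. Dropping the source outside $B_1$ in $u=K_s*G$ gives
\[
u(x)\ge c_0 X_G\quad\text{for } x\in B_1,\qquad X_G:=\int_{B_1}G .
\]
In the same way $v\ge c_0X_F$ on $B_1$, with $X_F:=\int_{B_1}F$. Inserting these bounds into the definitions of $F$ and $G$ yields
\[
X_F\ge (c_0X_G)^q\int_{B_1}|x|^b\,dx=c_1X_G^{\,q},\qquad
X_G\ge (c_0X_F)^p\int_{B_1}|x|^a\,dx=c_2X_F^{\,p}.
\]
The integrals $\int_{B_1}|x|^{a}\,dx$ and $\int_{B_1}|x|^{b}\,dx$ are finite and explicit, since $a,b>-2s>-N$.

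Combining the two inequalities gives $X_F\ge c_1c_2^qX_F^{pq}$. Here $0<X_F<\infty$: finiteness comes from the local boundedness of $u$ and $D=b>-N$, and positivity from $u>0$ away from the origin. Since $pq>1$, this forces $X_F\le (c_1c_2^q)^{-1/(pq-1)}$. Feeding that back gives $X_G\le (X_F/c_1)^{1/q}\le C$. All constants depend only on the parameters, so rescaling back yields \eqref{eq:source-bounds}.

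I do not expect a serious obstacle. The only points needing care are the exponent bookkeeping in the rescaling identity, which uses $q\alpha-b=\beta+2s$ and $p\beta-a=\alpha+2s$, and the finiteness of $X_F$ and $X_G$, without which the inequality $X\ge cX^{pq}$ could not be inverted. Notably, no decay or growth of the solution is needed: the mass outside $B_1$ is simply discarded.
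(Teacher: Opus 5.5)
Your proof is correct and is essentially the paper's argument: the same lower bound for the kernel on a ball, discarding the source outside the ball, inserting the resulting bound into the weighted powers, and closing with $pq>1$ and the positivity and finiteness of the source mass. The only difference is cosmetic: you normalize to $R=1$ through the dilation \eqref{eq:dilation} and \eqref{eq:dilated-sources}, whereas the paper carries the powers of $R$ through the ball averages directly.
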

For $N\ge2$ and $p,q>1$, these integral estimates for classical solutions are
\cite[Lemma 2.3]{li-yu-zhang}, specialized to equal orders.
That lemma assumes both powers exceed one. The following short
proof also covers positive continuous integral solutions when one
power is at most one.

\begin{proof}
Since $|x-y|\le2R$ for $x,y\in B_R$ and $2s-N<0$,
\eqref{eq:kernel}, $F,G\ge0$, and \eqref{eq:exact-potential} give
\[
\begin{gathered}
K_s(x-y)\ge cR^{2s-N}\qquad(x,y\in B_R,\ x\ne y),\\[0.4em]
\begin{cases}
\displaystyle u(x)\ge\int_{B_R}K_s(x-y)G(y)\,dy
\ge cR^{2s}\fint_{B_R}G\,dy,\\[0.4em]
\displaystyle v(x)\ge\int_{B_R}K_s(x-y)F(y)\,dy
\ge cR^{2s}\fint_{B_R}F\,dy,
\end{cases}
\qquad x\in B_R.
\end{gathered}
\]

Due to $q,p>0$, multiply by the respective weights,
and average over $B_R$, we can get:
\[
\begin{gathered}
\fint_{B_R}|x|^a\,dx=\frac{N}{N+a}R^a,\qquad
\fint_{B_R}|x|^b\,dx=\frac{N}{N+b}R^b,\\[0.5em]
\begin{cases}
\displaystyle\fint_{B_R}F\,dx
=\fint_{B_R}|x|^bu^q\,dx
\ge cR^{b+2sq}\left(\fint_{B_R}G\,dx\right)^q,\\[0.4em]
\displaystyle\fint_{B_R}G\,dx
=\fint_{B_R}|x|^av^p\,dx
\ge cR^{a+2sp}\left(\fint_{B_R}F\,dx\right)^p.
\end{cases}
\end{gathered}
\]
Insert the second average bound into the first and then the first
into the second:
\[
\begin{aligned}
\fint_{B_R}F
&\ge cR^{b+2sq+q(a+2sp)}\left(\fint_{B_R}F\right)^{pq},\\
\fint_{B_R}G
&\ge cR^{a+2sp+p(b+2sq)}\left(\fint_{B_R}G\right)^{pq}.
\end{aligned}
\]
Divide by the positive averages and use \eqref{eq:scales}:
\[
\begin{aligned}
\left(\fint_{B_R}F\,dx\right)^{pq-1}
&\le CR^{-b-2sq-q(a+2sp)}
=CR^{-(pq-1)(2s+\beta)},\\
\left(\fint_{B_R}G\,dx\right)^{pq-1}
&\le CR^{-a-2sp-p(b+2sq)}
=CR^{-(pq-1)(2s+\alpha)}.
\end{aligned}
\]
Since $pq>1$ and $|B_R|=|B_1|R^N$, these are
\eqref{eq:source-bounds}.

For $R\ge1$, \eqref{eq:source-bounds} gives
\[
\begin{aligned}
\alpha>N-2s
&\ \Longrightarrow\ 0<\int_{B_1}G\le\int_{B_R}G
\le CR^{N-2s-\alpha}\longrightarrow0,\\
\beta>N-2s
&\ \Longrightarrow\ 0<\int_{B_1}F\le\int_{B_R}F
\le CR^{N-2s-\beta}\longrightarrow0,
\end{aligned}
\qquad R\to\infty.
\]
Both alternatives are impossible, so $\alpha,\beta\le N-2s$.
\end{proof}

\subsection{Ball averages and weighted integrals}

Proposition \ref{prop:source} bounds the integrals of $F,G$ over
balls. We now use \eqref{eq:exact-potential} to estimate the ball
averages of $u,v$ and their weighted integrals at large distances.

\begin{proposition}\label{prop:inputs}
Assume \eqref{eq:integral-parameters}, and let $(u,v)$ be a positive
continuous solution of \eqref{eq:exact-potential}, with $\alpha,\beta$
as in \eqref{eq:scales}. There is $C=C(N,s,a,b,p,q)>0$,
independent of $(u,v)$, such that for every $R>0$,
\begin{equation}\label{eq:mean-decay}
\fint_{B_R}u(x)\,dx\le CR^{-\alpha},\qquad
\fint_{B_R}v(x)\,dx\le CR^{-\beta}.
\end{equation}
The same constant can be chosen so that
\begin{equation}\label{eq:inputs}
\begin{aligned}
\int_{B_4}(F+G+u+v)\,dx&\le C,\\
\int_{|x-y|\ge\ell}K_s(x-y)(F(y)+G(y))\,dy
&\le C\ell^{2s-N}
\end{aligned}
\end{equation}
for every $x\in B_2$ and $0<\ell\le1$.
For every $\tau>0$, there is $C_\tau=C(N,s,a,b,p,q,\tau)>0$
such that for all $R>0$,
\begin{equation}\label{eq:general-tail}
\int_{|x|>R}\frac{u(x)}{|x|^{N+2\tau}}\,dx
\le C_\tau R^{-\alpha-2\tau},\qquad
\int_{|x|>R}\frac{v(x)}{|x|^{N+2\tau}}\,dx
\le C_\tau R^{-\beta-2\tau}.
\end{equation}
In particular,
\[
\int_{\R^N}\frac{u(x)+v(x)}{1+|x|^{N+2\tau}}\,dx<\infty
\qquad(\tau>0).
\]
\end{proposition}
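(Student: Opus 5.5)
The plan is to obtain the mean decay \eqref{eq:mean-decay} by integrating the potential identity \eqref{eq:exact-potential} over $B_R$ and splitting the source into dyadic shells. We then derive the local bounds \eqref{eq:inputs} and the tails \eqref{eq:general-tail} as direct corollaries. Throughout, the only input is Proposition~\ref{prop:source} together with the kernel estimate \eqref{eq:weighted-kernel-local}-type averaging, and the fact that $\alpha,\beta\le N-2s$.

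\textbf{Step 1: mean decay.} By scaling \eqref{eq:dilation}, which preserves the integral system and leaves the constant of Proposition~\ref{prop:source} unchanged, it suffices to prove $\fint_{B_1}u\le C$ and $\fint_{B_1}v\le C$ for an arbitrary solution. Indeed, $\fint_{B_R}u=R^{-\alpha}\fint_{B_1}u_R$.

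By Fubini,
\[
\fint_{B_1}u=\int G(y)\Bigl(\fint_{B_1}K_s(x-y)\,dx\Bigr)dy .
\]
The inner average is bounded by $C$ for $|y|\le2$, since $K_s$ is locally integrable and its average over a unit ball is bounded uniformly in the center. For $|y|>2$ it is bounded by $C|y|^{2s-N}$.

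The near part is therefore $\le C\int_{B_2}G\le C$. The far part is a sum over shells $2^k<|y|\le2^{k+1}$, each bounded by
\[
C2^{k(2s-N)}\int_{B_{2^{k+1}}}G\le C2^{k(2s-N)}2^{k(N-2s-\alpha)}=C2^{-k\alpha}.
\]
This sums because $\alpha>0$. The same argument with $F$ and $\beta$ handles $v$.

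\textbf{Step 2: the local bounds \eqref{eq:inputs}.} The bound $\int_{B_4}(u+v)\le C$ is Step~1 with $R=4$, and $\int_{B_4}(F+G)\le C$ is Proposition~\ref{prop:source} with $R=4$.

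For the truncated potential, fix $x\in B_2$ and split the region $|x-y|\ge\ell$ into two pieces:
\begin{itemize}[leftmargin=*]
\item On $y\in B_4$, use $K_s(x-y)\le C\ell^{2s-N}$ together with $\int_{B_4}(F+G)\le C$.
\item On $|y|\ge4$, use $|x-y|\ge|y|/2$ and the dyadic shell estimate of Step~1. This gives a bound $C\le C\ell^{2s-N}$, since $\ell\le1$ and $2s<N$.
\end{itemize}

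\textbf{Step 3: the tails \eqref{eq:general-tail}.} Decompose $|x|>R$ into shells $2^kR<|x|\le2^{k+1}R$. On each shell,
\[
\int u\,|x|^{-N-2\tau}\le (2^kR)^{-N-2\tau}|B_{2^{k+1}R}|\fint_{B_{2^{k+1}R}}u\le C(2^kR)^{-2\tau-\alpha},
\]
which sums to $C_\tau R^{-\alpha-2\tau}$. The same holds for $v$.

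The final integrability statement follows by combining the case $R=1$ with the bound on $B_1$ from Step~1.

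\textbf{Main obstacle.} The only delicate point is the uniformity in Step~1. The constant must not depend on the solution, and this is exactly where the scale invariance and the universal constant of Proposition~\ref{prop:source} are used. The positivity of $\alpha$ and $\beta$ is what makes the dyadic series converge.
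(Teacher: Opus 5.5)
Your proposal is correct and follows essentially the same argument as the paper: Tonelli on the potential identity, a near/far split of the averaged kernel, dyadic shells controlled by Proposition~\ref{prop:source} with $\alpha,\beta>0$, and then the same corollaries for \eqref{eq:inputs} and \eqref{eq:general-tail}. The differences are cosmetic. You reduce to $R=1$ via the dilation \eqref{eq:dilation}, whereas the paper works directly at scale $R$. You split at $|y|=4$ instead of $8$, which is valid because $|x|<2\le|y|/2$ gives $|x-y|\ge|y|/2$. Also, the inequality $\alpha,\beta\le N-2s$ you list as an input is never actually needed.
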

\begin{proof}
Fix $R>0$. For $x\in B_R$,
\[
\begin{aligned}
|y|<2R&\ \Longrightarrow\ x-y\in B_{3R},\\
|y|\ge2R&\ \Longrightarrow\ |x-y|\ge |y|-R\ge |y|/2.
\end{aligned}
\]
By \eqref{eq:kernel}, $\int_{B_{3R}}K_s(z)\,dz\le CR^{2s}$.
Together with $|B_R|=|B_1|R^N$, the preceding comparisons give
\[
\fint_{B_R}K_s(x-y)\,dx\le
\begin{cases}
CR^{2s-N},&|y|<2R,\\
C|y|^{2s-N},&|y|\ge2R.
\end{cases}
\]
Split $|y|\ge2R$ into the rings $2^jR\le|y|<2^{j+1}R$, $j\ge1$.
Tonelli's theorem and \eqref{eq:source-bounds} give
\[
\begin{aligned}
\fint_{B_R}u(x)\,dx
&=\int_{\R^N}G(y)\left(\fint_{B_R}K_s(x-y)\,dx\right)dy\\
&\le CR^{2s-N}\int_{B_{2R}}G\,dy
+C\sum_{j\ge1}(2^jR)^{2s-N}\int_{B_{2^{j+1}R}}G\,dy\\
&\le CR^{-\alpha}\left(1+\sum_{j\ge1}2^{-j\alpha}\right)
\le CR^{-\alpha},
\end{aligned}
\]
where $\alpha>0$. Replacing $u,G,\alpha$ by $v,F,\beta$ proves
\eqref{eq:mean-decay}.

Equations \eqref{eq:source-bounds} and \eqref{eq:mean-decay} with $R=4$
give the first bound in \eqref{eq:inputs}. For $x\in B_2$,
\[
\begin{aligned}
|x-y|\ge\ell&\ \Longrightarrow\ K_s(x-y)\le C\ell^{2s-N},\\
|y|\ge8&\ \Longrightarrow\ |x-y|\ge |y|/2.
\end{aligned}
\]
Splitting at $|y|=8$ and applying \eqref{eq:source-bounds} yields
\[
\begin{aligned}
&\int_{|x-y|\ge\ell}K_s(x-y)(F(y)+G(y))\,dy\\
&\quad\le C\ell^{2s-N}\int_{B_8}(F+G)\,dy
+C\sum_{j\ge3}2^{j(2s-N)}\int_{B_{2^{j+1}}}(F+G)\,dy\\
&\quad\le C\ell^{2s-N}
+C\sum_{j\ge3}\left(2^{-j\alpha}+2^{-j\beta}\right)
\le C\ell^{2s-N}.
\end{aligned}
\]
The last inequality uses $\alpha,\beta>0$ and
$\ell^{2s-N}\ge1$ for $0<\ell\le1$.

For $\tau>0$, decompose $|x|>R$ into $2^jR<|x|\le2^{j+1}R$, $j\ge0$.
By \eqref{eq:mean-decay},
\[
\begin{aligned}
\int_{|x|>R}\frac{u(x)}{|x|^{N+2\tau}}\,dx
&\le\sum_{j\ge0}(2^jR)^{-N-2\tau}
|B_{2^{j+1}R}|\fint_{B_{2^{j+1}R}}u\,dx\\
&\le CR^{-\alpha-2\tau}\sum_{j\ge0}2^{-j(\alpha+2\tau)}
\le C_\tau R^{-\alpha-2\tau}.
\end{aligned}
\]
The series converges since $\alpha+2\tau>0$. Replacing $u,\alpha$
by $v,\beta$ proves \eqref{eq:general-tail}. The last assertion
follows by taking $R=1$ in \eqref{eq:mean-decay} and
\eqref{eq:general-tail} and splitting the integral at $|x|=1$.
\end{proof}

\subsection{Regularity and the converse implication}

The next lemma justifies differentiation of the Riesz identities and
integration by parts near the origin.

\begin{lemma}\label{lem:regularity}
Under \eqref{eq:integral-parameters}, let $(u,v)$ be a positive
continuous solution of \eqref{eq:exact-potential}. Both components are smooth on
$\R^N\setminus\{0\}$. For each
fixed solution there is $C_{u,v}$ such that
\begin{equation}\label{eq:origin-gradient}
|x|\bigl(|\nabla u(x)|+|\nabla v(x)|\bigr)\le C_{u,v}
\qquad(0<|x|<1).
\end{equation}
For $x\ne0$,
\begin{equation}\label{eq:gradient}
\nabla u(x)=\lim_{\varepsilon\downarrow0}
\int_{|x-y|>\varepsilon}\nabla K_s(x-y)G(y)\,dy,
\end{equation}
and the analogous formula holds for $v$. The integral converges
absolutely when $2s>1$.
\end{lemma}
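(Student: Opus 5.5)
The plan is to bootstrap regularity away from the origin through the Riesz identities, and then to obtain the gradient bound at the origin by a dilation argument based on the scaling invariance \eqref{eq:dilation}.

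\textbf{Smoothness away from the origin.} Fix $x_0\ne0$ and set $\rho=|x_0|/4$. Choose a cutoff $\eta\in C_c^\infty(B_{2\rho}(x_0))$ with $\eta=1$ on $B_\rho(x_0)$, and split
\[
u=K_s*(\eta G)+K_s*((1-\eta)G).
\]
For $x\in B_{\rho/2}(x_0)$ the outer term is a smooth function of $x$. Indeed, $|x-y|\ge\rho/2$ on its support, so every derivative of the kernel is bounded by $C_k\rho^{2s-N-k}$ near $y$ and by $C|y|^{2s-N-k}$ for large $|y|$. The resulting integrals converge by Proposition~\ref{prop:source}, summed over dyadic rings exactly as in the proof of Proposition~\ref{prop:inputs}. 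In the inner term the weight $|y|^a$ is smooth on the support and $v$ is continuous, so $\eta G$ is continuous and bounded. Hence $K_s*(\eta G)\in C^{2s-\epsilon}$, since the Riesz potential of an $L^\infty$ function with compact support gains $2s$ derivatives in the Hölder scale, with a logarithmic loss at integers. The same argument applies to $v$ and $F$. Now $t\mapsto t^q,t^p$ is smooth on positive values and $u,v>0$ near $x_0$, so $F$ and $G$ inherit the Hölder regularity of $u,v$ locally. Applying the Schauder estimate $I_{2s}:C^\sigma_c\to C^{\sigma+2s}$ (for noninteger exponents) repeatedly on shrinking balls raises the regularity by $2s$ at each step. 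This gives $C^\infty$ near $x_0$.

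\textbf{Gradient formula \eqref{eq:gradient}.} Differentiate the outer part under the integral sign, which is justified by the domination above. For the inner part with $\eta G\in C^\sigma$, $\sigma>0$, use the standard identity
\[
\nabla\bigl(K_s*(\eta G)\bigr)(x)=\lim_{\varepsilon\downarrow0}\int_{|x-y|>\varepsilon}\nabla K_s(x-y)\,\eta G(y)\,dy .
\]
This follows by writing $\eta G(y)=\eta G(y)-\eta G(x)+\eta G(x)$. The odd kernel $\nabla K_s$ annihilates the constant term on annuli, and the difference term is integrable because $|\nabla K_s(z)|\,|z|^\sigma\sim|z|^{2s-1-N+\sigma}$. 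When $2s>1$ we have $|\nabla K_s|\sim|z|^{2s-1-N}$, which is locally integrable, so the integral converges absolutely without the principal value. Adding the outer and inner parts gives \eqref{eq:gradient} for $u$, and the same argument gives the formula for $v$.

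\textbf{Origin bound \eqref{eq:origin-gradient}.} Fix $0<|x|=r<1$ and rescale by $R=r$: set $\tilde u(z)=u(rz)$ and $\tilde v(z)=v(rz)$. The functions $r^{\alpha}\tilde u$ and $r^\beta\tilde v$ solve the same integral system. Work directly with $\tilde u(z)=\int K_s(z-w)\,r^{2s}G(rw)\,dw$ near $|z|=1$, using the annulus $1/2<|z|<2$, which stays away from the origin. The goal is a bound on $\nabla\tilde u(z)$ that is uniform in $r\in(0,1)$, since $r|\nabla u(x)|=|\nabla\tilde u(x/r)|$. On $B_4$ the rescaled source $r^{2s}|rw|^a v(rw)^p$ is bounded by $C_{u,v}r^{2s+a}|w|^a$, because $v$ is locally bounded; for $|w|$ large it is controlled through \eqref{eq:inputs} and Proposition~\ref{prop:source} at scale $r|w|$. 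Thus the source is uniformly bounded in $L^\infty$ on the annulus $1/4<|w|<4$, has the singular but integrable weight near $w=0$ (where $|z-w|\ge1/4$ so the kernel is smooth), and has tails controlled by $\int_{|w|\ge4}|w|^{2s-N-1}(\cdot)$. Rerunning the local regularity argument with these uniform bounds gives $|\nabla\tilde u(z)|\le C_{u,v}$ at $|z|=1$. The main obstacle is this uniformity. In particular, the Hölder norm of the rescaled source on the annulus requires a uniform Hölder bound on $\tilde v$ there, which must itself be derived from the $L^\infty$ bound by one bootstrap step, and this step is needed when $2s\le1$. For the tail contributions I would use that $u$ and $v$ are bounded on $B_2$ by continuity and that their tails are finite by Proposition~\ref{prop:inputs}.
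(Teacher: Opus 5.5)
Your proposal is correct and follows essentially the same route as the paper: cutoff localization with a H\"older bootstrap for the Riesz potentials (the paper cites Silvestre's estimates), subtraction of $G(x)$ together with oddness of $\nabla K_s$ for \eqref{eq:gradient}, and estimates for $u(rz),v(rz)$ on a fixed annulus, uniform in $r$, for \eqref{eq:origin-gradient}; the paper handles the far field more economically through $|\nabla^kK_s(z-y)|\le C_kK_s(z-y)$ for separated points, which bounds those derivatives by $u(rz)$ itself. Two details to tighten: the principal-value identity needs the local H\"older exponent of $G$ to exceed $1-2s$, not merely to be positive, and your already established smoothness supplies this; and when $s\le1/4$ the origin bootstrap needs finitely many steps rather than one, using the uniform lower bound $\min_{\overline B_4}\min\{u,v\}>0$ whenever $p$ or $q$ is below one.
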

\begin{proof}
Put $A=\{1/4<|z|<4\}$ and $A'=\{1/2<|z|<2\}$.
For $r>0$, let $U_r(z)=u(rz)$, $V_r(z)=v(rz)$, and
$h_r(z)=r^{2s}G(rz)$, so $U_r=I_{2s}h_r$ on $\R^N$.
Continuity and positivity at zero give, for some $r_0\in(0,1)$,
\[
c\le U_r,V_r\le C\qquad(z\in B_8,\ 0<r<r_0).
\]
All annular constants may depend on $(u,v)$.
Choose $0\le\chi\le1$ smooth, equal to one near $\overline A$,
and supported in $\{1/8<|z|<8\}$. Since $a+2s>0$, for $z\in A$,
\[
\begin{aligned}
\|\chi h_r\|_\infty+\|I_{2s}(\chi h_r)\|_\infty
&\le Cr^{a+2s}\le C,\\
|\nabla^k I_{2s}((1-\chi)h_r)(z)|
&\le C_k I_{2s}h_r(z)=C_kU_r(z)\le C_k
\qquad(k\ge0).
\end{aligned}
\]
Indeed, for $z\in A$ and $y\in\operatorname{supp}(1-\chi)$,
$|z-y|\ge c_0>0$ and
\[
|\nabla^kK_s(z-y)|\le C_k|z-y|^{-k}K_s(z-y)
\le C_kK_s(z-y).
\]
Integrating this inequality against $h_r\ge0$ gives the second bound.
The same argument applies to $V_r$ with $b$ in place of $a$.
Fix an irrational $\theta\in(0,\min\{2s,1\})$. On nested annuli
$A'\Subset A_{j+1}\Subset A_j\Subset A$, the potential estimates
of Silvestre \cite[Propositions 2.8--2.9]{silvestre-regularity} give
\[
\begin{gathered}
U_r,V_r\in L^\infty(A)
\ \Longrightarrow\ U_r,V_r\in C^\theta(A_1),\\
U_r,V_r\in C^{j\theta}(A_j)
\ \Longrightarrow\ |z|^aV_r^p,\ |z|^bU_r^q\in C^{j\theta}(A_j)
\ \Longrightarrow\ U_r,V_r\in C^{(j+1)\theta}(A_{j+1}).
\end{gathered}
\]
At each step, use a cutoff supported in $A_j$ and equal to one
near $\overline{A_{j+1}}$, and repeat the same decomposition.
For $j\theta=k+\vartheta$, with $k$ an integer and $0<\vartheta<1$,
let $f$ be either localized right-hand side. For $|\nu|\le k$,
\[
\partial^\nu I_{2s}f=I_{2s}(\partial^\nu f),\qquad
\|\partial^\nu I_{2s}f\|_\infty\le C\|\partial^\nu f\|_\infty.
\]
The derivative identity holds first in distributions; the compact
support makes the potentials on the right continuous and bounded.
Apply the cited estimates to these derivatives. Since $\theta<2s$,
a slightly smaller input H\"older exponent avoids an integer endpoint
while still giving the required output exponent $(j+1)\theta$.
The middle implication uses $c\le U_r,V_r\le C$; the factors
$r^{a+2s},r^{b+2s}$ and the smooth outer potentials are uniformly
bounded. Choose $j\theta>1$ to obtain
\[
\|U_r\|_{C^1(A')}+\|V_r\|_{C^1(A')}\le C_{u,v}.
\]
For any fixed $r>0$, positivity and continuity give upper and lower
bounds on the compact annuli, now allowed to depend on $r$.
Iteration to arbitrary $j$ therefore gives smoothness on
$\R^N\setminus\{0\}$.
Now $\nabla U_r(z)=r\nabla u(rz)$ and its $v$ counterpart prove
\eqref{eq:origin-gradient} for small $|x|$; smoothness bounds the
remaining compact annulus.

Fix $x\ne0$ and $0<\rho<|x|/2$. Smoothness of $G$ near $x$ gives
\[
\begin{aligned}
\int_{B_\rho(x)}|\nabla K_s(x-y)|\,|G(y)-G(x)|\,dy
&\le C_{x,u,v}\int_0^\rho t^{2s-1}\,dt<\infty,\\
\int_{|x-y|\ge\rho}|\nabla K_s(x-y)|G(y)\,dy
&\le C\rho^{-1}u(x)<\infty.
\end{aligned}
\]
Oddness gives $\int_{\varepsilon<|z|<\rho}\nabla K_s(z)\,dz=0$.
Differentiation with this subtraction therefore yields
\[
\nabla u(x)=\int_{B_\rho(x)}\nabla K_s(x-y)(G(y)-G(x))\,dy
+\int_{|x-y|\ge\rho}\nabla K_s(x-y)G(y)\,dy,
\]
which is \eqref{eq:gradient}. The proof for $v$ is identical.
Without subtraction the local radial integral is
$\int_0^\rho t^{2s-2}\,dt$, finite only when $2s>1$.
\end{proof}

\begin{proposition}
\label{thm:representation}
Under \eqref{eq:parameters-main}, every positive classical entire
solution satisfies \eqref{eq:exact-potential} at every point of
$\R^N$, with finite continuous potentials. Conversely, every positive
continuous solution of this integral system belongs to the class in
Definition \ref{def:class}. It solves \eqref{eq:system} distributionally
on $\R^N$ and pointwise on $\R^N\setminus\{0\}$, where it is smooth.
\end{proposition}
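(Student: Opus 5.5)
The forward direction is immediate from what is already stated. For a positive classical entire solution, Lemma \ref{lem:system-potentials}, together with its extension to the origin given right after it, yields \eqref{eq:exact-potential} at every $x\in\R^N$ with finite continuous potentials. So the work lies in the converse. Let $(u,v)$ be a positive continuous solution of the integral system under \eqref{eq:parameters-main}; since $N\ge2>2s$, this is a special case of \eqref{eq:integral-parameters}. We must check the four requirements of Definition \ref{def:class}, the pointwise equations away from the origin, and the distributional equations on all of $\R^N$.

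Class membership comes from the earlier estimates. Continuity and positivity hold by hypothesis. The bound $\int(u+v)/(1+|x|^{N+2\tau})<\infty$ in Proposition \ref{prop:inputs}, taken with $\tau=s$, gives $u,v\in\mathcal L_{2s}$. Lemma \ref{lem:regularity} gives smoothness on $\R^N\setminus\{0\}$, hence $C^{1,1}_{\mathrm{loc}}$ there.

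For the pointwise equation at $x_0\ne0$, I would use a cutoff. Choose $\eta\in C_c^\infty$ with $\eta=1$ near $x_0$ and $\operatorname{supp}\eta\subset B_{|x_0|/2}(x_0)$, and split $G=\eta G+(1-\eta)G$. The function $\eta G$ is smooth and compactly supported, so the standard identity $(-\Delta)^sI_{2s}f=f$ applies to $f=\eta G$ in the classical sense, and $(-\Delta)^s I_{2s}(\eta G)(x_0)=G(x_0)$. The outer part $w=I_{2s}((1-\eta)G)$ is smooth near $x_0$. To see that $(-\Delta)^sw(x_0)=0$, I would approximate the source by $(1-\eta)G\chi_{B_R}$, where $\chi_{B_R}$ is the indicator of $B_R$. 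For each such truncation the potential is $s$-harmonic near $x_0$, by the standard fact that the Riesz kernel is $s$-harmonic away from its pole, after smoothing the source slightly. Letting $R\to\infty$, monotone convergence handles the far-field integral of the principal-value formula, with the $\mathcal L_{2s}$ tails controlled by \eqref{eq:general-tail}. Near $x_0$, the derivative bounds from the proof of Lemma \ref{lem:regularity} give uniform $C^2$ control, since the source vanishes near $x_0$. The same argument applies to $v$ and $F$.

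The distributional identity on all of $\R^N$ should follow from Fubini. For $\phi\in C_c^\infty$ we have $\int u(-\Delta)^s\phi=\int\!\!\int K_s(x-y)G(y)(-\Delta)^s\phi(x)\,dy\,dx$. For fixed $y$, the inner integral equals $\phi(y)$, because $I_{2s}(-\Delta)^s\phi=\phi$. Absolute integrability is what has to be justified. Here $|(-\Delta)^s\phi(x)|\le C(1+|x|)^{-N-2s}$. The local part is controlled by \eqref{eq:weighted-kernel-local} and the local boundedness of $v^p$. For the far part I need $\int K_s(x-y)(1+|x|)^{-N-2s}\,dx\le C(1+|y|)^{2s-N}$, and then $\int G(y)(1+|y|)^{2s-N}dy<\infty$, which follows from the ring decomposition with \eqref{eq:source-bounds} and $\alpha>0$.

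I expect the main obstacle to be the pointwise step: showing rigorously that the principal-value fractional Laplacian of the outer potential vanishes at $x_0$ when the source is only locally integrable and decays in the averaged sense of Proposition \ref{prop:source}. The alternative is to deduce the pointwise equation from the distributional one. On $\R^N\setminus\{0\}$ the function $u$ is smooth and lies in $\mathcal L_{2s}$, so $(-\Delta)^su$ is defined pointwise there, is continuous, and coincides with its distributional fractional Laplacian away from the origin. The distributional identity then gives $(-\Delta)^su=G$ pointwise on $\R^N\setminus\{0\}$. I would adopt this second route, since it only needs the standard compatibility of pointwise and distributional fractional Laplacians for smooth functions in $\mathcal L_{2s}$, tested against $\phi$ supported away from the origin.
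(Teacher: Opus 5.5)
Your proposal is correct and follows essentially the same route as the paper. The forward direction uses Lemma~\ref{lem:system-potentials} with its extension to the origin. The converse uses Proposition~\ref{prop:inputs} with $\tau=s$, Lemma~\ref{lem:regularity}, and a Fubini argument justified by $|(-\Delta)^s\phi|\le C_\phi(1+|x|)^{-N-2s}$, the bound $I_{2s}|(-\Delta)^s\phi|(y)\le C_\phi(1+|y|)^{2s-N}$, and the ring decomposition with \eqref{eq:source-bounds}. For the pointwise equations, the second route you adopt is exactly the paper's: smoothness off the origin, $u,v\in\mathcal L_{2s}$, and the distributional identity. The cutoff argument you sketched first is therefore unnecessary.
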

\begin{proof}
Lemma \ref{lem:system-potentials} proves the assertion for system
solutions. It remains to prove the converse.

Proposition \ref{prop:inputs}, with $\tau=s$, gives
$u,v\in\mathcal L_{2s}$, and Lemma \ref{lem:regularity} gives
smoothness away from zero. For $\phi\in C_c^\infty(\R^N)$, the singular-integral formula gives
$|(-\Delta)^s\phi(x)|\le C_\phi(1+|x|)^{-N-2s}$.
For $|y|\ge2$, split its potential into $|x|<|y|/2$,
$|x-y|<|y|/2$, and the remaining region. For $|y|<2$, use the
local integrability of $K_s$ and the displayed decay. More explicitly,
put $f=(-\Delta)^s\phi$ and $R=|y|\ge2$. The three regions give
\[
\begin{aligned}
I_{2s}|f|(y)
&\le CR^{2s-N}\|f\|_{L^1}
 +C_\phi R^{-N-2s}\int_{B_{R/2}}K_s(z)\,dz\\
&\quad+CR^{2s-N}\int_{|x|\ge R/2}|f(x)|\,dx\\
&\le C_\phi(R^{2s-N}+R^{-N}+R^{-N})
 \le C_\phi R^{2s-N}.
\end{aligned}
\]
Thus $I_{2s}|(-\Delta)^s\phi|(y)\le C_\phi(1+|y|)^{2s-N}$ for all $y$.
Proposition \ref{prop:source} implies the absolute Fubini bound
\[
\begin{aligned}
\int(F+G)I_{2s}|(-\Delta)^s\phi|
&\le C_\phi\left[\int_{B_2}(F+G)
+\sum_{j\ge1}2^{j(2s-N)}\int_{B_{2^{j+1}}}(F+G)\right]\\
&\le C_\phi\left[1+\sum_{j\ge1}(2^{-j\alpha}+2^{-j\beta})\right]<\infty.
\end{aligned}
\]
Use this absolute integrability to interchange the integrals.
The Fourier normalization gives $I_{2s}((-\Delta)^s\phi)=\phi$, hence
\[
\begin{aligned}
\int_{\R^N}u(-\Delta)^s\phi\,dx
&=\int_{\R^N}G(y)
 \left[\int_{\R^N}K_s(x-y)(-\Delta)^s\phi(x)\,dx\right]dy\\
&=\int_{\R^N}G\phi\,dy.
\end{aligned}
\]
The same substitution with $v=I_{2s}F$ gives
$\int v(-\Delta)^s\phi=\int F\phi$.
Together with smoothness off zero and $u,v\in\mathcal L_{2s}$,
these identities give the pointwise equations there and all
requirements of Definition \ref{def:class}.
\end{proof}

\section{Subcritical nonexistence}\label{sec:nonexistence}

We prove the fractional H\'enon-Lane-Emden nonexistence statement
throughout the strict weighted subcritical region
\[
\frac{N+a}{p+1}+\frac{N+b}{q+1}>N-2s.
\]
Throughout this section, $(u,v)$ is a positive continuous solution
of \eqref{eq:exact-potential}, both potentials are finite, and
\eqref{eq:integral-parameters} holds. In particular, $N=1$ is
allowed when $0<s<1/2$, and neither $p$ nor $q$ is required
separately to exceed one.

The proof first bounds
$\int_{B_1}(|x|^bu^{q+1}+|x|^av^{p+1})$ independently of the
solution. We obtain this bound from a localized Pohozaev identity
and a comparison of complete component intervals of the sets
$\{F>t\}$ and $\{G>t'\}$. We then apply the bound to
\eqref{eq:dilation}; the strict subcritical inequality makes the
rescaled energy grow by a positive power of the radius.
Local boundedness supplies the required integrability of the
nonlinear energy on bounded sets.
Unless stated otherwise, $C$ depends on $N,s,a,b,p,q$ and the fixed
cutoffs, and is independent of the solution and of the parameters
$\kappa,\delta$ introduced below.

\subsection{The cutoff and the local energies}\label{sec:source-energies}

With $F,G$ as in Section \ref{sec:inputs}, define the nonnegative
measure
\[
d\pi(x,y)=F(x)G(y)K_s(x-y)\,dx\,dy.
\]
All double integrals against $d\pi$ are over
$\R^N\times\R^N$. Substituting $u=K_s*G$ and $v=K_s*F$ gives
\begin{equation}\label{eq:marginals}
\begin{aligned}
\int_{\R^N}F(x)G(y)K_s(x-y)\,dy&=F(x)u(x),\\
\int_{\R^N}F(x)G(y)K_s(x-y)\,dx&=G(y)v(y).
\end{aligned}
\end{equation}
Because $a,b>-2s>-N$ and $u,v$ are locally bounded,
$Fu=|x|^bu^{q+1}$ and $Gv=|x|^av^{p+1}$ are locally integrable.

Choose
\begin{equation}\label{eq:cutoff-powers-final}
m=(N+1)^2,\qquad
\rho(x)=\min\{1,(2-|x|)_+\},\qquad
\eta(x)=\bigl([1-(|x|-1)_+^3]_+\bigr)^m,
\end{equation}
and set $X(x)=\eta(x)x$ and $w(x)=-x\cdot\nabla\eta(x)$.
For $1<r=|x|<2$,
\[
1-(r-1)^3=(2-r)\bigl(1+(r-1)+(r-1)^2\bigr),\qquad
w(x)=3mr(r-1)^2[1-(r-1)^3]^{m-1}.
\]
Thus $\eta\in C_c^2(\R^N)$, $\eta=1$ on $B_1$, $w\ge0$,
and $w=0$ outside $1<|x|<2$. Differentiating the displayed
formulas gives
\begin{equation}\label{eq:cutoff}
\begin{gathered}
\rho^m\le\eta\le C\rho^m,\qquad
|\nabla\eta|+w\le C\rho^{m-1},\qquad \rho w\le C\eta,\\
|\nabla^2\eta|+|\nabla w|+|\nabla^2X|\le C\rho^{m-2}.
\end{gathered}
\end{equation}
Here the constants depend only on $N$. In particular, $w$ and
its first derivatives vanish at $|x|=1$.

Define the finite integrals
\[
\begin{gathered}
E_F=\int\eta Fu,\qquad E_G=\int\eta Gv,\qquad E=E_F+E_G,\\
W_F=\int wFu,\qquad W_G=\int wGv.
\end{gathered}
\]
For $|e|=1$ and $x\ne y$, put
\[
D_e(x,y)=\frac{((x-y)\cdot e)^2}{|x-y|^2},\qquad
\omega_x=\frac{x}{|x|},\qquad D(x,y)=D_{\omega_x}(x,y)\quad(x\ne0).
\]
Set $D_e=0$ on the diagonal and $D=0$ also at $x=0$.
Since $0\le D_e,D\le1$, \eqref{eq:marginals} gives
\[
T=\iint w(x)D(x,y)\,d\pi,\qquad 0\le T\le W_F.
\]

\subsection{The localized Pohozaev estimate}\label{sec:virial}

For $x\ne y$, set
\[
\Phi(x,y)=\frac{(X(x)-X(y))\cdot(x-y)}{|x-y|^2},
\]
and set $\Phi=0$ on the diagonal. The cancellation
$X(x)-X(y)$ is essential when $2s\le1$: it replaces
$|\nabla K_s(x-y)|$ by a constant times $K_s(x-y)$.

\begin{lemma}\label{lem:virial}
Under \eqref{eq:integral-parameters}, the cutoff and integrals
defined above satisfy
\begin{equation}\label{eq:virial}
\frac{N+b}{q+1}E_F+\frac{N+a}{p+1}E_G
-\frac{W_F}{q+1}-\frac{W_G}{p+1}
=(N-2s)\iint\Phi\,d\pi.
\end{equation}
The integral on the right is absolutely convergent.
\end{lemma}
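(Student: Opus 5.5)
The identity comes from computing one quantity in two ways:
\[
\mathcal I=\int_{\R^N}F\,X\cdot\nabla u\,dx+\int_{\R^N}G\,X\cdot\nabla v\,dx .
\]
In the first computation I differentiate the nonlinearities. In the second I differentiate the kernel.

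\emph{First computation.} Off the origin, $F\,\nabla u=|x|^b\nabla(u^{q+1})/(q+1)$. I integrate by parts on $\{|x|>\delta\}$, using
\[
\operatorname{div}(|x|^b\eta x)=|x|^b\bigl((N+b)\eta+x\cdot\nabla\eta\bigr)=|x|^b\bigl((N+b)\eta-w\bigr),
\]
valid for $x\ne0$. This gives
\[
\int F\,X\cdot\nabla u=-\frac{(N+b)E_F-W_F}{q+1},
\]
once the sphere term at $|x|=\delta$ is removed. That term is at most $C\delta^{N-1}\cdot\delta^{b}\cdot\delta\,\sup_{B_1}u^{q+1}\to0$, because $N+b>0$. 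The same estimate, together with \eqref{eq:origin-gradient} from Lemma \ref{lem:regularity}, shows that $|x|^b u^q|X\cdot\nabla u|\le C_{u,v}|x|^b$ is integrable near $0$. Hence $\mathcal I$ is an absolutely convergent integral. It is compactly supported in $B_2$, where $X$ vanishes outside. The $v$-term is identical with $a,p$ in place of $b,q$. So $\mathcal I$ equals minus the left side of \eqref{eq:virial}.

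\emph{Second computation.} Here $\nabla K_s(z)=-(N-2s)K_s(z)\,z/|z|^2$. Formally, by \eqref{eq:gradient},
\[
\int F\,X\cdot\nabla u=-(N-2s)\iint\frac{X(x)\cdot(x-y)}{|x-y|^2}\,d\pi,\qquad
\int G\,X\cdot\nabla v=-(N-2s)\iint\frac{X(y)\cdot(y-x)}{|x-y|^2}\,d\pi .
\]
The second formula uses \eqref{eq:marginals} with the roles of the variables exchanged. Adding the two gives $\mathcal I=-(N-2s)\iint\Phi\,d\pi$, which is \eqref{eq:virial}.

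\emph{Absolute convergence of the right side.} The key bound is
\[
|\Phi(x,y)|\le \|\nabla X\|_\infty\le C .
\]
Moreover $\Phi=0$ unless $x\in B_2$ or $y\in B_2$. Hence
\[
\iint|\Phi|\,d\pi\le C\Bigl(\int_{B_2}Fu+\int_{B_2}Gv\Bigr)<\infty
\]
by \eqref{eq:marginals} and local integrability.

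\emph{Main obstacle: justifying the kernel computation when $2s\le1$.} In that range the individual integrals $\iint X(x)\cdot(x-y)|x-y|^{-2}\,d\pi$ need not converge absolutely. Only their sum, which produces $\Phi$, is controlled. I will regularize the kernel: let $K^\varepsilon(z)=c_{N,s}(|z|^2+\varepsilon^2)^{(2s-N)/2}$ and $u_\varepsilon=K^\varepsilon*G$, $v_\varepsilon=K^\varepsilon*F$. These are smooth, their gradients are given by absolutely convergent integrals, and Fubini applies. The kernel computation then holds exactly with $\Phi$ replaced by
\[
\Phi_\varepsilon=\frac{(X(x)-X(y))\cdot(x-y)}{|x-y|^2+\varepsilon^2}\,\frac{K^\varepsilon(x-y)}{K_s(x-y)} .
\]
We have $|\Phi_\varepsilon|\le C$ and $\Phi_\varepsilon\to\Phi$, so dominated convergence gives the limit $-(N-2s)\iint\Phi\,d\pi$.

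It remains to show $\int F\,X\cdot\nabla u_\varepsilon\to\int F\,X\cdot\nabla u$, and likewise for $v$. Away from the origin, $\nabla u_\varepsilon\to\nabla u$ locally uniformly. This follows from the subtracted representation in the proof of Lemma \ref{lem:regularity}: $G$ is smooth near $x$, the regularized kernel gradient is still odd, and it is dominated by $C|z|^{2s-1-N}$. Near the origin I will prove a uniform bound $|x||\nabla u_\varepsilon(x)|\le C_{u,v}$ on $B_1$ by repeating the scaling argument of Lemma \ref{lem:regularity} with the regularized kernel. Since $|X|\le|x|$ and $F$ is integrable near $0$, dominated convergence then finishes the proof. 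This uniform near-origin gradient control for the regularized potentials is the step I expect to need the most care.
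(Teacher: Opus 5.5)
Your proof is correct and follows the paper's argument: the same two-way evaluation of $\int FX\cdot\nabla u+\int GX\cdot\nabla v$, the same integration by parts off $B_\delta$, and the same pairing of the two kernel terms into $\Phi$. The paper uses a common sharp deletion $\{|x-y|>\varepsilon\}$ instead of your smoothed kernel $K^\varepsilon$, so Fubini gives exactly $(N-2s)\iint_{|x-y|>\varepsilon}\Phi\,d\pi$; the near-origin bound you flag also needs no rescaled regularity argument, because $|\nabla K^\varepsilon|\le|\nabla K_s|$ pointwise, and oddness on $\{|x-y|<|x|/2\}$, $|G(y)-G(x)|\le C_{u,v}|x|^{a-1}|x-y|$ there (from \eqref{eq:origin-gradient} and the positive lower bound for $v$), and $|x|\,|\nabla K_s(x-y)|\le CK_s(x-y)$ on the complement give $|x|\,|\nabla u_\varepsilon(x)|\le C_{u,v}\bigl(|x|^{a+2s}+u(x)\bigr)$, exactly as in the paper.
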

\begin{proof}
Since $X=0$ on $\R^N\setminus B_2$ and $X$ is Lipschitz,
\[
|\Phi(x,y)|\le\operatorname{Lip}(X)
\bigl(\mathbf1_{B_2}(x)+\mathbf1_{B_2}(y)\bigr).
\]
Integrating this inequality with \eqref{eq:marginals} gives
\[
\iint|\Phi|\,d\pi
\le\operatorname{Lip}(X)\int_{B_2}(Fu+Gv)<\infty.
\]

To differentiate the potentials, use the same cutoff in both
equations:
\[
J_\varepsilon^G(x)
=\int_{|x-y|>\varepsilon}\nabla K_s(x-y)G(y)\,dy,\qquad
J_\varepsilon^F(x)
=\int_{|x-y|>\varepsilon}\nabla K_s(x-y)F(y)\,dy.
\]
The identity
$\nabla K_s(z)=-(N-2s)zK_s(z)/|z|^2$ implies
\[
|J_\varepsilon^G(x)|\le(N-2s)\varepsilon^{-1}u(x),
\qquad
|J_\varepsilon^F(x)|\le(N-2s)\varepsilon^{-1}v(x).
\]
These bounds and the support of $X$ justify Fubini in
\[
\begin{aligned}
&-\int F(x)X(x)\cdot J_\varepsilon^G(x)\,dx
-\int G(y)X(y)\cdot J_\varepsilon^F(y)\,dy\\
&\quad=-\iint_{|x-y|>\varepsilon}
F(x)G(y)\bigl(X(x)-X(y)\bigr)\cdot\nabla K_s(x-y)\,dx\,dy\\
&\quad=(N-2s)\iint_{|x-y|>\varepsilon}\Phi\,d\pi.
\end{aligned}
\]
The second line uses
$\nabla K_s(y-x)=-\nabla K_s(x-y)$.
The bound on $\iint|\Phi|\,d\pi$ permits
$\varepsilon\downarrow0$ on the right.

We next obtain an integrable bound on the left, uniform in
$\varepsilon$. Let $r=|x|<1/2$.
Continuity and positivity on a fixed ball give
$c_{u,v}\le u,v\le C_{u,v}$ there. For $|x-y|<r/2$,
Lemma \ref{lem:regularity} and $r/2<|y|<3r/2$ imply
\[
\begin{aligned}
|\nabla G(y)|
&\le |a|\,|y|^{a-1}v(y)^p
+p|y|^av(y)^{p-1}|\nabla v(y)|
\le C_{u,v}r^{a-1},\\
|G(y)-G(x)|&\le C_{u,v}r^{a-1}|x-y|.
\end{aligned}
\]
The positive lower bound for $v$ also covers $0<p<1$.
Oddness of $\nabla K_s$ removes the constant $G(x)$ on
$\{\varepsilon<|x-y|<r/2\}$. On the complementary region
$|x-y|\ge r/2$, one has
$r|\nabla K_s(x-y)|\le CK_s(x-y)$. Consequently,
\[
\begin{aligned}
r|J_\varepsilon^G(x)|
&\le C_{u,v}r^a\int_0^{r/2}t^{2s-1}\,dt
+C\int_{\R^N}K_s(x-y)G(y)\,dy\\
&\le C_{u,v}r^{a+2s}+Cu(x)\le C_{u,v}.
\end{aligned}
\]
If $\varepsilon\ge r/2$, only the second term is needed.
The last inequality uses $a+2s>0$ and boundedness of $u$ near
zero. The identical computation with $(F,v,b,q)$ gives
$r|J_\varepsilon^F(x)|\le C_{u,v}$.
On each compact set away from zero, subtracting $G(x)$ and
$F(x)$ on a fixed small ball gives uniform bounds in the same way.
Since $|X(x)|\le|x|\mathbf1_{B_2}(x)$, we obtain
\[
|FX\cdot J_\varepsilon^G|\le C_{u,v}F\mathbf1_{B_2},
\qquad
|GX\cdot J_\varepsilon^F|\le C_{u,v}G\mathbf1_{B_2}.
\]
The right sides are integrable because $a,b>-N$.
Using \eqref{eq:gradient} and dominated convergence therefore yields
\[
-\int FX\cdot\nabla u-\int GX\cdot\nabla v
=(N-2s)\iint\Phi\,d\pi.
\]

It remains to evaluate these two integrals. On $|x|>\delta$,
\[
F\nabla u=\frac{|x|^b}{q+1}\nabla(u^{q+1}),\qquad
\operatorname{div}(|x|^bX)
=|x|^b\bigl((N+b)\eta-w\bigr).
\]
For $0<\delta<1$, the inner unit normal to
$\R^N\setminus\overline B_\delta$ is $-x/\delta$ and
$X=x$ on $\partial B_\delta$. Integration by parts gives
\[
\begin{aligned}
-\int_{|x|>\delta}FX\cdot\nabla u
={}&\frac1{q+1}\int_{|x|>\delta}
|x|^bu^{q+1}\bigl((N+b)\eta-w\bigr)\\
&+\frac{\delta^{b+1}}{q+1}
\int_{\partial B_\delta}u^{q+1}\,dS.
\end{aligned}
\]
Since $u$ is bounded near zero and $N+b>0$,
\[
0\le\delta^{b+1}\int_{\partial B_\delta}u^{q+1}\,dS
\le C_{u,v}\delta^{N+b}\longrightarrow0.
\]
For $(G,v,a,p)$ the boundary term is bounded by
$C_{u,v}\delta^{N+a}\to0$. Substituting both limits in the preceding
paired identity proves \eqref{eq:virial}. Constants depending on
the solution have been used only to justify the limiting operations.
\end{proof}

\subsubsection{A common cutoff estimate}\label{sec:localization}

The following estimate bounds the Taylor error in $\Phi$ and the
difference between a cutoff evaluated at $x$ and at $y$.

\begin{lemma}\label{lem:localization}
For every $0<\delta<1/4$,
\begin{equation}\label{eq:localization}
\iint|\Phi-\eta(x)+w(x)D(x,y)|\,d\pi
\le C\delta E+C\delta^{2s-N-1}.
\end{equation}
If $h\in C_c^1(\R^N)$ vanishes outside $B_2$ and, for some $H\ge0$,
\begin{equation}\label{eq:cutoff-comparison-hypothesis}
|h|\le H\rho^{m-N},\qquad
|\nabla h|\le H\rho^{m-N-1},
\end{equation}
then
\begin{equation}\label{eq:cutoff-comparison}
\iint|h(x)-h(y)|\,d\pi
\le CH\delta E+CH\delta^{2s-N}.
\end{equation}
The constant $C$ is independent of $h,H,\delta$ and $(u,v)$.
\end{lemma}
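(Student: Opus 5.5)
The plan is to split $\R^N\times\R^N$ into the near region $\{|x-y|<\delta\}$ and the far region $\{|x-y|\ge\delta\}$. On the near region I will use a Taylor expansion of $X$ and $h$. The loss of two (or $N+1$) powers of $\rho$ in the remainder is then traded, via Young's inequality, for $\delta\rho^m\le\delta\eta$ plus a large power of $|x-y|$. The only inputs are \eqref{eq:marginals}, \eqref{eq:cutoff}, the bounds \eqref{eq:inputs} of Proposition \ref{prop:inputs}, and the local bound $K_s(z)\le C|z|^{2s-N}$.

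\emph{Far region.} All integrands vanish unless $x\in B_2$ or $y\in B_2$. They are bounded there: $|\Phi|\le\operatorname{Lip}(X)$, $0\le\eta,wD\le C$, and $|h(x)-h(y)|\le 2H$ since $\rho\le1$. If $x\in B_2$, the second line of \eqref{eq:inputs} with $\ell=\delta$ gives
\[
\int_{|x-y|\ge\delta}K_s(x-y)G(y)\,dy\le C\delta^{2s-N},
\]
and integrating against $F(x)$ over $B_2$ with $\int_{B_4}F\le C$ gives $C\delta^{2s-N}$. The case $y\in B_2$ is symmetric with the roles of $F,G$ exchanged. So the far region contributes at most $C\delta^{2s-N}\le C\delta^{2s-N-1}$ in \eqref{eq:localization} and at most $CH\delta^{2s-N}$ in \eqref{eq:cutoff-comparison}.

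\emph{Near region, exact linear term.} Write $t=|x-y|<\delta$. Since $X(x)=\eta(x)x$ is radial-type, $\nabla X=\eta I+x\otimes\nabla\eta$ with $\nabla\eta=\eta'(|x|)\omega_x$. This gives
\[
\frac{(x-y)\cdot\nabla X(x)(x-y)}{|x-y|^2}=\eta(x)-w(x)D(x,y).
\]
Near the origin, $\eta\equiv1$ and $X=x$, so there is no singularity and the identity holds trivially. Taylor's formula therefore gives
\[
|\Phi-\eta(x)+w(x)D(x,y)|\le t\sup_{[x,y]}|\nabla^2X|\le Ct(\rho(x)+t)^{m-2}\le C(t\rho(x)^{m-2}+t^{m-1}),
\]
using \eqref{eq:cutoff} and the $1$-Lipschitz property of $\rho$. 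The pair lies in $B_3\times B_3$ whenever the integrand is nonzero.

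\emph{Near region, Young's inequality.} Apply Young with exponents $m/(m-2)$ and $m/2$:
\[
t\rho^{m-2}\le\delta\rho^m+\delta^{-(m-2)/2}t^{m/2}.
\]
The first term is at most $\delta\eta(x)$, whose $\pi$-integral is $\delta E_F\le\delta E$ by \eqref{eq:marginals}. For the second, note that $K_s(x-y)t^{m/2}\le C\delta^{2s-N+m/2}$ for $t<\delta$, since $m/2>N$. Hence its $\pi$-integral is at most
\[
C\delta^{-(m-2)/2}\delta^{2s-N+m/2}\Bigl(\int_{B_3}F\Bigr)\Bigl(\int_{B_3}G\Bigr)\le C\delta^{2s-N+1}.
\]
The term $t^{m-1}$ is handled the same way. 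This proves \eqref{eq:localization}.

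For $h$ the argument is identical: $|h(x)-h(y)|\le Ht(\rho(x)+t)^{m-N-1}$. Young with exponents $m/(m-N-1)$ and $(N+1)$ uses $m/(N+1)=N+1$ and gives
\[
t\rho^{m-N-1}\le\delta\rho^m+\delta^{-(m-N-1)/(N+1)}t^{N+1}.
\]
Since $K_s t^{N+1}\le Ct^{2s+1}$, the second term contributes $CH\delta^{-N}\delta^{2s+1}\le CH\delta^{2s-N}$. The first contributes $CH\delta E$.

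\emph{Main obstacle.} The key step is this Young balancing. Only averaged bounds $\int_{B_4}(F+G)\le C$ are available, with no pointwise control of $G$. The excess powers of $\rho$ must therefore be converted into the energy weight $\eta\ge\rho^m$ at cost exactly $\delta$, while the complementary power of $t$ must be large enough to kill the kernel singularity uniformly. The choice $m=(N+1)^2$ makes both exponent bookkeepings close.
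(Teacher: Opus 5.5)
Your proof is correct, but it is organized differently from the paper's.

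\textbf{The paper's route.} It uses a position-dependent scale $\ell(z)=\delta\rho(z)^{N+1}$. On pairs with $|x-y|\le\ell(z)$ at some endpoint $z\in B_2$, $\rho$ is comparable along $[x,y]$. The Taylor error is then directly $C|x-y|\rho(z)^{m-2}\le C\delta\rho(z)^m\le C\delta(\eta(x)+\eta(y))$, with no Young step. On the complement it uses crude bounds that keep powers of $\rho(z)$, namely $\rho^{m-1}+\rho^m/|x-y|$ and $\rho^{m-N}$. Then \eqref{eq:inputs} at the variable scale $\ell(z)$ produces $\delta^{2s-N}$ or $\delta^{2s-N-1}$ times positive powers of $\rho$, and this is where $m=(N+1)^2$ enters.

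\textbf{Your route.} You split at the fixed scale $\delta$. The far region then needs only $|\Phi|\le\operatorname{Lip}(X)$ and $|h|\le H$. You pay for the degeneracy of $\rho$ near $|x|=2$ inside the near region, via Young's inequality. The complementary pieces $K_s t^{m/2}$ and $K_s t^{N+1}$ are bounded for $t<\delta$, so the product bound $\int_{B_3}F\int_{B_3}G\le C$ closes the estimate. Your exponent bookkeeping checks out: the $\delta$-powers combine to $\delta^{2s-N+1}$ in both parts. The leftover terms $t^{m-1}$ and $t^{m-N}$ carry positive powers and are harmless.

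\textbf{Comparison.} Your argument is shorter and needs no comparability of $\rho$ along segments. It also gives the slightly stronger bound $C\delta E+C\delta^{2s-N}$ in \eqref{eq:localization}, because you never divide $|X(z)|$ by $|x-y|$. The paper's argument avoids Young's inequality. It also works with the same scale $\delta\rho(x)^{N+1}$ that reappears as the short-interval threshold in Lemma \ref{lem:layer-loss}. The improved power of $\delta$ does not matter downstream, since that term is absorbed into $C_{\kappa,\delta}$.
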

\begin{proof}
For $z\in B_2$, set $\ell(z)=\delta\rho(z)^{N+1}$.
Let $A_\delta$ consist of pairs for which
$|x-y|\le\ell(z)$ at at least one endpoint
$z\in\{x,y\}\cap B_2$.
Since $\rho$ is $1$-Lipschitz and
$\ell(z)\le\delta\rho(z)$, every $\xi\in[x,y]$ satisfies
\[
(1-\delta)\rho(z)\le\rho(\xi)\le(1+\delta)\rho(z).
\]
In particular, this segment lies in $B_2$.
For $x\ne0$,
\[
\nabla X(x)=\eta(x)\operatorname{Id}
-w(x)\omega_x\otimes\omega_x,
\]
with the second term zero on $B_1$. Taylor's formula and
\eqref{eq:cutoff} therefore give, on $A_\delta$,
\[
\begin{aligned}
|\Phi-\eta(x)+w(x)D(x,y)|
&\le C|x-y|\rho(z)^{m-2}
\le C\delta\rho(z)^{m+N-1}
\le C\delta\rho(z)^m,\\
|h(x)-h(y)|
&\le CH|x-y|\rho(z)^{m-N-1}
\le CH\delta\rho(z)^m.
\end{aligned}
\]
The first line uses $N\ge1$ and $0<\rho\le1$.
Substitute
$\rho(z)^m\le\eta(z)\le\eta(x)+\eta(y)$ and then
\eqref{eq:marginals}:
\[
\begin{aligned}
\iint_{A_\delta}|\Phi-\eta(x)+w(x)D|\,d\pi&\le C\delta E,\\
\iint_{A_\delta}|h(x)-h(y)|\,d\pi&\le CH\delta E.
\end{aligned}
\]

On $A_\delta^c$, each endpoint $z\in B_2$ satisfies
$|x-y|>\ell(z)$. From $|X(z)|\le2\eta(z)$ and
\eqref{eq:cutoff}, one obtains
\[
\begin{aligned}
|\Phi-\eta(x)+w(x)D|
&\le C\sum_{z\in\{x,y\}\cap B_2}
\left(\rho(z)^{m-1}+\frac{\rho(z)^m}{|x-y|}\right),\\
|h(x)-h(y)|
&\le H\sum_{z\in\{x,y\}\cap B_2}\rho(z)^{m-N}.
\end{aligned}
\]
Apply \eqref{eq:inputs} at this endpoint. Since
$0<\ell(z)<1$,
\[
\begin{aligned}
\int_{|z-y|>\ell(z)}K_s(z-y)(F(y)+G(y))\,dy
&\le C\ell(z)^{2s-N},\\
\int_{|z-y|>\ell(z)}
\frac{K_s(z-y)}{|z-y|}(F(y)+G(y))\,dy
&\le C\ell(z)^{2s-N-1}.
\end{aligned}
\]
The second estimate follows by inserting
$|z-y|^{-1}\le\ell(z)^{-1}$ in the first.
The powers at the remaining endpoint are exactly
\[
\begin{aligned}
\rho^{m-1}\ell^{2s-N}
&=\delta^{2s-N}\rho^{N+2s(N+1)},\\
\rho^m\ell^{2s-N-1}
&=\delta^{2s-N-1}\rho^{2s(N+1)},\\
\rho^{m-N}\ell^{2s-N}
&=\delta^{2s-N}\rho^{1+2s(N+1)}.
\end{aligned}
\]
Each exponent of $\rho$ is positive. Thus $\rho^\theta\le1$
in all three lines, and the remaining endpoint integral is
bounded by $\int_{B_2}(F+G)\le C$, again by \eqref{eq:inputs}.
It follows that
\[
\begin{aligned}
\iint_{A_\delta^c}|\Phi-\eta(x)+w(x)D|\,d\pi
&\le C\delta^{2s-N}+C\delta^{2s-N-1},\\
\iint_{A_\delta^c}|h(x)-h(y)|\,d\pi
&\le CH\delta^{2s-N}.
\end{aligned}
\]
Adding the two regions and using $\delta<1$ proves both claims.
\end{proof}

The function $h=\eta$ satisfies
\eqref{eq:cutoff-comparison-hypothesis} with $H=C$:
indeed, $\rho^m\le\rho^{m-N}$ and
$\rho^{m-1}\le\rho^{m-N-1}$ for $0<\rho\le1$.
Equations \eqref{eq:marginals} and \eqref{eq:cutoff-comparison}
therefore imply
\begin{equation}\label{eq:energy-symmetry}
\begin{aligned}
|E_F-E_G|
&=\left|\iint(\eta(x)-\eta(y))\,d\pi\right|\\
&\le\iint|\eta(x)-\eta(y)|\,d\pi
\le C\delta E+C\delta^{2s-N}.
\end{aligned}
\end{equation}

\begin{proposition}\label{prop:local-balance}
For $0<\delta<1/4$,
\begin{equation}\label{eq:local-balance}
\frac\gamma2E\le
\frac{W_F}{q+1}+\frac{W_G}{p+1}-(N-2s)T
+C\delta E+C\delta^{2s-N-1},
\end{equation}
where $\gamma$ is defined in \eqref{eq:gaps-main}.
\end{proposition}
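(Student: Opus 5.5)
Combine the localized Pohozaev identity of Lemma \ref{lem:virial} with the localization estimate \eqref{eq:localization} and the energy symmetry \eqref{eq:energy-symmetry}. The identity \eqref{eq:virial} gives
\[
\frac{N+b}{q+1}E_F+\frac{N+a}{p+1}E_G-\frac{W_F}{q+1}-\frac{W_G}{p+1}
=(N-2s)\iint\Phi\,d\pi .
\]
Replace $\Phi$ by $\eta(x)-w(x)D(x,y)$ on the right. By \eqref{eq:marginals}, $\iint\eta(x)\,d\pi=\int\eta Fu=E_F$, and by definition $\iint w(x)D\,d\pi=T$. Hence \eqref{eq:localization} yields
\[
(N-2s)\iint\Phi\,d\pi\le(N-2s)(E_F-T)+C\delta E+C\delta^{2s-N-1}.
\]
Alternatively, one could use the symmetric choice $\tfrac12(\eta(x)+\eta(y))$; the symmetry estimate shows the two choices differ by an admissible error.

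Next, rewrite the left side. Since $|E_F-E_G|\le C\delta E+C\delta^{2s-N}$ by \eqref{eq:energy-symmetry}, we have $E_F=\tfrac12E+\tfrac12(E_F-E_G)$ and $E_G=\tfrac12E-\tfrac12(E_F-E_G)$. Substituting these expresses the left side as
\[
\tfrac12\Bigl(\tfrac{N+b}{q+1}+\tfrac{N+a}{p+1}\Bigr)E-\frac{W_F}{q+1}-\frac{W_G}{p+1}
\]
plus an error of size $C(\delta E+\delta^{2s-N})$. Likewise $(N-2s)E_F=\tfrac12(N-2s)E+O(\delta E+\delta^{2s-N})$. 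Moving terms across, the coefficient of $E$ becomes $\tfrac12\bigl(\tfrac{N+a}{p+1}+\tfrac{N+b}{q+1}-(N-2s)\bigr)=\gamma/2$. This gives
\[
\frac\gamma2E\le\frac{W_F}{q+1}+\frac{W_G}{p+1}-(N-2s)T+C\delta E+C\delta^{2s-N}+C\delta^{2s-N-1}.
\]
Since $\delta<1$, we have $\delta^{2s-N}\le\delta^{2s-N-1}$, so the last two error terms combine. This is \eqref{eq:local-balance}.

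The only delicate point is bookkeeping: the sign of $T$ must come out as $-(N-2s)T$, and the errors must be absorbed uniformly in the solution. Because all needed estimates, \eqref{eq:localization} and \eqref{eq:energy-symmetry}, are already established with solution-independent constants, the rest is algebra. The inequality direction needs only an upper bound on $\iint\Phi\,d\pi$, which is why the absolute-value estimate suffices.
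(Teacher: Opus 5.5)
Your proposal is correct and follows essentially the paper's argument: the paper also writes $\iint\Phi\,d\pi=E_F-T+Q$ with $|Q|$ controlled by \eqref{eq:localization}, substitutes this into \eqref{eq:virial}, symmetrizes $E_F,E_G$ around $E/2$ with the error $|E_F-E_G|$ bounded by \eqref{eq:energy-symmetry}, and finishes with $\delta^{2s-N}\le\delta^{2s-N-1}$. The only cosmetic difference is that the paper moves $(N-2s)E_F$ to the left before symmetrizing, whereas you symmetrize that term separately; this is the same algebra.
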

\begin{proof}
Define
$Q=\iint(\Phi-\eta(x)+w(x)D)\,d\pi$.
By \eqref{eq:marginals},
$\iint\Phi\,d\pi=E_F-T+Q$.
Substituting this identity in \eqref{eq:virial} gives
\[
\begin{aligned}
&\left(\frac{N+b}{q+1}-(N-2s)\right)E_F
+\frac{N+a}{p+1}E_G\\
&\qquad=\frac{W_F}{q+1}+\frac{W_G}{p+1}
-(N-2s)T+(N-2s)Q.
\end{aligned}
\]
Use $E_F=(E+(E_F-E_G))/2$ and
$E_G=(E-(E_F-E_G))/2$ on the left. The coefficient of $E$
is $\gamma/2$, so
\[
\begin{aligned}
\frac\gamma2E
={}&\frac{W_F}{q+1}+\frac{W_G}{p+1}
-(N-2s)T+(N-2s)Q\\
&-\frac12\left(\frac{N+b}{q+1}-\frac{N+a}{p+1}
-(N-2s)\right)(E_F-E_G).
\end{aligned}
\]
Bound $|Q|$ by \eqref{eq:localization} and
$|E_F-E_G|$ by \eqref{eq:energy-symmetry}.
Since $\delta^{2s-N}\le\delta^{2s-N-1}$, this is
\eqref{eq:local-balance}.
\end{proof}

\subsection{Estimate for the cutoff terms}\label{sec:pressure}

We now estimate the three cutoff terms on the right of
\eqref{eq:local-balance}. Integration in the source level gives
\begin{equation}\label{eq:ideal-layer}
\begin{aligned}
\int_0^{F(x)}
\left[u(x)-(t/|x|^b)^{1/q}\right]dt
&=F(x)u(x)-\frac{q}{q+1}|x|^{-b/q}F(x)^{1+1/q}\\
&=\frac{F(x)u(x)}{q+1}\qquad(x\ne0).
\end{aligned}
\end{equation}
Here $F=|x|^bu^q$, and the calculation requires only $q>0$.
The counterpart for $G,v,a,p$ requires only $p>0$.
At a finite endpoint of a component interval of $\{F>t\}$
away from zero, $F=t$, so the corresponding endpoint value of
$u$ is $(t/|x|^b)^{1/q}$. We compare these values with the
interior value through the Riesz kernel.

\subsubsection{The comparison on two parallel intervals}\label{sec:interval}

Fix $e\in\Sph^{N-1}$ and write
\[
x=z+\xi e,\qquad y=z'+\xi'e,\qquad
z,z'\in e^\perp,\qquad \lambda=|z-z'|,\qquad h=\xi-\xi'.
\]
For $N\ge2$ we first take $z\ne z'$, hence $\lambda>0$.
For $N=1$, $e^\perp=\{0\}$ has zero-dimensional measure one,
$\lambda=0$, and $0<N-2s=1-2s<1$.
The restricted kernel and its directional factor are
\[
\mathcal K(\xi,\xi')
=c_{N,s}(\lambda^2+h^2)^{-(N-2s)/2},\qquad
\mathcal D(\xi,\xi')=\frac{h^2}{\lambda^2+h^2}.
\]
When $\lambda=0$, set $\mathcal D=1$ off the diagonal; values
on the diagonal do not affect the integrals.
For a bounded interval $I=(\xi_-,\xi_+)$, put
\[
\mathcal K_I(\xi')
=\frac{\mathcal K(\xi_-,\xi')+\mathcal K(\xi_+,\xi')}{2},
\]
and define $\mathcal K_J(\xi)$ by averaging at the endpoints of
a bounded interval $J$ in the second variable.

\begin{lemma}\label{lem:interval}
Let $I,J\subset\R$ be nonempty open intervals and $c_I,c_J\ge0$.
Assume that $I$ is bounded when $c_I>0$, and $J$ is bounded
when $c_J>0$. If $\lambda>0$, then
\begin{equation}\label{eq:interval}
\begin{aligned}
\iint_{I\times J}(c_I+c_J)\mathcal K
&\le\iint_{I\times J}(c_I\mathcal K_I+c_J\mathcal K_J)\\
&\quad+\iint_{I\times J}
\bigl((N-2s)\min(c_I,c_J)\mathcal D+|c_I-c_J|\bigr)\mathcal K.
\end{aligned}
\end{equation}
The same inequality holds for $\lambda=0$ if $0<N-2s<1$.
All integrals use $d\xi\,d\xi'$, and each nonnegative term may
equal $+\infty$. An endpoint term with zero coefficient is
defined to be zero without evaluating the endpoint.
\end{lemma}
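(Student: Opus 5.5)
The plan is to reduce \eqref{eq:interval} to a purely geometric inequality for the one-dimensional kernel $k(h)=c_{N,s}(\lambda^2+h^2)^{-(N-2s)/2}$ on $I\times J$, and then prove that inequality by integrating by parts in each variable about the midpoint of its interval.

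\emph{Reduction.} Use $c_I+c_J=2\min(c_I,c_J)+|c_I-c_J|$. The $|c_I-c_J|\,\mathcal K$ part of the left side is then exactly the last term on the right, and may be cancelled; if that term is $+\infty$ the claim is trivial. Since $c_I\ge\min$, $c_J\ge\min$ and $\mathcal K_I,\mathcal K_J\ge0$, it suffices to treat $\mu=\min(c_I,c_J)>0$; the case $\mu=0$ is immediate. Dividing by $\mu$, it suffices to show, for bounded $I,J$ (both are bounded since $c_I,c_J>0$),
\[
2\iint_{I\times J}\mathcal K\le\iint_{I\times J}(\mathcal K_I+\mathcal K_J)
+(N-2s)\iint_{I\times J}\mathcal D\,\mathcal K .
\]
When $\lambda>0$, every term here is finite.

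\emph{Integration by parts for $\lambda>0$.} Let $\xi_c,\xi'_c$ be the midpoints of $I,J$. From $\partial_\xi[(\xi-\xi_c)\mathcal K]=\mathcal K+(\xi-\xi_c)\partial_\xi\mathcal K$ we get
\[
\int_I\mathcal K\,d\xi=|I|\,\mathcal K_I(\xi')-\int_I(\xi-\xi_c)\partial_\xi\mathcal K\,d\xi,
\]
and similarly in $\xi'$ on $J$. Integrate over the other variable, add the two identities, and use $\partial_{\xi'}\mathcal K=-\partial_\xi\mathcal K=-k'(h)$. The combined bracket becomes $(h-h_0)k'(h)$ with $h_0=\xi_c-\xi'_c$. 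Moreover
\[
hk'(h)=-(N-2s)\frac{h^2}{\lambda^2+h^2}\,k=-(N-2s)\mathcal D\mathcal K .
\]
Hence
\[
2\iint\mathcal K=\iint(\mathcal K_I+\mathcal K_J)+(N-2s)\iint\mathcal D\mathcal K+h_0\iint_{I\times J}k'(\xi-\xi')\,d\xi\,d\xi'.
\]

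\emph{Sign of the last term.} This is the main point. Write $\iint k'(\xi-\xi')=\int_\R k'(h)\varphi(h)\,dh$, where $\varphi(h)=|I\cap(J+h)|$ is the overlap function. It is a trapezoid, symmetric about $h_0$ and nonincreasing in $|h-h_0|$. If $h_0\ge0$ then $\varphi(h)\ge\varphi(-h)$ for $h>0$, and $k'$ is odd and negative on $(0,\infty)$. Therefore
\[
\int k'\varphi=\int_0^\infty k'(h)\bigl(\varphi(h)-\varphi(-h)\bigr)\,dh\le0,
\]
so $h_0\int k'\varphi\le0$. The case $h_0<0$ is symmetric. This gives the reduced inequality, with exactly the coefficient $N-2s$.

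\emph{The case $\lambda=0$, $0<N-2s<1$.} Here $k=c|h|^{-(N-2s)}$ is locally integrable and $\mathcal D=1$. Repeat the computation on $\{|\xi-\xi'|>\varepsilon\}$ and let $\varepsilon\downarrow0$.
\begin{enumerate}[label=(\alph*),leftmargin=*]
\item The excision creates boundary terms of the form $(\pm\varepsilon-h_0)k(\varepsilon)$ along the lines $h=\pm\varepsilon$. The $h_0k(\varepsilon)$ contributions from the two sides cancel up to $O(\varepsilon)$ length differences. The remaining parts are $O(\varepsilon^{1-(N-2s)})\to0$.
\item The term $hk'(h)=-(N-2s)k$ is integrable.
\item The term $h_0\int k'\varphi$ is taken as a principal value. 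Its symmetric form $\int_0^\infty k'(\varphi(h)-\varphi(-h))$ converges, because $\varphi$ is Lipschitz with $\varphi(h)-\varphi(-h)=O(h)$ and $h\,k'(h)$ is integrable near $0$. The same sign argument applies.
\end{enumerate}
I expect this justification of the limiting boundary terms for $\lambda=0$ to be the only delicate step.
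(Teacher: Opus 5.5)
Your proof is correct. For $\lambda>0$ it follows the paper's argument: midpoint integration by parts in each variable, the identity $hk'(h)=-(N-2s)\mathcal D\mathcal K$, and the sign argument via the trapezoidal overlap function $\varphi(h)=|I\cap(J+h)|$, which is nonincreasing in $|h-h_0|$.

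The only difference is the case $\lambda=0$.

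\emph{The paper's route.} It does not redo the computation on an excised region. It applies the already proved inequality with $\lambda=\varepsilon>0$ and lets $\varepsilon\downarrow0$. Because $N-2s>0$, the kernels $c_{N,s}(\varepsilon^2+h^2)^{-(N-2s)/2}$ and $c_{N,s}h^2(\varepsilon^2+h^2)^{-(N-2s)/2-1}$, and their endpoint averages, increase pointwise to the $\lambda=0$ kernels. Monotone convergence then passes to the limit term by term, since every term is nonnegative. No boundary bookkeeping is needed, and $N-2s<1$ serves only to make the limiting terms finite.

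\emph{Your route.} The excision argument also works, and your estimates are right. By the divergence theorem, the two diagonal lines $h=\pm\varepsilon$ contribute
$k(\varepsilon)\bigl[-\varepsilon(\varphi(\varepsilon)+\varphi(-\varepsilon))+h_0(\varphi(\varepsilon)-\varphi(-\varepsilon))\bigr]$.
Since $\varphi$ is $1$-Lipschitz, this is $O(\varepsilon^{1-(N-2s)})$. Two small points should be added:
\begin{enumerate}[label=(\roman*)]
\item The rectangle-edge terms are now truncated to $|h|>\varepsilon$. You should note that they converge to the full endpoint terms by monotone convergence, using $N-2s<1$.
\item No principal value is needed for the $h_0$ term. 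Already for each $\varepsilon$ one has $h_0\int_{|h|>\varepsilon}k'\varphi\,dh\le0$, so it can be discarded before taking the limit.
\end{enumerate}

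\emph{Comparison.} Your route gives an exact identity at $\lambda=0$ with an explicit nonpositive remainder. The paper's route is shorter and avoids entirely the step you flagged as delicate.
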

\begin{proof}
If $\min(c_I,c_J)=0$, then
$|c_I-c_J|\mathcal K=(c_I+c_J)\mathcal K$.
The remaining terms on the right are nonnegative, proving the
inequality, including for unbounded intervals.

Suppose $c_I,c_J>0$ and $\lambda>0$.
Both intervals are bounded and $\mathcal K$ is smooth on their
closures. Write $m_I,m_J$ for their midpoints and
$d_0=m_I-m_J$. The endpoint identity
\[
[(\xi-m_I)\mathcal K]_{\xi_-}^{\xi_+}
=\frac{|I|}{2}\bigl(\mathcal K(\xi_-,\xi')+
\mathcal K(\xi_+,\xi')\bigr)=|I|\mathcal K_I(\xi')
\]
gives
\[
\int_I(\mathcal K-\mathcal K_I)\,d\xi
=-\int_I(\xi-m_I)\partial_\xi\mathcal K\,d\xi.
\]
The analogous formula for $J$ is
\[
\int_J(\mathcal K-\mathcal K_J)\,d\xi'
=-\int_J(\xi'-m_J)\partial_{\xi'}\mathcal K\,d\xi'.
\]
Insert
\[
\partial_\xi\mathcal K=-\partial_{\xi'}\mathcal K
=-(N-2s)\frac{h}{\lambda^2+h^2}\mathcal K
\]
in these two identities, integrate in the other variable, and add:
\begin{equation}\label{eq:midpoint}
\iint_{I\times J}(2\mathcal K-\mathcal K_I-\mathcal K_J)
=(N-2s)\iint_{I\times J}
\frac{h^2-d_0h}{\lambda^2+h^2}\mathcal K.
\end{equation}

To determine the sign of the $d_0h$ term, use the overlap length
\[
\rho_0(h)=|I\cap(J+h)|
=\max\left\{0,\min\left\{|I|,|J|,
\frac{|I|+|J|}{2}-|h-d_0|\right\}\right\}.
\]
For every nonnegative measurable $f$,
$\iint_{I\times J}f(\xi-\xi')\,d\xi\,d\xi'
=\int_\R f(h)\rho_0(h)\,dh$.
The formula for $\rho_0$ shows it is nonincreasing as
$|h-d_0|$ increases. If $h>0$ and $d_0\ge0$, then
$|h-d_0|\le|-h-d_0|$, so $\rho_0(h)\ge\rho_0(-h)$.
If $d_0\le0$, both inequalities reverse. In either case,
\[
d_0\bigl(\rho_0(h)-\rho_0(-h)\bigr)\ge0\qquad(h>0).
\]
Pairing $h$ and $-h$ therefore gives
\[
\begin{aligned}
d_0\iint_{I\times J}\frac{h}{\lambda^2+h^2}\mathcal K
&=c_{N,s}\int_0^\infty
\frac{d_0h\bigl(\rho_0(h)-\rho_0(-h)\bigr)}
{(\lambda^2+h^2)^{(N-2s)/2+1}}\,dh
\ge0.
\end{aligned}
\]
Deleting this nonnegative quantity from the right of
\eqref{eq:midpoint} yields
\[
2\iint_{I\times J}\mathcal K
\le\iint_{I\times J}
\bigl(\mathcal K_I+\mathcal K_J+(N-2s)\mathcal D\mathcal K\bigr).
\]
Multiply by $\min(c_I,c_J)$ and insert
\[
c_I+c_J=2\min(c_I,c_J)+|c_I-c_J|,
\quad
\min(c_I,c_J)\mathcal K_I\le c_I\mathcal K_I,
\quad
\min(c_I,c_J)\mathcal K_J\le c_J\mathcal K_J.
\]
This proves \eqref{eq:interval} for $\lambda>0$.

For $\lambda=0$, apply the proved inequality with
$\lambda=\varepsilon>0$. As $\varepsilon\downarrow0$,
the kernels
\[
c_{N,s}(\varepsilon^2+h^2)^{-(N-2s)/2},\qquad
c_{N,s}h^2(\varepsilon^2+h^2)^{-(N-2s)/2-1}
\]
and their endpoint values increase to the required kernels
almost everywhere. Monotone convergence in the nonnegative
inequality proves the assertion. When both coefficients are
positive, the intervals are bounded; the condition $N-2s<1$
also gives
$\int_{-r}^r|h|^{-(N-2s)}\,dh<\infty$ at every diagonal or
endpoint singularity.
\end{proof}

\subsubsection{Averaging near the radial direction}

For a function of $e\in\Sph^{N-1}$, write
\[
\avdir{H(e)}=\frac1{|\Sph^{N-1}|}\int_{\Sph^{N-1}}H(e)\,de.
\]
For $N=1$, this means $(H(-1)+H(1))/2$.
Fix a smooth function $0\le\psi\le1$, equal to one near zero
and zero on $[1,\infty)$. For $0<\sigma\le1$ and $|\omega|=1$,
set
\[
Z(\sigma)=\avdir{\psi(|e-\omega|/\sigma)}.
\]
Rotation invariance removes the dependence on $\omega$.
For $N\ge2$, spherical coordinates with
$|e-\omega|=2\sin(\theta/2)=\sigma t$ give
\[
Z(\sigma)=\frac{|\Sph^{N-2}|}{|\Sph^{N-1}|}\sigma^{N-1}
\int_0^1\psi(t)t^{N-2}(1-\sigma^2t^2/4)^{(N-3)/2}\,dt.
\]
Since $3/4\le1-\sigma^2t^2/4\le1$ and $\psi=1$ near zero,
this formula and its derivative imply
\[
c\sigma^{N-1}\le Z(\sigma)\le C\sigma^{N-1},
\qquad |Z'(\sigma)|\le C\sigma^{N-2}.
\]
For $N=1$, exactly one of $e=-1,1$ equals $\omega$, so
$Z(\sigma)=1/2$ and the same bounds hold.

Fix $0<\kappa<1$ and define
\[
h_e(x)=
\begin{cases}
\displaystyle\frac{w(x)}{Z(\kappa\rho(x))}
\psi\!\left(\frac{|e-\omega_x|}{\kappa\rho(x)}\right),
&1<|x|<2,\\
0,&\text{otherwise}.
\end{cases}
\]
For $N=1$, this is
$h_e(x)=2w(x)\mathbf1_{\{e=\operatorname{sign}x\}}$.
The definition of $Z$ gives $\avdir{h_e(x)}=w(x)$.
On the annulus, $|\nabla\omega_x|\le1$; where a derivative of
$\psi$ is nonzero, $|e-\omega_x|\le\kappa\rho(x)$. Hence
\[
\left|\nabla\frac{|e-\omega_x|}{\kappa\rho(x)}\right|
\le\frac{C}{\kappa\rho(x)},\qquad
\left|\nabla\frac1{Z(\kappa\rho(x))}\right|
\le C\kappa^{1-N}\rho(x)^{-N}.
\]
Together with \eqref{eq:cutoff}, these give
\begin{equation}\label{eq:cap}
\avdir{h_e}=w,\qquad
0\le h_e\le C\kappa^{1-N}\rho^{m-N},\qquad
|\nabla h_e|\le C\kappa^{-N}\rho^{m-N-1}.
\end{equation}
The constant value of $\psi$ near zero removes the apparent
nondifferentiability at $e=\omega_x$. At the two annular
boundaries, $w=O((|x|-1)^2)$ and $m-N>1$ show that $h_e$
and its first derivatives match zero. Thus $h_e\in C_c^1(\R^N)$.

Use \eqref{eq:cap} in Lemma \ref{lem:localization}, with
$H=C\kappa^{-N}$:
\begin{equation}\label{eq:cap-mismatch}
\iint|h_e(x)-h_e(y)|\,d\pi
\le C\kappa^{-N}\delta E+C\kappa^{-N}\delta^{2s-N}.
\end{equation}
For a unit vector $\nu=(x-y)/|x-y|$,
\[
|D_e-D|=|(\nu\cdot e)^2-(\nu\cdot\omega_x)^2|
\le2|e-\omega_x|.
\]
On the support of $h_e(x)$ this is at most
$2\kappa\rho(x)$. Multiplication by $h_e(x)$, averaging in
$e$, and \eqref{eq:marginals} now give
\begin{equation}\label{eq:cap-angle}
\begin{aligned}
\avdir{\iint h_e(x)D_e(x,y)\,d\pi}
&\le T+2\kappa\iint\rho(x)w(x)\,d\pi\\
&=T+2\kappa\int\rho wFu
\le T+C\kappa E.
\end{aligned}
\end{equation}
The last inequality is $\rho w\le C\eta$ from \eqref{eq:cutoff}.

\subsubsection{Restriction to short intervals}

For $0<t<F(x)$, let $I=I(e,x,t)$ be the component interval
containing $x$ of $\{F>t\}\cap(x+\R e)$.
To keep the coefficient constant on the whole interval, define
\[
c_I=
\begin{cases}
\inf_Ih_e,&I\text{ bounded and }|I|\le2\delta\inf_I\rho,\\
0,&\text{otherwise},
\end{cases}
\qquad c_F(e,x,t)=c_I.
\]
For a component $J$ of $\{G>t'\}$, define $c_G(e,y,t')=c_J$
by the same rule. In particular,
\[
0\le c_F(e,x,t)\le h_e(x),\qquad
0\le c_G(e,y,t')\le h_e(y).
\]

\begin{remark}\label{rem:layer-measurability}
These coefficients and all finite endpoints are jointly
measurable in direction, base point, and level.
Indeed, $\{(x,t):x\ne0,\ 0<t<F(x)\}$ is open.
The forward exit distance exceeds $r\ge0$ exactly when
$F(x+\xi e)>t$ for every $0\le\xi\le r$.
Compactness of this segment and openness of the layer make this
an open condition in $(e,x,t)$. The backward exit satisfies the
same property. Both exit distances, and hence the interval
length and its finite endpoints, are measurable.
The infima of the continuous functions $\rho,h_e$ over the
interval equal their infima over rational offsets strictly
between the two exits. These are infima of countably many
measurable functions. This proves measurability of $c_F$;
the proof for $c_G$ is identical.
\end{remark}

For fixed $e$, define
\[
\begin{gathered}
H_F(e)=\int h_eFu,\qquad H_G(e)=\int h_eGv,\\
L_F(e)=\int u(x)\int_0^{F(x)}
\bigl(h_e(x)-c_F(e,x,t)\bigr)\,dt\,dx,\\
L_G(e)=\int v(y)\int_0^{G(y)}
\bigl(h_e(y)-c_G(e,y,t')\bigr)\,dt'\,dy.
\end{gathered}
\]
The compact support and boundedness of $h_e$ give
\[
0\le L_F(e)\le H_F(e)<\infty,\qquad
0\le L_G(e)\le H_G(e)<\infty.
\]

\begin{lemma}\label{lem:layer-loss}
For $0<\kappa<1$ and $0<\delta<1/4$,
\begin{equation}\label{eq:layer-loss}
\avdir{\bigl(L_F(e)+L_G(e)\bigr)}
\le C\kappa^{-N}\delta E
+C\kappa^{(1-N)/2}\delta^{-N/2}\sqrt E.
\end{equation}
The constant is independent of $\kappa,\delta$ and the solution.
\end{lemma}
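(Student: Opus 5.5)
The plan is to split the loss $h_e(x)-c_F(e,x,t)$ according to why $c_F$ falls below $h_e(x)$, treat $F$ in detail, and handle $G$ identically with $(G,v,a,p)$. For fixed $e,x,t$ there are two cases. Either the component $I=I(e,x,t)$ is bounded with $|I|\le2\delta\inf_I\rho$, and then the loss is $h_e(x)-\inf_Ih_e$. Or $I$ is unbounded or long, and then $c_F=0$ and the loss is $h_e(x)$. Accordingly I would write $L_F(e)\le L_F^{\mathrm{sh}}(e)+L_F^{\mathrm{lg}}(e)$ and bound the direction averages of the two pieces separately. The first piece should produce the term $C\kappa^{-N}\delta E$, and the second the term with $\sqrt E$.

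\emph{Short intervals.} Here every point of $I$ is within $2\delta\inf_I\rho\le\rho(x)/2$ of $x$. Since $\rho$ is $1$-Lipschitz, $\rho$ is comparable to $\rho(x)$ along $I$. The gradient bound in \eqref{eq:cap} then gives
\[
h_e(x)-\inf_Ih_e\le|I|\sup_I|\nabla h_e|\le C\kappa^{-N}\delta\rho(x)^{m-N}.
\]
Integrating $\int u\int_0^F(\cdot)\,dt$ bounds this piece by $C\kappa^{-N}\delta\int_{1<|x|<2}\rho^{m-N}Fu$. The weight $\rho^{m-N}$ is not controlled by $\eta\asymp\rho^m$ near $|x|=2$, so I would treat the boundary layer separately. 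Where $\rho(x)^N$ is at most a fixed multiple of $\delta$, I would instead use the trivial bound $h_e\le C\kappa^{1-N}\rho^{m-N}$ and fold that region into the long-interval estimate below. I expect this to be absorbed by the square-root term, since that term carries exactly the factors $\kappa^{(1-N)/2}\delta^{-N/2}$.

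\emph{Long or unbounded intervals.} This is the main step. If $I$ is long or unbounded, then $F>t$ on a segment through $x$ in direction $e$ of length at least $c\delta\rho(x)$ on at least one side. I would average over $e$ first, and only over directions with $h_e(x)\ne0$, that is, $|e-\omega_x|\le\kappa\rho(x)$. The union of such segments fills a truncated cone at $x$ of aperture about $\kappa\rho$ and length about $\delta\rho$. Define
\[
M(x)=\sup_e\frac1{|\text{segment}|}\int_{\text{segment}}F .
\]
A polar-coordinate (maximal-function) estimate over this cone, using \eqref{eq:inputs} for $\int_{B_4}F\le C$, should give
\[
\avdir{h_e(x)\,t\,\mathbf 1_{\mathrm{lg}}}\le C\kappa^{1-N}\delta^{-N}\rho(x)^{-N}\cdot(\text{a quantity whose }u\text{-weighted integral over }B_2\text{ is }\le C).
\]
The term $\rho(x)^{-N}$ is compensated by $h_e\le C\kappa^{1-N}\rho^{m-N}$, since $m$ is large. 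The cleanest route is Cauchy--Schwarz in $(x,t)$ with respect to $u\,h_e\,dt\,dx$:
\[
L_F^{\mathrm{lg}}(e)\le\Bigl(\int h_eFu\Bigr)^{1/2}
\Bigl(\int h_eu\int_0^{F}\mathbf 1_{\mathrm{lg}}\,\frac{dt}{F}\cdots\Bigr)^{1/2}.
\]
Averaging the first factor in $e$ gives $\avdir{H_F}=\int wFu\le CE$, using $w\le C\eta$. For the second factor I would use that $t\le\inf$ of $F$ on the long segment. Combined with the kernel lower bound $u\ge c(\delta\rho)^{2s}\cdot(\text{average of }G)$ this may be unnecessary. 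The simpler alternative is to bound by $\int_{B_2}u\le C$ from \eqref{eq:inputs} and the cone-volume factor $\kappa^{1-N}\delta^{-N}$. This yields $C\kappa^{(1-N)/2}\delta^{-N/2}\sqrt E$.

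The hardest point is making the cone-averaging argument rigorous: it must convert "long component in direction $e$" into an integrable bound on $t$ uniformly after the average over $e$, with exactly the power $\kappa^{1-N}\delta^{-N}$. Measurability of all the sets involved is supplied by Remark~\ref{rem:layer-measurability}. Adding the short and long pieces for $F$ and $G$ gives \eqref{eq:layer-loss}.
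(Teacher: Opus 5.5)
There is a genuine gap, and it comes from where you split. You separate retained intervals ($|I|\le2\delta\inf_I\rho$) from the rest. A retained interval can still have length comparable to $\delta\rho(x)$, so the gradient bound in \eqref{eq:cap} only gives a loss $C\kappa^{-N}\delta\rho(x)^{m-N}$. As you note yourself, this is not controlled by $\eta\asymp\rho^m$, and your patch does not repair it.

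Off the layer $\{\rho^N\le c\delta\}$ you only have $\delta\rho^{m-N}\le c^{-1}\rho^m$. The factor $\delta$ disappears, and you get $C\kappa^{-N}E$, which cannot be absorbed later. Inside the layer, the trivial bound $h_e(x)$ averages in $e$ to $w(x)$, leaving $\int_{\rho^N\le c\delta}wFu$. Since $w/\eta\approx 2m/\rho$ near $|x|=2$, this quantity is not controlled by $E$. It also cannot be folded into the long-interval estimate, because these intervals are short. The only smallness there comes from the small measure of directions in which the component is long.

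The same problem appears in your long-interval step, where you use $w\le C\eta$ to get $\avdir{H_F}=W_F\le CE$. Only $\rho w\le C\eta$ holds, and $W_F$ is exactly the uncontrolled cutoff term in the Pohozaev balance. A Cauchy--Schwarz whose first factor is $\sqrt{W_F}$ therefore does not produce $\sqrt E$.

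The missing idea is a length threshold depending on $x$ and matched to the cutoff power: take $\ell=\delta\rho(x)^{N+1}$.
\begin{itemize}
\item \emph{Short intervals.} An interval with $|I|\le\ell$ automatically satisfies the retention rule, because $\inf_I\rho\ge(1-\delta)\rho(x)$. Its loss is at most $C\kappa^{-N}|I|\rho^{m-N-1}\le C\kappa^{-N}\delta\rho^m$, which integrates to $C\kappa^{-N}\delta E_F$.
\item \emph{Long intervals.} Every interval with $|I|>\ell$, retained or not, is bounded by the trivial loss $h_e(x)$. For such $e$, one of the two segments of length $\ell/2$ from $x$ in the directions $\pm e$ lies in $\{F>t\}$. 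Polar coordinates about $x$ and $\int_{B_4}F\le C$ then show that the fraction of such directions is at most $C/(t\ell^N)$.
\item \emph{Cauchy--Schwarz on the sphere.} Use $\avdir{h_e^2}\le(\sup_eh_e)w\le C\kappa^{1-N}\rho^{2m-N-1}$. The bound becomes $C\kappa^{(1-N)/2}\delta^{-N/2}\rho^{m/2}t^{-1/2}$. The power of $\rho$ is exactly $m/2$ because $m=(N+1)^2$ absorbs the factor $\rho^{-N(N+1)/2}$ coming from $\ell^{-N}$.
\item \emph{Integration.} Integrate $t^{-1/2}$ over $(0,F(x))$ to get $2\sqrt{F(x)}$. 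Then apply Cauchy--Schwarz in $x$ with $\int_{B_2}u\le C$ and $\rho^m\le\eta$. This produces the $C\kappa^{(1-N)/2}\delta^{-N/2}\sqrt{E_F}$ term without ever needing $W_F\lesssim E$.
\end{itemize}
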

\begin{proof}
We estimate $L_F$.
Its integrand vanishes unless $1<|x|<2$, because
$0\le c_F\le h_e$. Fix such an $x$ and put
$\ell=\delta\rho(x)^{N+1}$.
If $|I(e,x,t)|\le\ell$, then
\[
\inf_I\rho\ge\rho(x)-\ell\ge(1-\delta)\rho(x),\qquad
|I|\le\delta\rho(x)\le2\delta\inf_I\rho.
\]
Thus this interval satisfies the retention rule and
$c_F=\inf_Ih_e$. Every point of $I$ has $\rho$ comparable to
$\rho(x)$, so the gradient bound in \eqref{eq:cap} gives
\begin{equation}\label{eq:short-layer-loss}
\begin{aligned}
0\le h_e(x)-c_F(e,x,t)
&\le C\kappa^{-N}|I|\rho(x)^{m-N-1}\\
&\le C\kappa^{-N}\delta\rho(x)^m.
\end{aligned}
\end{equation}
Integrating $u(x)$ times this bound over $0<t<F(x)$ yields
\[
\int u(x)\int_0^{F(x)}
(h_e-c_F)\mathbf1_{\{|I|\le\ell\}}\,dt\,dx
\le C\kappa^{-N}\delta\int\rho^mFu
\le C\kappa^{-N}\delta E_F.
\]

For the remaining intervals, $|I|>\ell$ implies that at least
one of the two segments of length $\ell/2$ from $x$ in the
directions $e,-e$ lies in $\{F>t\}$.
Let $\mathcal A(x,t)$ consist of directions whose forward
segment has this property. Polar coordinates centered at $x$
give
\[
\begin{aligned}
\int_{B_{\ell/2}(x)}F(y)\,dy
&=\int_{\Sph^{N-1}}\int_0^{\ell/2}
F(x+re)r^{N-1}\,dr\,de\\
&\ge\int_{\mathcal A(x,t)}\int_0^{\ell/2}tr^{N-1}\,dr\,de
=\frac{t\ell^N}{N2^N}|\mathcal A(x,t)|.
\end{aligned}
\]
Since
$\{e:|I(e,x,t)|>\ell\}\subset
\mathcal A(x,t)\cup(-\mathcal A(x,t))$,
and $B_{\ell/2}(x)\subset B_4$, \eqref{eq:inputs} gives
\begin{equation}\label{eq:long-layer-probability}
\frac{|\{e:|I(e,x,t)|>\ell\}|}{|\Sph^{N-1}|}
\le\frac{C}{t\ell^N}\int_{B_{\ell/2}(x)}F(y)\,dy
\le\frac{C}{t\ell^N}.
\end{equation}
For $N=1$, polar integration is the sum over the two rays,
so this calculation remains valid.

On these intervals $h_e-c_F\le h_e$.
Cauchy-Schwarz on the sphere, followed by
$\avdir{h_e^2}\le(\sup_eh_e)\avdir{h_e}=(\sup_eh_e)w$,
gives
\[
\begin{aligned}
\avdir{h_e(x)\mathbf1_{\{|I|>\ell\}}}
&\le\bigl[(\sup_eh_e(x))w(x)\bigr]^{1/2}
\left[\frac{|\{e:|I|>\ell\}|}{|\Sph^{N-1}|}\right]^{1/2}\\
&\le C\kappa^{(1-N)/2}
\rho(x)^{(2m-N-1)/2}\,
t^{-1/2}\delta^{-N/2}\rho(x)^{-N(N+1)/2}\\
&=C\kappa^{(1-N)/2}\delta^{-N/2}
\rho(x)^{m/2}t^{-1/2}.
\end{aligned}
\]
The second line uses \eqref{eq:cap}, \eqref{eq:cutoff},
and \eqref{eq:long-layer-probability}; the last line uses
$m=(N+1)^2$.
Integrate first in $t$, using
$\int_0^{F(x)}t^{-1/2}\,dt=2\sqrt{F(x)}$, and then in $x$:
\[
\begin{aligned}
&\int_{1<|x|<2}u(x)\int_0^{F(x)}
\avdir{h_e(x)\mathbf1_{\{|I|>\ell\}}}\,dt\,dx\\
&\quad\le C\kappa^{(1-N)/2}\delta^{-N/2}
\int_{1<|x|<2}\sqrt{\rho^mFu}\sqrt u\\
&\quad\le C\kappa^{(1-N)/2}\delta^{-N/2}
\left(\int\rho^mFu\right)^{1/2}
\left(\int_{B_2}u\right)^{1/2}\\
&\quad\le C\kappa^{(1-N)/2}\delta^{-N/2}\sqrt{E_F}.
\end{aligned}
\]
Here the last line uses $\rho^m\le\eta$ and
$\int_{B_2}u\le C$ from \eqref{eq:inputs}.
Combining the short and long interval estimates proves the
claimed bound for $\avdir{L_F}$.
Replace $(F,u)$ by $(G,v)$ throughout and add.
Finally,
$\sqrt{E_F}+\sqrt{E_G}\le\sqrt{2(E_F+E_G)}$
gives \eqref{eq:layer-loss}.
\end{proof}

\subsubsection{Endpoint values and integration over levels}

If $c_I>0$, then continuity of $h_e$ gives $h_e\ge c_I>0$
on $\overline I$. Thus both endpoints $x_-,x_+$ lie in
$1<|x|<2$. They are endpoints of a complete component of
$\{F>t\}$, and $F$ is continuous there, so
\[
F(x_-)=F(x_+)=t,\qquad
u(x_\pm)=(t/|x_\pm|^b)^{1/q}.
\]
Set $u_I=(u(x_-)+u(x_+))/2$.
For $x\in I$, we can have
\[
|x_\pm-x|\le |I|\le2\delta\inf_I\rho\le2\delta\rho(x).
\]
The map $(r,r')\mapsto(r/r')^{b/q}$ has bounded first derivatives
on $[1,2]^2$. Hence, with
$\zeta=(t/|x|^b)^{1/q}\le u(x)$,
\[
|u(x_\pm)-\zeta|
=\zeta\left|\left(\frac{|x|}{|x_\pm|}\right)^{b/q}-1\right|
\le C\zeta\,|x_\pm-x|.
\]
Averaging these two inequalities yields
\begin{equation}\label{eq:endpoint}
|u_I-(t/|x|^b)^{1/q}|
\le C\delta\rho(x)(t/|x|^b)^{1/q}
\le C\delta\rho(x)u(x).
\end{equation}
Define $v_J$ at retained $G$ intervals in the same manner;
the bound uses $(a,p)$ in place of $(b,q)$.
A product involving $u_I$ or $v_J$ with zero coefficient is
always defined as zero without evaluating the endpoints.

\begin{proposition}\label{prop:boundary-estimate}
For $0<\kappa<1$ and $0<\delta<1/4$,
\begin{equation}\label{eq:pressure-target}
\frac{W_F}{q+1}+\frac{W_G}{p+1}-(N-2s)T
\le C(\kappa+\kappa^{-N}\delta)E
+C_{\kappa,\delta}(\sqrt E+1).
\end{equation}
Here $C$ is independent of $\kappa,\delta$ and the solution,
and $C_{\kappa,\delta}$ is independent of the solution.
\end{proposition}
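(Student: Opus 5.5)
The plan is to expand the cutoff terms with the layer identity \eqref{eq:ideal-layer}, average over directions near the radial one, and reduce everything to the one-dimensional comparison of Lemma \ref{lem:interval}, applied line by line. First, since $\avdir{h_e}=w$ by \eqref{eq:cap}, \eqref{eq:ideal-layer} gives
\[
\frac{W_F}{q+1}=\avdir{\int\!\!\int_0^{F(x)}h_e(x)\bigl[u(x)-(t/|x|^b)^{1/q}\bigr]\,dt\,dx}.
\]
The bracket lies in $[0,u(x)]$. Replacing $h_e(x)$ by $c_F(e,x,t)$ therefore costs at most $L_F(e)$. Next I replace $(t/|x|^b)^{1/q}$ by the endpoint average $u_I$. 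Since $c_F\le h_e$, \eqref{eq:endpoint} bounds this error by $C\delta\int\avdir{h_e}\rho Fu=C\delta\int\rho wFu\le C\delta E$, using $\rho w\le C\eta$. The $G$-term is treated in the same way. After averaging in $e$, Lemma \ref{lem:layer-loss} controls both losses, so it remains to bound, for each fixed $e$,
\[
M(e)=\int\!\!\int_0^{F(x)}c_F\,(u(x)-u_I)\,dt\,dx+\int\!\!\int_0^{G(y)}c_G\,(v(y)-v_J)\,dt'\,dy .
\]

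To treat $M(e)$, I will write $u(x)=\int K_s(x-y)\int_0^{G(y)}dt'\,dy$, and similarly $u_I=\int\mathcal K_I(y)\int_0^{G(y)}dt'\,dy$ in the line variable, and do the same for $v,v_J$ with $F$. This expresses $M(e)$ as an integral over $(t,t')$ and over $z,z'\in e^\perp$ of
\[
\iint \mathbf 1_{\{F>t\}}(x)\mathbf 1_{\{G>t'\}}(y)\bigl[c_I(\mathcal K-\mathcal K_I)+c_J(\mathcal K-\mathcal K_J)\bigr]\,d\xi\,d\xi'.
\]
For fixed $(z,z',t,t')$, each line set splits into its component intervals. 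The coefficients $c_I,c_J$ are constant on these intervals by construction, so the bracket integrates over each product $I\times J$. Lemma \ref{lem:interval} then bounds it by $\iint_{I\times J}\bigl((N-2s)\min(c_I,c_J)\mathcal D+|c_I-c_J|\bigr)\mathcal K$. The lemma applies with $\lambda>0$ off the null set $z=z'$ when $N\ge2$, and with $\lambda=0$ when $N=1$, where $N-2s<1$. Unbounded or discarded intervals carry zero coefficient, as the lemma allows.

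Now I bound the two pieces. First, $\min(c_I,c_J)\le h_e(x)$ and $\mathcal D=D_e(x,y)$. Integrating over the levels turns $\int_0^F\int_0^G$ into $d\pi$, so the first piece is at most $(N-2s)\iint h_e(x)D_e\,d\pi$. By \eqref{eq:cap-angle}, its direction average is at most $(N-2s)(T+C\kappa E)$. Second, I split
\[
|c_I-c_J|\le\bigl(h_e(x)-c_F\bigr)+|h_e(x)-h_e(y)|+\bigl(h_e(y)-c_G\bigr).
\]
Integrating the outer two terms against $\mathcal K\,G(y)$, resp. $\mathcal K\,F(x)$, reproduces exactly $L_F(e)$ and $L_G(e)$ through \eqref{eq:marginals}. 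The middle term is bounded by \eqref{eq:cap-mismatch}. Collecting all terms and using Lemma \ref{lem:layer-loss} gives
\[
C(\kappa+\kappa^{-N}\delta)E+C\kappa^{-N}\delta^{2s-N}+C\kappa^{(1-N)/2}\delta^{-N/2}\sqrt E .
\]
This is \eqref{eq:pressure-target}, with $C_{\kappa,\delta}$ absorbing the last two coefficients.

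The main obstacle I expect is justifying the line disintegration, that is, the exchange of the $e^\perp$ and level integrals with the component decomposition, including the case where some terms may be $+\infty$. For this I will use the measurability established in Remark \ref{rem:layer-measurability}. I will apply Tonelli separately to the nonnegative quantities $c_I\mathcal K$, $c_I\mathcal K_I$, $c_J\mathcal K$, $c_J\mathcal K_J$, checking that each is finite: $c_I u_I\le h_e\,C u(x)$ by \eqref{eq:endpoint}, and $h_e$ has compact support. This finiteness lets the subtraction be performed only after the comparison inequality has been integrated.
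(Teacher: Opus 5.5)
Your proposal is correct and takes essentially the paper's route. The ingredients match: the layer identity \eqref{eq:ideal-layer}, Lemma \ref{lem:interval} applied to component pairs on parallel lines (subtracting only after integrating, with finite endpoint terms), the splitting $|c_F-c_G|\le(h_e(x)-c_F)+|h_e(x)-h_e(y)|+(h_e(y)-c_G)$ into $L_F$, $L_G$ and the cap mismatch, and then \eqref{eq:cap-angle}, \eqref{eq:cap-mismatch} and \eqref{eq:layer-loss} after averaging in $e$. The only difference is ordering: you first reduce $W_F/(q+1)+W_G/(p+1)$ to your $M(e)$ (the paper's $J_e$) plus losses and then bound $M(e)$, whereas the paper bounds $J_e$ first and compares with the layer identity afterwards.
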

\begin{proof}
Fix $e$ and levels $t,t'>0$. Apply Lemma \ref{lem:interval}
to every pair of complete component intervals of
$\{F>t\}$ and $\{G>t'\}$ on parallel lines, with the
coefficients defined above. For $N\ge2$, integrate over
$z,z'\in e^\perp$ with $dx=dz\,d\xi$ and $dy=dz'\,d\xi'$;
the set $z=z'$ has product measure zero.
For $N=1$, use the $\lambda=0$ assertion of that lemma on
the single line. Each open subset of a line has countably
many component intervals. Remark \ref{rem:layer-measurability}
and Tonelli's theorem therefore permit summing over components
and integrating in $t,t'$ before any subtraction.

We record precisely how these integrations recover the
potentials. For a fixed $x$ and a fixed retained $F$ interval,
\[
\begin{aligned}
\int_0^\infty\int_{\{G>t'\}}K_s(x-y)\,dy\,dt'
&=\int K_s(x-y)G(y)\,dy=u(x),\\
\int_0^\infty\int_{\{G>t'\}}
\frac{K_s(x_--y)+K_s(x_+-y)}2\,dy\,dt'
&=\frac{u(x_-)+u(x_+)}2=u_I.
\end{aligned}
\]
Consequently, the $c_I\mathcal K$ and $c_I\mathcal K_I$
terms in \eqref{eq:interval} become respectively
\[
\int\int_0^{F(x)}c_Fu(x)\,dt\,dx,\qquad
\int\int_0^{F(x)}c_Fu_I\,dt\,dx.
\]
The $c_J$ terms give the corresponding expressions with
$G,v,v_J$.

All these integrated terms are finite. Indeed,
$c_F\le h_e$ and \eqref{eq:endpoint} give
\[
\begin{aligned}
\int\int_0^{F(x)}c_Fu(x)\,dt\,dx&\le H_F(e),\\
\int\int_0^{F(x)}c_Fu_I\,dt\,dx
&\le C\int\int_0^{F(x)}h_e(x)u(x)\,dt\,dx
=CH_F(e).
\end{aligned}
\]
The same bounds hold with $G$ in place of $F$.
The directional term is at most $(N-2s)H_F(e)$ because
$\min(c_F,c_G)\le h_e(x)$ and $D_e\le1$.
The mismatch term is at most $H_F(e)+H_G(e)$ because
$|c_F-c_G|\le h_e(x)+h_e(y)$.
We may therefore subtract the endpoint terms and define the
finite signed quantity
\[
J_e=\int\int_0^{F(x)}c_F(u-u_I)\,dt\,dx
+\int\int_0^{G(y)}c_G(v-v_J)\,dt'\,dy.
\]
For each pair of levels,
\[
|c_F-c_G|
\le |h_e(x)-h_e(y)|
+\bigl(h_e(x)-c_F\bigr)+\bigl(h_e(y)-c_G\bigr).
\]
The first term integrates to
$\iint|h_e(x)-h_e(y)|\,d\pi$.
For the second term, integration in $t',y$ gives $u(x)$,
so its integral is exactly $L_F(e)$; the third term similarly
gives $L_G(e)$. Thus the integrated form of
\eqref{eq:interval} is
\begin{equation}\label{eq:integrated-interval}
J_e\le (N-2s)\iint h_e(x)D_e(x,y)\,d\pi
+\iint|h_e(x)-h_e(y)|\,d\pi+L_F(e)+L_G(e).
\end{equation}

Next compare $J_e$ with the exact level formula
\eqref{eq:ideal-layer}. Since $0\le\zeta=(t/|x|^b)^{1/q}\le u(x)$,
\eqref{eq:endpoint} gives
\[
\begin{aligned}
h_e(x)(u-\zeta)-c_F(u-u_I)
&=(h_e(x)-c_F)(u-\zeta)+c_F(u_I-\zeta)\\
&\le (h_e(x)-c_F)u
+C\delta\rho(x)h_e(x)u.
\end{aligned}
\]
When $c_F=0$, the same inequality follows directly from
$0\le u-\zeta\le u$ and the zero-coefficient convention.
Integrate this inequality in $t,x$, apply \eqref{eq:ideal-layer},
and add its $(G,v)$ counterpart:
\[
\frac{H_F(e)}{q+1}+\frac{H_G(e)}{p+1}
\le J_e+L_F(e)+L_G(e)
+C\delta\int\rho h_e(Fu+Gv).
\]
Insert \eqref{eq:integrated-interval} in the right side,
then average in $e$. The three identities
\[
\begin{gathered}
\avdir{H_F}=W_F,\qquad \avdir{H_G}=W_G,\\
\avdir{\int\rho h_e(Fu+Gv)}
=\int\rho w(Fu+Gv)\le CE.
\end{gathered}
\]
use \eqref{eq:cap} and $\rho w\le C\eta$.
Applying \eqref{eq:cap-angle} to the directional term gives
\[
\begin{aligned}
\frac{W_F}{q+1}+\frac{W_G}{p+1}-(N-2s)T
&\le C(\kappa+\delta)E
+\avdir{\iint|h_e(x)-h_e(y)|\,d\pi}\\
&\quad+2\avdir{\bigl(L_F(e)+L_G(e)\bigr)}.
\end{aligned}
\]
Finally substitute \eqref{eq:cap-mismatch} and
\eqref{eq:layer-loss}:
\[
\begin{aligned}
\frac{W_F}{q+1}+\frac{W_G}{p+1}-(N-2s)T
\le{}&C(\kappa+\kappa^{-N}\delta)E\\
&+C\kappa^{(1-N)/2}\delta^{-N/2}\sqrt E
+C\kappa^{-N}\delta^{2s-N}.
\end{aligned}
\]
Here $\delta\le\kappa^{-N}\delta$ since $0<\kappa<1$.
For fixed $\kappa,\delta$, the last two coefficients depend
only on the equation parameters and these two choices.
This proves \eqref{eq:pressure-target}.
\end{proof}

\subsection{Absorption and subcritical scaling}\label{sec:finish}

\begin{theorem}\label{thm:riesz-liouville}
Let $N\ge1$, $0<s<\min\{1,N/2\}$, $a,b>-2s$,
$p,q>0$, and $pq>1$. If
\[
\frac{N+a}{p+1}+\frac{N+b}{q+1}>N-2s,
\]
then there is no positive continuous pair on $\R^N$ with finite
potentials satisfying
\[
\begin{cases}
u=I_{2s}(|x|^av^p),\\
v=I_{2s}(|x|^bu^q),
\end{cases}
\qquad x\in\R^N.
\]
\end{theorem}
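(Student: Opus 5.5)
The argument combines Proposition \ref{prop:local-balance} with Proposition \ref{prop:boundary-estimate} to bound $E$ by a constant independent of the solution, and then contradicts this bound by scaling with \eqref{eq:dilation}.

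Let $(u,v)$ be such a pair; we argue by contradiction. Inserting \eqref{eq:pressure-target} into \eqref{eq:local-balance} gives
\[
\frac\gamma2E\le C(\kappa+\kappa^{-N}\delta+\delta)E+C_{\kappa,\delta}(\sqrt E+1)+C\delta^{2s-N-1}.
\]
Here $\gamma>0$ by the subcritical hypothesis, and $C$ does not depend on $\kappa,\delta$ or the solution. First fix $\kappa\in(0,1)$ with $C\kappa\le\gamma/8$. Then fix $\delta\in(0,1/4)$ with $C(\kappa^{-N}+1)\delta\le\gamma/8$. With these choices the linear term on the right is at most $\frac\gamma4E$, so
\[
\frac\gamma4E\le C'(\sqrt E+1),
\]
where $C'$ depends only on $N,s,a,b,p,q$. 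Since $E<\infty$ (local integrability), this yields
\[
E\le M=M(N,s,a,b,p,q).
\]
Because $\eta=1$ on $B_1$, it follows that $\int_{B_1}(Fu+Gv)\le M$ for every positive continuous solution with finite potentials.

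Now apply this bound to the rescaled pair $(u_R,v_R)$ from \eqref{eq:dilation}. By the scaling computation after \eqref{eq:scales}, this pair is again a positive continuous solution with finite potentials, so the same $M$ applies. By \eqref{eq:dilated-sources}, $F_Ru_R(x)=R^{\alpha+\beta+2s}(Fu)(Rx)$, and similarly for $G_Rv_R$. Changing variables gives
\[
\int_{B_1}(F_Ru_R+G_Rv_R)=R^{\alpha+\beta+2s-N}\int_{B_R}(Fu+Gv)\le M.
\]
By \eqref{eq:gap}, $\alpha+\beta+2s-N=\gamma/(1-S)>0$. Since $u,v>0$, the integral $\int_{B_R}(Fu+Gv)\ge\int_{B_1}(Fu+Gv)=:c_0>0$ for $R\ge1$. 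Hence $R^{\gamma/(1-S)}c_0\le M$ for all $R\ge1$, which is impossible as $R\to\infty$.

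The main obstacle is the first step: the absorption must use the fact that the constants multiplying $E$ are independent of $\kappa,\delta$, while the remaining constants, which do depend on $\kappa,\delta$, multiply only $\sqrt E$ and $1$. The order of choice is forced: $\kappa$ first, then $\delta$ small relative to $\kappa^N$. The scaling step is routine, since every earlier bound was stated uniformly over all solutions.
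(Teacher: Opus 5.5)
Your proposal is correct and follows the paper's own proof essentially step for step. You absorb the $E$-terms by fixing $\kappa$ first and then $\delta$, which gives a solution-independent bound on $\int_{B_1}(|x|^bu^{q+1}+|x|^av^{p+1})$. You then apply that bound to the dilated pair, where the exponent $\alpha+\beta+2s-N=\gamma/(1-S)>0$ contradicts the positivity of the energy on $B_1$.
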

\begin{proof}
The strict inequality is $\gamma>0$.
Insert \eqref{eq:pressure-target} into
\eqref{eq:local-balance} and use
$\delta\le\kappa^{-N}\delta$:
\[
\frac\gamma2E
\le C(\kappa+\kappa^{-N}\delta)E
+C_{\kappa,\delta}(\sqrt E+1).
\]
The term $C\delta^{2s-N-1}$ from
\eqref{eq:local-balance} is included in $C_{\kappa,\delta}$.
The coefficient $C$ of $E$ is independent of both choices.
Choose first $0<\kappa<1$ such that
$C\kappa\le\gamma/8$.
With this $\kappa$ fixed, choose $0<\delta<1/4$ such that
$C\kappa^{-N}\delta\le\gamma/8$.
Moving these two terms to the left gives
\begin{equation}\label{eq:absorb}
\frac\gamma4E\le C_{\kappa,\delta}(\sqrt E+1).
\end{equation}
If $E\ge1$, then $\sqrt E+1\le2\sqrt E$;
division by $\sqrt E$ gives
$\sqrt E\le8C_{\kappa,\delta}/\gamma$.
If $E<1$, no further estimate is needed.
Since $\eta=1$ on $B_1$,
\begin{equation}\label{eq:energy}
\int_{B_1}\bigl(|x|^bu^{q+1}+|x|^av^{p+1}\bigr)\,dx
\le E\le
\max\{1,(8C_{\kappa,\delta}/\gamma)^2\}=C.
\end{equation}
The choices of $\kappa,\delta$ depend only on
$N,s,a,b,p,q$, so this $C$ is uniform over all solutions.

For every $R>0$, \eqref{eq:dilation} gives another solution.
Apply \eqref{eq:energy} to it and change variables $y=Rx$:
\[
\begin{aligned}
\int_{B_1}|x|^bu_R(x)^{q+1}\,dx
&=R^{(q+1)\alpha-b-N}\int_{B_R}|y|^bu(y)^{q+1}\,dy,\\
\int_{B_1}|x|^av_R(x)^{p+1}\,dx
&=R^{(p+1)\beta-a-N}\int_{B_R}|y|^av(y)^{p+1}\,dy.
\end{aligned}
\]
The scale relations
$q\alpha-\beta=2s+b$ and
$p\beta-\alpha=2s+a$ show that both exponents equal
$\alpha+\beta+2s-N$.
By \eqref{eq:gap}, this common value is
$\gamma/(1-S)>0$. Thus
\[
\int_{B_R}\bigl(|x|^bu^{q+1}+|x|^av^{p+1}\bigr)\,dx
\le CR^{-\gamma/(1-S)}.
\]
For $R\ge1$, the positive components and positive weights on
$B_1\setminus\{0\}$ imply
\[
0<
\int_{B_1}\bigl(|x|^bu^{q+1}+|x|^av^{p+1}\bigr)\,dx
\le CR^{-\gamma/(1-S)}\longrightarrow0.
\]
This contradiction proves the theorem.
\end{proof}

\begin{proof}[Proof of Theorem \ref{thm:main}]
For $N\ge2$ and $0<s<1$, one has $2s<N$.
The assumptions of Theorem \ref{thm:main} therefore satisfy
all the hypotheses of Theorem \ref{thm:riesz-liouville}, which
excludes its positive continuous integral solutions.
If a positive classical entire solution existed,
Proposition \ref{thm:representation} would give such an integral
solution with finite potentials at every point.
The integral nonexistence statement excludes this possibility
as well.
\end{proof}

\section{Existence at and above one half}\label{sec:construction}

We construct a positive entire solution by letting the radius of a
Dirichlet ball tend to infinity. A positive linear term in the ball
equations will tend to zero, and a fixed central normalization will
prevent the limit from vanishing. The construction assumes \eqref{eq:supercritical} and
$1/(p+1)+1/(q+1)>1-2s$. The latter condition is automatic for $s\ge1/2$
and also gives the existence result used in Section
\ref{sec:critical-criterion}.

\subsection{Common notation and operators}\label{sec:common-notation}

Throughout Sections \ref{sec:construction} and
\ref{sec:critical-criterion}, assume \eqref{eq:parameters-main}.
Thus $N>2s$ and $a,b>-N$. The Riesz operator $I_{2s}$, its kernel
$K_s$, and its constant $c_{N,s}$ retain the normalization in
\eqref{eq:riesz-definition}. For real functions define
\begin{equation}\label{eq:common-energy-form}
\mathcal E_s(w,z)=\frac{C(N,s)}2
\iint_{\R^N\times\R^N}
\frac{(w(x)-w(y))(z(x)-z(y))}{|x-y|^{N+2s}}\,dx\,dy.
\end{equation}
The associated homogeneous space is
\begin{equation}\label{eq:common-homogeneous-space}
\dot H^s(\R^N)=
\overline{C_c^\infty(\R^N)}^{\|\cdot\|_{\dot H^s}},
\qquad \|w\|_{\dot H^s}^2=\mathcal E_s(w,w),
\qquad \dot H^s(\R^N)\hookrightarrow L^{2N/(N-2s)}(\R^N).
\end{equation}
The same form and homogeneous space will be used for the tests in
Section \ref{sec:critical-criterion}.

The fractional Sobolev space $H^s(\R^N)$ consists of functions $w$ such that
\[
\|w\|_{H^s(\R^N)}^2
=\int_{\R^N}|w|^2\,dx
+\iint_{\R^N\times\R^N}
\frac{|w(x)-w(y)|^2}{|x-y|^{N+2s}}\,dx\,dy<\infty.
\]
The zero exterior condition is imposed through the subspace
\[
H^s_0(B_R)=\{w\in H^s(\R^N):
w=0\text{ almost everywhere in }\R^N\setminus B_R\}.
\]
For $f\in C_c^\infty(B_R)$, the Green operator
$\mathcal G_{s,R}$ gives the unique weak solution $w\in H^s_0(B_R)$ of
\[
\begin{cases}
(-\Delta)^s w=f,&x\in B_R,\\
w=0,&x\in\R^N\setminus B_R.
\end{cases}
\]
The weak identity is
\[
\mathcal E_s(w,\phi)=\int_{B_R}f\phi\qquad(\phi\in H^s_0(B_R)).
\]
Its integral representation is
\[
(\mathcal G_{s,R}f)(x)
=\int_{B_R}G_{s,R}(x,y)f(y)\,dy,\qquad x\in B_R.
\]
For $R=1$, Dipierro-Grunau \cite[Theorem 1]{boggio} give the kernel
\begin{equation}\label{eq:boggio}
\begin{gathered}
G_s(x,y)=\kappa_{N,s}\int_0^{\omega(x)\omega(y)}
\frac{t^{s-1}}{(|x-y|^2+t)^{N/2}}\,dt,
\qquad x,y\in B_1,\quad x\ne y,\\
\omega(x)=1-|x|^2,\qquad
\kappa_{N,s}=\frac{\Gamma(N/2)}{4^s\pi^{N/2}\Gamma(s)^2}.
\end{gathered}
\end{equation}
For $x,y\in B_R$ with $x\ne y$, scaling gives
\[
G_{s,R}(x,y)=R^{2s-N}G_s(x/R,y/R),\qquad
0<G_{s,R}(x,y)\le K_s(x-y).
\]
Put
\[
\tau_R(x)=(R^2-|x|^2)_+,\qquad
\delta_R(x)=R-|x|\quad(x\in B_R),\qquad
\tau_R=(R+|x|)\delta_R.
\]
For fixed $x\ne y$, the upper limit in the scaled integral is
\[
\frac{\tau_R(x)\tau_R(y)}{R^2}
=R^2-|x|^2-|y|^2+\frac{|x|^2|y|^2}{R^2}\uparrow\infty
\quad\bigl(R>\max\{|x|,|y|\}\bigr).
\]
The  integral then gives
\begin{equation}\label{eq:green-exhaustion}
G_{s,R}(x,y)\uparrow K_s(x-y)\qquad(R\to\infty).
\end{equation}
For $\Omega=B_R(c)$, we write
$G_\Omega(x,y)=G_{s,R}(x-c,y-c)$ and
$\mathcal G_\Omega f=\int_\Omega G_\Omega(\cdot,y)f(y)\,dy$.
The same formulas define potentials for measurable sources whenever
the integrals are finite.

The boundary factor $\tau_R^s$
is built into the space
\[
X_R=\{w=\tau_R^s z:\ z\in C(\overline{B_R})\text{ radial}\},
\qquad \|w\|_{X_R}=\|z\|_{C(\overline{B_R})},
\]
where $w=0$ outside $B_R$. Thus $w\in X_R$ means that
$w/\tau_R^s$ extends continuously to the closed ball; its boundary
values always mean this extension. For $D>-2s$, define
\[
(\mathcal G^{(D)}_{s,R}f)(x)
=\int_{B_R}G_{s,R}(x,y)|y|^D f(y)\,dy,\qquad x\in B_R,
\]
again with zero values outside $B_R$.

The pointwise positive cone in $X_R$ and its interior are
\[
P_R=\{w\in X_R:w\ge0\},\qquad
\operatorname{int}P_R
=\{\tau_R^s z:\min_{\overline{B_R}}z>0\}.
\]
It is normal: $0\le w\le z$ implies
$\|w\|_{X_R}\le\|z\|_{X_R}$. Put $K_R=\mathcal G_{s,R}$ and,
for $u_+=\max\{u,0\}$, define
\begin{equation}\label{eq:composite}
\mathcal N_R(u)=\mathcal G^{(a)}_{s,R}
\left[\left(\mathcal G^{(b)}_{s,R}(u_+^q)\right)^p\right].
\end{equation}
Lemma \ref{lem:ball-maps} will give compactness and strong positivity
of $K_R$. The composition $\mathcal N_R$ eliminates the second
component; its properties are verified before Proposition
\ref{prop:ball-continuum}.

Our ball solutions satisfy
\begin{equation}\label{eq:ball-green-system}
\begin{cases}
u=\mu K_Ru+\mathcal G^{(a)}_{s,R}(v^p),\\
v=\mathcal G^{(b)}_{s,R}(u^q),
\end{cases}
\qquad x\in B_R,\quad \mu\ge0.
\end{equation}
Here $(u,v)\in P_R\times P_R$. Equivalently, they solve the zero exterior problem
\begin{equation}\label{eq:aux-system}
\begin{cases}
(-\Delta)^su=\mu u+|x|^av^p,\\
(-\Delta)^sv=|x|^bu^q,
\end{cases}
\quad\text{in }B_R,\qquad u=v=0\quad\text{on }\R^N\setminus B_R.
\end{equation}
The equations hold weakly in $B_R$ and pointwise off the origin.
We use $\mu=0$ for the Pohozaev argument and $\mu>0$ for the
construction. Ball functions are always extended by zero to the
whole complement, not only assigned a trace on $\partial B_R$.

\subsection{Positive solutions on balls}\label{sec:existence}

We first establish the Green mapping properties for every $D>-2s$.
We then exclude $\mu=0$ under \eqref{eq:supercritical} and construct
a connected set of solutions with $\mu>0$.

\begin{lemma}\label{lem:ball-maps}
Let $N\ge2$, $0<s<1$, $R>0$, and $D>-2s$.
For a bounded radial function
$f\in C(B_R\setminus\{0\})\cap L^\infty(B_R)$, set
$w=\mathcal G^{(D)}_{s,R}f$. Then:
\begin{enumerate}[label=\textup{(\roman*)}]
\item $w\in X_R\cap H^s_0(B_R)$, and
\[
\|w\|_{X_R}+\|w\|_{H^s(\R^N)}
\le C_{N,s,D,R}\|f\|_{L^\infty(B_R)}.
\]
\item If $(f_j)$ is a sequence of such functions with
$\sup_j\|f_j\|_{L^\infty(B_R)}<\infty$, then
$\mathcal G^{(D)}_{s,R}f_j$ has a subsequence converging in $X_R$.
\item If $f\ge0$ and $\int_{B_R}f\,dx>0$, then
\[
\min_{\overline{B_R}}\frac{w}{\tau_R^s}>0.
\]
\end{enumerate}
The constant in \textup{(i)} is independent of $f$.
\end{lemma}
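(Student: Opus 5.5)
The proof rests on the explicit Boggio kernel \eqref{eq:boggio} and its scaling. By scaling $x\mapsto x/R$ it suffices to treat $R=1$, because $\mathcal G^{(D)}_{s,R}f(x)=R^{2s+D}\mathcal G^{(D)}_{s,1}[f(R\,\cdot)](x/R)$. The standard two-sided estimate
\[
G_s(x,y)\asymp |x-y|^{2s-N}\min\Bigl\{1,\frac{\omega(x)^s\omega(y)^s}{|x-y|^{2s}}\Bigr\}
\]
follows from the integral formula. We also use the pointwise bound $G_s(x,y)\le C\,\omega(x)^s\omega(y)^s|x-y|^{-N}$ near the boundary.

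\textbf{Step (i).} First, $w$ is bounded. Since $G_s\le K_s$, the estimate \eqref{eq:weighted-kernel-local} gives $|w|\le C\|f\|_\infty$. Next, $w/\omega^s$ is bounded, and here the main work is near $\partial B_1$. We split $y$ into $|y|<1/2$ and $|y|\ge1/2$. For $|y|<1/2$ the weight is integrable and $G_s(x,y)\le C\omega(x)^s$ when $|x|>3/4$. For $|y|\ge1/2$ the weight $|y|^D$ is bounded, so the classical estimate $\mathcal G_{s,1}1=c(1-|x|^2)^s$ bounds this part by $C\|f\|_\infty\omega(x)^s$.

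We then need continuity of $z=w/\omega^s$ on $\overline{B_1}$. Away from $\partial B_1$ it follows from the continuity of $\mathcal G$ applied to locally integrable sources, using dominated convergence with the kernel bound. At the boundary we write
\[
z(x)=\int\frac{G_s(x,y)}{\omega(x)^s}|y|^Df(y)\,dy.
\]
Here $G_s(x,y)/\omega(x)^s$ converges as $x\to x_0\in\partial B_1$ to $\kappa_{N,s}s^{-1}\omega(y)^s|x_0-y|^{-N}$, and a uniform integrable majorant is available. This step is the main obstacle: the majorant must control the region $y$ near $x_0$, where $|x-y|^{-N}\omega(y)^s$ is borderline. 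I would use the bound $G_s(x,y)/\omega(x)^s\le C\min\{|x-y|^{s-N},\,\omega(y)^s|x-y|^{-N}\}$, which is uniformly integrable, and conclude by a Vitali-type argument. Radiality of $w$ is inherited from $f$.

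For the $H^s_0$ membership and the norm bound, we approximate $|y|^Df$ by $C_c^\infty(B_1)$ functions. Since $D>-2s$, one has $|y|^Df\in L^{2N/(N+2s)}$ locally exactly when $D>-(N+2s)/2$. If $-2s<D\le-(N+2s)/2$, we instead test the weak identity directly: $\mathcal E_s(w,w)=\int|y|^Dfw\le C\|f\|_\infty\|w\|_\infty\int|y|^D$, which is finite since $D>-N$. This gives $\|w\|_{H^s}\le C\|f\|_\infty$ once the monotone approximation of $f$ truncated near $0$ justifies the identity.

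\textbf{Step (ii).} This follows from Arzel\`a--Ascoli applied to $z_j=w_j/\omega^s$. The majorants in step (i) depend only on $\sup_j\|f_j\|_\infty$, so the same argument gives equicontinuity. In the interior we use the continuity of $y\mapsto G_s(\cdot,y)$ in $L^1$ with the weight $|y|^D$. Near the boundary we use the uniform integrability of $G_s(x,y)/\omega(x)^s$ established in step (i).

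\textbf{Step (iii).} Since $f\ge0$ has positive integral, there is a compact set $A\Subset B_1\setminus\{0\}$ on which $\int_A f>0$. On $A$ we have the lower bound $G_s(x,y)\ge c\,\omega(x)^s$ for $x\in\overline{B_1}$, $y\in A$: the upper limit $\omega(x)\omega(y)\le C$ keeps the integrand bounded below, and $\int_0^{\omega(x)\omega(y)}t^{s-1}\,dt\asymp\omega(x)^s$. Hence $z\ge c\int_A|y|^Df>0$ on $\overline{B_1}$, including its boundary extension.
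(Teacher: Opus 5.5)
Your proof is correct, but you reach the boundary behaviour of $w/\tau_R^s$ by a different mechanism than the paper.

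\textbf{The paper's route.} It splits $w=w_0+w_1$ with a cutoff near the origin. It gets $C^1$ control of $w_0/\tau_R^s$ near $\partial B_R$ from the explicit quotient formula. It quotes Ros-Oton--Serra's boundary regularity theorem for $w_1/\tau_R^s\in C^\eta(\overline{B_R})$. Finally it patches these together with a separate modulus at the origin and interior H\"older estimates.

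\textbf{Your route.} You stay entirely with the Boggio formula. The quotient
$k(x,y)=\kappa_{N,s}\omega(y)^s\int_0^1\eta^{s-1}(|x-y|^2+\omega(x)\omega(y)\eta)^{-N/2}\,d\eta$
is defined and continuous in $x\in\overline{B_1}\setminus\{y\}$. Your bound $k(x,y)\le C|x-y|^{s-N}$ is valid:
\begin{itemize}
\item if $\omega(x)\le|x-y|$, then $\omega(y)\le3|x-y|$ by $|\omega(x)-\omega(y)|\le2|x-y|$, and you use $k\le\kappa_{N,s}s^{-1}\omega(y)^s|x-y|^{-N}$;
\item otherwise you use $G_s\le K_s$.
\end{itemize}
Near the origin, $G_s\le K_s$ together with \eqref{eq:weighted-kernel-local} handles the weight. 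These give uniform integrability of $k(x,\cdot)|\cdot|^D$. Vitali then makes $x\mapsto k(x,\cdot)|\cdot|^D$ continuous from the compact set $\overline{B_1}$ into $L^1(B_1)$. That single fact yields the sup bound on $w/\omega^s$ and a modulus of continuity depending only on $\|f\|_\infty$. So you get (i) and (ii) simultaneously, without using continuity of $f$.

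\textbf{Comparison.} The paper's route gives quantitative H\"older control up to the boundary and leans on a general regularity theorem. Yours is self-contained and more elementary, and it delivers exactly the equicontinuity that Arzel\`a--Ascoli needs. In (iii), your global lower bound $G_s(x,y)\ge c\,\omega(x)^s\omega(y)^s$ (from $|x-y|^2+t\le5$) is a uniform version of the paper's Fatou argument at the boundary.

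\textbf{Minor remarks.}
\begin{itemize}
\item The second case of your $H^s$ step is empty. Since $N>2s$, one has $-2s>-(N+2s)/2$, so $|y|^Df\in L^{2N/(N+2s)}(B_1)$ always. Your alternative bound via $\|w\|_\infty\int|y|^D$ would work regardless.
\item Interior continuity should be argued by uniform integrability, as you do at the boundary, rather than by dominated convergence. The diagonal singularity moves with $x$, so there is no single integrable majorant.
\end{itemize}
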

\begin{proof}
Rotation invariance of \eqref{eq:boggio} shows that $w$ is radial.
Choose $r>2N/(N+2s)$ such that $rD>-N$. For $D<0$ this is possible
because
\[
\frac{2N}{N+2s}<\frac{N}{2s}<\frac{N}{-D};
\]
for $D\ge0$ there is no upper restriction on $r$. Consequently
\[
\||x|^Df\|_{L^r(B_R)}\le C_{N,D,r,R}\|f\|_\infty,
\qquad r'=\frac r{r-1}<\frac{2N}{N-2s}.
\]
First take a smooth radial $f$ supported away from zero and the
boundary. Test its weak equation by $w$. The fractional Poincar\'e
and Sobolev inequalities give
\[
\begin{aligned}
\|w\|_{H^s}^2
&\le C_R\mathcal E_s(w,w)
=C_R\int_{B_R}|x|^Dfw\\
&\le C_R\||x|^Df\|_r\|w\|_{r'}\\
&\le C_R\||x|^Df\|_r\,\mathcal E_s(w,w)^{1/2}
\le C_R\||x|^Df\|_r\|w\|_{H^s}.
\end{aligned}
\]
Dividing by $\|w\|_{H^s}$ when $w\ne0$ proves the bound; it is immediate for $w=0$.
For general $f$, radial cutoffs and mollification give $f_j$ with
$|f_j|\le\|f\|_\infty$, $f_j\to f$ a.e., and
$|x|^Df_j\to |x|^Df$ in $L^r$. 

The same estimate for differences
gives convergence of the weak solutions in $H^s_0(B_R)$.
By \eqref{eq:weighted-kernel-local} and
$G_{s,R}\le K_s$, their Green integrals converge pointwise to $w$,
this identifies the weak limit.

It remains to control the boundary quotient. Assume
$\|f\|_\infty\le1$. Formula \eqref{eq:weighted-kernel-local} gives
\[
\|w\|_{L^\infty(B_R)}
\le\sup_{x\in B_R}\int_{B_R}K_s(x-y)|y|^D\,dy\le C.
\]
For $0<\rho<R/4$ and $|x|\le\rho/2$, split the integral into
$B_\rho$ and its complement. The first part, at $x$ and at zero,
is at most $C\rho^{D+2s}$. On the second part, the kernel and
its $x$ derivatives are bounded for $|x|\le\rho/2$, with an
integrable bound in $y$. Hence
\[
|w(x)-w(0)|\le C\rho^{D+2s}+C_\rho|x|.
\]
Since $D+2s>0$, first choosing $\rho$ small and then $|x|$ small
gives a modulus at zero independent of $f$. On each compact subset
of $B_R\setminus\{0\}$, the source $|x|^Df$ is uniformly bounded;
interior estimates and the preceding global supremum bound give a
uniform H\"older modulus there.

Choose $\chi\in C_c^\infty(B_{R/2})$ radial, with $\chi=1$ on
$B_{R/4}$ and $0\le\chi\le1$, and write
\[
w=w_0+w_1,\qquad
w_0=\mathcal G_{s,R}(\chi|x|^Df),\quad
w_1=\mathcal G_{s,R}((1-\chi)|x|^Df).
\]
For $|x|\ge3R/4$ and $|y|\le R/2$,
\[
|x-y|\ge |x|-|y|\ge 3R/4-R/2=R/4.
\]
In the scaled version of \eqref{eq:boggio}, substitute
$t=\tau_R(x)\tau_R(y)\eta/R^2$. The resulting formula is
\[
\frac{G_{s,R}(x,y)}{\tau_R(x)^s}
=\frac{\kappa_{N,s}\tau_R(y)^s}{R^{2s}}
\int_0^1
\frac{\eta^{s-1}\,d\eta}
{(|x-y|^2+\tau_R(x)\tau_R(y)\eta/R^2)^{N/2}}.
\]
The displayed lower bound for $|x-y|$ permits one derivative in
$x$ up to $\partial B_R$. Since $D>-N$,
\[
\left\|\frac{w_0}{\tau_R^s}\right\|_
{C^1(\overline{B_R}\setminus B_{3R/4})}
\le C\int_{B_{R/2}}|y|^D\,dy\le C.
\]
The other source is bounded because $1-\chi=0$ near zero.
Ros-Oton-Serra \cite[Theorem 1.2]{ros-boundary}, applied to this
bounded source, and $\tau_R=(R+|x|)\delta_R$ imply
\[
\left\|\frac{w_1}{\tau_R^s}\right\|_{C^\eta(\overline{B_R})}
\le C\|(1-\chi)|x|^Df\|_\infty\le C
\]
for some $\eta>0$. Combining this bound with the origin and
interior moduli proves uniform boundedness and equicontinuity
of $w/\tau_R^s$. This proves the $X_R$ estimate, and
Arzel\`a-Ascoli proves compactness.

If $f\ge0$ and $\int f>0$, the positivity of the kernel gives $w>0$
inside $B_R$. At $\zeta\in\partial B_R$, the quotient formula
and Fatou's lemma give
\[
\left(\frac w{\tau_R^s}\right)(\zeta)
\ge \frac{\kappa_{N,s}}{sR^{2s}}
\int_{B_R}\frac{\tau_R(y)^s|y|^Df(y)}{|\zeta-y|^N}\,dy>0.
\]
The quotient is therefore positive everywhere on the compact
set $\overline{B_R}$, so its minimum is positive.
\end{proof}

The next argument uses the dilation of the Green kernel directly.
Only the bounds supplied by Lemma \ref{lem:ball-maps} are needed at
the origin and the boundary.

\begin{lemma}\label{lem:ball-pohozaev}
Assume \eqref{eq:parameters-main} and \eqref{eq:supercritical}.
For every $R>0$, there is no nonzero nonnegative pair
$(u,v)\in X_R\times X_R$ satisfying
\[
\begin{cases}
u=\mathcal G^{(a)}_{s,R}(v^p),\\
v=\mathcal G^{(b)}_{s,R}(u^q),
\end{cases}
\qquad x\in B_R.
\]
\end{lemma}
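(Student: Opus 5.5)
The plan is a Pohozaev identity obtained by dilating the Green kernel instead of differentiating the solution. Write $F=|x|^bu^q$ and $G=|x|^av^p$. Then $u=\int_{B_R}G_{s,R}(\cdot,y)G(y)\,dy$, $v=\int_{B_R}G_{s,R}(\cdot,x)F(x)\,dx$, and by the symmetry of $G_{s,R}$
\[
\int_{B_R}Fu=\iint F(x)G(y)G_{s,R}(x,y)\,dx\,dy=\int_{B_R}Gv.
\]
For $\lambda\ge1$ consider
\[
I(\lambda)=\iint_{B_R\times B_R}F(x)G(y)\,G_{s,\lambda R}(x,y)\,dx\,dy,
\qquad
G_{s,\lambda R}(x,y)=\lambda^{2s-N}G_{s,R}(x/\lambda,y/\lambda).
\]
I will compute $\lambda^{-1}(I(\lambda)-I(1))$ as $\lambda\downarrow1$ in two ways: once from the explicit Boggio formula \eqref{eq:boggio}, which gives a strictly positive lower bound, and once from the scaling, which gives $-\gamma\int Fu$ with $\gamma\le0$ by \eqref{eq:supercritical}. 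The two must agree, which forces a contradiction.

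\textbf{Positivity from the kernel.} In the scaled form of \eqref{eq:boggio}, the upper limit $\tau_{\lambda R}(x)\tau_{\lambda R}(y)/(\lambda R)^2$ is strictly increasing in $\lambda$ for fixed $x,y\in B_R$. Hence $\partial_\lambda G_{s,\lambda R}(x,y)|_{\lambda=1}$ exists and is strictly positive. By Fatou,
\[
\liminf_{\lambda\downarrow1}\frac{I(\lambda)-I(1)}{\lambda-1}
\ge\iint F(x)G(y)\,\partial_\lambda G_{s,\lambda R}(x,y)\big|_{\lambda=1}\,dx\,dy>0,
\]
provided the pair is nonzero. If $u\not\equiv0$, Lemma \ref{lem:ball-maps}(iii) shows that $v$ and then $u$ are positive in $B_R$, so $F,G>0$ a.e. This derivative is essentially the Ros-Oton–Serra boundary term $\int(u/\delta_R^s)(v/\delta_R^s)\,x\cdot\nu$, but I will not need its exact value.

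\textbf{Evaluation by scaling.} Change variables $x=\lambda x'$, $y=\lambda y'$ in $I(\lambda)$. Then
\[
I(\lambda)=\lambda^{N+2s}\iint F(\lambda x')G(\lambda y')G_{s,R}(x',y')\,dx'\,dy',
\]
where the integration is over $x',y'\in B_{R/\lambda}$. Outside $B_R$ we have $F=G=0$, and $F,G$ are continuous off the origin and vanish continuously at $\partial B_R$ because $u,v\in X_R$. So I may differentiate the functions $F(\lambda x)$, $G(\lambda y)$ at $\lambda=1$ instead of the kernel.

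This gives $(N+2s)\int Fu+\int(x\cdot\nabla F)u+\int(y\cdot\nabla G)v$, after using Fubini with $u=\mathcal G^{(a)}_{s,R}(v^p)$. Next, $(x\cdot\nabla F)u=bFu+q|x|^b u^q\,x\cdot\nabla u$. Integrating $|x|^b x\cdot\nabla(u^{q+1})$ by parts gives $-(N+b)\int|x|^bu^{q+1}$ with no boundary terms. There is none at $\partial B_R$ since $u\sim\tau_R^s\to0$, and the one at $\partial B_\epsilon$ is $O(\epsilon^{N+b})$, as in Lemma \ref{lem:virial}. Hence $\int(x\cdot\nabla F)u=\bigl(b-\tfrac{q(N+b)}{q+1}\bigr)\int Fu$, and similarly for $G$. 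Summing with $\int Fu=\int Gv$,
\[
\frac{d}{d\lambda}I\Big|_{\lambda=1}
=\Bigl(N+2s+a+b-\tfrac{q(N+b)}{q+1}-\tfrac{p(N+a)}{p+1}\Bigr)\int Fu
=\Bigl(\tfrac{N+a}{p+1}+\tfrac{N+b}{q+1}-(N-2s)\Bigr)\int Fu=\gamma\int Fu\le0.
\]
This contradicts the strict positivity above, so $u\equiv0$ and then $v\equiv0$.

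\textbf{Main obstacle.} The hard step is justifying the differentiation in $\lambda$ on the scaling side. The difference quotients $(F(\lambda x)-F(x))/(\lambda-1)$ need an integrable majorant after pairing with $u$. Near the origin I expect to use the gradient bound $|x||\nabla u|\le C$, obtained as in Lemma \ref{lem:regularity} from the local Riesz structure, which gives $|x\cdot\nabla F|\le C|x|^b$, integrable since $b>-N$. For $0<q<1$ I will also use the positivity of $u$ near $0$. Near $\partial B_R$ I will use the $C^1$ bound of $u/\tau_R^s$ on the boundary layer from Lemma \ref{lem:ball-maps}, together with the standard $|\nabla u|\le C\delta_R^{s-1}$, which is integrable against $u^q\lesssim\delta_R^{sq}$. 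A cleaner alternative is to move the derivative onto a cutoff: insert $\eta(|x|/R)$-type cutoffs as in Lemma \ref{lem:virial}, differentiate only smooth compactly supported objects, and let the cutoff tend to the indicator at the end, with the error controlled by the $X_R$ bounds.
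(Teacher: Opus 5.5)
Your argument is correct, and it arrives at exactly the paper's identity \eqref{eq:ball-pohozaev}. For $R=1$, \eqref{eq:green-ball-dilation} shows that the $\lambda$-derivative of $G_{s,\lambda}(x,y)=\lambda^{2s-N}G_s(x/\lambda,y/\lambda)$ at $\lambda=1$ is $(2s-N)G_s-(x\cdot\nabla_x+y\cdot\nabla_y)G_s=H_s$. Your two evaluations of the right derivative of $I(\lambda)$ therefore say precisely $\gamma E_1=\iint F(x)G(y)H_s(x,y)\,dx\,dy$.

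The difference lies in how the dilation is moved from the kernel onto the solution.
\begin{itemize}
\item The paper differentiates the Green potentials along compactly supported fields $X_{\varepsilon,\tau}$. This needs the paired formula \eqref{eq:ball-paired-green} with a common deletion radius, the kernel bound \eqref{eq:green-vector-bound}, and the weighted integrability \eqref{eq:ball-weighted-integrability} to remove the cutoffs. In return it uses only interior smoothness of $u,v$, with no gradient information at the origin or at $\partial B_1$. Your ``cleaner alternative'' is essentially this route.
\item Your change of variables $x=\lambda x'$, $y=\lambda y'$ carries out the kernel-side integration by parts exactly. Domain monotonicity $G_{s,\lambda R}\uparrow$ plus Fatou then gives the sign from a one-sided bound alone. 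So you need neither finiteness of $\iint FGH_s$ nor any principal-value care when $2s\le1$.
\item The price of your route is that you must prove $x\cdot\nabla F,\ x\cdot\nabla G\in L^1(B_R)$ and justify the product rule.
\end{itemize}

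Two points in that justification should be adjusted.

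First, Lemma \ref{lem:ball-maps} gives a $C^1$ bound only for $w_0/\tau_R^s$, the piece whose source lies in $B_{R/2}$. The remaining piece is only $C^\eta$ after division by $\tau_R^s$. The boundary control must therefore come from the standard estimate $|\nabla u|\le C\delta_R^{s-1}$, which you also quote. Combine it with $u\ge c\tau_R^s$ from Lemma \ref{lem:ball-maps}(iii) to bound $u^{q-1}$ when $q<1$. This gives $|\nabla F|\le C\delta_R^{sq-1}$ near $\partial B_R$, and $|x||\nabla F|\le C|x|^b$ near $0$.

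Second, when $sq<1$ and $R/\lambda<|x|<R$, the segment $[x,\lambda x]$ crosses $\partial B_R$. Bounding the difference quotient by the supremum of $|\nabla F|$ on that segment therefore does not give an integrable majorant. Instead:
\begin{itemize}
\item Write $F(\lambda x)-F(x)=\int_1^\lambda\mu^{-1}g(\mu x)\,d\mu$ with $g(y)=y\cdot\nabla F(y)\in L^1$, using absolute continuity of $F$ along rays.
\item Strong continuity of dilations in $L^1$ then gives $(F(\lambda\cdot)-F)/(\lambda-1)\to g$ in $L^1$.
\item Pair this with $\mathcal G_{s,R}[G(\lambda\cdot)]=\lambda^a\mathcal G^{(a)}_{s,R}[v(\lambda\cdot)^p]$. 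This converges uniformly to $u$ by Lemma \ref{lem:ball-maps}(i), since $v(\lambda\cdot)^p\to v^p$ uniformly.
\item This justifies the product rule in $\lambda$. The $G$-term is handled the same way after using kernel symmetry to write it as $\int v\,(G(\lambda\cdot)-G)$.
\end{itemize}
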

\begin{proof}
Green kernel scaling and the exponents in \eqref{eq:dilation}
reduce the proof to $R=1$.
If one component is zero, both are zero. Otherwise, positivity of
the kernel and Lemma \ref{lem:ball-maps} give
\[
u,v>0\quad\hbox{in }B_1,\qquad
u,v\le C(1-|x|)^s.
\]
The constants in this proof may depend on this fixed pair.
Put $F=|x|^bu^q$, $G=|x|^av^p$, and $\delta(x)=1-|x|$.
Kernel symmetry gives the finite positive number
\[
E_1:=\int_{B_1}Fu
=\iint_{B_1^2}F(x)G_s(x,y)G(y)\,dx\,dy
=\int_{B_1}Gv>0.
\]
Indeed, $u,v$ are bounded, $a,b>-N$, and $s(p+1),s(q+1)>0$.
These facts also imply
\begin{equation}\label{eq:ball-weighted-integrability}
\int_{B_1}\frac{Fu+Gv}{\delta}\,dx<\infty.
\end{equation}

Write $\omega(x)=1-|x|^2$ and $\zeta=\omega(x)\omega(y)$.
In \eqref{eq:boggio}, substitute the integration variable
$t=\lambda^2 r$ in $G_s(\lambda x,\lambda y)$.
Its new upper limit is
$\lambda^{-2}-|x|^2-|y|^2+\lambda^2|x|^2|y|^2$.
Differentiating at $\lambda=1$ therefore gives
\begin{equation}\label{eq:green-ball-dilation}
\begin{aligned}
(x\cdot\nabla_x+y\cdot\nabla_y)G_s(x,y)
&=(2s-N)G_s(x,y)-H_s(x,y),\\
H_s(x,y)
&=\frac{2\kappa_{N,s}(1-|x|^2|y|^2)\zeta^{s-1}}
{(|x-y|^2+\zeta)^{N/2}}>0
\qquad(x\ne y).
\end{aligned}
\end{equation}
The lower bound
\[
G_s(x,y)\ge
\frac{\kappa_{N,s}}s
\frac{\zeta^s}{(|x-y|^2+\zeta)^{N/2}}
\]
and $1-|x|^2|y|^2\le\omega(x)+\omega(y)$ yield
\[
H_s(x,y)\le
C\bigl(\delta(x)^{-1}+\delta(y)^{-1}\bigr)G_s(x,y).
\]
Multiplying by $F(x)G(y)$ and using
\eqref{eq:ball-weighted-integrability} proves
\begin{equation}\label{eq:green-ball-positive-remainder}
0<\iint_{B_1^2}F(x)G(y)H_s(x,y)\,dx\,dy<\infty.
\end{equation}

We justify the use of $x\cdot\nabla$ by compact cutoffs.
For a vector field $Y$, define
\[
\mathcal D_Y(x,y)
=Y(x)\cdot\nabla_xG_s(x,y)+Y(y)\cdot\nabla_yG_s(x,y).
\]
Put $A_s(x,y)=\kappa_{N,s}\zeta^{s-1}
(|x-y|^2+\zeta)^{-N/2}$.
Differentiating \eqref{eq:boggio} gives the paired formula
\[
\begin{aligned}
\mathcal D_Y
={}&-\bigl((N-2s)G_s+2\zeta A_s\bigr)
\frac{(Y(x)-Y(y))\cdot(x-y)}{|x-y|^2}\\
&-2\zeta A_s\left(
\frac{x\cdot Y(x)}{\omega(x)}+
\frac{y\cdot Y(y)}{\omega(y)}\right).
\end{aligned}
\]
The preceding lower bound gives $\zeta A_s\le sG_s$.
Using also $\omega\ge\delta$, we obtain
\begin{equation}\label{eq:green-vector-bound}
|\mathcal D_Y(x,y)|
\le CG_s(x,y)\left(
\frac{|Y(x)-Y(y)|}{|x-y|}
+\frac{|Y(x)|}{\delta(x)}+\frac{|Y(y)|}{\delta(y)}
\right).
\end{equation}
For $X\in C_c^1(B_1\setminus\{0\};\R^N)$, differentiation of the
two potentials and addition on the common set $|x-y|>\varepsilon$
give
\begin{equation}\label{eq:ball-paired-green}
\int_{B_1}FX\cdot\nabla u+\int_{B_1}GX\cdot\nabla v
=\iint_{B_1^2}F(x)G(y)\mathcal D_X(x,y)\,dx\,dy.
\end{equation}
Here $u,v$ are smooth on compact subsets of $B_1\setminus\{0\}$:
the local iteration in Lemma \ref{lem:regularity} applies because
$G_s-K_s$ is smooth there and the components are positive.
On such compact subsets the constant subtraction in the proof of
Lemma \ref{lem:virial} justifies the differentiated potentials.
On the right, \eqref{eq:green-vector-bound} bounds the paired
integrand by $C_XF(x)G(y)G_s(x,y)$, which is integrable.
Thus the common deletion radius may tend to zero, also when $2s\le1$.

Choose cutoffs $\chi_\varepsilon=0$ on $B_\varepsilon$ and
$\chi_\varepsilon=1$ outside $B_{2\varepsilon}$, and
$\psi_\tau=0$ where $\delta\le\tau$ and $\psi_\tau=1$ where
$\delta\ge2\tau$. Take their gradients bounded by $C/\varepsilon$
and $C/\tau$, respectively, and set
$X_{\varepsilon,\tau}(x)=x\chi_\varepsilon(x)\psi_\tau(x)$.
For $0<\varepsilon,\tau<1/8$, write
$X_{\varepsilon,\tau}-x=Y_0+Y_\partial$, where the two terms
are supported in $B_{2\varepsilon}$ and $\{\delta<2\tau\}$.
They satisfy
\[
\|\nabla Y_0\|_\infty\le C,\qquad
\|\nabla Y_\partial\|_\infty\le C/\tau,\qquad
|Y_0|\le C\varepsilon,\quad |Y_\partial|\le C.
\]
In \eqref{eq:green-vector-bound}, the difference $Y(x)-Y(y)$
vanishes unless at least one point belongs to the support of $Y$.
For the boundary term also use $\tau^{-1}\le2\delta^{-1}$ on
that support. Integrating the resulting bounds gives
\[
\begin{aligned}
&\iint_{B_1^2}F(x)G(y)
|\mathcal D_{X_{\varepsilon,\tau}}-\mathcal D_{\mathrm{id}}|\,dx\,dy\\
&\quad\le C\int_{B_{2\varepsilon}}(Fu+Gv)\,dx
+C\int_{\{\delta<2\tau\}}\frac{Fu+Gv}{\delta}\,dx\\
&\quad\le C\left(\varepsilon^{N+a}+\varepsilon^{N+b}
+\tau^{s(p+1)}+\tau^{s(q+1)}\right)\longrightarrow0.
\end{aligned}
\]
Here $\mathrm{id}(x)=x$. All four powers are positive.
Consequently the right side of
\eqref{eq:ball-paired-green} converges, by
\eqref{eq:green-ball-dilation}, to
$(2s-N)E_1-\iint F(x)G(y)H_s(x,y)\,dx\,dy$.

On its left, integrate by parts before taking the cutoff limit:
\[
\int FX_{\varepsilon,\tau}\cdot\nabla u
=-\frac1{q+1}\int u^{q+1}
\operatorname{div}(|x|^bX_{\varepsilon,\tau}),
\]
and use the analogous identity for $G,v,a,p$.
The derivatives of the two cutoffs contribute at most the same
four vanishing powers just displayed. Since
$\operatorname{div}(|x|^bx)=(N+b)|x|^b$, the limit is
$-[(N+b)/(q+1)+(N+a)/(p+1)]E_1$.
Equating both limits proves
\begin{equation}\label{eq:ball-pohozaev}
\left(\frac{N+b}{q+1}+\frac{N+a}{p+1}-(N-2s)\right)E_1
=\iint_{B_1^2}F(x)G(y)H_s(x,y)\,dx\,dy>0.
\end{equation}
The coefficient on the left is nonpositive under
\eqref{eq:supercritical}, a contradiction.
\end{proof}

Lemma \ref{lem:ball-pohozaev} excludes $\mu=0$ in the critical and
supercritical range. We now construct positive solutions with $\mu>0$.

\subsubsection{An unbounded connected set of positive solutions}

For $u\ge0$, substitute
$v=\mathcal G^{(b)}_{s,R}(u^q)$ from the second identity of
\eqref{eq:ball-green-system} into its first identity. This gives
$u=\mu K_Ru+\mathcal N_R(u)$, with \eqref{eq:composite}.
Since $\|u\|_\infty\le R^{2s}\|u\|_{X_R}$, two applications of
Lemma \ref{lem:ball-maps}\textup{(i)} give
\[
\|\mathcal G^{(b)}_{s,R}(u_+^q)\|_\infty
\le C_R\|u\|_{X_R}^q,
\]
and hence
\begin{equation}\label{eq:degree}
\|\mathcal N_R(u)\|_{X_R}
\le C_R\|\mathcal G^{(b)}_{s,R}(u_+^q)\|_\infty^p
\le C_R\|u\|_{X_R}^{pq}.
\end{equation}
Positive powers are continuous on uniformly bounded sets of
nonnegative functions in the uniform norm, so
Lemma \ref{lem:ball-maps}\textup{(i)} also makes $\mathcal N_R$
continuous. The displayed inner bound and part \textup{(ii)},
applied to the outer Green operator, make $\mathcal N_R$ compact.
Moreover, linearity of both Green operators gives
$\mathcal N_R(tu)=t^{pq}\mathcal N_R(u)$ for $t\ge0$.
All constants here depend only on $N,s,a,b,p,q,R$.

Let $\lambda_1(R)>0$ be the first Dirichlet eigenvalue of
$(-\Delta)^s$ on $B_R$. Strong positivity of $K_R$ implies that
its largest eigenvalue is the simple eigenvalue $1/\lambda_1(R)$,
and no other eigenvalue has a nonnegative eigenfunction. Its positive eigenfunction
is radial, by rotational invariance and simplicity.

For a real parameter $\lambda$, consider the single equation
\begin{equation}\label{eq:bifurcation}
u=e^\lambda K_Ru+\mathcal N_R(u),\qquad u\in X_R.
\end{equation}
A positive solution of \eqref{eq:bifurcation}, together with
$v=\mathcal G^{(b)}_{s,R}(u^q)$ and $\mu=e^\lambda$, gives
\eqref{eq:ball-green-system}.

\begin{proposition}\label{prop:ball-continuum}
Assume \eqref{eq:parameters-main} and \eqref{eq:supercritical}.
For each $R>0$, the positive solutions of \eqref{eq:bifurcation}
have a connected component
$\mathcal C_R\subset\R\times\operatorname{int}P_R$
whose closure contains $(\log\lambda_1(R),0)$.
On this component,
\[
0<\mu=e^\lambda<\lambda_1(R),\qquad
\sup_{(\lambda,u)\in\mathcal C_R}\|u\|_{X_R}=\infty.
\]
The closure and connectedness use the product topology.
\end{proposition}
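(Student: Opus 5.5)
This is a Rabinowitz global bifurcation argument for the compact map $(\lambda,u)\mapsto e^\lambda K_Ru+\mathcal N_R(u)$ on $X_R$, carried out on the cone $P_R$. Take the Dancer/Rabinowitz unilateral version for positive cones. The linear part $e^\lambda K_R$ is compact by Lemma \ref{lem:ball-maps}(ii) and strongly positive by Lemma \ref{lem:ball-maps}(iii). The nonlinearity satisfies $\|\mathcal N_R(u)\|_{X_R}\le C_R\|u\|_{X_R}^{pq}=o(\|u\|_{X_R})$ by \eqref{eq:degree}, since $pq>1$.

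\textbf{Bifurcation.} The characteristic value $\mu=\lambda_1(R)$ of $K_R$ is simple and has a positive eigenfunction. The Krein--Rutman/Dancer theorem therefore produces a connected component $\mathcal C_R$ of nontrivial solutions in $\R\times P_R$ whose closure contains $(\log\lambda_1(R),0)$. The component satisfies one of two alternatives: it is unbounded in $\R\times X_R$, or it meets another bifurcation point $(\log\mu_k,0)$. The second alternative is excluded because no other characteristic value has a nonnegative eigenfunction.

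\textbf{Interior of the cone.} Every nonzero $u\in P_R$ solving \eqref{eq:bifurcation} lies in $\operatorname{int}P_R$ by Lemma \ref{lem:ball-maps}(iii), because $u=e^\lambda K_Ru+\mathcal N_R(u)\ge e^\lambda K_Ru$. Solutions on the boundary of the cone are therefore only $u=0$. Near $u=0$, bifurcation can occur only from $\lambda=\log\lambda_1(R)$. So the component, with this point removed, stays inside $\R\times\operatorname{int}P_R$.

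\textbf{Bounds on $\mu$.} Let $\varphi_1>0$ be the first eigenfunction and test the weak form of \eqref{eq:aux-system} against it. This gives
\[
\lambda_1\int u\varphi_1=\mu\int u\varphi_1+\int|x|^av^p\varphi_1 .
\]
The last integral is strictly positive, so $\mu<\lambda_1(R)$. Since $\mu=e^\lambda>0$, the $\lambda$-range is bounded above. Unboundedness must then come either from $\lambda\to-\infty$ with $\|u\|$ bounded, or from $\|u\|\to\infty$.

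\textbf{Excluding $\lambda\to-\infty$ with bounded $u$ (main obstacle).} This is the step I expect to be hardest. Take a sequence on $\mathcal C_R$ with $\lambda_j\to-\infty$ and $\|u_j\|_{X_R}\le M$. By compactness of $\mathcal N_R$, a subsequence converges to $u$ with $u=\mathcal N_R(u)$.

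Then $u$ cannot be $0$. Indeed, if $u_j\to0$, divide by $\|u_j\|$: the normalized functions satisfy $\hat u_j=\mu_jK_R\hat u_j+o(1)$ with $\mu_j\to0$, so $\|\hat u_j\|\to0$, which is impossible since $\|\hat u_j\|=1$. Hence $u$ is a nonzero nonnegative solution of the $\mu=0$ system. Lemma \ref{lem:ball-pohozaev}, which uses \eqref{eq:supercritical}, rules this out.

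So $\|u\|_{X_R}$ is unbounded on $\mathcal C_R$. The topological care needed here concerns how "unbounded in the product space" is combined with $\lambda$ ranging over all of $\R$. Rather than proving $\lambda$ bounded below, I would work with the $\mu$-parametrization on $[0,\lambda_1)$. In that parametrization, $\mu=0$ is allowed in the closure but excluded as a solution by Lemma \ref{lem:ball-pohozaev}, and this gives the claimed unbounded supremum.
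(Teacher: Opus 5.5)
Your proposal is correct and follows the paper's proof essentially step for step. The shared steps are: a unilateral global bifurcation theorem on the cone (the paper invokes Cano-Casanova--L\'opez-G\'omez--Molina-Meyer, Theorem 4.1, and checks algebraic simplicity and the transversality $\partial_\lambda L_{\lambda_0}\phi_1=-\phi_1$); exclusion of other bifurcation points because only $\lambda_0=\log\lambda_1(R)$ has a nonnegative eigenfunction; the bound $0<\mu<\lambda_1(R)$ from testing with $\phi_1$; and exclusion of a bounded branch with $\lambda_j\to-\infty$ via $1\le e^{\lambda_j}\|K_R\|+C_R\|u_j\|_{X_R}^{pq-1}$ and Lemma~\ref{lem:ball-pohozaev}. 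Your closing detour through the $\mu$-parametrization is unnecessary, since the $\lambda_j\to-\infty$ argument already gives $\sup_{\mathcal C_R}\|u\|_{X_R}=\infty$.
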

\begin{proof}
Let $\lambda_0=\log\lambda_1(R)$ and $L_\lambda=I-e^\lambda K_R$.
We verify the hypotheses of Cano-Casanova-L\'opez-G\'omez-Molina-Meyer
\cite[Theorem 4.1]{cano}. The cone $P_R$ is normal with nonempty
interior. The family $e^\lambda K_R$ is compact and real analytic,
and $L_\lambda$ is invertible for
$e^\lambda\|K_R\|<1$, by its Neumann series.
The remainder $-\mathcal N_R$ is continuous and compact on
bounded sets. By \eqref{eq:degree} and $pq>1$,
\[
\frac{\|\mathcal N_R(u)\|_{X_R}}{\|u\|_{X_R}}
\le C_R\|u\|_{X_R}^{pq-1}\longrightarrow0
\quad(u\to0),
\]
uniformly in $\lambda$ on compact intervals. Finally, a nonzero
nonnegative solution satisfies
$u\ge e^\lambda K_Ru\in\operatorname{int}P_R$, by
Lemma \ref{lem:ball-maps}(iii). This verifies the strong positivity
condition on solutions required in that theorem.

Let $\phi_1>0$ be the first eigenfunction. Strong positivity of
$K_R$ gives its algebraically simple eigenvalue
$1/\lambda_1(R)$, and
\[
\ker L_{\lambda_0}=\operatorname{span}\{\phi_1\},\qquad
\partial_\lambda L_{\lambda_0}\phi_1=-\phi_1.
\]
For $w\in X_R$, kernel symmetry and
$K_R\phi_1=\phi_1/\lambda_1(R)$ give
\[
\int_{B_R}\phi_1L_{\lambda_0}w=0,\qquad
\int_{B_R}\phi_1\partial_\lambda L_{\lambda_0}\phi_1
=-\int_{B_R}\phi_1^2<0.
\]
Thus the crossing has algebraic multiplicity one. Moreover,
\[
\ker L_\lambda\cap(P_R\setminus\{0\})\ne\varnothing
\quad\Longleftrightarrow\quad \lambda=\lambda_0,
\]
because no other eigenvalue of a strongly positive compact
operator has a nonnegative eigenfunction. The cited theorem
therefore gives a connected component $\mathcal C_R$ that is
unbounded in $\R\times X_R$ and whose closure contains
$(\lambda_0,0)$.

Test the first weak equation by $\phi_1$. Symmetry of the energy
form and the eigenfunction equation yield
\[
(\lambda_1(R)-\mu)\int_{B_R}u\phi_1
=\int_{B_R}|x|^av^p\phi_1>0.
\]
Since $\int u\phi_1>0$, every point of the component satisfies
$0<\mu<\lambda_1(R)$.

Suppose that $\|u\|_{X_R}$ were bounded on this component.
Its unboundedness and $\lambda<\lambda_0$ would then give
$(\lambda_j,u_j)\in\mathcal C_R$ with $\lambda_j\to-\infty$.
In
\[
u_j=e^{\lambda_j}K_Ru_j+\mathcal N_R(u_j),
\]
the first term tends to zero and the second has a convergent
subsequence. Hence $u_j\to u_*=\mathcal N_R(u_*)$ in $X_R$.
Dividing the norm inequality by $\|u_j\|_{X_R}>0$ gives
\[
1\le e^{\lambda_j}\|K_R\|+
C_R\|u_j\|_{X_R}^{pq-1}.
\]
This excludes $u_*=0$. Setting
$v_*=\mathcal G^{(b)}_{s,R}(u_*^q)$ now gives a nonzero
nonnegative pair with $\mu=0$, contrary to Lemma
\ref{lem:ball-pohozaev}. Thus the $X_R$ norms are unbounded.
\end{proof}

On $\mathcal C_R$, $v=\mathcal G^{(b)}_{s,R}(u^q)$ is determined by
$u$, and the map $(\lambda,u)\mapsto u(0)+v(0)$ is continuous by
Lemma \ref{lem:ball-maps}. To select $u(0)+v(0)=1$, it suffices
to prove that the central sum is unbounded on $\mathcal C_R$.
We next show that a bound for this sum controls the supremum
norms on each fixed ball.

\subsection{Uniform bounds and compactness}\label{sec:green-profiles}

\subsubsection{Green potentials on expanding balls}

The following estimates hold under \eqref{eq:parameters-main}
for all $0<s<1$. The weighted critical inequality is needed for
the construction of positive ball solutions in Section \ref{sec:existence}, but not
for the compactness estimates below.

\begin{lemma}\label{lem:inner-central}
Let $N\ge2$, $0<s<1$, $R>0$, and let $h\ge0$ be radial in $B_R$.
If $z=\mathcal G_{s,R}h$ and $z(0)<\infty$, then
\begin{equation}\label{eq:green-mass}
\int_{B_R}z\,dx\le C_{N,s}R^Nz(0).
\end{equation}
The constant is independent of $R$ and $h$.
\end{lemma}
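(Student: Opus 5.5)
The plan is to use Tonelli's theorem to move the integral in $x$ onto the kernel. This reduces \eqref{eq:green-mass} to a pointwise comparison between the torsion function of the ball and the Green function with pole at the centre. Since $h\ge0$ and $G_{s,R}>0$,
\[
\int_{B_R}z\,dx=\int_{B_R}h(y)\,\varphi_R(y)\,dy,\qquad
\varphi_R(y)=\int_{B_R}G_{s,R}(x,y)\,dx=(\mathcal G_{s,R}1)(y),
\]
while $z(0)=\int_{B_R}G_{s,R}(0,y)h(y)\,dy$. It therefore suffices to prove
\[
\varphi_R(y)\le C_{N,s}R^N G_{s,R}(0,y)\qquad(y\in B_R\setminus\{0\}),
\]
after which multiplication by $h(y)\ge0$ and integration give the claim. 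Radial symmetry of $h$ will not be needed.

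First I reduce to $R=1$ by scaling. From $G_{s,R}(x,y)=R^{2s-N}G_s(x/R,y/R)$ one gets $\varphi_R(y)=R^{2s}\varphi_1(y/R)$ and $G_{s,R}(0,y)=R^{2s-N}G_s(0,y/R)$. The ratio of the two sides therefore produces exactly the factor $R^N$, and it remains to show $\varphi_1\le C\,G_s(0,\cdot)$ on $B_1$.

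For the lower bound on the pole at the origin I use \eqref{eq:boggio} with $x=0$, so that $\omega(0)\omega(y)=\omega(y)$. For $0<t<\omega(y)\le1$ and $|y|<1$, one has $|y|^2+t\le2$, hence
\[
G_s(0,y)\ge\kappa_{N,s}2^{-N/2}\int_0^{\omega(y)}t^{s-1}\,dt=\frac{\kappa_{N,s}}{s\,2^{N/2}}\,\omega(y)^s.
\]

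For the upper bound on the torsion function I would cite the classical explicit formula
\[
\varphi_1(y)=\frac{\Gamma(N/2)}{4^s\Gamma(1+s)\Gamma(N/2+s)}(1-|y|^2)_+^s
=c\,\omega(y)^s.
\]
This formula can be checked by computing the Fourier-side or hypergeometric expression, or identified by uniqueness of the weak solution in $H^s_0(B_1)$ with source $1$. Combining it with the previous display gives the pointwise comparison with $C=c\,s\,2^{N/2}/\kappa_{N,s}$.

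The only real obstacle is the upper bound $\varphi_1\le C\omega^s$, if one prefers not to quote the explicit formula. As a self-contained fallback, I would use $G_s(x,y)\le\kappa_{N,s}s^{-1}\zeta^s|x-y|^{-N}$, where $\zeta=\omega(x)\omega(y)$, together with $G_s\le K_s$. For $|x-y|\ge\omega(y)$ the first bound gives a contribution of at most $C\omega(y)^s\int_{|x-y|\ge\omega(y)}|x-y|^{-N}\omega(x)^s\,dx$. Since $\omega(x)\le\omega(y)+2|x-y|$, the integrand is at most $C(\omega(y)^s+|x-y|^s)|x-y|^{-N}$. Its integral is bounded by a constant times $\log(2/\omega(y))+1$, which is not quite enough. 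This is why I would rely on the known exact torsion formula, or equivalently on the boundary bound $\mathcal G_{s,1}1\le C\omega^s$ that is implicit in Lemma \ref{lem:ball-maps}\textup{(i)} with $D=0$, $f=1$, $R=1$. The latter bound gives $\|\varphi_1/\tau_1^s\|_\infty\le C_{N,s}$ directly, and this is all that is needed.
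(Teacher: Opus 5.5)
Your proof is correct and follows essentially the paper's route: Tonelli, reduction to $R=1$, the explicit torsion formula $\int_{B_1}G_s(x,y)\,dx=c\,\omega(y)^s$, and the lower bound $G_s(0,y)\ge c'\omega(y)^s$. The paper observes that $|y|^2+t<1$ on the integration range, so it gets $\kappa_{N,s}/s$ without your factor $2^{-N/2}$. Incidentally, your self-contained fallback does close. In the far region the integral is at most $C\bigl(\omega(y)^s\log(2/\omega(y))+1\bigr)\le C$, because the logarithm carries the factor $\omega(y)^s$ that you dropped. The near region gives $C\omega(y)^{2s}\le C\omega(y)^s$, so the explicit formula is not actually needed.
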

\begin{proof}
For $R=1$, the potential of the constant source is
\[
\int_{B_1}G_s(x,y)\,dx
=\frac{\Gamma(N/2)}{4^s\Gamma(1+s)\Gamma(N/2+s)}
(1-|y|^2)^s.
\]
This is the ball solution formula in
\cite[formula (1.4)]{ros-boundary}, together with kernel symmetry.
At $x=0$, the integration interval is
$0<t<1-|y|^2$, so $|y|^2+t\le1$. Hence
\[
G_s(0,y)\ge\kappa_{N,s}\int_0^{1-|y|^2}t^{s-1}\,dt
=\frac{\kappa_{N,s}}s(1-|y|^2)^s.
\]
Combining the two formulas and using symmetry gives
$\int_{B_1}G_s(x,y)\,dx\le C_{N,s}G_s(0,y)$.
After scaling,
\[
\int_{B_R}G_{s,R}(x,y)\,dx
\le C_{N,s}R^NG_{s,R}(0,y).
\]
Multiply by $h(y)\ge0$, integrate in $y$, and apply Tonelli's
theorem. The result is
\[
\int_{B_R}z(x)\,dx
\le C_{N,s}R^N\int_{B_R}G_{s,R}(0,y)h(y)\,dy
=C_{N,s}R^Nz(0).
\]
\end{proof}

\begin{lemma}\label{lem:green-limits}\label{lem:weighted-local-continuity}
Let $N\ge2$, $0<s<1$, and $a,b>-2s$.
Let $D_j$ be balls containing zero, with
$d_j=\dist(0,\partial D_j)\to\infty$, and set
\[
Z_j(x)=\int_{D_j}G_{D_j}(x,y)H_j(y)\,dy,\qquad H_j\ge0.
\]
Assume $\sup_jZ_j(0)<\infty$ and, for each $L>0$,
\[
H_j(y)\le C_L(1+|y|^a+|y|^b)\quad\hbox{a.e. on }B_{2L}
\]
for all sufficiently large $j$. For every
$0<\sigma<\min\{1,2s,2s+a,2s+b\}$,
$\|Z_j\|_{C^\sigma(\overline B_L)}$ is bounded for large $j$.
If, along a subsequence, $Z_j\to Z$ locally uniformly and
$H_j\to H$ locally uniformly away from zero, then
\begin{equation}\label{eq:limit-positive-constant}
Z=I_{2s}H+C_*,\qquad C_*\ge0,
\end{equation}
with finite potentials at every point. Bounded local sources are
included by taking $a=b=0$.
\end{lemma}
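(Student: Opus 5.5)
We would split each potential at a fixed radius into an inner part, whose source is locally controlled, and an outer part, which is $s$-harmonic and nonnegative near the origin. Fix $L$ and $M\ge4L$, and let $j$ be large, so that $d_j>2M$ and the source bound holds on $B_{2M}$. Write
\[
Z_j=Z_j^{\mathrm{in},M}+Z_j^{\mathrm{out},M},\qquad
Z_j^{\mathrm{in},M}(x)=\int_{D_j\cap B_M}G_{D_j}(x,y)H_j(y)\,dy.
\]

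\textbf{Inner part.} We use $0<G_{D_j}\le K_s$ and the bound on $H_j$. The estimate \eqref{eq:weighted-kernel-local} then gives a sup bound $C_M$ on $Z_j^{\mathrm{in},M}$. For the H\"older bound we use the standard kernel difference estimate. After splitting at $|y-x|<2|x-x'|$, it gives
\[
\int_{B_M}|K_s(x-y)-K_s(x'-y)|\,|y|^D\,dy\le C_M|x-x'|^\sigma
\qquad\text{for }\sigma<\min\{1,2s,2s+D\}.
\]
We also need $G_{D_j}(\cdot,y)-K_s(\cdot-y)$ to be smooth in $x\in B_{2L}$, uniformly for $y\in B_M$ and large $j$. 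This should follow from the explicit ball kernel \eqref{eq:boggio} after scaling, because $d_j\to\infty$ and the regular part of the kernel is of size $O(d_j^{2s-N})$ with derivatives of the same order.

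\textbf{Outer part.} The function $Z_j^{\mathrm{out},M}$ is nonnegative on $\R^N$, vanishes outside $D_j$, and satisfies $(-\Delta)^sZ_j^{\mathrm{out},M}=0$ in $B_M$. The fractional Harnack inequality and interior regularity for nonnegative $s$-harmonic functions on $B_M$ are stated with the global nonnegativity, so that no tail assumption is needed. After scaling to $B_1$ they give
\[
\sup_{B_{M/2}}Z_j^{\mathrm{out},M}\le C\,Z_j^{\mathrm{out},M}(0)\le C\,Z_j(0),\qquad
\operatorname{osc}_{B_L}Z_j^{\mathrm{out},M}\le C(L/M)^{\sigma}Z_j(0).
\]
Together with the inner bounds, this yields the uniform $C^\sigma(\overline B_L)$ bound.

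\textbf{The limit.} Pass to a subsequence with $Z_j\to Z$ and $H_j\to H$. Since $d_j\to\infty$, we have $G_{D_j}(x,y)\to K_s(x-y)$ pointwise. On $B_M$ the integrands are dominated by $K_s(x-y)C_M(1+|y|^a+|y|^b)$, which is integrable. Hence $Z_j^{\mathrm{in},M}\to I_{2s}(H\mathbf 1_{B_M})$, and therefore $Z_j^{\mathrm{out},M}\to w_M:=Z-I_{2s}(H\mathbf 1_{B_M})\ge0$ locally uniformly on $B_{M/2}$. Letting $M\to\infty$ and using monotone convergence, $I_{2s}H\le Z<\infty$ everywhere, so all the potentials are finite. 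The passage to the limit $j\to\infty$ in the oscillation estimate gives
\[
\operatorname{osc}_{B_L}w_M\le C(L/M)^\sigma\sup_jZ_j(0).
\]
Moreover, $w_M$ decreases to $Z-I_{2s}H$ pointwise, since $I_{2s}(H\mathbf 1_{B_M})$ increases to $I_{2s}H$. Letting $M\to\infty$ with $L$ fixed shows that $Z-I_{2s}H$ has zero oscillation on every $B_L$. It is therefore a constant $C_*\ge0$, which is \eqref{eq:limit-positive-constant}.

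\textbf{Main obstacle.} We expect the main obstacle to be the uniform control of the outer parts without any tail information on $Z_j$. We avoid tails by relying on the global nonnegativity version of the Harnack and H\"older estimates, scaled to the ball $B_M$ whose radius tends to infinity. A secondary technical point is the uniform comparison of $G_{D_j}$ with $K_s$ on $B_{2L}\times B_M$ for non-concentric balls $D_j$. We would handle it with the scaled formula \eqref{eq:boggio} centered at the center of $D_j$.
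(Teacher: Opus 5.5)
Your proof is correct, but it controls the Green potentials by a different mechanism than the paper does.

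\textbf{The paper's route.} From Boggio's formula the paper derives one pointwise estimate, $|\nabla_xG_\Omega(x,y)|\le C(|x-y|^{-1}+\dist(x,\partial\Omega)^{-1})G_\Omega(x,y)$, and uses it twice. On dyadic annuli it gives the H\"older modulus of the nearby potential, uniformly over balls $\Omega\supset B_{4L}$. Integrated along segments in $B_K$, it gives $|\log(G_{D_j}(x,y)/G_{D_j}(0,y))|\le C_K(L^{-1}+d_j^{-1})$ for $|y|>L$. Integrating this against $H_j\ge0$ yields the far-field flatness $C_K(L^{-1}+d_j^{-1})Z_j(0)$ and a Lipschitz bound.

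\textbf{Your route.} You write the near kernel as $K_s$ minus a regular part. You treat the far potential as a globally nonnegative $s$-harmonic function in $B_M$, and import the fractional Harnack inequality and tail-free interior estimates.

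The step you hedged does hold. The regular part is $\kappa_{N,s}\int_\zeta^\infty t^{s-1}(|x-y|^2+t)^{-N/2}\,dt$, where $\zeta\ge(d_j-|x|)(d_j-|y|)$ and $|\nabla_x\zeta|/\zeta\le 2/\dist(x,\partial D_j)$. Hence its $x$-gradient is $O(d_j^{2s-N-1})$ on $B_{2L}\times B_M$ once $d_j>2M$.

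For the far part, the cleanest rigorous form is kernel by kernel. Apply Harnack and the Poisson-kernel bound $|\nabla_xP_{B_M}(x,z)|\le CM^{-1}P_{B_M}(0,z)$, for $x\in B_{M/2}$ and $|z|>M$, to each $G_{D_j}(\cdot,y)$ with $y\in D_j\setminus B_M$. The constants are independent of $y$, so you can then integrate against $H_j$. This avoids having to show that the superposition is a weak or viscosity solution, which matters because outside $B_{2M}$ the source $H_j$ is merely measurable with $Z_j(0)<\infty$. It also improves your rate to the Lipschitz bound $C(L/M)Z_j(0)$.

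\textbf{Trade-off.} The paper's argument is self-contained: it uses only the explicit ball kernel, and one estimate serves both the local modulus and the flatness. Yours relies on external potential theory. In exchange, your far-field step needs only $B_M\subset D_j$, not the ball geometry of $D_j$, and its flatness rate carries no $d_j^{-1}$ term. Boggio's formula enters your proof only through $G_{D_j}\to K_s$ and the smallness of the regular part.
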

\begin{proof}
Differentiate the scaled formula \eqref{eq:boggio}. With
$\zeta=\tau_R(x)\tau_R(y)/R^2$, the upper-limit contribution is
bounded using
\[
\zeta^s(|x-y|^2+\zeta)^{-N/2}
\le s\int_0^\zeta t^{s-1}(|x-y|^2+t)^{-N/2}\,dt,\qquad
\frac{|\nabla_x\zeta|}{\zeta}\le\frac2{\delta_R(x)}.
\]
The derivative of the integrand costs at most $C/|x-y|$.
Translation therefore gives, on every ball $\Omega$,
\begin{equation}\label{eq:green-log-derivative}
|\nabla_xG_\Omega(x,y)|
\le C_{N,s}\left(|x-y|^{-1}+\dist(x,\partial\Omega)^{-1}\right)
G_\Omega(x,y).
\end{equation}

We first estimate the contribution of nearby sources.
For $\Omega\supset B_{4L}$, $D>-2s$, and bounded $f$, put
$P_\Omega(x)=\int_{B_{2L}}G_\Omega(x,y)|y|^Df(y)\,dy$.
We claim that
\begin{equation}\label{eq:weighted-local-continuity}
\|P_\Omega\|_{C^\sigma(\overline B_L)}
\le C_{N,s,D,L,\sigma}\|f\|_\infty,
\qquad 0<\sigma<\min\{1,2s,2s+D\},
\end{equation}
independently of $\Omega$. Scale $L$ to one and assume
$\|f\|_\infty\le1$. Direct integration gives
\[
\int_{B_r(x)\cap B_2}|y|^D\,dy\le Cr^{N+\min\{D,0\}}.
\]
For $D<0$, use $B_r(x)\subset B_{3r}$ when $|x|\le2r$;
otherwise $|y|>r$ on $B_r(x)$. For $D\ge0$ the weight is bounded.
Set $\nu=2s+\min\{D,0\}>0$. Dyadic integration with $G_\Omega\le K_s$
gives a uniform supremum bound. For $h=|x-x'|<1/4$, the parts
within distance $2h$ of either point contribute at most $Ch^\nu$.
On the remaining annuli apply \eqref{eq:green-log-derivative};
the boundary distance is at least three. Summation gives
\[
|P_\Omega(x)-P_\Omega(x')|
\le C\left(h^\nu+h+
h\sum_{\substack{k\ge0\\2^kh\le4}}(2^kh)^{\nu-1}\right)
\le C_\sigma h^\sigma.
\]
If $\nu=1$, use $h(1+|\log h|)\le C_\sigma h^\sigma$ for $\sigma<1$.
The supremum bound handles $h\ge1/4$, proving
\eqref{eq:weighted-local-continuity}.
Write each local source as
$H_j=(1+|y|^a+|y|^b)f_j$ with $0\le f_j\le C_L$, and apply this
estimate with $D=0,a,b$.

For the distant sources fix $K>0$, take $L>2K$, and let
$Z_{j,>L}$ denote their potential. On segments in $B_K$, with
$|y|>L$ and $d_j>2K$, integration of
\eqref{eq:green-log-derivative} gives
\[
\left|\log\frac{G_{D_j}(x,y)}{G_{D_j}(0,y)}\right|
\le C_K(L^{-1}+d_j^{-1}).
\]
Since the sources are nonnegative, $Z_{j,>L}(0)\le Z_j(0)$.
Exponentiating, and then integrating against $H_j$, yields
\begin{equation}\label{eq:far-flat}
|Z_{j,>L}(x)-Z_{j,>L}(0)|
\le C_K(L^{-1}+d_j^{-1})Z_j(0)\qquad(x\in B_K).
\end{equation}
The same computation between two points gives a uniform Lipschitz
bound for this distant part. Together with
\eqref{eq:weighted-local-continuity} and the central bound, this
proves the stated local bounds.

It remains to identify a convergent subsequence.
Write $D_j=B_{R_j}(c_j)$. For fixed $x\ne y$ its Green upper limit
satisfies
\[
\frac{(R_j^2-|x-c_j|^2)(R_j^2-|y-c_j|^2)}{R_j^2}
\ge(d_j-|x|)(d_j-|y|)\longrightarrow\infty.
\]
Thus $G_{D_j}(x,y)\to K_s(x-y)$. On a fixed $B_L$, convergence of
the sources away from zero and their common weighted bound give
\[
Z_{j,\le L}\longrightarrow I_{2s}(H\mathbf1_{B_L})
\quad\hbox{locally uniformly}.
\]
To check uniformity at zero, remove $B_\rho$: its contribution is at
most $C(\rho^{2s}+\rho^{2s+a}+\rho^{2s+b})$ by
\eqref{eq:weighted-kernel-local}, also for evaluation points outside
$B_{2\rho}$ by $|x-y|\ge|x|/2$.
On the remaining compact set the sources are uniformly bounded;
the integrable diagonal singularity and the preceding kernel
convergence give the claimed limit.

Nonnegativity and \eqref{eq:far-flat} now imply
\[
0\le I_{2s}(H\mathbf1_{B_L})\le Z,\qquad
\operatorname{osc}_{B_K}\bigl(Z-I_{2s}(H\mathbf1_{B_L})\bigr)
\le C_K/L.
\]
Let $L\to\infty$. Monotone convergence gives a finite potential
$I_{2s}H\le Z$, and the oscillation bound makes $Z-I_{2s}H$
constant. This proves \eqref{eq:limit-positive-constant}.
\end{proof}

We will also need a boundary estimate whose constant is
independent of the radius and center of a ball.

\begin{lemma}\label{lem:uniform-ball-boundary}
Let $0<s<1$, let $\Omega$ be a ball, and let $y_0\in\partial\Omega$.
Suppose $w\in H^s(\R^N)\cap C(\R^N)\cap L^\infty(\R^N)$,
$w=0$ outside $\Omega$, and
$|(-\Delta)^sw|\le M$ weakly in $\Omega\cap B_8(y_0)$, where $M\ge0$.
Then
\begin{equation}\label{eq:uniform-boundary-vanishing}
|w(x)|\le C_{N,s}(\|w\|_\infty+M)\dist(x,\partial\Omega)^s
\qquad(x\in\Omega\cap B_1(y_0)),
\end{equation}
and the zero extension satisfies
\begin{equation}\label{eq:uniform-boundary-holder}
\|w\|_{C^{s/2}(\overline{B_{1/2}(y_0)})}
\le C_{N,s}(\|w\|_\infty+M).
\end{equation}
The constants are independent of the radius and center of $\Omega$.
\end{lemma}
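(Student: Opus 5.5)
The estimate is local near $y_0$ and scale-free in the size of $\Omega$, so the plan is a barrier argument with a boundary barrier that is uniform over all balls of radius at least some fixed size. Normalize $\|w\|_\infty+M\le1$. Two cases arise. If the radius $R$ of $\Omega$ is small, say $R\le2$, then $\Omega\subset B_8(y_0)$ up to harmless enlargement, and the Ros-Oton-Serra estimate \cite[Theorem 1.2]{ros-boundary} on balls applies directly. By scaling, its constant is uniform for $R$ in a compact range. For $R$ below a fixed threshold one rescales to the unit ball; the source bound then improves by the factor $R^{2s}\le C$, and the supremum bound is unchanged. So the substantive case is $R$ large.

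For $R\ge2$, I would compare $w$ with an explicit supersolution. Translate and rotate so that $y_0=0$ and the inner normal is $e_N$. The natural barrier is $\varphi(x)=(1-|x-x_*|^2)_+^s$-type functions, or better the one-dimensional profile $(x_N)_+^s$, which is $s$-harmonic in the half-space. Since $\Omega$ is a ball of radius $R\ge2$ tangent at $0$, I would use instead the exterior-ball barrier. Take the complement of the ball $B_1(-e_N)$, which lies outside $\Omega$ only if $\Omega$ is on the other side — this is wrong for interior balls. So I would use the Dirichlet torsion function of a ball $B$ contained in $\Omega$? That gives a lower bound, not an upper bound. For the upper bound I would use the exterior region. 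Let $B^c$ be the complement of the ball $B_1(-e_N)$ tangent to $\partial\Omega$ at $0$ from outside; this exists since $\Omega$ is a ball, so its complement contains a unit ball near $y_0$. Take the standard barrier $\Psi(x)=(|x+e_N|^2-1)_+^s$, suitably truncated. It satisfies $(-\Delta)^s\Psi\ge c>0$ in $B_4\setminus B_1(-e_N)$ after cutting off outside $B_{8}$; the truncation error is bounded, so one adds a multiple of the torsion-type function. The comparison function is $C(\Psi\chi+\text{bump})$, which dominates $\pm w$ outside $B_8\cap\Omega$ because $\|w\|_\infty\le1$, and also on $\Omega\cap B_8$ by the maximum principle, using $|(-\Delta)^sw|\le1$ there. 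All constants depend only on $N,s$ because the barrier is built on a unit exterior ball and $B_8$, independently of $R$ and the center. Since $\Psi(x)\le C\dist(x,\partial B_1(-e_N))^s$ and, along the normal through any point of $\Omega\cap B_1(0)$, this distance is comparable to $\dist(x,\partial\Omega)$, the same construction works at every nearby boundary point $y\in\partial\Omega\cap B_1(y_0)$, whose $B_7(y)$ lies in $B_8(y_0)$. Taking $y$ to be the nearest boundary point gives \eqref{eq:uniform-boundary-vanishing}. The maximum principle is applied in weak form, using $w\in H^s$ and the continuity of $w$.

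For \eqref{eq:uniform-boundary-holder}, I would combine the boundary decay with interior estimates. For $x,x'\in B_{1/2}(y_0)$, set $d=\dist(x,\partial\Omega)\ge\dist(x',\partial\Omega)$ and $h=|x-x'|$. If $h\ge d/2$, then \eqref{eq:uniform-boundary-vanishing} gives $|w(x)-w(x')|\le Cd^s\le Ch^{s}\le Ch^{s/2}$; the zero extension is handled the same way. If $h<d/2$, rescale $B_d(x)$ to the unit ball, so that $w_d(z)=w(x+dz)$ satisfies $|(-\Delta)^sw_d|\le d^{2s}$ in $B_1$. By \eqref{eq:uniform-boundary-vanishing}, the weighted tail of $w_d$ is controlled by $d^s(1+|z|)^s$, which is integrable against $(1+|z|)^{-N-2s}$. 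Silvestre's interior Hölder estimate \cite{silvestre-regularity} then gives $|w(x)-w(x')|\le Cd^s(h/d)^{\theta}$ with $\theta\ge s/2$, hence a bound by $Ch^{s/2}$.

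The main obstacle is making the barrier computation uniform in $R$: verifying $(-\Delta)^s(\Psi\chi)\ge c$ near the tangency point with the truncation error controlled only by $N,s$. I would verify this by first checking it for the untruncated $\Psi$, which is a known explicit function, and then bounding the cutoff contribution by a constant absorbed using a large multiple of the barrier.
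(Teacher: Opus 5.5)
\textbf{The gap is the barrier in your proof of the decay estimate.} Your architecture for \eqref{eq:uniform-boundary-vanishing} matches the paper's: at the nearest boundary point $\zeta$ of $x$, take the unit ball $B_1(c)$, $c=\zeta+\nu(\zeta)$, which lies outside $\Omega$, and compare $\pm w$ with a fixed barrier built on it. By convexity such an exterior ball exists for a ball $\Omega$ of any radius. So your split into $R\le2$ and $R\ge2$ is unnecessary, and uniformity in $R$ is automatic.

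Your barrier does not work, for three reasons.
\begin{enumerate}
\item $\Psi(z)=(|z|^2-1)_+^s$ grows like $|z|^{2s}$, so $\Psi\notin\mathcal L_{2s}$ and $(-\Delta)^s\Psi=-\infty$ everywhere. There is no untruncated formula to check first.
\item After truncation, the claim $(-\Delta)^s(\Psi\chi)\ge c>0$ on the whole annulus is false in general. At a fixed $z$ with $|z|=2$ and $\chi=1$ near $z$, one has $(-\Delta)^s(\Psi\chi)(z)\to-\Delta(|z|^2-1)=-2N$ as $s\to1$.
\item Your proposed repairs also fail. If the added torsion-type function is positive at $\zeta$ (as the torsion function of any ball containing $\zeta$ is), the comparison function no longer vanishes at $\zeta$, and the $\dist(x,\partial\Omega)^s$ bound is lost. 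A barrier cut off outside $B_8$ cannot dominate $|w|$, which may be as large as $\|w\|_\infty$, on $\Omega\setminus B_8$. Finally, you claim no supersolution property on $\Omega\cap B_8$ outside the annulus, so the set where you invoke the maximum principle does not match the set where the barrier is a supersolution.
\end{enumerate}

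\textbf{What is needed.} You need the barrier of \cite[Lemma 2.6]{ros-boundary}, which the paper quotes directly. It satisfies $\varphi=0$ on $B_1$, $\varphi\ge1$ off $B_4$, $(-\Delta)^s\varphi\ge1$ in $B_4\setminus\overline B_1$, and $0\le\varphi\le C(|z|-1)^s$ there. One construction is $C_1|z|^{-N}(|z|^2-1)_+^s+1-\eta$. The first term is the Kelvin transform of $(1-|z|^2)_+^s$, so its fractional Laplacian is a positive constant times $|z|^{-N-2s}$ for $|z|>1$. Here $\eta$ is smooth, equal to one on $B_3$ and zero off $B_4$.

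Run the comparison only on $\Omega\cap(B_4(c)\setminus\overline{B_1(c)})$, with $A=\|w\|_\infty+M$. Since $|\zeta-y_0|<2$ gives $|c-y_0|<3$, this set lies in $\Omega\cap B_7(y_0)$. Outside it, $A\varphi(\cdot-c)\pm w\ge0$, because either $w=0$ or $\varphi\ge1$. Then $|x-c|=1+\dist(x,\partial\Omega)$ gives the decay estimate.

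\textbf{The H\"older step.} Your argument here is correct and differs from the paper's. The paper splits at scale $\rho=\sqrt h$ and applies the interior estimate \cite[Proposition 2.3]{ros-boundary} with the global $L^\infty$ norm, which costs half the exponent. You instead rescale on $B_d(x)$ and use \eqref{eq:uniform-boundary-vanishing} to bound the rescaled tail by $Cd^s(1+|z|)^s$. This even yields a $C^s$ bound. The price is that you need an interior estimate with a weighted tail term, rather than Silvestre's global-$L^\infty$ version.
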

\begin{proof}
Ros-Oton-Serra \cite[Lemma 2.6]{ros-boundary} supply a bounded
radial barrier $\varphi\in H^s_{\rm loc}(\R^N)\cap C(\R^N)$ with
\[
\varphi=0\text{ on }B_1,\qquad
\varphi\ge1\text{ on }\R^N\setminus B_4,\qquad
(-\Delta)^s\varphi\ge1\text{ in }B_4\setminus\overline B_1,
\]
and $0\le\varphi(z)\le C(|z|-1)^s$ in that annulus.
Fix $x\in\Omega\cap B_1(y_0)$, take a nearest boundary point
$\zeta$, and put $c=\zeta+\nu(\zeta)$. Convexity of the ball
implies $B_1(c)\cap\Omega=\varnothing$. Also
\[
|\zeta-y_0|\le |x-\zeta|+|x-y_0|<2,\qquad
B_4(c)\subset B_7(y_0)\subset B_8(y_0).
\]
Let $A=\|w\|_\infty+M$. On
$\Omega\cap(B_4(c)\setminus\overline B_1(c))$,
\[
(-\Delta)^s(A\varphi(\cdot-c)\pm w)\ge A-M\ge0.
\]
On its complement the same functions are nonnegative: use
$w=0$ outside $\Omega$ and $\varphi\ge1$ outside $B_4(c)$.
Testing their negative parts, which are in $H^s$ and supported
in this bounded set, gives $|w|\le A\varphi(\cdot-c)$.
Since $x=\zeta-\delta_\Omega(x)\nu(\zeta)$,
\[
|x-c|=1+\delta_\Omega(x),\qquad
|w(x)|\le CA\delta_\Omega(x)^s.
\]
This proves \eqref{eq:uniform-boundary-vanishing} with a constant
depending only on $N,s$.

For $x,x'\in B_{1/2}(y_0)$ let $h=|x-x'|<1/16$ and
$\rho=\sqrt h$. If one point is outside $\Omega$, the other
has boundary distance at most $h$, so the last estimate gives
$|w(x)-w(x')|\le CAh^s$. If both lie inside and one boundary
distance is at most $2\rho$, the other is at most
$2\rho+h\le3\rho$, giving $CA\rho^s=CAh^{s/2}$.
In the remaining case $B_\rho(x)\subset\Omega$ and
$x'\in B_{\rho/2}(x)$. Rescale this ball to a unit ball and apply
\cite[Proposition 2.3]{ros-boundary} with H\"older exponent $s$:
\[
|w(x)-w(x')|
\le C\rho^{-s}(\|w\|_\infty+\rho^{2s}M)h^s
\le CAh^{s/2}.
\]
All balls used lie in $B_8(y_0)$. The bound
$|w(x)-w(x')|\le2\|w\|_\infty$ handles $h\ge1/16$ and
completes \eqref{eq:uniform-boundary-holder}.
\end{proof}

\subsubsection{One-dimensional limits}

A radial rescaling has a simple geometric limit. If
$x_j=r_je_1$ and $\ell_j/r_j\to0$, then, uniformly on each
fixed ball in the $z$ variable,
\begin{equation}\label{eq:radial-flattening}
\frac{|x_j+\ell_jz|-r_j}{\ell_j}
=z_1+O\!\left(\frac{\ell_j}{r_j}|z|^2\right).
\end{equation}
For a radial function $u_j$ and a fixed amplitude factor $A_j$, set
$U_j(z)=A_ju_j(x_j+\ell_jz)$ and
$a_j(z)=(|x_j+\ell_jz|-r_j)/\ell_j$. Then
\[
U_j(z)=U_j(a_j(z)e_1),\qquad a_j(z)\longrightarrow z_1
\]
locally uniformly. If $U_j$ is locally equicontinuous, every limit
satisfies $U(z)=U(z_1e_1)$. The same argument applies to $v_j$,
both in the interior and near the boundary.

\begin{lemma}
\label{lem:profile-exclusion}
Let $N\ge2$, $0<s<1$, $p,q>0$, $pq>1$, and assume
\begin{equation}\label{eq:radial-window}
\frac1{p+1}+\frac1{q+1}>1-2s.
\end{equation}
For $a_0,b_0>0$ and $C_U,C_V\ge0$, there is no bounded
continuous nonnegative nonzero pair, depending only on $x_1$, with
finite potentials satisfying
\[
\begin{cases}
U=a_0I_{2s}(V^p)+C_U,\\
V=b_0I_{2s}(U^q)+C_V,
\end{cases}
\qquad x\in\R^N.
\]
\end{lemma}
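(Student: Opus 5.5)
The plan is to integrate out the transverse variables, which turns the statement into a one-dimensional Riesz system, and then to invoke Theorem~\ref{thm:riesz-liouville} with $N=1$ and zero weights. Write $x=(x_1,x')$ with $x'\in\R^{N-1}$. For a nonnegative measurable $f$ depending only on $x_1$, Tonelli's theorem gives
\[
I_{2s}f(x)=c_{N,s}\int_\R f(y_1)\int_{\R^{N-1}}\bigl(h^2+|z'|^2\bigr)^{-(N-2s)/2}\,dz'\,dy_1,\qquad h=x_1-y_1 .
\]
The inner integral is finite exactly when $N-2s>N-1$, that is, when $2s<1$. In that case it equals $c'|h|^{2s-1}$ with $c'>0$, by the substitution $z'=|h|\zeta$.

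First I treat $1/2\le s<1$. The inner integral is $+\infty$ for every $h$, so a finite potential at one point forces $V^p=0$ a.e., and then $V\equiv0$ by continuity. The first equation gives $U\equiv C_U$. The second then requires $I_{2s}(C_U^q)<\infty$, which forces $C_U=0$. Hence the pair is zero, contrary to the hypothesis.

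Next I treat $0<s<1/2$. The system becomes
\[
U=a_0c'\,I^{(1)}_{2s}(V^p)+C_U,\qquad V=b_0c'\,I^{(1)}_{2s}(U^q)+C_V
\]
on $\R$, where $I^{(1)}_{2s}$ has kernel $|h|^{2s-1}$. I now eliminate the constants.
\begin{itemize}
\item If $C_V>0$, then $V^p\ge C_V^p$, and $\int_\R|h|^{2s-1}\,dh=\infty$ makes the potential of $V^p$ infinite. So $C_V=0$, and likewise $C_U=0$.
\item Both components are then nonzero. If $V\equiv0$, then $U=C_U=0$.
\item Since the kernel is positive and each source is nonnegative and nonzero, both $U$ and $V$ are strictly positive everywhere.
\end{itemize}
Finiteness and continuity of the potentials are given by hypothesis.

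To normalize the coefficients I put $U=\lambda\tilde U$ and $V=\mu\tilde V$. This requires $\lambda=a_0c'\mu^p\cdot\theta$ and $\mu=b_0c'\lambda^q\cdot\theta$, where $\theta$ is the ratio between $c_{1,s}$ and the $|h|^{2s-1}$ normalization. This system is uniquely solvable for $\lambda,\mu>0$ because $pq\ne1$. Then $(\tilde U,\tilde V)$ is a positive continuous solution of \eqref{eq:exact-potential} in dimension one with $a=b=0$ and finite potentials. The parameters satisfy $N=1$, $0<s<1/2=\min\{1,N/2\}$, $a=b=0>-2s$, $p,q>0$ and $pq>1$. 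The subcritical condition of Theorem~\ref{thm:riesz-liouville} reads $1/(p+1)+1/(q+1)>1-2s$, which is exactly \eqref{eq:radial-window}. That theorem rules out such a pair, which proves the lemma.

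I expect the main obstacle to be the careful justification of the dimension reduction. The reduction needs the Tonelli computation of the transverse integral and the positivity of $c'$. It also has to keep track of the statement that finite potentials hold at every point, which is what rules out both the divergent case $2s\ge1$ and the constants $C_U,C_V>0$. The remaining steps are routine.
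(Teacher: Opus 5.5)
Your proposal is correct and follows essentially the same route as the paper: Tonelli reduction in the transverse variables, divergence of the transverse kernel integral for $s\ge 1/2$, removal of the constants $C_U,C_V$ through divergence of the Riesz potential of a positive constant, rescaling to unit coefficients (the paper gives the explicit factors $(a_0b_0^p)^{1/(pq-1)}$ and $(b_0a_0^q)^{1/(pq-1)}$), and an application of Theorem~\ref{thm:riesz-liouville} with $N=1$, $a=b=0$. The differences are only in ordering: the paper removes the constants in $N$ dimensions before the reduction, whereas you remove them afterwards on the line and treat the case $s\ge1/2$ by forcing $V\equiv0$ directly.
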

\begin{proof}
If one component were identically zero, its potential equation
would force the other to be zero. Since the pair is nonzero,
the positive kernel makes both components positive everywhere.
If $C_U>0$, then $U\ge C_U$ and
\[
I_{2s}(U^q)(0)
\ge cC_U^q\int_{|y|>1}|y|^{2s-N}\,dy
=c' C_U^q\int_1^\infty r^{2s-1}\,dr=\infty,
\]
contrary to finiteness of $V$. Thus $C_U=0$, and the other
equation gives $C_V=0$ by the same calculation.

Write $y=(\tau,y')\in\R\times\R^{N-1}$. For $t\ne0$,
polar coordinates in $\R^{N-1}$ give
\begin{equation}\label{eq:transverse-riesz}
\int_{\R^{N-1}}(t^2+|y'|^2)^{-(N-2s)/2}\,dy'
=\begin{cases}
c_{N,s}^*|t|^{2s-1},&0<s<1/2,\\
+\infty,&1/2\le s<1.
\end{cases}
\end{equation}
Indeed the radial integrand at infinity is comparable to
$r^{2s-2}$. For $s<1/2$, the substitution $y'=|t|z'$
and the beta integral yield
\[
c_{N,s}^*=\pi^{(N-1)/2}
\frac{\Gamma((1-2s)/2)}{\Gamma((N-2s)/2)},\qquad
c_{N,s}c_{N,s}^*=c_{1,s}.
\]
If $s\ge1/2$, integrate the positive one-dimensional source
over any interval in $\tau$ not containing the evaluation
coordinate. Tonelli's theorem and \eqref{eq:transverse-riesz}
make the potential infinite, a contradiction.

For $s<1/2$, Tonelli's theorem instead converts the two
$N$-dimensional potentials into one-dimensional Riesz
potentials. Put
\[
\widetilde U(t)=(a_0b_0^p)^{1/(pq-1)}U(te_1),\qquad
\widetilde V(t)=(b_0a_0^q)^{1/(pq-1)}V(te_1).
\]
The coefficient identities
$a_0(a_0b_0^p)^{1/(pq-1)}
=(b_0a_0^q)^{p/(pq-1)}$ and its counterpart give
\[
\widetilde U=I_{2s}(\widetilde V^p),\qquad
\widetilde V=I_{2s}(\widetilde U^q)\quad\text{on }\R.
\]
Theorem \ref{thm:riesz-liouville} applies with dimension one,
$a=b=0$, and $0<s<1/2$. Its strict subcritical inequality is
exactly \eqref{eq:radial-window}, so it excludes this pair.
\end{proof}

\begin{lemma}\label{lem:half-line}
Let $0<s<1$, $a_0,b_0>0$, and $p,q>0$.
There is no bounded continuous nonnegative nonzero pair on $\R$,
zero on $(-\infty,0]$, satisfying distributionally
\[
\begin{cases}
(-\Delta)^sU=a_0V^p,\\
(-\Delta)^sV=b_0U^q,
\end{cases}
\qquad t\in(0,\infty).
\]
\end{lemma}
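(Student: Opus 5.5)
The plan is a Pohozaev-type identity on the half-line together with a sign argument. The identity is generated by dilations $t\mapsto\lambda t$ centred at the boundary point $0$. The analogue of the positive remainder $H_s$ in Lemma \ref{lem:ball-pohozaev} is the boundary term of the fractional Pohozaev identity of Ros-Oton--Serra. For a solution $w$ vanishing on $(-\infty,0]$ with $(-\Delta)^sw=f$ in $(0,\infty)$, that boundary term is
\[
\Gamma(1+s)^2\int_{\partial}(w/\delta^s)^2\,(x\cdot\nu)\,d\sigma .
\]
On the half-line this is a single point $t=0$ with $x\cdot\nu=0$, so the boundary term vanishes. The cleaner route is therefore the \emph{translation} Pohozaev identity (multiplier $\partial_t$), which is valid on the half-space, and for which the boundary term is $\pm\Gamma(1+s)^2(U/t^s)(0^+)(V/t^s)(0^+)$ with a definite sign.

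\textbf{Step 1: regularity and boundary behaviour.} I would first record the properties of the pair. Boundedness and the equations give $|(-\Delta)^sU|,|(-\Delta)^sV|\le M$ on $(0,\infty)$. Applying Lemma \ref{lem:uniform-ball-boundary} with balls $(0,2R)$ centred near $0$ (where the pair is bounded and the estimate is uniform) gives $U,V\le Ct^s$ near $0$ and uniform $C^{s/2}$ bounds. I would then use the Ros-Oton--Serra boundary regularity to get that $U/t^s$ and $V/t^s$ extend continuously to $t=0$, together with interior $C^{2s+\epsilon}$ smoothness. If one component vanishes identically, its equation forces the other to vanish, since $a_0,b_0>0$. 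Hence both components are positive on $(0,\infty)$ by the strong maximum principle.

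\textbf{Step 2: the identity.} I would test the $U$ equation against $V'\chi_R$ and the $V$ equation against $U'\chi_R$, where $\chi_R$ is a cutoff at scale $R$, and add. On the source side this gives
\[
\int\bigl(a_0V^pV'+b_0U^qU'\bigr)\chi_R
=-\int\Bigl(\tfrac{a_0}{p+1}V^{p+1}+\tfrac{b_0}{q+1}U^{q+1}\Bigr)\chi_R' ,
\]
and the boundary value at $0$ vanishes because $U(0)=V(0)=0$. On the operator side the bilinear fractional integration-by-parts formula (the polarized translation Pohozaev identity) gives
\[
\int(-\Delta)^sU\,V'+\int(-\Delta)^sV\,U'=-\Gamma(1+s)^2\,(U/t^s)(0)\,(V/t^s)(0)+\text{cutoff errors}.
\]

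\textbf{Step 3: sending $R\to\infty$.} This is the main obstacle. Boundedness alone does not make the cutoff errors vanish, because the nonlocal commutator with $\chi_R'$ is not small unless $U,V$ have limits at infinity. To handle it, I would pass along a sequence of translates $U(\cdot+t_j)$, $V(\cdot+t_j)$ with $t_j\to\infty$ to an entire bounded limit solving the same system on $\R$. By the entire Liouville theory for bounded solutions of $(-\Delta)^s U=a_0V^p\ge0$ on $\R$, such a limit is constant, and then $V^p=0$, so the limit is zero. This shows $U,V\to0$ at infinity. Using the half-line Green representation, which mirrors Lemma \ref{lem:green-limits}, I would then obtain enough decay for the error terms to vanish; alternatively I would average over $R$ to control them.

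\textbf{Step 4: conclusion.} In the limit the identity reads $0=-\Gamma(1+s)^2\,(U/t^s)(0)\,(V/t^s)(0)$. The Hopf-type lower bound from the Green kernel, as in Lemma \ref{lem:ball-maps}(iii), gives $(U/t^s)(0)>0$ and $(V/t^s)(0)>0$ for positive sources. This is a contradiction.

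If Step 3 resists, the fallback is to push the decay argument further to get $U,V\in L^{p+1},L^{q+1}$. With that integrability the translation identity can be proved directly, with no cutoffs at all.
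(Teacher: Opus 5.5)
There is a genuine gap in Step 3, and the whole argument depends on it. The translation identity yields a contradiction only if $U,V\to0$ at infinity. For $s=1$ it is just conservation of $U'V'+\frac{a_0}{p+1}V^{p+1}+\frac{b_0}{q+1}U^{q+1}$, which is perfectly consistent with $U'(0)V'(0)>0$ whenever the limits at infinity are positive.

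You obtain the decay from the claim that every bounded entire solution of $(-\Delta)^s\bar U=a_0\bar V^p\ge0$ on $\R$ is constant. That claim is false for $0<s<1/2$. Since $1>2s$, the potential $I_{2s}f$ with $0\le f\in C_c(\R)$ is a bounded nonconstant supersolution. The system-level statement also fails, because the lemma allows all $p,q>0$. For $a_0=b_0$ and $p=q=(1+2s)/(1-2s)$, the one-dimensional system has the bounded positive entire solution $U=V=\lambda(1+t^2)^{-(1-2s)/2}$. So no Liouville theorem can force translate limits to vanish in the generality of the statement.

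Under $pq>1$ and \eqref{eq:radial-window}, you could try to exclude nonzero translate limits with Theorem \ref{thm:riesz-liouville} in dimension one. You would first have to identify those limits as integral solutions, as in Lemma \ref{lem:profile-exclusion}. That still does not prove the lemma as stated. Even where your Liouville step is valid ($s\ge1/2$, by recurrence), you only know $U,V\to0$ with no rate. You would still need enough decay to kill the nonlocal cutoff errors, and you leave this open.

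The missing ingredient is monotonicity. The paper obtains it by moving planes on the antisymmetric differences $U(2\lambda-t)-U(t)$ and $V(2\lambda-t)-V(t)$:
\begin{itemize}
\item At negative minima, a narrow-region estimate gives the product inequality \eqref{eq:narrow-product}.
\item The Hopf bound $U,V\ge ct^s$ starts the reflection; it also controls the coefficients when $p$ or $q$ is below one.
\item Reflection cannot stop at a finite plane.
\end{itemize}
Hence $U$ and $V$ are nondecreasing and have positive limits $U_\infty,V_\infty$. Passing to the limit in the translated distributional equation then gives $0=U_\infty\int(-\Delta)^s\phi=a_0V_\infty^p\int\phi>0$, a contradiction. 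This is the opposite of the decay your identity needs, and once monotonicity is available no Pohozaev identity is required.
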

\begin{proof}
The strong maximum principle makes both components positive
on $(0,\infty)$; if one were zero, the other equation would
force both to be zero. Interior regularity and iteration on
compact subintervals then give smoothness there. To obtain a
lower boundary bound, choose a nonzero nonnegative smooth source
supported in $(1,2)$ below each of $a_0V^p$ and $b_0U^q$.
Compare its zero exterior Green potential on $(0,3)$ with $U$
and $V$, respectively. The ball Green formula in dimension one,
after translation, gives a positive limit of this potential
divided by $t^s$ at zero. Therefore
\begin{equation}\label{eq:half-line-hopf}
U(t),V(t)\ge ct^s\qquad(0<t<t_*).
\end{equation}

For $\lambda>0$ and $t<\lambda$, set
\[
W_{1,\lambda}(t)=U(2\lambda-t)-U(t),\qquad
W_{2,\lambda}(t)=V(2\lambda-t)-V(t).
\]
They are nonnegative for $t\le0$ and antisymmetric across
$\lambda$. Subtract the original equations from their
reflections. The result on $(0,\lambda)$ is
\[
(-\Delta)^sW_{1,\lambda}=c_{1,\lambda}W_{2,\lambda},\qquad
(-\Delta)^sW_{2,\lambda}=c_{2,\lambda}W_{1,\lambda},
\]
where, with positive arguments,
\[
\begin{aligned}
c_{1,\lambda}(t)&=
a_0p\int_0^1[(1-\theta)V(t)+\theta V(2\lambda-t)]^{p-1}\,d\theta,\\
c_{2,\lambda}(t)&=
b_0q\int_0^1[(1-\theta)U(t)+\theta U(2\lambda-t)]^{q-1}\,d\theta.
\end{aligned}
\]
Suppose $m_i=W_{i,\lambda}(t_i)<0$ is a minimum on $t<\lambda$.
It is attained in $(0,\lambda)$ because the boundary values at
zero and $\lambda$ are nonnegative. Pair the fractional kernels
at $y$ and $2\lambda-y$. For $y<\lambda$,
\[
|t_i-y|^{-1-2s}-|t_i-(2\lambda-y)|^{-1-2s}>0,
\qquad m_i-W_{i,\lambda}(y)\le0.
\]
Dropping this nonpositive contribution gives
\begin{equation}\label{eq:antisymmetric-minimum}
\begin{aligned}
(-\Delta)^sW_{i,\lambda}(t_i)
&\le 2C(1,s)m_i\int_{-\infty}^\lambda
|t_i-(2\lambda-y)|^{-1-2s}\,dy\\
&=\frac{C(1,s)}s(\lambda-t_i)^{-2s}m_i
\le \frac{C(1,s)}s\delta^{-2s}m_i,
\end{aligned}
\end{equation}
if $\lambda-t_i\le\delta$. The first equation at $t_1$
forces $W_{2,\lambda}(t_1)<0$, so its negative minimum $m_2$
also exists. Applying the two inequalities at their minima
and using $m_2\le W_{2,\lambda}(t_1)$,
$m_1\le W_{1,\lambda}(t_2)$ gives
\begin{equation}\label{eq:narrow-product}
\frac{C(1,s)^2}{s^2}\delta^{-4s}
\le c_{1,\lambda}(t_1)c_{2,\lambda}(t_2).
\end{equation}

For small $\lambda$, choose $\delta=\lambda$.
Because $W_{2,\lambda}(t_1)<0$,
$V(t_1)>V(2\lambda-t_1)$, and
$\lambda<2\lambda-t_1<2\lambda$. Estimate
\eqref{eq:half-line-hopf} bounds both arguments of the first
coefficient below by $c\lambda^s$. If $p<1$, use this lower
bound in the negative power $p-1$; if $p\ge1$, use boundedness
of $V$. The same argument for the second coefficient yields
\[
c_{1,\lambda}(t_1)\le C\lambda^{-s(1-p)_+},\qquad
c_{2,\lambda}(t_2)\le C\lambda^{-s(1-q)_+}.
\]
Substitution into \eqref{eq:narrow-product} would imply
$1\le C\lambda^{\,s[4-(1-p)_+-(1-q)_+]}$.
Its exponent is positive because $p,q>0$; this is impossible
for small $\lambda$. Reflection therefore starts.

Suppose reflection stops at a finite $\lambda_0$.
At an interior zero of either nonnegative difference, the
paired-kernel expression is strictly negative unless that
difference vanishes identically. The equation has a
nonnegative right-hand side, and
$W_{i,\lambda_0}(0)>0$, so both differences are strictly
positive on $(0,\lambda_0)$.
They remain positive on every compact subset away from the
plane for nearby values of $\lambda$; their positive values at
zero also exclude negative points near zero. Any negative
minimum must therefore lie in $(\lambda-\delta,\lambda)$.
All its arguments lie in a fixed compact subinterval of
$(0,\infty)$, where $U,V$ have positive minima. Both
coefficients are consequently bounded, also for sublinear
powers. Taking $\delta$ sufficiently small contradicts
\eqref{eq:narrow-product}. Thus reflection never stops.

For $0<t<t'$, use $\lambda=(t+t')/2$ to obtain
$U(t)\le U(t')$ and $V(t)\le V(t')$. Boundedness gives limits
$U_\infty,V_\infty>0$ as $t\to\infty$.
For $0\le\phi\in C_c^\infty(\R)$ with $\int\phi>0$, translate
the first equation by $L$. For all large $L$,
\[
\int_\R U(t+L)(-\Delta)^s\phi(t)\,dt
=a_0\int_\R V(t+L)^p\phi(t)\,dt.
\]
Since $U$ is bounded and $(-\Delta)^s\phi\in L^1(\R)$,
dominated convergence sends the left side to
$U_\infty\int(-\Delta)^s\phi=0$. The right side tends to
$a_0V_\infty^p\int\phi>0$, a contradiction.
\end{proof}

\subsubsection{Bounds from the central value}\label{sec:radial-compactness}

For the remaining estimates assume \eqref{eq:parameters-main}
and \eqref{eq:radial-window}, which is automatic for
$s\ge1/2$. We first control concentration away from the origin,
then use the central value to control the origin itself.

\begin{lemma}\label{lem:radial-annular}
Assume \eqref{eq:parameters-main} and \eqref{eq:radial-window},
and fix $C_0>0$.
Let $(u,v)\in X_R\times X_R$ be a positive radial solution of
\eqref{eq:ball-green-system}, with $\mu\ge0$ and $\mu R^{2s}\le C_0$.
Put
\[
\begin{gathered}
\alpha_0=\frac{2s(p+1)}{pq-1},\qquad
\beta_0=\frac{2s(q+1)}{pq-1},\\
\mathcal M(r)=r^{(\alpha-\alpha_0)/\alpha_0}u(r)^{1/\alpha_0}
+r^{(\beta-\beta_0)/\beta_0}v(r)^{1/\beta_0}.
\end{gathered}
\]
Here $\alpha,\beta$ are defined in \eqref{eq:scales}.
There is $C=C(N,s,a,b,p,q,C_0)$, independent of $R,\mu,u,v$, such that
\begin{equation}\label{eq:annular-small}
\mathcal M(r)\min\{r,R-r\}\le C\qquad(0<r<R).
\end{equation}
Consequently,
\begin{equation}\label{eq:radial-decay-small}
u(r)\le Cr^{-\alpha},\qquad v(r)\le Cr^{-\beta}
\qquad(0<r\le R/2).
\end{equation}
\end{lemma}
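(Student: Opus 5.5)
We argue by contradiction through a doubling-and-rescaling (blow-up) argument in the style of Pol\'a\v{c}ik-Quittner-Souplet, adapted to radial ball solutions. The quantity $\mathcal M$ is chosen so that, away from the origin, it scales like an inverse length. Near a point $r\sim r_j$ with $r_j$ bounded below relative to $\ell_j$, the weights $|x|^a,|x|^b$ are essentially the constants $r_j^a,r_j^b$. Rescaled by $\ell_j=\mathcal M(r_j)^{-1}$, the pair then solves an unweighted system with coefficients $a_0,b_0>0$. Concretely, we suppose there are solutions $(u_j,v_j)$ on $B_{R_j}$ with $\mu_jR_j^{2s}\le C_0$, and radii $r_j$ with $\mathcal M_j(r_j)\min\{r_j,R_j-r_j\}\to\infty$. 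The doubling lemma, applied to the radial function $\mathcal M_j$ on the interval $(0,R_j)$ with distance $\min\{r,R-r\}$, lets us replace $r_j$ by a point where, in addition, $\mathcal M_j\le2\mathcal M_j(r_j)$ on $|r-r_j|\le k_j\ell_j$ with $k_j\to\infty$. We then set
\[
U_j(z)=\ell_j^{\alpha_0}r_j^{(\alpha-\alpha_0)}u_j(x_j+\ell_jz),\qquad
V_j(z)=\ell_j^{\beta_0}r_j^{(\beta-\beta_0)}v_j(x_j+\ell_jz),\qquad x_j=r_je_1 .
\]
The normalization gives $U_j^{1/\alpha_0}+V_j^{1/\beta_0}=1$ at $z=0$ and a bound $\le 2^{\cdots}$ on growing balls. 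Since $\ell_j/r_j\to0$, the weights $|x_j+\ell_jz|^a/r_j^a\to1$ locally uniformly. The linear term becomes $\mu_j\ell_j^{2s}U_j$, and this tends to zero because $\mu_j\ell_j^{2s}\le C_0(\ell_j/R_j)^{2s}\to0$.

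Next we pass to the limit. The Green representation of $(U_j,V_j)$ is over balls $D_j$ centered at $-x_j/\ell_j$ with radius $R_j/\ell_j$. There are two cases according to whether $d_j=\dist(0,\partial D_j)=(R_j-r_j)/\ell_j$ tends to infinity; in the opposite case, $r_j/\ell_j\to\infty$ holds by the choice of the point. \emph{Interior case.} Here $d_j\to\infty$. The local bounds of Lemma \ref{lem:green-limits} (bounded sources, $a=b=0$) need $\sup_jU_j(0),V_j(0)<\infty$, which holds by normalization. They give local H\"older bounds, a subsequence converging locally uniformly, and limits satisfying $U=I_{2s}(V^p)+C_U$, $V=I_{2s}(U^q)+C_V$. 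We put the coefficient $1$ into $a_0,b_0$. The limit is nonzero since the normalization at $z=0$ persists. By \eqref{eq:radial-flattening} it depends only on $z_1$, and it is bounded by the doubling bound. Lemma \ref{lem:profile-exclusion} then gives a contradiction under \eqref{eq:radial-window}. \emph{Boundary case.} Here $d_j$ stays bounded, and after translation the balls $D_j$ converge to a half-space. Lemma \ref{lem:uniform-ball-boundary}, with $M$ controlled by the local bounds of the sources, gives uniform $C^{s/2}$ bounds up to the boundary. The limit is then a bounded nonzero one-dimensional pair vanishing on a half-line and solving the system distributionally on the other half-line. This is excluded by Lemma \ref{lem:half-line}. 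We must also check that the normalization point stays at bounded positive distance from the limiting boundary, which follows from the vanishing estimate \eqref{eq:uniform-boundary-vanishing}.

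The main obstacle is the \emph{uniformity of the Green representation in the limit}. On the full ball the sources include contributions near the origin, where the weights are singular, and far regions, where no doubling bound holds. In the interior case we must ensure that Lemma \ref{lem:green-limits} applies, although the local source bound there is only assumed on $B_{2L}$ for large $j$. This is exactly what the doubling bound provides, and the distant parts are handled through \eqref{eq:far-flat} using only $Z_j(0)$ bounded. The origin of $\R^N$ sits at $z=-x_j/\ell_j$, which goes to infinity, so it never enters a fixed ball. Finally, \eqref{eq:radial-decay-small} follows from \eqref{eq:annular-small}. For $r\le R/2$ we have $\min\{r,R-r\}=r$, so $r^{(\alpha-\alpha_0)/\alpha_0}u^{1/\alpha_0}\le C/r$, that is, $u\le C r^{-\alpha_0-(\alpha-\alpha_0)}=Cr^{-\alpha}$, and similarly $v\le Cr^{-\beta}$.
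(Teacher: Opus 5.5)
Your proposal is correct and follows the paper's proof essentially step for step: the doubling selection for $\mathcal M$ with distance $\min\{r,R-r\}$, the rescaling carrying the factors $r_j^{\alpha-\alpha_0}$ and $r_j^{\beta-\beta_0}$, local bounds and the limit from Lemma \ref{lem:green-limits} with $\mu_j\ell_j^{2s}\to0$, one-dimensionality by radial flattening, and the contradiction with Lemma \ref{lem:profile-exclusion}. The only difference is your boundary case, which cannot occur, because the doubling lemma gives $\mathcal M_j(r_j)\min\{r_j,R_j-r_j\}>2k_j$ and hence $(R_j-r_j)/\ell_j\to\infty$ as the paper records; this matters, since that branch as sketched would need global bounds on $U_j,V_j$ (for Lemma \ref{lem:uniform-ball-boundary} and for passing to the distributional limit) that the purely local doubling bound does not supply.
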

\begin{proof}
Suppose \eqref{eq:annular-small} fails. For integers $k\to\infty$,
choose a point $x_0$ in a punctured ball such that
\[
\mathcal M(|x_0|)\min\{|x_0|,R-|x_0|\}>4k.
\]
If $\mathcal M\le2\mathcal M(|x_0|)$ on
$B_{k/\mathcal M(|x_0|)}(x_0)$, keep $x_0$. Otherwise replace it
by a point $x_1$ in that ball with
$\mathcal M(|x_1|)>2\mathcal M(|x_0|)$ and repeat. The total
displacement is bounded by
\[
\sum_{\nu\ge0}\frac{k}{2^\nu\mathcal M(|x_0|)}
=\frac{2k}{\mathcal M(|x_0|)}
<\tfrac12\min\{|x_0|,R-|x_0|\}.
\]
All chosen points remain in a compact subset of the punctured
ball, where $\mathcal M$ is continuous. Infinite iteration
would make $\mathcal M$ unbounded there, so the procedure stops.

After rotation and passage to a sequence, the selected points
have the form $x_j=r_je_1$. Put
$\ell_j=\mathcal M(r_j)^{-1}$. The selection gives
\[
r_j/\ell_j\to\infty,\qquad (R_j-r_j)/\ell_j\to\infty,\qquad
\mathcal M(|x|)\le2\mathcal M(r_j)
\quad(|x-x_j|\le k_j\ell_j),\quad k_j\to\infty.
\]
On $D_j=(B_{R_j}-x_j)/\ell_j$, define
\[
U_j(z)=r_j^{\alpha-\alpha_0}\ell_j^{\alpha_0}
u_j(x_j+\ell_jz),\qquad
V_j(z)=r_j^{\beta-\beta_0}\ell_j^{\beta_0}
v_j(x_j+\ell_jz).
\]
At $z=0$, their normalized powers add to one. For $|z|\le L$,
\[
\frac{|x_j+\ell_jz|}{r_j}=1+O(L\ell_j/r_j)\longrightarrow1,
\]
so substitution into the definition of $\mathcal M$ gives
\[
U_j(0)^{1/\alpha_0}+V_j(0)^{1/\beta_0}=1,\qquad
U_j(z)^{1/\alpha_0}+V_j(z)^{1/\beta_0}\le2+o_L(1).
\]
The scale identities are
\[
p\beta_0-\alpha_0=q\alpha_0-\beta_0=2s,\qquad
p(\beta-\beta_0)-(\alpha-\alpha_0)=a,\qquad
q(\alpha-\alpha_0)-(\beta-\beta_0)=b.
\]
Insert the definitions of $U_j,V_j$ into
\eqref{eq:ball-green-system} and use the scaling of its Green
kernel. Equivalently, the resulting equations are
\[
\begin{cases}
(-\Delta)^sU_j=\mu_j\ell_j^{2s}U_j+
 (|x_j+\ell_jz|/r_j)^aV_j^p,\\
(-\Delta)^sV_j=(|x_j+\ell_jz|/r_j)^bU_j^q.
\end{cases}
\]
They still have nonnegative Green sources on $D_j$.
The distance from zero to $\partial D_j$ tends to infinity,
and
\[
0\le\mu_j\ell_j^{2s}
\le C_0(\ell_j/R_j)^{2s}\longrightarrow0.
\]
The preceding bounds make both sources uniformly bounded on
each fixed ball and both central potential values bounded.
Lemma \ref{lem:green-limits} therefore gives a locally uniformly
convergent subsequence. The limiting sources are $V^p$ and
$U^q$, and the normalization and local bounds give
\[
U(0)^{1/\alpha_0}+V(0)^{1/\beta_0}=1,\qquad
U^{1/\alpha_0}+V^{1/\beta_0}\le2\quad\text{on }\R^N.
\]
Its second conclusion yields
\[
U=I_{2s}(V^p)+C_U,\qquad
V=I_{2s}(U^q)+C_V,\qquad C_U,C_V\ge0.
\]
Because $\ell_j/r_j\to0$, formula \eqref{eq:radial-flattening}
and local equicontinuity show that $U,V$ depend only on $z_1$.
This bounded nonzero pair contradicts Lemma
\ref{lem:profile-exclusion}, proving \eqref{eq:annular-small}.

For $0<r\le R/2$, one has $\min\{r,R-r\}=r$. Substitute this
in \eqref{eq:annular-small} and use the definition of $\mathcal M$:
\[
r^{\alpha/\alpha_0}u(r)^{1/\alpha_0}
+r^{\beta/\beta_0}v(r)^{1/\beta_0}
=r\mathcal M(r)\le C.
\]
Bounding each nonnegative summand separately proves
\eqref{eq:radial-decay-small}.
\end{proof}

\begin{theorem}
\label{thm:radial-compactness}
Assume \eqref{eq:parameters-main} and \eqref{eq:radial-window},
and fix $C_0>0$.
Consider positive radial solutions $(u,v)\in X_R\times X_R$
of \eqref{eq:ball-green-system}, with $\mu\ge0$ and $\mu R^{2s}\le C_0$.
For every $L,c>0$, there is $C_{L,c}$ such that
\[
R\ge4L,\quad u(0)+v(0)\le c
\quad\Longrightarrow\quad \sup_{B_L}(u+v)\le C_{L,c}.
\]
For every fixed $R,c>0$, there is $C_{R,c}$ such that
\[
u(0)+v(0)\le c
\quad\Longrightarrow\quad \sup_{B_R}(u+v)\le C_{R,c}.
\]
The constant $C_{L,c}$ is independent of $R,\mu,u,v$,
and $C_{R,c}$ is independent of $\mu,u,v$.
Both may depend on the equation parameters and $C_0$.
Every sequence with $R_j\to\infty$ and $u_j(0)+v_j(0)\le c$
has a subsequence converging locally uniformly on $\R^N$.
\end{theorem}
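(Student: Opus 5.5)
Lemma \ref{lem:radial-annular} controls everything away from the origin, and the central value handles the rest through Lemma \ref{lem:inner-central}; the last assertion then follows from Lemma \ref{lem:green-limits}. Fix $L$ and $R\ge4L$. By \eqref{eq:radial-decay-small}, for $L/2\le r\le R/2$ we have $u(r)\le C(L/2)^{-\alpha}$ and $v(r)\le C(L/2)^{-\beta}$, with constants independent of $R,\mu,u,v$. So the annulus $L/2\le|x|\le L$ and, more generally, $B_{R/2}\setminus B_{L/2}$ are uniformly controlled. The only remaining issue is a possible concentration of $u$ or $v$ inside $B_{L/2}$, near or at the origin, where \eqref{eq:radial-decay-small} degenerates.

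To exclude this, I would write $u=\mathcal G_{s,R}h_u$ with $h_u=\mu u+|x|^av^p\ge0$ radial, and similarly $v=\mathcal G_{s,R}(|x|^bu^q)$. Lemma \ref{lem:inner-central} gives $\int_{B_R}u\le CR^Nu(0)$, which is only an average bound and does not directly control a radial spike. I would instead compare directly through the Green kernel. For $|x|<L$ and $|y|<2L$, the lower bound on $G_{s,R}(0,y)$ from the proof of Lemma \ref{lem:inner-central} and $R\ge4L$ give $G_{s,R}(0,y)\ge c_L$. Hence
\[
c_L\int_{B_{2L}}h_u\le u(0)\le c,
\]
and likewise for the $v$ source. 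This bounds the local source mass but not the sup norm. The supremum bound inside $B_{L/2}$ should therefore come from a second blow-up argument. Suppose $\sup_{B_L}(u_j+v_j)\to\infty$. The maximum points lie in $B_{L/2}$ by the annular bound. I would rescale with the scaling \eqref{eq:dilation} around the origin, or with the doubling scales of $\mathcal M$ centered at a maximum point $x_j$.

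There are two cases. If $|x_j|/\ell_j\to\infty$, Lemma \ref{lem:radial-annular}'s contradiction applies directly: \eqref{eq:annular-small} already bounds $\mathcal M(|x_j|)|x_j|$, so this case cannot produce blow-up beyond the $r^{-\alpha}$ rate. If instead $|x_j|/\ell_j$ stays bounded, the rescaled pair $U_j(z)=\lambda_j^{\alpha}u_j(\lambda_jz)$, $V_j(z)=\lambda_j^\beta v_j(\lambda_jz)$, with $\lambda_j\to0$ chosen so that $U_j(0)+V_j(0)$ or the maximum is normalized, converges by Lemma \ref{lem:green-limits} to an entire solution of $U=I_{2s}(|x|^aV^p)+C_U$ and $V=I_{2s}(|x|^bU^q)+C_V$. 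The constants must vanish by the finiteness argument of Lemma \ref{lem:profile-exclusion}. The contradiction should come from the central normalization: $U_j(0)=\lambda_j^\alpha u_j(0)\le c\lambda_j^\alpha\to0$. Normalizing at the maximum, where the value is $1$ with bounded location, yields a limit that is positive by Proposition \ref{thm:representation}'s positivity, since $U(0)>0$ whenever the pair is nonzero. Yet $U(0)=\lim U_j(0)=0$ by continuity and the uniform Hölder bounds of Lemma \ref{lem:green-limits}. This gives the contradiction, so the limit must vanish. This step is \textbf{the main obstacle}: one must check that the $\mu$-term disappears, since $\mu\lambda_j^{2s}\le C_0(\lambda_j/R_j)^{2s}\to0$, that the sources are locally bounded by $C(|z|^a+|z|^b)$ using the annular bound, and that $\lambda_j\to0$ is forced rather than merely allowed.

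For fixed $R$ the same argument works with $R$ fixed. The annular region near $\partial B_R$ is covered by \eqref{eq:annular-small}, together with Lemma \ref{lem:uniform-ball-boundary} near the boundary. Finally, given the uniform bounds on each $B_L$, Lemma \ref{lem:green-limits} supplies $C^\sigma$ bounds on compact sets. Arzelà--Ascoli and a diagonal argument then give local uniform convergence on $\R^N$.
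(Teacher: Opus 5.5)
Your argument for the first assertion and for the final compactness statement is essentially the paper's. You blow up at a maximum of $u_j^{1/\alpha}+v_j^{1/\beta}$ on $\overline B_L$ with $\ell_j=M_j^{-1}$. You use \eqref{eq:radial-decay-small} to get $r_j/\ell_j\le C$; this makes your case $|x_j|/\ell_j\to\infty$ simply vacuous, and normalizing at the origin is not possible since $\ell_j^\alpha u_j(0)\to0$. The rescaled central values tend to zero, and the positive Riesz kernel in the limit (via Fatou or Lemma \ref{lem:green-limits}) forces the nonzero limit to be positive at the origin. That positivity comes from the limiting potential identity, not from Proposition \ref{thm:representation}, but this is cosmetic.

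The genuine gap is the full-ball bound for fixed $R$. Estimate \eqref{eq:annular-small} controls $\mathcal M(r)\min\{r,R-r\}$, so near $\partial B_R$ it only gives $\mathcal M(r)\le C/(R-r)$, which still allows blow-up as $r\to R$. Lemma \ref{lem:uniform-ball-boundary} cannot fill this. Its constant is $\|w\|_\infty+M$, where $M$ bounds the source $\mu u+|x|^av^p$ near the boundary point, so applying it to $u,v$ themselves presupposes the very bound you want.

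After the origin (first assertion with $L<R/4$) and compact subsets of $B_R\setminus\{0\}$ (Lemma \ref{lem:radial-annular}) are handled, you must still exclude maxima $M_j\to\infty$ of $u_j^{1/\alpha_0}+v_j^{1/\beta_0}$ at radii $r_j\to R$. This needs a boundary blow-up with its own Liouville theorem. In the paper one rescales at $x_j$ by $\ell_j=M_j^{-1}$, so that $0\le U_j,V_j\le1$. Only then is Lemma \ref{lem:uniform-ball-boundary} applicable: it gives $U_j(0)+V_j(0)\le CL_j^s$ with $L_j=(R-r_j)/\ell_j$, which excludes $L_j\to0$. Two cases remain:
\begin{itemize}
\item If $L_j\to\infty$, the limit is a bounded entire pair depending only on $z_1$, excluded by Lemma \ref{lem:profile-exclusion}; this is where \eqref{eq:radial-window} enters.
\item If $L_j\to L\in(0,\infty)$, the limit solves the system in the half-space $\{z_1<L\}$ and vanishes outside it. After transverse integration of the kernel it becomes the half-line system of Lemma \ref{lem:half-line}, which is ruled out by the moving-plane argument there.
\end{itemize}
This half-space case and its Liouville theorem are missing from your proposal. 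Without them the second assertion is not established. Theorem \ref{thm:unified-existence} relies on that assertion to show that $u(0)+v(0)$ is unbounded on $\mathcal C_R$.
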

\begin{proof}
We first prove the bound on $B_L$, uniformly for $R\ge4L$.
If it fails, choose a sequence with central sum at most $c$ and
\[
M_j=\max_{\overline B_L}(u_j^{1/\alpha}+v_j^{1/\beta})
\longrightarrow\infty,\qquad \ell_j=M_j^{-1}.
\]
Here $\alpha,\beta>0$ by $a,b>-2s$ and $pq>1$.
The maximum cannot occur at zero for large $j$ because the
central sums are bounded. At a maximizing radius $r_j$,
\eqref{eq:radial-decay-small} gives $M_j\le C/r_j$, so
$r_j/\ell_j\le C$. Set
\[
\widetilde U_j(z)=\ell_j^\alpha u_j(\ell_jz),\qquad
\widetilde V_j(z)=\ell_j^\beta v_j(\ell_jz).
\]
For $|z|\le L/\ell_j$ these functions satisfy
\[
\widetilde U_j^{1/\alpha}+\widetilde V_j^{1/\beta}\le1,
\quad
\widetilde U_j(r_je_1/\ell_j)^{1/\alpha}
+\widetilde V_j(r_je_1/\ell_j)^{1/\beta}=1,
\quad
\widetilde U_j(0)+\widetilde V_j(0)\longrightarrow0.
\]
Their Green equations are those of
\eqref{eq:ball-green-system} on $B_{R_j/\ell_j}$ with
$\mu_j$ replaced by $\mu_j\ell_j^{2s}$. On a fixed $B_{2L'}$,
their sources are bounded by
$C_{L'}(1+|z|^a+|z|^b)$, and
\[
\mu_j\ell_j^{2s}\le C_0(\ell_j/R_j)^{2s}\longrightarrow0.
\]
The weighted extension in Lemma
\ref{lem:weighted-local-continuity} therefore supplies local
equicontinuity. After a diagonal subsequence and convergence of
$r_je_1/\ell_j$, the limit is continuous and nonzero.

Apply Fatou's lemma to the two nonnegative Green integrals at
zero, using \eqref{eq:green-exhaustion} on
$B_{R_j/\ell_j}$. Dropping the nonnegative linear term gives
\[
0=\widetilde U(0)\ge
I_{2s}(|z|^a\widetilde V^p)(0),\qquad
0=\widetilde V(0)\ge
I_{2s}(|z|^b\widetilde U^q)(0).
\]
The weights and kernels are positive off zero. Each integral
can vanish only if its continuous nonnegative source component
is identically zero. This contradicts the nonzero normalized
limit and proves the uniform local bound.

Now fix $R$. The first assertion bounds $u,v$ near zero, by
taking $L<R/4$. Lemma \ref{lem:radial-annular} bounds them on
every compact subset of $B_R\setminus\{0\}$. Thus an unbounded
sequence of full-ball maxima must occur at
$x_j=r_je_1$ with $r_j\to R$. Choose $x_j$ to maximize
$u_j^{1/\alpha_0}+v_j^{1/\beta_0}$, where $\alpha_0,\beta_0$
are as in Lemma \ref{lem:radial-annular}. Let its maximum be
$M_j\to\infty$, put $\ell_j=M_j^{-1}$, and define
\[
U_j(z)=\ell_j^{\alpha_0}u_j(x_j+\ell_jz),\qquad
V_j(z)=\ell_j^{\beta_0}v_j(x_j+\ell_jz)
\]
on $D_j=(B_R-x_j)/\ell_j$, with zero extensions. Then
\[
0\le U_j,V_j\le1,\qquad
U_j(0)^{1/\alpha_0}+V_j(0)^{1/\beta_0}=1,
\]
and their equations have coefficients
$\mu_j\ell_j^{2s}\to0$,
$|x_j+\ell_jz|^a\to R^a$,
$|x_j+\ell_jz|^b\to R^b$ on fixed balls away from the rescaled
origin. The zero extensions belong to $H^s(\R^N)$ by
Lemma \ref{lem:ball-maps}.

Put $L_j=(R-r_j)/\ell_j$. If $L_j$ is bounded, the point
$y_j=L_je_1$ is a nearest boundary point of $D_j$. For
$z\in D_j\cap B_8(y_j)$, the original radius lies between
$R-8\ell_j$ and $R$, so both right-hand sides are uniformly
bounded. Lemma \ref{lem:uniform-ball-boundary} gives
\[
U_j(0)+V_j(0)\le C L_j^s.
\]
Normalization excludes $L_j\to0$. Passing to a subsequence,
either $L_j\to\infty$ or $L_j\to L\in(0,\infty)$.

If $L_j\to\infty$, Lemma \ref{lem:green-limits} gives a
bounded nonzero entire limit satisfying
\[
U=R^aI_{2s}(V^p)+C_U,\qquad
V=R^bI_{2s}(U^q)+C_V,\qquad C_U,C_V\ge0.
\]
Since $\ell_j/r_j\to0$, radial flattening makes this limit
one-dimensional. Lemma \ref{lem:profile-exclusion} excludes it.

If $L_j\to L\in(0,\infty)$, the balls $D_j$ converge locally
to $\{z_1<L\}$. Interior estimates and
\eqref{eq:uniform-boundary-holder} give local uniform convergence
of the zero extensions, including at the limiting boundary.
The limit is continuous, bounded, nonzero, and zero on
$\{z_1\ge L\}$. For a compactly supported test function inside
$\{z_1<L\}$, pass to the equations using
$0\le U_j,V_j\le1$ and
$|(-\Delta)^s\phi(z)|\le C_\phi(1+|z|)^{-N-2s}$.
Thus
\[
(-\Delta)^sU=R^aV^p,\qquad
(-\Delta)^sV=R^bU^q
\quad\text{in }\{z_1<L\}.
\]
Again radial flattening makes both components depend only on
$z_1$. The normalization of the fractional Laplacian gives
\[
C(N,s)\int_{\R^{N-1}}
(t^2+|z'|^2)^{-(N+2s)/2}\,dz'
=C(1,s)|t|^{-1-2s}\qquad(t\ne0).
\]
Transverse integration in the distributional equations and
the coordinate change $t=L-z_1$ therefore yield the half-line
system in Lemma \ref{lem:half-line}, with constants $R^a,R^b$.
This is also impossible. The full-ball bound follows.

Finally, let $R_j\to\infty$ and $u_j(0)+v_j(0)\le c$.
For fixed $L$, the first bound controls $u_j,v_j$ on $B_{2L}$
for all sufficiently large $j$. Their nonnegative Green sources
are bounded there by $C_L(1+|x|^a+|x|^b)$, and their central
values are at most $c$. Lemma
\ref{lem:weighted-local-continuity} gives equicontinuity on
$B_L$, with every exponent below
$\min\{1,2s,2s+a,2s+b\}$. Arzel\`a-Ascoli on successively
larger balls and a diagonal choice give the claimed subsequence.
\end{proof}

\Needspace{12\baselineskip}
\subsection{Passage to the whole space}\label{sec:unified-existence}

\begin{theorem}
\label{thm:unified-existence}
Assume \eqref{eq:parameters-main}, \eqref{eq:supercritical},
and \eqref{eq:radial-window}.
System \eqref{eq:system} has a positive classical entire solution.
\end{theorem}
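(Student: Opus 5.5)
The plan is to pass to the limit $R\to\infty$ along ball solutions normalized by their central value. The compactness comes from Theorem \ref{thm:radial-compactness}, the identification of the limit from Lemma \ref{lem:green-limits}, and the final regularity from Proposition \ref{thm:representation}.

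\emph{Selection on each ball.} Fix $R$ and consider the continuum $\mathcal C_R$ of Proposition \ref{prop:ball-continuum}. On it the map $(\lambda,u)\mapsto u(0)+v(0)$, with $v=\mathcal G^{(b)}_{s,R}(u^q)$, is continuous, and $\mathcal C_R$ is connected. Near the bifurcation point $(\log\lambda_1(R),0)$ the central sum tends to zero, since $\|u\|_{X_R}\to0$ there. I claim the central sum is unbounded on $\mathcal C_R$. If $u(0)+v(0)\le c$ on all of $\mathcal C_R$, then $0<\mu<\lambda_1(R)$ gives $\mu R^{2s}\le \lambda_1(1)=:C_0$. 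The second assertion of Theorem \ref{thm:radial-compactness} then bounds $\sup_{B_R}(u+v)$. Two applications of Lemma \ref{lem:ball-maps}(i) in $u=\mu K_Ru+\mathcal G^{(a)}_{s,R}(v^p)$ bound $\|u\|_{X_R}$, which contradicts the unboundedness in Proposition \ref{prop:ball-continuum}. By connectedness there is therefore a point with $u_R(0)+v_R(0)=1$, together with $0<\mu_R<\lambda_1(R)=\lambda_1(1)R^{-2s}$.

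\emph{Limit.} Take $R_j\to\infty$. Theorem \ref{thm:radial-compactness} (with $c=1$, $C_0=\lambda_1(1)$) gives a subsequence with $u_j\to u$ and $v_j\to v$ locally uniformly on $\R^N$, and $u(0)+v(0)=1$. Write $u_j=\mathcal G_{s,R_j}(\mu_ju_j+|x|^av_j^p)$. On each $B_{2L}$ the sources are bounded by $C_L(1+|x|^a+|x|^b)$, and $\mu_j\to0$. They converge locally uniformly away from the origin to $|x|^av^p$ and $|x|^bu^q$. Lemma \ref{lem:green-limits} then gives
\[
u=I_{2s}(|x|^av^p)+C_u,\qquad v=I_{2s}(|x|^bu^q)+C_v,\qquad C_u,C_v\ge0,
\]
with finite potentials everywhere.

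\emph{Removing constants and positivity.} This is the main obstacle. If $C_u>0$, then $u\ge C_u$ and
\[
v(0)\ge C_u^q\int_{|y|>1}|y|^{b}K_s(y)\,dy=\infty,
\]
because $b+2s>0$ makes the integral diverge. This contradicts finiteness, so $C_u=0$, and symmetrically $C_v=0$. The pair is nonzero because $u(0)+v(0)=1$. If $v\equiv0$, then $u=I_{2s}(0)=0$, which is impossible; likewise $u\not\equiv0$. Hence both sources are nontrivial and nonnegative, and the positive kernel gives $u,v>0$ everywhere. So $(u,v)$ is a positive continuous solution of \eqref{eq:integral-system-intro}. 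Proposition \ref{thm:representation} then shows it is a positive classical entire solution of \eqref{eq:system}. The delicate points are the uniform bound $\mu R^{2s}\le C_0$, which is supplied by $\mu<\lambda_1(R)$ together with eigenvalue scaling, and ruling out a vanishing limit, which the central normalization handles.
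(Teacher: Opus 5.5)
Your proof is correct. The selection step on each ball (unboundedness of the central sum on $\mathcal C_R$, then connectedness) matches the paper's. The passage to the limit $R\to\infty$, however, follows a genuinely different route.

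\textbf{The paper's route.} It never identifies the limit as an exact Riesz potential. It proceeds as follows.
\begin{enumerate}
\item Fatou's lemma gives only the one-sided inequalities $u\ge I_{2s}(|x|^av^p)$ and $v\ge I_{2s}(|x|^bu^q)$, which suffice for positivity.
\item A uniform tail bound \eqref{eq:unified-tail} is derived from the decay estimate \eqref{eq:radial-decay-small} and the Green mass bound of Lemma \ref{lem:inner-central}.
\item This bound is used to pass to the limit in the weak formulation tested against $(-\Delta)^s\phi$.
\item Smoothness and the pointwise equations off the origin come from a separate local regularity iteration.
\end{enumerate}

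\textbf{Your route.} You apply Lemma \ref{lem:green-limits} directly to the unrescaled ball solutions, and its hypotheses do hold:
\begin{enumerate}
\item $d_j=R_j\to\infty$ and $u_j(0)\le1$;
\item the sources $\mu_ju_j+|x|^av_j^p$ and $|x|^bu_j^q$ are bounded by $C_L(1+|x|^a+|x|^b)$ on $B_{2L}$, by the first assertion of Theorem \ref{thm:radial-compactness};
\item these sources converge locally uniformly off the origin, because $\mu_j\le\lambda_1(1)R_j^{-2s}\to0$.
\end{enumerate}
You then remove the additive constants using the divergence of $\int_{|y|>1}|y|^{b+2s-N}\,dy$ (respectively with $a$), as in Lemma \ref{lem:profile-exclusion}. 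Finally, the converse half of Proposition \ref{thm:representation} supplies $u,v\in\mathcal L_{2s}$, smoothness away from zero, and both the distributional and pointwise equations.

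\textbf{Comparison.} Your argument is shorter and yields the exact integral identity as a by-product. It bypasses Lemma \ref{lem:inner-central}, the tail estimate, and the separate regularity iteration, reusing results the paper already proves. The paper's argument works directly with the weak equation and never needs to identify or eliminate the constants $C_u,C_v$.
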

\begin{proof}
For each fixed $R>0$, Proposition \ref{prop:ball-continuum}
gives a connected component $\mathcal C_R$. On this component,
\[
0<\mu<\lambda_1(R)=\lambda_1(1)R^{-2s}.
\]
Hence Theorem \ref{thm:radial-compactness} applies with
$C_0=\lambda_1(1)$. If $u(0)+v(0)$ were bounded on
$\mathcal C_R$, its full-ball estimate would bound
$\|u\|_\infty+\|v\|_\infty$. Lemma \ref{lem:ball-maps} and
\eqref{eq:ball-green-system} would then give
\[
\|u\|_{X_R}
\le C_R\bigl(\lambda_1(R)\|u\|_\infty+\|v\|_\infty^p\bigr)
\le C_R,
\]
contradicting the unboundedness in Proposition
\ref{prop:ball-continuum}. The central sum is therefore
unbounded. It is continuous on $\mathcal C_R$ and tends to
zero at its bifurcation point. Its image is an interval with
infimum zero and no finite upper bound, so it contains one.
Choose a pair on each ball with
\begin{equation}\label{eq:ball-selection}
u_R(0)+v_R(0)=1,\qquad
0<\mu_R<\lambda_1(1)R^{-2s}.
\end{equation}

We next bound the integrals needed to pass to a nonlocal limit.
The local bound in Theorem \ref{thm:radial-compactness} controls
$u_R+v_R$ on $B_1$ for $R\ge4$. On $1\le r\le R/2$,
\eqref{eq:radial-decay-small} and $\alpha,\beta>0$ give
$u_R(r)+v_R(r)\le C$. Thus
\[
\sup_{B_{R/2}}(u_R+v_R)\le C\qquad(R\ge4).
\]
Apply Lemma \ref{lem:inner-central} to the two nonnegative
Green sources $\mu_Ru_R+|x|^av_R^p$ and $|x|^bu_R^q$.
Using \eqref{eq:ball-selection}, obtain
\[
\int_{B_R}(u_R+v_R)\,dx\le C_{N,s}R^N.
\]
For the zero extensions, split the tail at $R/2$. For
$K\ge1$ and $R>\max\{4,2K\}$ this gives
\begin{equation}\label{eq:unified-tail}
\begin{aligned}
\int_{|x|>K}\frac{u_R+v_R}{1+|x|^{N+2s}}\,dx
&\le C\int_K^{R/2}r^{-1-2s}\,dr
+CR^{-N-2s}\int_{B_R}(u_R+v_R)\,dx\\
&\le C(K^{-2s}+R^{-2s})\le CK^{-2s}.
\end{aligned}
\end{equation}
The constants are independent of $R$.

Take $R_j\to\infty$. Theorem \ref{thm:radial-compactness}
gives a locally uniform limit $(u,v)$ with
$u(0)+v(0)=1$. Fatou's lemma in the nonnegative Green
representations, together with \eqref{eq:green-exhaustion},
gives
\[
u\ge I_{2s}(|x|^av^p),\qquad
v\ge I_{2s}(|x|^bu^q).
\]
The central normalization excludes $u\equiv v\equiv0$.
Positivity of the Riesz kernel gives the two alternatives
\[
\begin{aligned}
u\not\equiv0&\ \Longrightarrow\ v\ge I_{2s}(|x|^bu^q)>0
\ \Longrightarrow\ u\ge I_{2s}(|x|^av^p)>0,\\
v\not\equiv0&\ \Longrightarrow\ u\ge I_{2s}(|x|^av^p)>0
\ \Longrightarrow\ v\ge I_{2s}(|x|^bu^q)>0.
\end{aligned}
\] Fatou's lemma in
\eqref{eq:unified-tail} also gives $u,v\in\mathcal L_{2s}$.

Let $\phi\in C_c^\infty(\R^N)$. For large $j$ it is a valid
test function in the ball equations. Its fractional Laplacian
satisfies
\[
|(-\Delta)^s\phi(x)|\le C_\phi(1+|x|)^{-N-2s}.
\]
On $B_K$, local uniform convergence permits passage in
$\int u_{R_j}(-\Delta)^s\phi$. Outside $B_K$,
\eqref{eq:unified-tail} bounds the integral by $C_\phi K^{-2s}$,
uniformly for large $j$, and gives the same bound for the limit.
First let $j\to\infty$ and then $K\to\infty$. This proves
\[
\int_{\R^N}u_{R_j}(-\Delta)^s\phi
\longrightarrow\int_{\R^N}u(-\Delta)^s\phi.
\]
On the support of $\phi$, the nonlinearities are locally
uniformly bounded before multiplication by their weights.
Since $a,b>-N$, dominated convergence gives
\[
\int |x|^av_{R_j}^p\phi\longrightarrow\int |x|^av^p\phi,
\qquad
\int |x|^bu_{R_j}^q\phi\longrightarrow\int |x|^bu^q\phi.
\]
Finally,
$|\mu_{R_j}\int u_{R_j}\phi|
\le\mu_{R_j}\|u_{R_j}\|_{L^\infty(\operatorname{supp}\phi)}
\|\phi\|_1\to0$. Passing to both weak equations proves
\eqref{eq:system} distributionally on all of $\R^N$.

On a compact subset of $\R^N\setminus\{0\}$, the weights are
smooth and $u,v$ have positive minima. Their positive powers
are consequently smooth functions of the unknowns there.
Apply the local estimates
\cite[Corollaries 2.4--2.5]{ros-boundary} first to smooth
convolutions and then pass to their limit. Localizing and
differentiating as in Lemma \ref{lem:regularity}, iteration in
the two equations gives
$u,v\in C^\infty_{\rm loc}(\R^N\setminus\{0\})$ and the
pointwise equations. Together with continuity, positivity,
and $u,v\in\mathcal L_{2s}$, these are precisely the conditions
in Definition \ref{def:class}.
\end{proof}

\begin{proof}[Proof of Theorem \ref{thm:small-exist}]
If $1/2\le s<1$, then $1-2s\le0$ whereas
$1/(p+1)+1/(q+1)>0$. Thus \eqref{eq:radial-window} holds.
Under \eqref{eq:supercritical}, Theorem
\ref{thm:unified-existence} gives a positive classical entire
solution. If \eqref{eq:supercritical} fails, its strict reverse
is \eqref{eq:hyperbola}, and Theorem \ref{thm:main} excludes
such a solution. This proves the equivalence.
\end{proof}
\section{Orders below one half}\label{sec:critical-criterion}

Throughout this section $N\ge2$ and $0<s<1/2$.
We first apply the construction of Section \ref{sec:construction}.
For the two nonexistence assertions, we then prove a common inequality
for integral equations on $\R\times\Sph^{N-1}$ and verify its hypotheses
for each original system. In both applications the original solution is
an arbitrary positive classical entire solution in the sense of
Definition \ref{def:class}.

\begin{proof}[Proof of Theorem \ref{thm:below-half-exist}]
The first inequality in \eqref{eq:sufficient-window-intro} is
\eqref{eq:supercritical}; the second is \eqref{eq:radial-window}.
Together with \eqref{eq:parameters-main}, these are the hypotheses of
Theorem \ref{thm:unified-existence}. That theorem gives a positive
classical entire solution.
\end{proof}

\subsection{A common nonexistence criterion}\label{sec:critical-finite-energy}

\subsubsection{The cylinder equation and reflection}

For $x\ne0$, write
\[
x=e^t\theta,\qquad t=\log|x|,\qquad
\theta\in\Sph^{N-1},\qquad h=\frac{N-2s}{2}.
\]
We use the cylinder $\mathcal C=\R\times\Sph^{N-1}$ with surface
measure $dt\,d\theta$ and the real $L^2$ pairing
$\langle f,g\rangle=\int_{\mathcal C}fg$.
The identity
\[
|e^t\theta-e^\tau\varphi|^2
=2e^{t+\tau}\bigl(\cosh(t-\tau)-\theta\cdot\varphi\bigr)
\]
leads, for $0<\sigma<1/2$, to the kernel and operator
\begin{equation}\label{eq:full-cylinder-kernel}
\begin{aligned}
\mathcal K_\sigma(t,z)
&=c_{N,\sigma}[2(\cosh t-z)]^{-(N-2\sigma)/2},\qquad -1\le z\le1,\\
(\mathcal T_\sigma f)(t,\theta)
&=\int_\R\int_{\Sph^{N-1}}
\mathcal K_\sigma(t-\tau,\theta\cdot\varphi)
 f(\tau,\varphi)\,d\varphi\,d\tau.
\end{aligned}
\end{equation}
The constant $c_{N,\sigma}$ is the Riesz normalization in
\eqref{eq:kernel}. For $\sigma=s$ write $\mathcal K=\mathcal K_s$
and $\mathcal T=\mathcal T_s$.

For example, suppose $u=I_{2s}(|x|^au^p)$ and $N+a=h(p+1)$.
Substitute $y=e^\tau\varphi$, $dy=e^{N\tau}d\tau\,d\varphi$,
and $u(e^\tau\varphi)=e^{-h\tau}U(\tau,\varphi)$ in this identity.
The remaining exponential factor is
$e^{(N+a-h-hp)\tau}=1$. Consequently
\[
U(t,\theta):=e^{ht}u(e^t\theta)
\quad\Longrightarrow\quad U=\mathcal T(U^p),
\qquad
\int_{\mathcal C}U^{p+1}
=\int_{\R^N}|x|^au^{p+1}\,dx.
\]
These identities retain the angular variable; no symmetry is used.
The integral on either side of the last identity is allowed to be
infinite at this stage.

The kernel $\mathcal K$ is positive and even in $t$, and decreases
strictly for $t>0$. Its angular integral
\begin{equation}\label{eq:angular-cylinder-kernel}
J_N(t)=\int_{\Sph^{N-1}}
\mathcal K(t,\theta\cdot\varphi)\,d\varphi
\end{equation}
is independent of $\theta$. For $|t|\le1$, polar coordinates about
$\theta$ compare this integral with
\[
\int_0^1\frac{\rho^{N-2}}{(t^2+\rho^2)^{(N-2s)/2}}\,d\rho
=|t|^{2s-1}\int_0^{1/|t|}
\frac{v^{N-2}}{(1+v^2)^{(N-2s)/2}}\,dv.
\]
The last integral is bounded above and below by positive constants,
because $2s-2<-1$. For $|t|\ge1$, use
$\cosh t-z\asymp e^{|t|}$ uniformly in $z\in[-1,1]$. Thus
\begin{equation}\label{eq:cylinder-kernel-bounds}
J_N(t)\asymp |t|^{2s-1}\quad(t\to0),\qquad
J_N(t)\asymp e^{-h|t|}\quad(|t|\to\infty),
\qquad J_N\in L^1(\R).
\end{equation}
All constants in these kernel bounds depend only on $N,s$.

\begin{lemma}\label{lem:cylinder-reflection}
Let $p>1$. Let $\mathcal L(t,\theta,\varphi)>0$ be even in $t$,
symmetric in $\theta,\varphi$, and strictly decreasing in $t>0$
for every angular pair. Assume
\[
J_{\mathcal L}(t):=\sup_\theta\int_{\Sph^{N-1}}
\mathcal L(t,\theta,\varphi)\,d\varphi\in L^1(\R).
\]
Suppose $W>0$ is smooth on $\mathcal C$ and satisfies
\[
W(t,\theta)=\int_{\mathcal C}
\mathcal L(t-\tau,\theta,\varphi)W(\tau,\varphi)^p
\,d\tau\,d\varphi<\infty.
\]
Assume, uniformly in $\theta$, that
$|W|+|W_t|\le Ce^{\mu t}$ for all sufficiently negative $t$,
where $\mu>0$. If
$W(t_1,\theta_0)>W(t_2,\theta_0)$ for some $t_1<t_2$ and $\theta_0$,
then there is $\lambda_0\in\R$ such that
\[
W(t,\theta)=W(2\lambda_0-t,\theta),\qquad
|W(t,\theta)|+|W_t(t,\theta)|\le C'e^{-\mu|t|}.
\]
In particular $0<\int_{\mathcal C}W^{p+1}<\infty$.
\end{lemma}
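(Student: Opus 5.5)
We run the method of moving planes in the variable $t$ only, keeping every angular variable fixed. For $\lambda\in\R$ set $W_\lambda(t,\theta)=W(2\lambda-t,\theta)$ and $w_\lambda=W_\lambda-W$ on $\Sigma_\lambda=\{t<\lambda\}$. Evenness of $\mathcal L$ in $t$ and the change $\tau\mapsto2\lambda-\tau$ give
\[
w_\lambda(t,\theta)=\int_{\tau<\lambda}\int_{\Sph^{N-1}}
\bigl[\mathcal L(t-\tau,\theta,\varphi)-\mathcal L(t+\tau-2\lambda,\theta,\varphi)\bigr]
\bigl(W_\lambda^p-W^p\bigr)(\tau,\varphi)\,d\varphi\,d\tau.
\]
For $t,\tau<\lambda$ one has $|t-\tau|<|t+\tau-2\lambda|$, so the bracket is positive by strict monotonicity. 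On the set $\Sigma_\lambda^-=\{w_\lambda<0\}$ the mean value theorem gives $W^p-W_\lambda^p\le pW^{p-1}(-w_\lambda)$, using $p>1$. Dropping the positive contributions and applying Hardy--Littlewood--Sobolev type estimates (here only Young's inequality in $t$ with $J_{\mathcal L}\in L^1$) to the negative part then yields
\[
\|w_\lambda^-\|_{L^\infty(\Sigma_\lambda)}\le\|J_{\mathcal L}\|_{L^1}\,p\,\sup_{\Sigma_\lambda^-}W^{p-1}\,\|w_\lambda^-\|_{L^\infty(\Sigma_\lambda)}.
\]

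We move in from $-\infty$ and first need a starting position. For $\lambda\ll0$, every $t<\lambda$ satisfies $W\le Ce^{\mu t}$, hence $\sup_{\Sigma_\lambda}W^{p-1}\to0$. The displayed inequality then forces $w_\lambda^-\equiv0$ once $p\|J_{\mathcal L}\|_{L^1}\sup_{\Sigma_\lambda}W^{p-1}<1$. To guarantee that $\sup|w_\lambda^-|$ is attained or at least finite, we use boundedness of $W$ on $\Sigma_\lambda$: it is bounded as $t\to-\infty$ by hypothesis, and it is continuous up to $\lambda$. The positive set starts near $-\infty$, so this step causes no difficulty. Define $\lambda_0=\sup\{\lambda:w_{\lambda'}\ge0\text{ on }\Sigma_{\lambda'}\text{ for all }\lambda'\le\lambda\}$. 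The hypothesis $W(t_1,\theta_0)>W(t_2,\theta_0)$ with $t_1<t_2$ shows $\lambda_0\le(t_1+t_2)/2<\infty$, since otherwise $w_{(t_1+t_2)/2}(t_1,\theta_0)<0$. We then claim $w_{\lambda_0}\equiv0$. If not, the integral formula gives $w_{\lambda_0}>0$ strictly on $\Sigma_{\lambda_0}$, since the bracket is strictly positive and $W_{\lambda_0}^p-W^p\ge0$ is not identically zero. The standard argument then pushes slightly past $\lambda_0$. On a compact set $[-T,\lambda_0-\varepsilon]\times\Sph^{N-1}$, $w_\lambda$ stays positive for $\lambda$ near $\lambda_0$ by continuity and compactness of the sphere. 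Near $t=-\infty$, the smallness of $W$ kills $w_\lambda^-$ through the same contraction inequality. Near the plane, the width $\varepsilon$ region gives smallness because the $t$-derivative bound makes $w_\lambda^-$ small there. This last piece is the main obstacle: the narrow-region smallness has to come from the $L^1$ kernel restricted to a thin slab. We would handle it by splitting $\|J_{\mathcal L}\|_{L^1}$ so that $\Sigma_\lambda^-$ is contained in $(-\infty,-T)\cup(\lambda_0-\varepsilon,\lambda)$. We then bound $W^{p-1}$ on the first piece by $Ce^{(p-1)\mu t}$, and on the second piece we use that the measure of the slab in $\tau$ is at most $2\varepsilon$. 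Concretely, we estimate $\int_{\text{slab}}J_{\mathcal L}(t-\tau)\,d\tau$ by the integral of $J_{\mathcal L}$ over an interval of length $2\varepsilon$, which tends to zero uniformly in $t$ by absolute continuity of the integral. This contradicts the maximality of $\lambda_0$.

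Once the symmetry $W(t,\theta)=W(2\lambda_0-t,\theta)$ is established, the decay for $t\to+\infty$ is the reflection of the assumed bound at $-\infty$. For $t\ge\lambda_0$ large this gives $|W|+|W_t|\le Ce^{\mu(2\lambda_0-t)}$. In the middle range, smoothness bounds $W$ and $W_t$ on the compact set, so $|W|+|W_t|\le C'e^{-\mu|t|}$ holds globally. Finally $W>0$ gives $\int W^{p+1}>0$, and the two-sided exponential decay together with finiteness of $|\Sph^{N-1}|$ gives $\int_{\mathcal C}W^{p+1}\le C\int_\R e^{-(p+1)\mu|t|}\,dt<\infty$.
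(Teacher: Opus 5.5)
Your proposal is correct and takes essentially the same route as the paper. It uses moving planes in $t$ with the $L^\infty$ contraction coefficient $p\|J_{\mathcal L}\|_1\sup W^{p-1}$, starts from $-\infty$ using the left-end decay, obtains strict positivity at the stopping plane, advances past it by splitting into a far tail and a thin slab, and reflects the left-end bounds. There are two minor differences: you bound the slab contribution by absolute continuity of the integral of $J_{\mathcal L}$, whereas the paper uses that $J_{\mathcal L}$ is even and nonincreasing in $|t|$ to bound it by $\int_{-2\varepsilon}^{2\varepsilon}J_{\mathcal L}$; and you get $\lambda_0\le(t_1+t_2)/2$ up front, rather than excluding $\lambda_0=+\infty$ at the end.
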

\begin{proof}
For $t<\lambda$, set $W_\lambda(t,\theta)=W(2\lambda-t,\theta)$.
Split each potential at $\tau=\lambda$ and replace
$\tau$ by $2\lambda-\tau$ in its right half. Subtraction gives
\begin{equation}\label{eq:full-reflection-identity}
\begin{split}
W_\lambda(t,\theta)-W(t,\theta)
={}&\int_{\tau<\lambda}\int_{\Sph^{N-1}}
\bigl[\mathcal L(t-\tau,\theta,\varphi)
-\mathcal L(2\lambda-t-\tau,\theta,\varphi)\bigr]\\
&\hspace{12mm}\cdot
\bigl(W_\lambda(\tau,\varphi)^p-W(\tau,\varphi)^p\bigr)
\,d\varphi\,d\tau.
\end{split}
\end{equation}
For $t,\tau<\lambda$, one has
$|t-\tau|<2\lambda-t-\tau$, so the bracket is positive.
Let $D_\lambda=(W-W_\lambda)_+$ on $t<\lambda$. On
$\{D_\lambda>0\}$, the mean-value theorem gives
$W^p-W_\lambda^p\le pW^{p-1}D_\lambda$.
Discarding the nonnegative contributions in
\eqref{eq:full-reflection-identity}, we obtain
\[
D_\lambda(t,\theta)
\le p\int_{\tau<\lambda}\int_{\Sph^{N-1}}
\mathcal L(t-\tau,\theta,\varphi)
W(\tau,\varphi)^{p-1}D_\lambda(\tau,\varphi)\,d\varphi\,d\tau.
\]
Here $D_\lambda\le W$ is bounded on the left half-cylinder. Hence
\[
\|D_\lambda\|_\infty
\le p\|J_{\mathcal L}\|_1
\sup_{\tau<\lambda,\varphi}W(\tau,\varphi)^{p-1}
\|D_\lambda\|_\infty.
\]
The assumed decay makes the coefficient less than one for all
sufficiently negative $\lambda$. Thus $W_\lambda\ge W$ there.

Let $\lambda_0$ be the supremum of the planes up to which this
inequality holds for every preceding plane. If $\lambda_0<\infty$,
continuity gives $W_{\lambda_0}\ge W$ on $t<\lambda_0$.
If equality does not hold identically, the positive bracket in
\eqref{eq:full-reflection-identity} gives
$W_{\lambda_0}>W$ at every point of this half-cylinder.
We show that this strict inequality permits an advance.
Choose $A$ so large that
\[
p\|J_{\mathcal L}\|_1\sup_{\tau<-A,\varphi}W^{p-1}<\tfrac14.
\]
For small $\varepsilon>0$, strict positivity has a positive minimum
on $[-A,\lambda_0-\varepsilon]\times\Sph^{N-1}$ and persists when
$\lambda$ is slightly larger than $\lambda_0$.
Negative differences can therefore occur only for $\tau<-A$ or
$\lambda_0-\varepsilon<\tau<\lambda$.
The latter strip has length at most $2\varepsilon$.
Since $J_{\mathcal L}$ is even and nonincreasing in $|t|$, its
contribution to the preceding supremum estimate is at most
\[
p\sup_{\substack{|\tau-\lambda_0|\le1\\\varphi\in\Sph^{N-1}}}
W(\tau,\varphi)^{p-1}
\int_{-2\varepsilon}^{2\varepsilon}J_{\mathcal L}(t)\,dt.
\]
This is less than $1/4$ after decreasing $\varepsilon$, by
$J_{\mathcal L}\in L^1$. The sum of the tail and strip coefficients
is less than one, so $D_\lambda=0$. This contradicts the definition
of $\lambda_0$. A finite stopping plane therefore gives symmetry.

If $\lambda_0=+\infty$, take $\lambda=(t_1+t_2)/2$ in the reflected
inequality to obtain $W(t_1,\theta)\le W(t_2,\theta)$ for every
$t_1<t_2$ and every $\theta$. This contradicts the stated decrease.
Thus $\lambda_0$ is finite. Reflecting the given left-end bounds
across $\lambda_0$ yields the asserted right-end bounds, and their
integration gives the finite positive energy.
\end{proof}

\subsubsection{The dilation form}\label{sec:critical-quadratic-form}

For the single kernel define
\begin{equation}\label{eq:full-Q-kernel}
\mathcal Q(t,z)=-t\partial_t\mathcal K(t,z)-(1-2s)\mathcal K(t,z).
\end{equation}
We also write $\mathcal Q$ for the associated integral operator.
Direct differentiation of \eqref{eq:full-cylinder-kernel} gives
\[
\frac{|t\partial_t\mathcal K(t,z)|}{\mathcal K(t,z)}
=h\frac{|t\sinh t|}{\cosh t-z}\le C(1+|t|).
\]
Together with \eqref{eq:cylinder-kernel-bounds}, this proves
\[
\int_\R\sup_\theta\int_{\Sph^{N-1}}
(\mathcal K+|t\partial_t\mathcal K|+|\mathcal Q|)
(t,\theta\cdot\varphi)\,d\varphi\,dt<\infty,
\qquad \varepsilon J_N(\varepsilon)\le C\varepsilon^{2s}\to0.
\]
Symmetry and the integral bound imply that these kernels define
bounded self-adjoint operators on $L^2(\mathcal C)$.

We prove the sign separately on the constant angular component
and on the other spherical harmonics.

\subsubsection{The constant angular component}

On functions independent of $\theta$, the operator $\mathcal Q$
acts by convolution with $-tJ_N'(t)-(1-2s)J_N(t)$.
The following calculation establishes its pointwise sign.

\begin{lemma}\label{lem:constant-mode}
For every $t\ne0$,
\begin{equation}\label{eq:constant-mode-strict}
-tJ_N'(t)>(1-2s)J_N(t).
\end{equation}
\end{lemma}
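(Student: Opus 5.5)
By evenness of $J_N$ it suffices to take $t>0$. There, \eqref{eq:constant-mode-strict} is equivalent to
\[
\frac{d}{dt}\bigl(t^{1-2s}J_N(t)\bigr)
=t^{-2s}\bigl((1-2s)J_N(t)+tJ_N'(t)\bigr)<0 .
\]
So the plan is to show that $t^{1-2s}J_N(t)$ is strictly decreasing on $(0,\infty)$. I would first reduce $J_N$ to a one-variable integral. Use polar coordinates about $\theta$, with $z=\theta\cdot\varphi$ and then $r=1-z\in[0,2]$, and put $A=\cosh t-1$, so that $\cosh t-z=A+r$. This gives
\[
J_N(t)=C_{N,s}\,g(A),\qquad
g(A)=\int_0^2(A+r)^{-h}\,r^{(N-3)/2}(2-r)^{(N-3)/2}\,dr ,
\]
with $h=(N-2s)/2$. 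The chain rule with $dA/dt=\sinh t$ then yields
\[
\frac{-tJ_N'(t)}{J_N(t)}
=\frac{t\sinh t}{\cosh t-1}\cdot\frac{-Ag'(A)}{g(A)}
=t\coth(t/2)\cdot\frac{-Ag'(A)}{g(A)} .
\]

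The two factors are then estimated separately. The elementary inequality $t\coth(t/2)>2$ for $t>0$ supplies a strict factor. It therefore suffices to prove the weak inequality $-Ag'(A)/g(A)\ge \tfrac12-s$, that is, that $A^{1/2-s}g(A)$ is nonincreasing in $A>0$. The model case explains why this should hold. If the measure were $r^{(N-3)/2}\,dr$ on $(0,\infty)$, scaling would give $g(A)=cA^{(N-1)/2-h}=cA^{s-1/2}$ exactly, since $2s<1$ makes the integral converge at infinity. Substituting $r=A\rho$ in the true $g$ gives
\[
A^{1/2-s}g(A)=\int_0^{2/A}(1+\rho)^{-h}\rho^{(N-3)/2}(2-A\rho)^{(N-3)/2}\,d\rho .
\]
Here the upper limit shrinks as $A$ grows. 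For $N\ge3$ the factor $(2-A\rho)^{(N-3)/2}$ is nonincreasing in $A$, and the boundary term vanishes (or is nonpositive). So the integral is nonincreasing, which proves the claim for $N\ge3$.

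The main obstacle is $N=2$, where $(2-A\rho)^{-1/2}$ increases with $A$. In that case I would not split off the weak inequality. Instead I would use the slack $t\coth(t/2)-2>0$ directly. Write $-Ag'/g=h\,\mathbb E[A/(A+r)]$, an average with weight $(A+r)^{-h}r^{-1/2}(2-r)^{-1/2}$. I would then aim to show
\[
t\coth(t/2)\,h\,\mathbb E\!\left[\frac{A}{A+r}\right]>1-2s .
\]
For small $t$, the region near $r=0$ dominates and the $(2-r)^{-1/2}$ factor is $1+O(r)$. The model computation then gives the value $2(\tfrac12-s)+O(A)$, while $t\coth(t/2)=2+t^2/6+\dots$ supplies the positive correction; the error must be checked to be of lower order, which is where care is needed. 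For large $t$ we have $A/(A+r)\to1$ uniformly, so the left side behaves like $t\cdot h\to\infty$. For intermediate $t$, I would first try the lower bound $A/(A+r)\ge A/(A+2)$ together with $t\coth(t/2)\ge2$. If that is not sufficient, I would fall back on the hypergeometric representation of $g$ and a monotonicity argument for $(A+r)^{-h}$ under reflection $r\mapsto 2-r$, which handles the symmetric factor $(r(2-r))^{-1/2}$.
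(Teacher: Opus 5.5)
Your reduction is sound and is essentially the paper's strategy. Since $A=\cosh t-1=2\sinh^2(t/2)$, your $A^{1/2-s}g(A)$ is a constant multiple of $(\sinh(t/2))^{1-2s}J_N(t)$. The paper likewise proves that this quantity is nonincreasing and gets strictness from $(t/2)\coth(t/2)>1$. Your $N\ge3$ argument with $r=A\rho$ is correct.

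The gap is $N=2$, which the lemma must cover: Section~\ref{sec:unequal-critical-curve} applies it with $N=2$. There you give only a plan, and its small-$t$ step is quantitatively wrong. For $N=2$ one has $h=1-s$ and $\int_0^2(A+r)^{-h}r^{-1/2}\,dr\asymp A^{s-1/2}$. Replacing $(2-r)^{-1/2}$ by $2^{-1/2}$ and $(0,2)$ by $(0,\infty)$ changes both integrals in $\mathbb E[A/(A+r)]$ by $O(1)$, because $(A+r)^{-h}r^{1/2}\le r^{s-1/2}$ is integrable at $0$ uniformly in $A$. Consequently $-Ag'(A)/g(A)$ differs from $\tfrac12-s$ by a term of order $A^{1/2-s}$ (to leading order a positive multiple of it), not $O(A)$. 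This term dominates the slack $t\coth(t/2)-2\approx t^2/6\approx A/3$. So the claim that the error is of lower order is false. The small-$t$ regime stands or falls with the \emph{sign} of that correction, which is exactly the weak monotonicity you set aside. Your intermediate-range bound also leaves a hole: $A/(A+r)\ge A/(A+2)$ together with $t\coth(t/2)\ge2$ gives the claim only when $A>2(1-2s)$.

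The missing idea is that the weak inequality $-Ag'(A)/g(A)\ge\tfrac12-s$ does hold for $N=2$. The increasing factor $(2-A\rho)^{-1/2}$ is an artifact of the substitution $r=A\rho$. Use the half-angle variable instead. With $r=1-\cos\vartheta$ and $\tau=\tan(\vartheta/2)$,
\[
A+r=\frac{A+(A+2)\tau^2}{1+\tau^2},\qquad
[r(2-r)]^{(N-3)/2}\,dr=\sin^{N-2}\vartheta\,d\vartheta
=\frac{2^{N-1}\tau^{N-2}}{(1+\tau^2)^{N-1}}\,d\tau .
\]
Then the scaling $\tau=\sqrt{A/(A+2)}\,y$ gives
\[
A^{1/2-s}g(A)=C\Bigl(1+\frac A2\Bigr)^{-(N-1)/2}
\int_0^\infty\frac{y^{N-2}\,dy}{(1+y^2)^{h}\bigl(1+\frac{A}{A+2}y^2\bigr)^{h_+}},
\qquad h_+=\frac{N-2+2s}{2}>0 .
\]
Both factors are nonincreasing in $A$ for every $N\ge2$. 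Your chain-rule identity and $t\coth(t/2)>2$ then finish the proof uniformly in $N$. This is exactly the paper's argument, written there in the variable $z=\sinh(t/2)$, for which $z^2/(1+z^2)=A/(A+2)$.
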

\begin{proof}
By evenness it suffices to take $t>0$. Set
$z=\sinh(t/2)$ and $h_+=(N-2+2s)/2>0$.
Spherical polar coordinates give
\[
J_N(t)=C\int_0^\pi
\frac{\sin^{N-2}\vartheta}
{[2(\cosh t-\cos\vartheta)]^h}\,d\vartheta,
\qquad h=\frac{N-2s}{2}.
\]
For $r=\tan(\vartheta/2)$, the two factors in this integral are
\[
2(\cosh t-\cos\vartheta)
=\frac{4[z^2+(1+z^2)r^2]}{1+r^2},\qquad
\sin^{N-2}\vartheta\,d\vartheta
=\frac{2^{N-1}r^{N-2}}{(1+r^2)^{N-1}}\,dr.
\]
Substituting them and using $N-1-h=h_+$ yields
\[
J_N(t)=C\int_0^\infty
\frac{r^{N-2}\,dr}
{(1+r^2)^{h_+}[z^2+(1+z^2)r^2]^h}.
\]
Now set $r=z(1+z^2)^{-1/2}y$. Since $N-1-2h=2s-1$,
\begin{equation}\label{eq:constant-mode-integral}
z^{1-2s}J_N(2\operatorname{arsinh}z)
=C(1+z^2)^{-(N-1)/2}
\int_0^\infty
\frac{y^{N-2}\,dy}
{(1+y^2)^h(1+\frac{z^2}{1+z^2}y^2)^{h_+}}.
\end{equation}
The integral remains finite at $z=0$: its integrand is
$O(y^{N-2})$ at zero and $O(y^{2s-2})$ at infinity.
For $z>0$ each factor on the right is nonincreasing, because
$h_+>0$ and $z^2/(1+z^2)$ is increasing.
Differentiation is justified on every compact subinterval of
$(0,\infty)$ by an integrable majorant. Therefore
\[
0\ge\frac{d}{dt}\bigl[(\sinh(t/2))^{1-2s}J_N(t)\bigr]
=(\sinh(t/2))^{1-2s}
\left[J_N'(t)+\frac{1-2s}{2}\coth(t/2)J_N(t)\right].
\]
Multiply by $-t$ and use $(t/2)\coth(t/2)>1$ to obtain
\eqref{eq:constant-mode-strict}. Here $1-2s>0$ supplies the strict
comparison with $(1-2s)J_N(t)$.
\end{proof}

\subsubsection{The other angular components}

Let $Y$ be a spherical harmonic of degree $\ell\ge0$:
$-\Delta_{\Sph^{N-1}}Y=\ell(\ell+N-2)Y$.
Rotation invariance of the kernel preserves this harmonic subspace.
Using the time Fourier transform
$\widehat\phi(\xi)=\int_\R e^{-it\xi}\phi(t)\,dt$, define
$m_{\ell,s}$ by
\[
\widehat{\mathcal T(\phi Y)}(\xi,\theta)
=m_{\ell,s}(\xi)\widehat\phi(\xi)Y(\theta).
\]
Put $N_\ell=N+2\ell$. In the cylinder formula of
Ao-Chan-DelaTorre-Fontelos-Gonz\'alez-Wei
\cite[Proposition 2.1 and (4.3)]{ao-ode}, take $Q_0=0$ and use
\[
\sqrt{\left(\frac{N-2}{2}\right)^2+\ell(\ell+N-2)}
=\ell+\frac{N-2}{2}.
\]
This gives the multiplier $\Theta_{\ell,s}$ of the conjugated
fractional Laplacian. Its inverse is $\mathcal T_s$: indeed, for a
smooth compactly supported cylinder function $F$, polar substitution
in the normalized Riesz potential gives
\[
u=I_{2s}\!\left(|x|^{-(N+2s)/2}
F(\log|x|,x/|x|)\right),\qquad
e^{ht}u(e^t\theta)=\mathcal T_sF(t,\theta).
\]
Applying $(-\Delta)^sI_{2s}=\mathrm{Id}$ shows that the two
multipliers have product one. Thus
\begin{equation}\label{eq:angular-multipliers}
\Theta_{\ell,s}(\xi)=2^{2s}
\frac{|\Gamma((N_\ell+2s)/4+i\xi/2)|^2}
{|\Gamma((N_\ell-2s)/4+i\xi/2)|^2},
\qquad m_{\ell,s}(\xi)=\Theta_{\ell,s}(\xi)^{-1}>0.
\end{equation}
The cited symbol is even in $\xi$, so changing its convention
$t=-\log r$ to $t=\log r$ leaves this formula unchanged.

Let $q_{\ell,s}$ be the multiplier of $\mathcal Q$ on the same
subspace. The Fourier rules
$\widehat{\partial_t k}=i\xi\widehat k$ and
$\widehat{tk}=i\partial_\xi\widehat k$ give
\begin{equation}\label{eq:Q-multiplier}
q_{\ell,s}(\xi)
=2s\,m_{\ell,s}(\xi)+\xi m_{\ell,s}'(\xi)
=m_{\ell,s}(\xi)
\left(2s-\xi\frac{\Theta_{\ell,s}'(\xi)}{\Theta_{\ell,s}(\xi)}\right).
\end{equation}
The differentiation rules apply distributionally at the kernel
singularity. The absolute kernel bounds proved above identify the
result with the bounded integral operator $\mathcal Q$.
For $\ell\ge1$, the condition $N\ge2$ gives $N_\ell\ge4$.
In view of $m_{\ell,s}>0$, the required sign is equivalent to
\begin{equation}\label{eq:multiplier-target}
2s-\xi\frac{\Theta_{\ell,s}'(\xi)}{\Theta_{\ell,s}(\xi)}>0
\qquad(\xi\in\R).
\end{equation}

To evaluate this expression, define, for $c\ge1/2$,
\[
R_{c,s}(t)=e^{-ct}\frac{\sinh(st/2)}{\sinh(t/2)},\qquad
g_{c,s}(t)=-R_{c,s}'(t)\quad(t>0).
\]
For the chosen degree, set
\[
y=\xi/2,\qquad c=(N_\ell-2)/4\ge1/2,\qquad
B_\ell=(N_\ell-2s)/4.
\]
Subtracting the integral formulas for the digamma function
$\Psi=\Gamma'/\Gamma$ gives
\[
\begin{aligned}
\Psi(B_\ell+s+iy)-\Psi(B_\ell+iy)
&=\int_0^\infty e^{-(B_\ell+iy)t}
\frac{1-e^{-st}}{1-e^{-t}}\,dt\\
&=\int_0^\infty R_{c,s}(t)e^{-iyt}\,dt.
\end{aligned}
\]
In the second equality we used
$B_\ell+(s-1)/2=c$. Differentiate the logarithm of
\eqref{eq:angular-multipliers} and take the imaginary part of this
formula. This gives
\[
\frac{\Theta_{\ell,s}'(\xi)}{\Theta_{\ell,s}(\xi)}
=\int_0^\infty R_{c,s}(t)\sin(\xi t/2)\,dt,
\qquad
\xi\frac{\Theta_{\ell,s}'}{\Theta_{\ell,s}}
=2y\int_0^\infty R_{c,s}(t)\sin(yt)\,dt.
\]
Since $R_{c,s}(0)=s$ and $R_{c,s}$ decays at infinity,
integration by parts yields
\begin{equation}\label{eq:cosine-multiplier-bridge}
2s-\xi\frac{\Theta_{\ell,s}'(\xi)}{\Theta_{\ell,s}(\xi)}
=2\int_0^\infty g_{c,s}(t)\cos(\xi t/2)\,dt.
\end{equation}
We now establish the sign of this cosine transform.

\begin{lemma}\label{lem:cosine-density}
For $0<s<1/2$ and $c\ge1/2$, the functions just defined satisfy
$g_{c,s}>0$, $g_{c,s}'<0$, and $g_{c,s}''>0$ on $(0,\infty)$.
The functions $g_{c,s},g_{c,s}',g_{c,s}''$ decay exponentially and
have finite limits at zero. Moreover,
\begin{equation}\label{eq:positive-cosine}
\int_0^\infty g_{c,s}(t)\cos(yt)\,dt>0
\qquad(y\in\R).
\end{equation}
\end{lemma}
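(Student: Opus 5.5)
The positivity \eqref{eq:positive-cosine} will follow from the classical Pólya criterion: a nonnegative function on $(0,\infty)$ that is decreasing, convex, and integrable has a positive cosine transform. So the plan is to prove the three sign statements, the boundary behaviour, and then run the Pólya argument.

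For the signs, write
\[
R_{c,s}(t)=e^{-ct}\frac{\sinh(st/2)}{\sinh(t/2)}
=\frac{e^{-(c+(1-s)/2)t}-e^{-(c+(1+s)/2)t}}{1-e^{-t}},
\]
and expand the denominator geometrically. This gives
\[
R_{c,s}(t)=\sum_{k\ge0}\bigl(e^{-\alpha_kt}-e^{-\beta_kt}\bigr),\qquad
\alpha_k=c+k+\tfrac{1-s}2,\quad \beta_k=\alpha_k+s.
\]
Then $g_{c,s}=-R_{c,s}'=\sum_k(\beta_ke^{-\beta_kt}-\alpha_ke^{-\alpha_kt})$ is not obviously termwise signed. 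A better route is the integral representation
\[
e^{-\alpha t}-e^{-\beta t}=t\int_\alpha^\beta e^{-\gamma t}\,d\gamma ,
\]
which exhibits $R_{c,s}$ as $t$ times a completely monotone function. Instead, I would use the Bernstein-type representation
\[
R_{c,s}(t)=\int_0^\infty e^{-\gamma t}\,d\nu(\gamma),\qquad
d\nu=\sum_k\bigl(\delta_{\alpha_k}-\delta_{\beta_k}\bigr).
\]
This measure is signed, so it is not directly useful either.

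The concrete approach I would actually pursue is the following. Differentiate $\log R_{c,s}$:
\[
\frac{R'}{R}=-c+\frac s2\coth\frac{st}2-\frac12\coth\frac t2 .
\]
Since $s\coth(st/2)<\coth(t/2)+$ something? I would check that $x\mapsto x\coth x$ is increasing. That gives $s\coth(st/2)\le\coth(t/2)$, hence $R'<-c<0$ and $g>0$. For $g'<0$ and $g''>0$, i.e. $R''>0$ and $R'''<0$, I would write $R=e^{-ct}\phi(t)$ with $\phi=\sinh(st/2)/\sinh(t/2)$. The key analytic input is that $\phi$ is completely monotone for $0<s<1$. This follows from the partial-fraction/infinite-product expansion of $\sinh(st/2)/\sinh(t/2)$, or equivalently from $\phi(t)=\int_0^\infty e^{-\gamma t}\,d\mu(\gamma)$ with $\mu\ge0$: the Laplace inversion of the ratio is a positive Poisson-type density
\[
\frac{\sin(\pi s)}{\cosh(\pi\gamma)+\cos(\pi s)}\ \text{(up to constants)}
\]
on the two-sided line, where the symmetric part reduces to a positive cosine representation. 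If complete monotonicity of $\phi$ on $(0,\infty)$ holds, then $R=e^{-ct}\phi$ is completely monotone, and so $g=-R'$ satisfies $g>0$, $g'<0$, $g''>0$ automatically. The hypothesis $c\ge1/2$ and $s<1/2$ would then enter only through strictness and decay. Exponential decay with rate at least $c$, and finite limits of $R',R'',R'''$ at $0$ from smoothness and evenness of $\phi$, are routine.

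For the Pólya step, with $g\ge0$ convex, decreasing, and tending to $0$, integrate by parts twice:
\[
\int_0^\infty g\cos(yt)\,dt=\frac1{y^2}\int_0^\infty g''(t)\bigl(1-\cos(yt)\bigr)\,dt-\frac{g'(0^+)}{y^2}\cdot0 .
\]
More carefully, the first integration gives $\frac1y\int(-g')\sin(yt)\,dt$, and the second gives
\[
\frac{1}{y^2}\Bigl[-g'(0^+)\bigl(1-\cos\bigr)\text{-terms}\Bigr]+\frac1{y^2}\int g''(1-\cos(yt))\,dt .
\]
The boundary terms vanish because $1-\cos(yt)=0$ at $t=0$ and $g'\to0$ at infinity. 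The result is
\[
\frac1{y^2}\int_0^\infty g''(t)\bigl(1-\cos(yt)\bigr)\,dt ,
\]
which is strictly positive since $g''>0$. At $y=0$ the integral is $\int g=R(0)=s>0$.

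The main obstacle is the complete monotonicity, or at least three signed derivatives, of $\sinh(st/2)/\sinh(t/2)$ on the correct side. If the Laplace-density argument fails, I would fall back to verifying $R''>0$ and $R'''<0$ directly from the logarithmic-derivative formula and monotonicity of $x\coth x$ and its derivatives, using $c\ge1/2$ to dominate the remaining terms.
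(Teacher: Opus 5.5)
Your P\'olya-type step and your proof that $g>0$ are fine, but the proof of $g'<0$ and $g''>0$ fails. The claim that $\phi(t)=\sinh(st/2)/\sinh(t/2)$ is completely monotone on $(0,\infty)$ is false. A completely monotone function is convex, but $\phi$ is smooth and even, and $\phi(t)=s\bigl(1-\tfrac{1-s^2}{24}t^2+O(t^4)\bigr)$ is strictly concave near $t=0$. The density $\sin(\pi s)/(\cosh\pi\gamma+\cos\pi s)$ you have in mind is, up to scaling, the Fourier transform of $\phi$. It writes $\phi$ as a positive combination of $\cos(\gamma t)$, which gives positive definiteness, not a Laplace representation. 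Nor is $R_{c,s}$ completely monotone for any $c$. Your own expansion writes it as the Laplace transform of the signed measure $\sum_k(\delta_{\alpha_k}-\delta_{\beta_k})$, whose atoms are all distinct, so uniqueness of the Laplace transform rules out a positive Bernstein measure. The hypothesis $c\ge1/2$ is also not only for strictness. At $t=0$ one computes $g_{c,s}''(0)=s\,c\bigl(c^2-\tfrac{1-s^2}{4}\bigr)$, which is negative for $c<\tfrac12\sqrt{1-s^2}$. Any argument in which $g''>0$ comes for free would therefore prove a false statement.

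What remains is the content of your fallback, and that is the whole difficulty. Write $R'/R=-(c+b_s)$ with $b_s=\tfrac12\coth(t/2)-\tfrac s2\coth(st/2)$. You then need $(c+b_s)^2-b_s'>0$ and $(c+b_s)^3-3(c+b_s)b_s'+b_s''>0$. Monotonicity of $x\coth x$ gives only $b_s\ge0$, which yields $g>0$. Since $b_s''<0$, you also need an upper bound for $b_s'$ and a quantitative lower bound for $b_s''$, and you supply neither. The paper gets $0<b_s'\le D_s=(1-s^2)/12$ from the decrease of $x^3\cosh x/\sinh^3x$. Its key inequality is $b_s''+2b_sb_s'\ge0$: it reduces this to positivity of a four-term combination of $\cosh$'s and proves that as a third divided difference of $\lambda\mapsto\cosh(x\sqrt\lambda)$, which is where $s<1/2$ is used. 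These bounds give $R''/R\ge c^2-D_s>0$. They also give $-R'''/R\ge(c+b_s)^3-(3c+5b_s)D_s>0$, because $c^2-3D_s\ge s^2/4$ when $c\ge1/2$. Your final step is the same as the paper's: two integrations by parts reduce the cosine integral to $y^{-2}\int_0^\infty g''(1-\cos yt)\,dt$, and at $y=0$ it equals $\int_0^\infty g=s$.
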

\begin{proof}
We first record the implication of the derivative signs. If
$g>0$, $g''>0$, and $g,g',g''$ have the stated endpoint bounds,
then two integrations by parts give, for $y\ne0$,
\[
\begin{aligned}
\int_0^\infty g(t)\cos(yt)\,dt
&=-\frac1y\int_0^\infty g'(t)\sin(yt)\,dt\\
&=\frac1{y^2}\int_0^\infty g''(t)(1-\cos(yt))\,dt>0.
\end{aligned}
\]
The last integral is strictly positive because $g''>0$ and
$1-\cos(yt)>0$ on a set of positive measure. For $y=0$, use $g>0$.
It remains to verify these derivative signs for $g=-R_{c,s}'$.

Set
\[
b_s(t)=\frac12\coth(t/2)-\frac s2\coth(st/2),
\qquad D_s=\frac{1-s^2}{12},\qquad R=R_{c,s}.
\]
Logarithmic differentiation gives
\[
\frac{R'}R=-(c+b_s),\qquad
\frac{R''}R=(c+b_s)^2-b_s',\qquad
-\frac{R'''}R=(c+b_s)^3-3(c+b_s)b_s'+b_s''.
\]
We will substitute the following three bounds into these identities:
\begin{equation}\label{eq:hyperbolic-signs}
b_s\ge0,\qquad 0<b_s'\le D_s,\qquad b_s''+2b_sb_s'\ge0.
\end{equation}
For $x=t/2>0$, convexity of $\sinh$ and $0<s<1$ give
$\sinh(sx)<s\sinh x$. Hence
\[
b_s'(t)=\frac{s^2\operatorname{csch}^2(sx)
-\operatorname{csch}^2x}{4}>0,
\qquad b_s(0)=0,
\]
which proves $b_s\ge0$.
To obtain the upper bound on $b_s'$, put
$T(x)=x^3\cosh x/\sinh^3x$. The sign of $-T'(x)/T(x)$ is that of
\[
H(x)=x(2+\cosh2x)-\tfrac32\sinh2x.
\]
Indeed $H'=2J$, where
$J=1-\cosh2x+x\sinh2x$ satisfies
$J''=4x\sinh2x>0$ and $H(0)=J(0)=J'(0)=0$.
Successive integration gives $J'>0$, $J>0$, $H>0$, and therefore
$T'<0$. Substitution in the derivative of $b_s'$ gives
\[
b_s''(t)=\frac{T(x)-T(sx)}{4x^3}<0.
\]
Thus $b_s'$ decreases from
$b_s'(0)=(1-s^2)/12=D_s$, proving the middle inequality in
\eqref{eq:hyperbolic-signs}.

For the last inequality in \eqref{eq:hyperbolic-signs}, direct
substitution of the derivatives of $\coth$ gives
\begin{equation}\label{eq:hyperbolic-bracket}
b_s''+2b_sb_s'
=\frac s4\operatorname{csch}^2(sx)\coth(sx)
\left[\frac{\sinh^2(sx)}{\sinh^2x}
+s\frac{\tanh(sx)}{\tanh x}-2s^2\right].
\end{equation}
The prefactor is positive. Multiply the bracket by
$4\sinh^2x\cosh(sx)>0$ and use the product formulas for hyperbolic
functions. Its sign becomes the sign of
\begin{equation}\label{eq:hyperbolic-combination}
\begin{split}
F_s(x)={}&\cosh(3sx)-(1-4s^2)\cosh(sx)\\
&+s(1-2s)\cosh((2+s)x)
-s(1+2s)\cosh((2-s)x).
\end{split}
\end{equation}
The condition $s<1/2$ makes the four numbers
\[
\lambda_1=s^2<\lambda_2=9s^2<\lambda_3=(2-s)^2<\lambda_4=(2+s)^2
\]
distinct and increasing. Set
\[
c_1=-(1-4s^2),\quad c_2=1,\quad
c_3=-s(1+2s),\quad c_4=s(1-2s)>0.
\]
Expansion gives $\sum_{j=1}^4c_j\lambda_j^k=0$ for $k=0,1,2$.
The three moment equations have a one-dimensional nullspace, so
\[
c_j=\frac{\lambda_*}{\prod_{i\ne j}(\lambda_j-\lambda_i)},
\qquad \lambda_*=c_4\prod_{i<4}(\lambda_4-\lambda_i)>0.
\]
For $\Phi(\lambda)=\cosh(x\sqrt\lambda)$, its power series gives
\[
\Phi(\lambda)=\sum_{k=0}^\infty\frac{x^{2k}\lambda^k}{(2k)!},
\qquad \Phi'''(\lambda)>0\quad(x,\lambda>0).
\]
The third divided-difference formula therefore gives some
$\zeta\in(\lambda_1,\lambda_4)$ such that
\[
F_s(x)=\lambda_*\sum_{j=1}^4
\frac{\Phi(\lambda_j)}{\prod_{i\ne j}(\lambda_j-\lambda_i)}
=\frac{\lambda_*}{3!}\Phi'''(\zeta)>0.
\]
This proves the last inequality of \eqref{eq:hyperbolic-signs}.

We can now substitute all three bounds into the derivatives of $R$.
First, $R'/R=-(c+b_s)<0$ and
$R''/R\ge c^2-D_s>0$.
Next, $b_s''\ge-2b_sb_s'$ gives
\[
-\frac{R'''}R
\ge(c+b_s)^3-(3c+5b_s)b_s'
\ge(c+b_s)^3-(3c+5b_s)D_s.
\]
For fixed $c$, the last polynomial in $b_s\ge0$ is positive at zero,
since
$c(c^2-3D_s)\ge cs^2/4>0$.
Its derivative is also positive:
$3(c+b_s)^2-5D_s\ge1/3$.
Thus $R'''<0$. The identities $g=-R'$, $g'=-R''$, and $g''=-R'''$
prove the asserted signs. The formula for $R$ and its Taylor
expansion at zero give the required endpoint bounds. The two
integrations by parts at the start of the proof now yield
\eqref{eq:positive-cosine}.
\end{proof}

\begin{lemma}\label{lem:nonconstant-modes}
For every spherical harmonic degree $\ell\ge1$, the multiplier
$q_{\ell,s}(\xi)$ is strictly positive for every $\xi\in\R$.
\end{lemma}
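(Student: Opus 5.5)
Fix a degree $\ell\ge1$ and write $q_{\ell,s}=m_{\ell,s}\bigl(2s-\xi\Theta_{\ell,s}'/\Theta_{\ell,s}\bigr)$, as in \eqref{eq:Q-multiplier}. Since \eqref{eq:angular-multipliers} gives $m_{\ell,s}(\xi)=\Theta_{\ell,s}(\xi)^{-1}>0$ for every real $\xi$, positivity of $q_{\ell,s}$ reduces to the target inequality \eqref{eq:multiplier-target}. This in turn is equivalent, via the bridge identity \eqref{eq:cosine-multiplier-bridge}, to
\[
\int_0^\infty g_{c,s}(t)\cos(\xi t/2)\,dt>0 .
\]
Here $c=(N_\ell-2)/4$ and $N_\ell=N+2\ell$.

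First I check that the parameters are admissible for Lemma \ref{lem:cosine-density}. For $\ell\ge1$ and $N\ge2$ we have $N_\ell\ge4$, so $c\ge1/2$. In this section $0<s<1/2$ throughout. Lemma \ref{lem:cosine-density} then gives \eqref{eq:positive-cosine} with $y=\xi/2$, which is exactly the required strict positivity for every $\xi\in\R$, including $\xi=0$, where the integral equals $\int g_{c,s}>0$.

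The remaining work is to confirm that the derivation of \eqref{eq:cosine-multiplier-bridge} is legitimate for this $c$. The digamma integral representation requires $\operatorname{Re}(B_\ell)>0$. This holds because $B_\ell=(N_\ell-2s)/4>0$. The relation $B_\ell+(s-1)/2=c$ lets one rewrite $e^{-B_\ell t}(1-e^{-st})/(1-e^{-t})$ as $R_{c,s}(t)$. Taking imaginary parts of the logarithmic derivative of $|\Gamma(\cdot)|^2$ gives the sine transform of $R_{c,s}$. The integration by parts uses $R_{c,s}(0)=s$ together with exponential decay of $R_{c,s}$, and that decay follows from $c+(1-s)/2>0$. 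For $\xi=0$ the identity is trivial, since both sides equal $2s=2R_{c,s}(0)=2\int_0^\infty g_{c,s}$.

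The only delicate point is the justification of the Fourier identification of $\mathcal Q$ with the multiplier $q_{\ell,s}$ on the harmonic subspace. The text preceding \eqref{eq:Q-multiplier} already asserts this, relying on the absolute kernel bounds, so I would simply invoke it. The main obstacle is therefore already absorbed into Lemma \ref{lem:cosine-density}, and the proof here consists of verifying $c\ge1/2$ and $m_{\ell,s}>0$ and chaining the two identities.
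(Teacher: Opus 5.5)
Your proposal is correct and follows the paper's proof: you verify $c=(N+2\ell-2)/4\ge1/2$ for $\ell\ge1$, apply Lemma~\ref{lem:cosine-density} with $y=\xi/2$, pass through \eqref{eq:cosine-multiplier-bridge} to \eqref{eq:multiplier-target}, and multiply by $m_{\ell,s}>0$ in \eqref{eq:Q-multiplier}. Your extra checks of the digamma representation, the identity $B_\ell+(s-1)/2=c$, and the integration by parts are accurate, but the paper carries them out in the derivation that precedes the lemma.
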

\begin{proof}
For $\ell\ge1$ one has $c=(N+2\ell-2)/4\ge1/2$.
Apply Lemma \ref{lem:cosine-density} with this value of $c$ and with
$y=\xi/2$. Equation \eqref{eq:cosine-multiplier-bridge} then gives
\eqref{eq:multiplier-target}. Multiplication by the positive factor
$m_{\ell,s}(\xi)$ in \eqref{eq:Q-multiplier} proves the claim.
\end{proof}

\subsubsection{Scaling and the threshold}

\begin{lemma}\label{lem:cylinder-identity}
Let $\mathcal P$ be a bounded self-adjoint operator on
$L^2(\mathcal C)$, diagonal in the time Fourier transform and
spherical harmonics. Denote its real multipliers by $M_\ell(\xi)$,
and assume
\[
M_\ell\in C^1(\R),\qquad
\sup_{\ell,\xi}\bigl(|M_\ell(\xi)|+|\xi M_\ell'(\xi)|\bigr)<\infty.
\]
Let $p>1$ and let a positive smooth function $W$ satisfy
$W=\mathcal P(W^p)$ and
$|W|+|W_t|\le Ce^{-\mu|t|}$, uniformly in the angular variable,
for some $\mu>0$. Put $f=W^p$ and $E=\int_{\mathcal C}W^{p+1}$.
For an orthonormal spherical harmonic basis $Y_{\ell,k}$, set
$f_{\ell,k}(t)=\int_{\Sph^{N-1}}f(t,\theta)Y_{\ell,k}(\theta)\,d\theta$.
Then
\begin{equation}\label{eq:full-cylinder-virial}
\frac{2E}{p+1}
=\frac1{2\pi}\sum_{\ell,k}\int_\R
\bigl(M_\ell(\xi)+\xi M_\ell'(\xi)\bigr)
|\widehat f_{\ell,k}(\xi)|^2\,d\xi.
\end{equation}
\end{lemma}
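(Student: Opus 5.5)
The plan is to obtain \eqref{eq:full-cylinder-virial} as a Pohozaev identity for the dilation generator $t\partial_t$ on $\mathcal C$. We evaluate the pairing $\langle t\partial_tW,f\rangle$ once directly in physical space and once through the multipliers, and compare.

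\emph{Regularity.} First record what the hypotheses give. Since $W>0$ is smooth and $|W|+|W_t|\le Ce^{-\mu|t|}$ uniformly in $\theta$, the functions $f=W^p$ and $f_t=pW^{p-1}W_t$ satisfy $|f|+|f_t|\le Ce^{-p\mu|t|}$, because $p>1$. Hence $f$, $tf$, $f_t$ and $tf_t$ all lie in $L^1\cap L^2(\mathcal C)$, and the same holds for every angular coefficient $f_{\ell,k}$. It follows that each $\widehat f_{\ell,k}$ is $C^1$ in $\xi$, that $\xi\widehat f_{\ell,k}$ and $\widehat f_{\ell,k}'$ are in $L^2(\R)$, and, by Plancherel, that
\[
\sum_{\ell,k}\int_\R(1+\xi^2)\bigl(|\widehat f_{\ell,k}|^2+|\widehat f_{\ell,k}'|^2\bigr)\,d\xi<\infty .
\]
Since $f\in L^2$ and $\mathcal P$ is bounded, the identity $W=\mathcal Pf$ gives $\widehat W_{\ell,k}=M_\ell\widehat f_{\ell,k}$.

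\emph{Physical side.} Integrate by parts in $t$; the exponential decay removes the boundary terms. This gives
\[
\langle t\partial_tW,f\rangle=\int_{\mathcal C}t\,\partial_t\Bigl(\frac{W^{p+1}}{p+1}\Bigr)=-\frac{E}{p+1},
\]
and $E$ is finite.

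\emph{Fourier side.} Use the rule $\widehat{t g'}=i\partial_\xi(i\xi\widehat g)=-(\widehat g+\xi\widehat g')$ together with Parseval, applied mode by mode. This gives
\[
\langle t\partial_tW,f\rangle=-\frac1{2\pi}\sum_{\ell,k}\int_\R\Bigl(M_\ell|\widehat f_{\ell,k}|^2+\xi M_\ell'|\widehat f_{\ell,k}|^2+\xi M_\ell\,\widehat f_{\ell,k}'\,\overline{\widehat f_{\ell,k}}\Bigr)\,d\xi .
\]
The left side is real, so only the real part of the last term matters. Write $\operatorname{Re}(\widehat f'\,\overline{\widehat f})=\tfrac12\partial_\xi|\widehat f|^2$ and integrate by parts in $\xi$:
\[
\int_\R\xi M_\ell\,\tfrac12\partial_\xi|\widehat f_{\ell,k}|^2\,d\xi=-\tfrac12\int_\R(M_\ell+\xi M_\ell')|\widehat f_{\ell,k}|^2\,d\xi .
\]
The boundary terms $\xi M_\ell|\widehat f_{\ell,k}|^2$ vanish along a sequence $|\xi|\to\infty$, because $\int_\R|\xi||\widehat f_{\ell,k}|^2\,d\xi<\infty$ and $M_\ell$ is bounded. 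Collecting terms, the right side equals
\[
-\frac1{2}\cdot\frac1{2\pi}\sum_{\ell,k}\int_\R(M_\ell+\xi M_\ell')|\widehat f_{\ell,k}|^2\,d\xi .
\]
Equating this with $-E/(p+1)$ gives \eqref{eq:full-cylinder-virial}.

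\emph{Main obstacle.} The delicate step is justifying the Fourier-side computation and summing it over all harmonics. The function $t\partial_tW$ need not belong to $L^2$, so we cannot simply apply $\mathcal P$ to it. Instead we write $t\partial_tW=t\partial_t\mathcal Pf$ and move the generator onto $f$. Concretely, the pairing equals $-\langle W,f\rangle-\langle \mathcal Pf,tf_t\rangle$, and every factor here is in $L^2$ because $tf_t\in L^2$. The commutator of $t\partial_t$ with $\mathcal P$ is the operator with multiplier $\xi M_\ell'$. The uniform bound $\sup_{\ell,\xi}\bigl(|M_\ell|+|\xi M_\ell'|\bigr)<\infty$ makes this operator bounded, which justifies exchanging the sum over $(\ell,k)$ with the $\xi$-integral by dominated convergence, using the weighted Plancherel bound above.
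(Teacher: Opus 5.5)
Your proof is correct, but it implements the dilation identity differently from the paper.

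\textbf{The paper's route.} The paper differentiates the quadratic form $B(\lambda)=\langle f_\lambda,\mathcal P f_\lambda\rangle$, with $f_\lambda(t,\theta)=f(\lambda t,\theta)$, at $\lambda=1$. Self-adjointness gives $B'(1)=2\langle tf_t,W\rangle=-\frac{2p}{p+1}E$. On the Fourier side, $\widehat{f_\lambda}(\xi)=\lambda^{-1}\widehat f(\xi/\lambda)$ and a change of variables give $B(\lambda)=\frac1{2\pi\lambda}\sum_{\ell,k}\int M_\ell(\lambda\xi)|\widehat f_{\ell,k}|^2\,d\xi$. Its derivative at $\lambda=1$ is $\frac1{2\pi}\sum\int(\xi M_\ell'-M_\ell)|\widehat f_{\ell,k}|^2\,d\xi$. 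Combining this with $E=\langle f,\mathcal Pf\rangle$ isolates the symbol $M_\ell+\xi M_\ell'$.

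In that route the $\xi$-derivative lands on $M_\ell$ automatically. One never differentiates $\widehat f_{\ell,k}$, and there are no boundary terms at $|\xi|=\infty$. The hypothesis $\sup|\xi M_\ell'|<\infty$ is used exactly to differentiate under the integral in $\lambda$.

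\textbf{Your route.} You apply the generator $t\partial_t$ to $W$ and pair with $f$. This produces $-E/(p+1)$ in one step, without invoking self-adjointness or the separate identity $E=\langle f,\mathcal Pf\rangle$. The cost is that $\partial_\xi$ falls on $\widehat f_{\ell,k}$, so you must integrate by parts in $\xi$ and use that $M_\ell$ is real when taking the real part.

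Your decay bookkeeping covers all of this correctly. From $tf,f_t\in L^1\cap L^2$ you get $\widehat f_{\ell,k}\in C^1$ and $\widehat f_{\ell,k}',\ \xi\widehat f_{\ell,k}\in L^2$. You also get $\xi M_\ell|\widehat f_{\ell,k}|^2\to0$ along a sequence; it even tends to zero outright, by Riemann--Lebesgue applied to $\xi\widehat f_{\ell,k}$.

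\textbf{The obstacle you describe is not real.} The hypothesis $|W_t|\le Ce^{-\mu|t|}$ gives $|t\partial_tW|\le C|t|e^{-\mu|t|}$. Hence $t\partial_tW\in L^2(\mathcal C)$, and Parseval applies to $\langle t\partial_tW,f\rangle$ directly, mode by mode, with absolute summability from Cauchy--Schwarz.

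Your alternative rewriting $-\langle W,f\rangle-\langle\mathcal Pf,tf_t\rangle$ is also valid. Via $\widehat{tf_t}=-(\widehat f+\xi\widehat f')$ it leads to the same expression $\frac1{2\pi}\sum\int\xi M_\ell\,\widehat f_{\ell,k}\overline{\widehat f_{\ell,k}'}\,d\xi$, so nothing breaks. The commutator remark is not needed for the computation. If you keep it, mind the order: with this Fourier convention $[t\partial_t,\mathcal P]$ has multiplier $-\xi M_\ell'$.
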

\begin{proof}
The decay gives $f,t f_t\in L^2(\mathcal C)$.
For $\lambda>0$ set $f_\lambda(t,\theta)=f(\lambda t,\theta)$
and $B(\lambda)=\langle f_\lambda,\mathcal P f_\lambda\rangle$.
Differentiation in $L^2$, self-adjointness, and
$\mathcal P f=W$ give
\[
B'(1)=2\langle t f_t,W\rangle
=\frac{2p}{p+1}\int_{\mathcal C}t\,\partial_t(W^{p+1})
=-\frac{2p}{p+1}E.
\]
On the other hand, $\widehat{f_\lambda}(\xi)=
\lambda^{-1}\widehat f(\xi/\lambda)$ and Plancherel's formula give
\[
B(\lambda)=\frac1{2\pi\lambda}
\sum_{\ell,k}\int_\R M_\ell(\lambda\xi)
|\widehat f_{\ell,k}(\xi)|^2\,d\xi.
\]
The uniform multiplier bound permits differentiation under the
sum and integral near $\lambda=1$. Hence
\[
B'(1)=\frac1{2\pi}\sum_{\ell,k}\int_\R
\bigl(\xi M_\ell'(\xi)-M_\ell(\xi)\bigr)
|\widehat f_{\ell,k}(\xi)|^2\,d\xi.
\]
Equate the two expressions and use
$E=\langle f,\mathcal P f\rangle$ to obtain
\eqref{eq:full-cylinder-virial}.
\end{proof}

For each $0<\sigma<1/2$, write
\[
\mathcal Q_\sigma=-t\partial_t\mathcal K_\sigma
-(1-2\sigma)\mathcal K_\sigma.
\]
Lemmas \ref{lem:constant-mode} and \ref{lem:nonconstant-modes} apply
to this kernel with $s=\sigma$. We combine these signs for products
of the operators $\mathcal T_\sigma$.

\begin{theorem}\label{thm:cylinder-product-threshold}
Let $N\ge2$, $m\ge1$, $s_j>0$, and
$\sigma=\sum_{j=1}^m s_j<1/2$.
Let $\mathcal P=\mathcal T_{s_1}\cdots\mathcal T_{s_m}$ and denote
its integral kernel by $\mathcal L$.
Suppose $p>1$, $W>0$ is smooth on $\mathcal C$, and
$W=\mathcal P(W^p)$ with a finite integral at every point.
Assume there are $C,\mu>0$ such that, uniformly in $\theta$,
$|W|+|W_t|\le Ce^{\mu t}$ for all sufficiently negative $t$.
If $W(t_1,\theta_0)>W(t_2,\theta_0)$ for some
$t_1<t_2$ and $\theta_0\in\Sph^{N-1}$, then
\[
\frac2{p+1}>1-2\sigma.
\]
\end{theorem}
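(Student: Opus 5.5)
We plan to combine the reflection lemma with the dilation identity. First, check that $\mathcal P=\mathcal T_{s_1}\cdots\mathcal T_{s_m}$ satisfies the hypotheses of Lemma \ref{lem:cylinder-reflection}.

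For positivity, evenness and monotonicity of the kernel, note that $\mathcal P$ corresponds to the Riesz potential $I_{2\sigma}$ conjugated to the cylinder. This uses $I_{2s_1}\cdots I_{2s_m}=I_{2\sigma}$, the semigroup property valid since $\sigma<N/2$. The conjugation weights match, because each $\mathcal T_{s_j}$ conjugates with $|x|^{-(N+2s_j)/2}$ on the source and $e^{h_jt}$ on the output, and these telescope. Hence $\mathcal L(t,\theta,\varphi)=\mathcal K_\sigma(t,\theta\cdot\varphi)$. It is positive, even in $t$, symmetric, strictly decreasing for $t>0$, and has $L^1$ angular integral by \eqref{eq:cylinder-kernel-bounds}. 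Lemma \ref{lem:cylinder-reflection} then gives a finite $\lambda_0$ with $W$ symmetric about $\lambda_0$, two-sided exponential decay of $W,W_t$, and $0<E=\int W^{p+1}<\infty$.

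Next, apply Lemma \ref{lem:cylinder-identity} with $\mathcal P=\mathcal T_\sigma$, whose multipliers are $M_\ell=m_{\ell,\sigma}$. The bounds on $M_\ell$ and $\xi M_\ell'$ follow from the absolute kernel bounds for $\mathcal K_\sigma$ and $t\partial_t\mathcal K_\sigma$ recorded after \eqref{eq:full-Q-kernel}. This gives
\[
\frac{2E}{p+1}=\frac1{2\pi}\sum_{\ell,k}\int_\R\bigl(m_{\ell,\sigma}+\xi m_{\ell,\sigma}'\bigr)|\widehat f_{\ell,k}|^2\,d\xi .
\]
By \eqref{eq:Q-multiplier}, $m+\xi m'=q_{\ell,\sigma}+(1-2\sigma)m_{\ell,\sigma}$. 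Since $E=\langle f,\mathcal T_\sigma f\rangle$, we obtain
\[
\frac{2E}{p+1}-(1-2\sigma)E=\langle f,\mathcal Q_\sigma f\rangle .
\]
So it suffices to show $\langle f,\mathcal Q_\sigma f\rangle>0$.

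Decompose $f$ in spherical harmonics. For $\ell\ge1$, Lemma \ref{lem:nonconstant-modes} with $s=\sigma$ gives $q_{\ell,\sigma}>0$, so those modes contribute nonnegatively. The constant mode needs care: Lemma \ref{lem:constant-mode} gives a pointwise positive convolution kernel, not a positive multiplier. Since $f>0$, its angular average $f_0$ is positive, so $\int\!\!\int f_0(t)k(t-\tau)f_0(\tau)\,dt\,d\tau>0$ with $k=-tJ_N'-(1-2\sigma)J_N>0$. Pointwise positivity of the kernel against a positive function is therefore enough. The total is strictly positive, which gives $2/(p+1)>1-2\sigma$.

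The main obstacle is identifying the kernel of the product $\mathcal P$ with $\mathcal K_\sigma$; this must be checked via the Riesz semigroup. A second point is justifying the Plancherel computation for $\ell=0$ with a kernel whose multiplier sign is unknown. I avoid that by working with the constant mode directly in physical space, using the absolute integrability of $\mathcal Q_\sigma$ and the decay of $f$.
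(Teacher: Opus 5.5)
Your proof has a genuine gap for $m\ge2$, at exactly the step you flagged. The identification $\mathcal T_{s_1}\cdots\mathcal T_{s_m}=\mathcal T_\sigma$ is false, because the conjugation weights do not telescope. The Riesz semigroup law is correct; the problem is the weights between the factors. With $\mathcal T_sF(t,\theta)=e^{h_st}\,I_{2s}\bigl(|x|^{-(N+2s)/2}F(\log|x|,x/|x|)\bigr)(e^t\theta)$ and $h_s=(N-2s)/2$, the output weight $|x|^{(N-2s_2)/2}$ of $\mathcal T_{s_2}$ meets the source weight $|x|^{-(N+2s_1)/2}$ of $\mathcal T_{s_1}$ and leaves $|x|^{-(s_1+s_2)}$. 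Hence
\[
\mathcal T_{s_1}\mathcal T_{s_2}F(t,\theta)=e^{h_{s_1}t}\,I_{2s_1}\Bigl(|x|^{-\sigma}\,I_{2s_2}\bigl(|x|^{-(N+2s_2)/2}F(\log|x|,x/|x|)\bigr)\Bigr)(e^t\theta).
\]
The weight $|x|^{-\sigma}$ sits between the two potentials and does not commute with them, so $I_{2s_1}I_{2s_2}=I_{2\sigma}$ never applies. The multipliers confirm this. Expanding $\log\Gamma$ about $N_\ell/4$, $\log m_{\ell,s}(0)$ is a term linear in $s$ plus a series in $s^3,s^5,\dots$ with positive coefficients. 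It is therefore not additive in $s$, and $\prod_j m_{\ell,s_j}(0)<m_{\ell,\sigma}(0)$. This is also why Lemma \ref{lem:weighted-cylinder-factorization} produces $\mathcal T_{s_1}\mathcal T_{s_2}$ rather than $\mathcal T_{2s}$. Consequently $W$ solves $W=\mathcal P(W^p)$ but not $W=\mathcal T_\sigma(W^p)$. You cannot apply Lemma \ref{lem:cylinder-identity} with $M_\ell=m_{\ell,\sigma}$, the identity $E=\langle f,\mathcal T_\sigma f\rangle$ is unavailable, and the reflection hypotheses you derived from $\mathcal L=\mathcal K_\sigma$ are unproved.

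The missing idea is to keep the true multiplier $M_\ell=\prod_j m_{\ell,s_j}$ and split the dilation form by the product rule:
\[
2\sigma M_\ell+\xi M_\ell'=\sum_{j=1}^m q_{\ell,s_j}\prod_{i\ne j}m_{\ell,s_i}.
\]
For $\ell\ge1$, positivity comes from Lemma \ref{lem:nonconstant-modes} applied to each $s_j$ separately, not to $\sigma$. For $\ell=0$, the time kernel is $\sum_j\overline Q_{s_j}*\bigl(\mathop{*}_{i\ne j}J_{N,s_i}\bigr)$. Here $J_{N,s_i}$ is the angular integral of $\mathcal K_{s_i}$, and $\overline Q_{s_j}=-tJ_{N,s_j}'-(1-2s_j)J_{N,s_j}>0$ by Lemma \ref{lem:constant-mode} with $s=s_j$. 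This kernel is a sum of convolutions of positive functions, so your physical-space pairing with $f_0>0$ then goes through. The reflection hypotheses must also be checked for the true kernel $\mathcal L$:
\begin{itemize}
\item strict decrease for $t>0$ holds because the time convolution of even integrable kernels strictly decreasing in $|t|$ is again strictly decreasing;
\item symmetry follows from self-adjointness of the commuting factors;
\item $\|J_{\mathcal L}\|_1=\prod_j\|J_{N,s_j}\|_1$;
\item the uniform bounds on $M_\ell$ and $\xi M_\ell'$ follow from those of the factors.
\end{itemize}
As written, your argument is complete only for $m=1$, which covers the critical diagonal. The case $m=2$ is exactly the one needed in Theorem \ref{thm:fixed-critical-curve}.
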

\begin{proof}
We first record the bounds for $\mathcal L$ itself.
Put $\rho^2=(t-\tau)^2+|\theta-\varphi|^2$.
The bounds for the factor kernels imply
\begin{equation}\label{eq:cylinder-product-kernel-bounds}
\begin{aligned}
\mathcal L(t-\tau,\theta,\varphi)&\le C\rho^{2\sigma-N}
&& (0<\rho<1),\\
\mathcal L(t-\tau,\theta,\varphi)&\le Ce^{-\kappa|t-\tau|}
&& (|t-\tau|\ge1).
\end{aligned}
\end{equation}
Indeed, for two factors and $r=\rho<1$, split the intermediate
point into distance less than $r/3$ from each endpoint and the
remaining annuli. In local cylinder coordinates their contributions
are at most
\[
\begin{aligned}
&Cr^{2s_2-N}\int_0^{r/3}v^{2s_1-1}\,dv
+Cr^{2s_1-N}\int_0^{r/3}v^{2s_2-1}\,dv\\
&\qquad+C\int_{r/3}^1v^{2(s_1+s_2)-N-1}\,dv+C
\le Cr^{2(s_1+s_2)-N}.
\end{aligned}
\]
Every partial sum of the orders is below $1/2$, so this calculation
iterates. For large time separation, at least one factor in a
convolution has time separation at least half as large.
Its exponential bound and the integrability of the other factor
give exponential decay, with a smaller positive $\kappa$ if needed.
No differentiated product-kernel estimate is required.

Each factor kernel is positive, even in time, and strictly
decreasing for positive time. For two even, integrable, strictly
decreasing time kernels with distinct angular arguments,
\[
(g*h)'(t)=\int_0^\infty g'(r)
\bigl[h(t-r)-h(t+r)\bigr]\,dr<0\qquad(t>0).
\]
Here $|t-r|<t+r$. Apply this comparison with the intermediate
angular variables fixed outside their measure-zero singular sets,
then integrate in those variables. This proves strict decrease of
$\mathcal L$. The factors commute because their multipliers are
scalar on every spherical harmonic space. Their product is
self-adjoint, so $\mathcal L$ is also symmetric in the angular
arguments. If $J_{N,s_j}$ is the angular integral of
$\mathcal K_{s_j}$, rotation invariance and Tonelli's theorem give
\[
J_{\mathcal L}=J_{N,s_1}*\cdots*J_{N,s_m},\qquad
\|J_{\mathcal L}\|_1=\prod_{j=1}^m\|J_{N,s_j}\|_1<\infty.
\]
The assumed decrease of $W$ therefore allows
Lemma \ref{lem:cylinder-reflection} to be applied:
\[
|W|+|W_t|\le C'e^{-\mu|t|},\qquad
E:=\int_{\mathcal C}W^{p+1}\in(0,\infty).
\]

The multiplier of $\mathcal P$ on degree $\ell$ is
$M_\ell=\prod_{j=1}^m m_{\ell,s_j}$. The integral bounds for
$\mathcal K_{s_j}$ and $\mathcal Q_{s_j}$, together with
\eqref{eq:Q-multiplier}, imply
\[
\sup_{\ell,\xi}\bigl(|m_{\ell,s_j}(\xi)|
+|\xi m_{\ell,s_j}'(\xi)|\bigr)<\infty.
\]
The product rule gives the same bound for $M_\ell$, so
Lemma \ref{lem:cylinder-identity} applies to $f=W^p$.
Subtract $(1-2\sigma)E$ from \eqref{eq:full-cylinder-virial}:
\begin{equation}\label{eq:cylinder-spectral-balance}
\left(\frac2{p+1}-(1-2\sigma)\right)E
=\frac1{2\pi}\sum_{\ell,k}\int_\R
(2\sigma M_\ell+\xi M_\ell')
|\widehat f_{\ell,k}|^2\,d\xi.
\end{equation}
Since $\sigma=\sum_j s_j$,
\[
2\sigma M_\ell+\xi M_\ell'
=\sum_{j=1}^m q_{\ell,s_j}\prod_{i\ne j}m_{\ell,s_i}.
\]
For $\ell\ge1$, every factor on the right is positive by
Lemma \ref{lem:nonconstant-modes}.

For the constant angular component, Lemma \ref{lem:constant-mode}
gives
\[
\overline Q_{s_j}(t)
:=-tJ_{N,s_j}'(t)-(1-2s_j)J_{N,s_j}(t)>0\qquad(t\ne0).
\]
The inverse Fourier transform of $2\sigma M_0+\xi M_0'$ is
\[
Q_0=\sum_{j=1}^m\overline Q_{s_j}*
\mathop{*}_{i\ne j}J_{N,s_i}>0\quad\hbox{a.e.}
\]
For $m=1$, this means $Q_0=\overline Q_{s_1}$.
Choose $Y_{0,1}=|\Sph^{N-1}|^{-1/2}$.
Then $f_0=f_{0,1}\ge0$ and $f_0\not\equiv0$, because $f=W^p>0$.
Consequently the right side of \eqref{eq:cylinder-spectral-balance}
equals
\begin{equation}\label{eq:positive-cylinder-form}
\begin{aligned}
&\int_\R\int_\R f_0(t)Q_0(t-\tau)f_0(\tau)\,d\tau\,dt\\
&\quad+\frac1{2\pi}\sum_{\ell\ge1,k}\int_\R
\left(\sum_{j=1}^m q_{\ell,s_j}\prod_{i\ne j}m_{\ell,s_i}\right)
|\widehat f_{\ell,k}|^2\,d\xi>0.
\end{aligned}
\end{equation}
The first term is strictly positive and all remaining terms are
nonnegative. The uniform multiplier bounds justify the sum.
Divide \eqref{eq:cylinder-spectral-balance} by $E>0$ to conclude.
\end{proof}

\subsection{The critical diagonal}\label{sec:proof-critical-diagonal}

Assume in this subsection that
\begin{equation}\label{eq:critical-parameters}
N\ge2,\qquad 0<s<\frac12,\qquad a=b>-2s,
\qquad p=q=p_S(a)=\frac{N+2s+2a}{N-2s}.
\end{equation}
Set $a_*=2s(N-1)/(1-2s)$. Substituting the displayed value of $p$
gives
\begin{equation}\label{eq:critical-target}
\frac2{p+1}-(1-2s)
=\frac{(1-2s)(a_*-a)}{N+a}.
\end{equation}
Thus Theorem \ref{thm:below-half-exist} applies when $-2s<a<a_*$.
For $a\ge a_*$ we reduce the system to a scalar equation, exclude
a monotone cylinder profile, and apply Theorem
\ref{thm:cylinder-product-threshold}.
The scalar reduction holds for the whole interval $a>-2s$.
The instability argument below is used only for $a\ge a_*>0$.

\begin{lemma}\label{lem:critical-equality}
Every positive classical entire solution $(u,v)$ under
\eqref{eq:critical-parameters} satisfies $u=v$.
\end{lemma}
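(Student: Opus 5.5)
We plan to prove $u=v$ by the method of moving spheres (equivalently, moving planes after a Kelvin transform) applied to the difference $w=u-v$ in the integral formulation. Here the system is symmetric: $a=b$ and $p=q$. By Proposition \ref{thm:representation} and Lemma \ref{lem:system-potentials}, the pair satisfies $u=I_{2s}(|x|^av^p)$ and $v=I_{2s}(|x|^au^p)$ at every point, with continuous positive components.

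\textbf{Step 1: $w$ is a fixed point of a monotone linear inequality.} Subtracting the two equations gives
\[
u-v=I_{2s}\bigl(|x|^a(v^p-u^p)\bigr).
\]
On $\{u>v\}$ the source $v^p-u^p$ is negative. So $(u-v)_+\le I_{2s}\bigl(|x|^a(v^p-u^p)\mathbf 1_{\{u>v\}}\bigr)_-$ is impossible to be positive unless the set is empty? Not directly, because the contributions from $\{v>u\}$ are positive.

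\textbf{Step 2: the energy argument with the critical exponent.} Set $w=u-v$ and $c(x)=|x|^a(u^p-v^p)/(u-v)\ge0$, so that $w=-I_{2s}(cw)$. Pair this identity with $cw$. Since the Riesz kernel is positive definite ($\widehat{K_s}=|\xi|^{-2s}>0$), we get
\[
\int cw^2=-\langle cw,I_{2s}(cw)\rangle\le0,
\]
hence $cw^2=0$, so $w=0$ wherever $c>0$. Because $c>0$ everywhere off the origin, this forces $u=v$.

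\textbf{Main obstacle: justifying the pairing.} The hard step is showing that $cw\in L^{2N/(N+2s)}$, which is exactly what makes $\langle cw,I_{2s}(cw)\rangle$ finite and equal to $\int|\xi|^{-2s}|\widehat{cw}|^2$ by the Hardy–Littlewood–Sobolev inequality. Near the origin this is fine by continuity and $a>-2s$. At infinity we would use the decay and energy available at the critical exponent.

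The first tool is Proposition \ref{prop:inputs}, which gives averaged decay $R^{-\alpha}$ with $\alpha=\beta=(N-2s)/2$ on the critical diagonal. That alone does not control the energy.

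So I would first run a Kelvin transform. Set $u^*(x)=|x|^{2s-N}u(x/|x|^2)$. At $p=p_S(a)$ the system is Kelvin-invariant, so $(u^*,v^*)$ solves the same integral system away from zero. Apply the moving-sphere method of Li-Yu-Zhang for the Kelvin-invariant case to obtain either radial symmetry about the origin with $u,v\le C(1+|x|)^{2s-N}$, or directly the finite-energy bound $\int|x|^au^{p+1}<\infty$.

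With $u,v=O(|x|^{2s-N})$, we have $c\lesssim|x|^{a}|x|^{(2s-N)(p-1)}$, and $cw=O(|x|^{a-(N-2s)p})=O(|x|^{-N-2s-a})$. This is in $L^{2N/(N+2s)}$ at infinity since $a>-2s$. If the decay step fails, I would instead localize with a cutoff $\phi_R$, pair $w=-I_{2s}(cw)$ against $c\,w\phi_R$, and control the commutator error by the scaling bounds of Proposition \ref{prop:source}, using that the critical scaling makes those error terms $O(1)$ and hence, after averaging in $R$, vanishing.
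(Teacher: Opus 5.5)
There is a genuine gap, and it is exactly the step you flag as the main obstacle. Since $|cw|\le|x|^a(u^p+v^p)$ and $|w|\le u+v$, justifying $\int cw^2=-\langle cw,I_{2s}(cw)\rangle$ amounts to a finite-energy bound such as $\int|x|^a(u^p+v^p)(u+v)<\infty$. Nothing available for an arbitrary positive classical entire pair gives this.

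Your Kelvin/moving-sphere step does not supply it. The only Kelvin invariance is about the origin, and moving spheres about the origin is the moving-plane argument in $t=\log|x|$ on the cylinder (Lemma~\ref{lem:cylinder-reflection}). It produces an inversion-symmetric, hence decaying, profile only if the reflection stops. It never stops when $e^{ht}u(e^t\theta)$ and its $v$ counterpart are nondecreasing in $t$, and in that alternative you obtain neither decay nor finite energy. The paper excludes this alternative only after the scalar reduction furnished by the present lemma, through the instability Lemma~\ref{lem:critical-unstable}, which also needs $a>0$. Appealing to it here is therefore circular. Radial symmetry, for its part, is not available from moving planes with the weight $|x|^a$, $a>0$, and is not expected in general.

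The fallback fails for the same reason. In the localized pairing the error is $\tfrac12\iint|cw|(x)\,|cw|(y)\,(\phi_R(x)-\phi_R(y))^2K_s(x-y)\,dx\,dy$. Proposition~\ref{prop:source} makes its far part $O(1)$. The near-diagonal part, however, is controlled by the available bounds only through the scale-invariant local energy $\int_{B_{2R}}|x|^a(u+v)^{p+1}$, and its uniform boundedness is again the missing statement. Even an $O(1)$ error would only give $\int cw^2<\infty$, not $\int cw^2=0$.

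The missing idea is a comparison argument that needs no integrability at infinity beyond the exact potentials; your abandoned Step 1 was pointing toward it.
\begin{itemize}
\item Because $v^p-u^p\le0$ on $\{u>v\}$, Kato's inequality gives $(-\Delta)^s(u-v)_+\le0$ distributionally.
\item On $B_R$ write $u=\mathcal G_{s,R}(|x|^av^p)+H_Ru$. Here $H_Ru$ is $s$-harmonic in $B_R$, equals $u$ outside $B_R$, and is nonnegative since $G_{s,R}\le K_s$.
\item Since $(u-v)_+\le u=H_Ru$ outside $B_R$, the maximum principle gives $(u-v)_+\le H_Ru$ in $B_R$.
\item Green exhaustion \eqref{eq:green-exhaustion}, with monotone convergence in the finite identity $u=I_{2s}(|x|^av^p)$, gives $H_Ru(x)\downarrow0$ as $R\to\infty$.
\end{itemize}
Hence $u\le v$, and by symmetry $u=v$. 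The critical value of $p$ plays no role in this argument.
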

\begin{proof}
Proposition \ref{thm:representation} gives
\[
u=I_{2s}(|x|^av^p),\qquad v=I_{2s}(|x|^au^p),
\qquad (-\Delta)^s(u-v)=|x|^a(v^p-u^p)
\]
on the whole space, with the last equality interpreted
distributionally. In particular Lemma \ref{lem:origin} supplies
that equality across the origin.
For $w=(u-v)_+$, smooth convex approximation of the positive part
gives the distributional inequality
\[
(-\Delta)^s w
\le\mathbf1_{\{u>v\}}|x|^a(v^p-u^p)\le0.
\]
Use the Green operator of Section \ref{sec:construction} to set
\[
H_Ru=u-\mathcal G_{s,R}(|x|^av^p).
\]
The equation for $u$ gives $(-\Delta)^sH_Ru=0$ in $B_R$;
the zero exterior Green condition gives $H_Ru=u$ outside $B_R$.
Moreover $H_Ru\ge0$, because $G_{s,R}\le K_s$ and
$u=I_{2s}(|x|^av^p)$.
Since $w\le u=H_Ru$ outside $B_R$, the maximum principle gives
$w\le H_Ru$ inside as well. One may apply this comparison to the
smooth convex approximations first: local boundedness and
$u,v\in\mathcal L_{2s}$ permit the limit in the distributional
inequality and in its exterior integrals.
Finally, Green exhaustion \eqref{eq:green-exhaustion} and the exact
potential identity give, for each fixed $x$,
\[
0\le w(x)\le H_Ru(x)
=u(x)-\mathcal G_{s,R}(|x|^av^p)(x)\downarrow0.
\]
Thus $u\le v$. Interchanging the two components proves $v\le u$.
\end{proof}

After this reduction the equation and potential identity are
\begin{equation}\label{eq:critical-scalar}
(-\Delta)^su=|x|^au^p,\qquad
u=I_{2s}(|x|^au^p).
\end{equation}
As in Definition \ref{def:class}, the equation holds pointwise away
from zero and distributionally on $\R^N$; the potential identity
holds at every point. With $h=(N-2s)/2$, the parameter equalities are
\begin{equation}\label{eq:critical-relations}
N+a=h(p+1),\qquad a+2s=h(p-1).
\end{equation}
The next two statements show that this scalar solution cannot
satisfy the stability inequality. This fact will be used to exclude
monotonicity in the logarithmic radius.

\begin{corollary}\label{lem:critical-masses}
Assume \eqref{eq:critical-parameters} and $a>0$. For every positive
classical entire solution $u$ of \eqref{eq:critical-scalar} and every
$R>0$,
\begin{equation}\label{eq:critical-mass-bounds}
\int_{B_R}|x|^au^p\le CR^h,\qquad
\int_{B_R}u\le CR^{N-h},\qquad
\int_{|x|>R}\frac{u(x)}{|x|^{N+2s}}\,dx\le CR^{-h-2s}.
\end{equation}
The constant $C$ depends only on $N,s,a$.
\end{corollary}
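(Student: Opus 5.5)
This follows directly from Propositions \ref{prop:source} and \ref{prop:inputs}, applied to the pair $(u,u)$ with $a=b$ and $p=q=p_S(a)$. By Lemma \ref{lem:critical-equality} and Proposition \ref{thm:representation}, $(u,u)$ is a positive continuous solution of \eqref{eq:exact-potential}. Hypothesis \eqref{eq:integral-parameters} holds: $N\ge2>2s$, $a>-2s$, and $p>1$. The last point uses $a>0$, which gives $p=(N+2s+2a)/(N-2s)>1$ and hence $pq=p^2>1$. The constants in those propositions depend only on $N,s,a,b,p,q$, which here reduce to $N,s,a$.

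The key step is to compute the scaling exponents in \eqref{eq:scales}. With $a=b$ and $p=q$ we get
\[
\alpha=\beta=\frac{2s(p+1)+a(p+1)}{p^2-1}=\frac{2s+a}{p-1}.
\]
By \eqref{eq:critical-relations}, $a+2s=h(p-1)$, so $\alpha=\beta=h$.

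Inserting this into \eqref{eq:source-bounds} gives
\[
\int_{B_R}|x|^au^p=\int_{B_R}G\le CR^{N-2s-h}=CR^{h},
\]
since $N-2s=2h$. Next, \eqref{eq:mean-decay} gives $\fint_{B_R}u\le CR^{-h}$. Multiplying by $|B_R|$ yields $\int_{B_R}u\le CR^{N-h}$. Finally, \eqref{eq:general-tail} with $\tau=s$ gives the tail bound $CR^{-h-2s}$.

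The only point needing care is verifying the hypotheses. Proposition \ref{prop:source} requires $pq>1$, which is where $a>0$ enters; for $a\le0$ one would have $p\le1$. It also requires the integral identities at every point, including the origin, and these are supplied by Lemma \ref{lem:system-potentials}. No other obstacle is expected, since all estimates are uniform over solutions by construction.
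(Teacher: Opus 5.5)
Your proof is correct and is the paper's argument: apply Propositions \ref{prop:source} and \ref{prop:inputs} to $(u,u)$, use $a+2s=h(p-1)$ to get $\alpha=\beta=h$, and read off the three exponents, taking $\tau=s$ in the tail bound. One correction to your side remark: $p_S(a)>1$ is equivalent to $a>-2s$, so $pq>1$ already follows from \eqref{eq:critical-parameters}, and your claim that $a\le0$ would force $p\le1$ is false; the hypothesis $a>0$ plays no role in this corollary (the paper uses it in Lemma \ref{lem:critical-unstable}, for local boundedness of $|x|^au^p$), and Lemma \ref{lem:critical-equality} is not needed, since \eqref{eq:critical-scalar} already gives the potential identity for the scalar $u$ at every point.
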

\begin{proof}
Apply the earlier estimates to the pair $(u,u)$.
Substituting $a=b$ and $p=q$ in \eqref{eq:scales}, and using
$a+2s=h(p-1)$ from \eqref{eq:critical-relations}, gives
$\alpha=\beta=h$.
Proposition \ref{prop:source} then gives the first estimate with
exponent $N-2s-h=h$. Proposition \ref{prop:inputs} gives the second
with exponent $N-h$. In \eqref{eq:general-tail}, choose $\tau=s$;
its exponent is $-h-2s$, which proves the third estimate.
\end{proof}

\begin{lemma}\label{lem:critical-unstable}
Under the assumptions of Corollary \ref{lem:critical-masses}, the
inequality
\begin{equation}\label{eq:critical-stability}
p\int_{\R^N}|x|^au^{p-1}\phi^2\,dx
\le\|\phi\|_{\dot H^s(\R^N)}^2
\qquad\text{for every }\phi\in C_c^\infty(\R^N)
\end{equation}
cannot hold. The norm has the normalization in
\eqref{eq:common-energy-form}--\eqref{eq:common-homogeneous-space}.
\end{lemma}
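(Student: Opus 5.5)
The plan is to test \eqref{eq:critical-stability} with the truncations $\phi_R=u\eta_R$ and argue by contradiction. Here $\eta_R(x)=\eta_0(x/R)$, with $\eta_0\in C_c^\infty(B_2)$ radial, $0\le\eta_0\le1$, and $\eta_0=1$ on $B_1$. Since $u$ is continuous, positive and smooth off the origin, $\phi_R$ is not smooth at $0$. We would first extend \eqref{eq:critical-stability} by density to compactly supported functions in $\dot H^s$ that are continuous and whose weighted quadratic term is finite, which is the case for $u\eta_R$. Then we use the standard localization identity for the nonlocal form,
\[
\|u\eta_R\|_{\dot H^s}^2=\int_{\R^N}\eta_R^2\,u\,(-\Delta)^su\,dx
+\frac{C(N,s)}2\iint\frac{u(x)u(y)(\eta_R(x)-\eta_R(y))^2}{|x-y|^{N+2s}}\,dx\,dy .
\]
Together with \eqref{eq:critical-scalar} this identifies the first term as $\int|x|^au^{p+1}\eta_R^2$. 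This step should be justified as in Lemma~\ref{lem:virial}: excise $B_\delta$, use the pointwise equation off zero, and control the boundary error with continuity of $u$ at the origin and $a>-N$. Write $\mathcal R_R$ for the double integral, including the constant $C(N,s)/2$. The stability inequality then yields
\[
(p-1)\int|x|^au^{p+1}\eta_R^2\le\mathcal R_R .
\]

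Next we show that $\mathcal R_R$ is bounded uniformly in $R$, using only Corollary~\ref{lem:critical-masses}. We split $(x,y)$ according to whether both points lie in $B_{4R}$ or one point lies outside.
\begin{itemize}[leftmargin=*]
\item If $x,y\in B_{4R}$, we use $(\eta_R(x)-\eta_R(y))^2\le CR^{-2}|x-y|^2$ together with the mass bound $\int_{B_R}u\le CR^{N-h}$. The point is that $u(y)$ cannot be pulled out as a supremum. Instead, after integrating out the kernel we bound $\int u\cdot I_{2-2s}$-type averages of $u$ dyadically by the ball averages \eqref{eq:mean-decay}. This gives a product of two masses of size $R^{N-h}\cdot R^{N-h}\cdot R^{-N-2s}=R^{N-2h-2s}=1$.
\item If one point is far away, we use the tail bound $\int_{|x|>R}u|x|^{-N-2s}\le CR^{-h-2s}$ times $\int_{B_{2R}}u\le CR^{N-h}$, which is again of order $R^0$ by $2h=N-2s$.
\end{itemize}
Hence $\int_{\R^N}|x|^au^{p+1}=E<\infty$, and $E>0$.

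The main step, and the expected obstacle, is to show $\limsup_R\mathcal R_R\le C\varepsilon^{2s}$ for every $\varepsilon>0$. That forces $(p-1)E\le0$, which contradicts $p>1$. For this we now use the finite energy rather than the mass bounds. We bound $\mathcal R_R\le2\int u(x)^2\Gamma_R(x)\,dx$, where
\[
\Gamma_R(x)=C\int\frac{(\eta_R(x)-\eta_R(y))^2}{|x-y|^{N+2s}}\,dy\le C\min\{R^{-2s},\,R^N|x|^{-N-2s}\}.
\]
On any ball or dyadic shell $A$ of radius $r$ we apply Hölder with the weight $|x|^a$:
\[
\int_Au^2\le\Bigl(\int_A|x|^au^{p+1}\Bigr)^{2/(p+1)}\Bigl(\int_A|x|^{-2a/(p-1)}\Bigr)^{(p-1)/(p+1)} .
\]
This is admissible since $2a/(p-1)<2h<N$, and by \eqref{eq:critical-relations} the resulting power is exactly $r^{2s}$. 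We then split into three regions:
\begin{itemize}[leftmargin=*]
\item On $B_{\varepsilon R}$ the contribution is at most $C\varepsilon^{2s}E^{2/(p+1)}$.
\item On $\varepsilon R<|x|<4R$ it is at most $C_\varepsilon\bigl(\int_{|x|>\varepsilon R}|x|^au^{p+1}\bigr)^{2/(p+1)}\to0$.
\item On the shells $2^kR<|x|<2^{k+1}R$ with $k\ge2$ it is at most $C2^{-kN}\bigl(\int_{|x|>4R}|x|^au^{p+1}\bigr)^{2/(p+1)}$, which is summable in $k$ and tends to zero.
\end{itemize}
Letting $R\to\infty$ and then $\varepsilon\to0$ gives the contradiction. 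If the uniform bound on $\mathcal R_R$ proves delicate, the fallback is to replace $\eta_R$ by a logarithmic cutoff $\eta_R(x)=\min\{1,(\log(R^2/|x|)/\log R)_+\}$, for which the critical-scale commutator gains a factor $1/\log R$.
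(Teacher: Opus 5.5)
Your first step, the cutoff inequality $(p-1)\int|x|^au^{p+1}\eta_R^2\le\mathcal R_R$, and your final step are sound. There is a genuine gap in between.

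\textbf{The gap.} It is in your second step: the $R$-uniform bound on $\mathcal R_R$ obtained from Corollary~\ref{lem:critical-masses} alone, and hence the finiteness of $E$. The pairs with one point outside $B_{4R}$, and the pairs in $B_{4R}^2$ with $|x-y|\ge cR$, are indeed controlled by products of masses, as you say. The near-diagonal part is not. It is $R^{-2}\iint_{x,y\in B_{4R},\,|x-y|<R}u(x)u(y)|x-y|^{2-N-2s}\,dx\,dy$, which involves averages of $u$ over small balls $B_\rho(x)$ centred at arbitrary points. After symmetrising it is bounded by $CR^{-2s}\int_{B_{4R}}u^2$. However, \eqref{eq:mean-decay} and \eqref{eq:critical-mass-bounds} only control $L^1$ masses of $u$ and $|x|^au^p$ on balls centred at the origin. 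They say nothing about concentration of $u$ away from the origin, so "bounding $I_{2-2s}$-type averages dyadically by the ball averages" does not work. Since $\mathcal R_R(u)=\mathcal R_1(u_R)$ with $u_R(x)=R^hu(Rx)$, an $R$-uniform bound amounts to a uniform local ($L^2$-type) bound for the whole family of rescaled solutions. That is an a priori estimate, and here it has to come from the stability inequality, not from the mass bounds. Your logarithmic-cutoff fallback contains the same near-diagonal term and has the same defect.

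\textbf{The missing idea.} You need a second use of stability, with sublinear absorption. Your weighted H\"older inequality gives $\int_{B_R}u^2\le C\,\mathcal E(R)^{2/(p+1)}$, where $\mathcal E(R)=\int_{B_R}|x|^au^{p+1}$ and $2/(p+1)<1$. Inserted into the cutoff inequality at a single scale, it only yields $\mathcal E(R)\le C+C\,\mathcal E(4R)^{2/(p+1)}$. This compares a ball with a larger one and cannot close without growth information on $\mathcal E$.

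The paper closes it by hole-filling:
\begin{itemize}
\item It takes cutoffs equal to one on $B_r$ and supported in $B_{(r+R)/2}$, with $1\le r<R\le2$, and obtains $\mathcal E(r)\le C(R-r)^{-N-2s}\bigl(1+\int_{B_R}u^2\bigr)$.
\item Young's inequality then gives $\mathcal E(r)\le\varepsilon\,\mathcal E(R)+C_\varepsilon(R-r)^{-\nu}$.
\item Iterating along $r_j=2-2^{-j}$, using only $\mathcal E(2)<\infty$ for the fixed solution, gives $\mathcal E(1)\le C$ with $C$ independent of the solution.
\item This bound is applied to $u_L(x)=L^hu(Lx)$, which satisfies both \eqref{eq:critical-scalar} and \eqref{eq:critical-stability} because $a+2s=h(p-1)$. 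That gives $\int_{B_L}|x|^au^{p+1}\le C$ for every $L$, i.e.\ $E<\infty$.
\end{itemize}

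\textbf{Your final step.} Once $E<\infty$ is available, your last step is correct and is a genuine alternative to the paper's ending. You show $\mathcal R_R\to0$ from the shellwise bound $\int_Au^2\le Cr^{2s}\bigl(\int_A|x|^au^{p+1}\bigr)^{2/(p+1)}$ and conclude $(p-1)E\le0$. The paper instead proves $u\in\dot H^s$ with $\|u\|_{\dot H^s}^2=E$, via the truncated potentials $I_{2s}(|x|^au^p\mathbf 1_{B_j})$, and applies stability to $u$ itself to get $pE\le E$. Your ending avoids identifying $u$ as an element of $\dot H^s$, but it needs exactly the same finite-energy input.
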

\begin{proof}
Suppose \eqref{eq:critical-stability} holds. Let $\eta$ be a smooth
compactly supported cutoff. Test \eqref{eq:critical-scalar} with
$u\eta^2$ and \eqref{eq:critical-stability} with $u\eta$.
These tests follow by approximation: $a>0$ and local boundedness of
$u$ make the source locally bounded, which gives local $H^s$
regularity; the exterior integrals are finite because
$u\in\mathcal L_{2s}$.
In the difference of the two energy forms use the identity
\[
(u_x\eta_x-u_y\eta_y)^2
-(u_x-u_y)(u_x\eta_x^2-u_y\eta_y^2)
=u_xu_y(\eta_x-\eta_y)^2,
\]
where $u_x=u(x)$ and $\eta_x=\eta(x)$. This gives
\begin{equation}\label{eq:critical-cutoff-product}
(p-1)\int_{\R^N}|x|^au^{p+1}\eta^2\,dx
\le\frac{C(N,s)}2\iint_{\R^N\times\R^N}
\frac{u(x)u(y)(\eta(x)-\eta(y))^2}{|x-y|^{N+2s}}\,dx\,dy.
\end{equation}
The product $u(x)u(y)$ is retained when one point lies outside a
fixed ball; only a weighted $L^1$ bound is available there.

Define $\mathcal E(r)=\int_{B_r}|x|^au^{p+1}$.
For $1\le r<R\le2$, put $d=R-r$ and choose $\eta=1$ on $B_r$,
$\eta=0$ outside $B_{(r+R)/2}$, and $|\nabla\eta|\le C/d$.
For the part of \eqref{eq:critical-cutoff-product} on $B_R^2$, use
$2u(x)u(y)\le u(x)^2+u(y)^2$ and
\[
\sup_x\int_{\R^N}
\frac{(\eta(x)-\eta(y))^2}{|x-y|^{N+2s}}\,dy
\le C\int_0^\infty\min\{1,t^2/d^2\}t^{-1-2s}\,dt
\le Cd^{-2s}.
\]
This part is bounded by $Cd^{-2s}\int_{B_R}u^2$.
In each remaining nonzero pair, one point is in
$B_{(r+R)/2}$ and the other is outside $B_R$.
For the latter point in $B_4$, use $|x-y|\ge d/2$.
For $|y|>4$, use $|x-y|\ge|y|/2$.
The last two estimates of \eqref{eq:critical-mass-bounds}, at fixed
radii $2$ and $4$, give
\[
\begin{aligned}
&\int_{B_{(r+R)/2}}\int_{\R^N\setminus B_R}
\frac{u(x)u(y)}{|x-y|^{N+2s}}\,dy\,dx\\
&\qquad\le C\left(\int_{B_2}u\right)
\left[d^{-N-2s}\int_{B_4}u+
\int_{|y|>4}\frac{u(y)}{|y|^{N+2s}}\,dy\right]
\le Cd^{-N-2s}.
\end{aligned}
\]
Consequently \eqref{eq:critical-cutoff-product} implies
\begin{equation}\label{eq:critical-hole-estimate}
\mathcal E(r)\le C(R-r)^{-N-2s}
\left(1+\int_{B_R}u^2\right).
\end{equation}

Apply H\"older's inequality with exponents $(p+1)/2$ and
$(p+1)/(p-1)$:
\[
\int_{B_R}u^2
\le\mathcal E(R)^{2/(p+1)}
\left(\int_{B_R}|x|^{-2a/(p-1)}\,dx\right)^{(p-1)/(p+1)}
\le C\mathcal E(R)^{2/(p+1)}.
\]
The last weight is integrable, since
$2a/(p-1)=2h-4s/(p-1)<N$ by \eqref{eq:critical-relations}.
Set $\sigma_0=2/(p+1)<1$ and
$\nu=(N+2s)/(1-\sigma_0)$.
Substitute this estimate into \eqref{eq:critical-hole-estimate},
and use Young's inequality, to obtain
\begin{equation}\label{eq:critical-iteration}
\mathcal E(r)\le\varepsilon\mathcal E(R)
+C_\varepsilon(R-r)^{-\nu}.
\end{equation}
Take $r_j=2-2^{-j}$ and choose $\varepsilon>0$ so that
$\varepsilon2^\nu<1$. Iterating from $r_0=1$ gives
\[
\mathcal E(1)
\le\varepsilon^m\mathcal E(r_m)
+C_\varepsilon2^\nu\sum_{j=0}^{m-1}(\varepsilon2^\nu)^j
\le\varepsilon^m\mathcal E(2)+C.
\]
For each fixed solution $\mathcal E(2)<\infty$.
Letting $m\to\infty$ proves $\mathcal E(1)\le C$, with a constant
independent of the solution.

For every $L>0$, the function $u_L(x)=L^hu(Lx)$ satisfies both
\eqref{eq:critical-scalar} and \eqref{eq:critical-stability}, because
$a+2s=h(p-1)$.
Apply the uniform bound on $B_1$ to $u_L$. The other critical
identity, $N+a=h(p+1)$, cancels the scaling factor and yields
\[
\int_{B_L}|x|^au^{p+1}\,dx
=\int_{B_1}|x|^a u_L^{p+1}\,dx\le C.
\]
Letting $L\to\infty$ gives
$E:=\int_{\R^N}|x|^au^{p+1}\in(0,\infty)$.
This is a consequence of the assumed stability inequality.

To use $u$ itself in stability, set
$f=|x|^au^p$, $f_j=f\mathbf1_{B_j}$, and $u_j=I_{2s}f_j$.
Each $f_j$ is bounded and compactly supported. The Riesz energy
identity and positivity give
\[
\begin{aligned}
\|u_j\|_{\dot H^s}^2&=\int f_jI_{2s}f_j\uparrow E,\\
\|u_j-u_k\|_{\dot H^s}^2
&=\int(f_j-f_k)I_{2s}(f_j-f_k)\\
&\le\int_{\R^N\setminus B_k}fu\longrightarrow0\qquad(j>k).
\end{aligned}
\]
Thus $(u_j)$ converges in $\dot H^s$.
The Sobolev embedding and the pointwise convergence $u_j\uparrow u$
identify the limit with $u$, so
$u\in\dot H^s$ and $\|u\|_{\dot H^s}^2=E$.
Choose compact smooth approximations to $u$ in this norm and a
subsequence converging almost everywhere. Apply
\eqref{eq:critical-stability} to these approximations and use Fatou's
lemma on its left side. The result is $pE\le E$, which contradicts
$p>1$ and $E>0$.
\end{proof}

\begin{proof}[Proof of Theorem \ref{thm:critical-criterion}]
Suppose first that $a\ge a_*$ and a positive classical entire pair
exists. Since $a_*>0$, Lemmas \ref{lem:critical-equality} and
\ref{lem:critical-unstable} both apply: the pair reduces to a scalar
solution $u$, and this solution does not satisfy
\eqref{eq:critical-stability}.
Define $U(t,\theta)=e^{ht}u(e^t\theta)$.
Substitution in the potential formula, using
$N+a=h(p+1)$, gives
\begin{equation}\label{eq:cylinder-scalar}
U=\mathcal T_s(U^p)\qquad\hbox{on }\mathcal C.
\end{equation}
Continuity at zero and \eqref{eq:origin-gradient} imply, uniformly
in $\theta$,
\begin{equation}\label{eq:left-cylinder-asymptotic}
\begin{aligned}
U(t,\theta)&=e^{ht}(u(0)+o(1)),\\
U_t(t,\theta)&=e^{ht}\bigl(hu(e^t\theta)+e^t\theta\cdot\nabla u(e^t\theta)\bigr),\\
|U(t,\theta)|+|U_t(t,\theta)|&\le Ce^{ht}\qquad(t\to-\infty).
\end{aligned}
\end{equation}
Smoothness on the cylinder follows from Lemma \ref{lem:regularity}.

We must verify the decrease hypothesis of Theorem
\ref{thm:cylinder-product-threshold}.
Suppose instead that $U$ is nondecreasing in $t$ for every $\theta$.
For $\rho>1$, set $u_\rho(x)=\rho^hu(\rho x)$ and
$z_\rho=u_\rho-u$. Monotonicity gives $z_\rho\ge0$ for $x\ne0$,
while continuity gives
$z_\rho(0)=(\rho^h-1)u(0)>0$.
The scaling relation $a+2s=h(p-1)$ and the exact potentials give
\[
z_\rho=I_{2s}\bigl(|x|^a(u_\rho^p-u^p)\bigr)>0.
\]
Strict positivity follows because the nonnegative source is
positive on a set of positive measure near zero.
Subtracting the two equations gives
\[
(-\Delta)^sz_\rho=c_\rho z_\rho,
\qquad
c_\rho(x)=p|x|^a\int_0^1
\bigl((1-\eta)u(x)+\eta u_\rho(x)\bigr)^{p-1}\,d\eta
\ge p|x|^au(x)^{p-1}.
\]
For $\phi\in C_c^\infty(\R^N)$, test this equation with
$\phi^2/z_\rho$. The algebraic identity
\[
(\phi_x-\phi_y)^2
-(z_x-z_y)\left(\frac{\phi_x^2}{z_x}-\frac{\phi_y^2}{z_y}\right)
=\left(\sqrt{\frac{z_y}{z_x}}\phi_x
-\sqrt{\frac{z_x}{z_y}}\phi_y\right)^2\ge0,
\qquad z_x=z_\rho(x),
\]
combined with \eqref{eq:common-energy-form}, yields
\[
p\int|x|^au^{p-1}\phi^2
\le\int c_\rho\phi^2
\le\|\phi\|_{\dot H^s}^2.
\]
To justify the test, use the positive minimum of $z_\rho$ on the
support of $\phi$, its local $H^s$ regularity, and
$z_\rho\in\mathcal L_{2s}$ for the exterior integrals.
This is precisely \eqref{eq:critical-stability}, contradicting
Lemma \ref{lem:critical-unstable}.

Therefore $U(t_1,\theta_0)>U(t_2,\theta_0)$ for some $t_1<t_2$.
Equation \eqref{eq:cylinder-scalar}, the bounds
\eqref{eq:left-cylinder-asymptotic}, and this decrease verify every
hypothesis of Theorem \ref{thm:cylinder-product-threshold} with
$m=1$, $s_1=s$, and $W=U$. It gives
\[
\frac2{p+1}>1-2s.
\]
By \eqref{eq:critical-target}, this is equivalent to $a<a_*$,
contrary to $a\ge a_*$. The strict inequality includes nonexistence
at $a=a_*$. The finite cylinder energy used in the threshold theorem
was obtained from reflection after monotonicity was excluded.

Conversely, let $-2s<a<a_*$ and $p=q=p_S(a)$.
Then $p>1$, and direct substitution gives
\[
\frac{N+a}{p+1}+\frac{N+a}{p+1}=N-2s,
\qquad \frac1{p+1}+\frac1{p+1}>1-2s
\]
by \eqref{eq:critical-target}. These are
\eqref{eq:sufficient-window-intro}. Theorem
\ref{thm:below-half-exist} therefore gives a positive classical
entire solution, completing both directions.
\end{proof}

\subsection{A fixed critical curve with unequal weights}\label{sec:unequal-critical-curve}

Fix the parameters in \eqref{eq:fixed-weights-main}:
\[
N=2,\qquad 0<s<\frac14,\qquad -2s<b<0,
\qquad a=\frac{4s-b}{1-4s},\qquad
p_0=\frac{1+4s}{1-4s}>1.
\]
These weights remain fixed throughout the subsection.
We first exclude $(p,q)=(p_0,1)$ and then prove existence for
$p>p_0$ on the same curve \eqref{eq:fixed-critical-curve}.

Suppose a positive classical entire pair exists at $(p_0,1)$.
Lemma \ref{lem:system-potentials} gives the finite identities
\begin{equation}\label{eq:unequal-critical-potentials}
\begin{cases}
u=I_{2s}(|x|^av^{p_0}),\\
v=I_{2s}(|x|^bu),
\end{cases}
\qquad x\in\R^2.
\end{equation}
In particular $u(0),v(0)>0$.
Substitution in \eqref{eq:scales} gives
\begin{equation}\label{eq:unequal-critical-scales}
\begin{aligned}
\alpha&=1+\frac b2>0,&
\beta&=1-2s-\frac b2>0,&
\delta&=s+\frac b2\in(0,s),\\
\alpha+\beta&=2-2s,&
2+b&=2\alpha,&
2+a&=(p_0+1)\beta.
\end{aligned}
\end{equation}
Identify $\Sph^1$ with $\mathbb T=\R/(2\pi\mathbb Z)$, using
$e_\theta=(\cos\theta,\sin\theta)$ and the usual angle measure.
On $\mathcal C=\R\times\mathbb T$ set
\[
U(t,\theta)=e^{\alpha t}u(e^te_\theta),\qquad
V(t,\theta)=e^{\beta t}v(e^te_\theta).
\]
For $0<\sigma<1/2$, formula \eqref{eq:full-cylinder-kernel} becomes
\[
\mathcal K_\sigma(t,\vartheta)
=c_{2,\sigma}[2(\cosh t-\cos\vartheta)]^{-(1-\sigma)},
\qquad \mathcal T_\sigma f=\mathcal K_\sigma*f,
\]
where $*$ is convolution on $\R\times\mathbb T$ with measure
$dt\,d\theta$, and $\vartheta=\theta-\varphi$.
Define
$\mathcal A_{\pm\delta}f=(e^{\pm\delta t}\mathcal K_s)*f$.
The two exponents in the first polar substitution are
\[
\alpha-(1-s)=\delta,
\qquad 2+a-(1-s)-p_0\beta=-\delta;
\]
in the second substitution they are
\[
\beta-(1-s)=-\delta,
\qquad 2+b-(1-s)-\alpha=\delta.
\]
Thus \eqref{eq:unequal-critical-potentials} becomes
\begin{equation}\label{eq:unequal-cylinder-system}
\begin{cases}
U=\mathcal A_\delta(V^{p_0}),\\
V=\mathcal A_{-\delta}U,
\end{cases}
\qquad (t,\theta)\in\mathcal C.
\end{equation}
The next lemma identifies the product of these two weighted kernels.

\begin{lemma}\label{lem:weighted-cylinder-factorization}
With the preceding parameters, define
\[
s_1=s+\delta=2s+\frac b2,
\qquad s_2=s-\delta=-\frac b2,
\qquad L=\mathcal K_{s_1}*\mathcal K_{s_2}.
\]
Then $s_1,s_2>0$, $s_1+s_2=2s<1/2$, and
\begin{equation}\label{eq:unequal-kernel-factorization}
\mathcal A_{-\delta}\mathcal A_\delta
=\mathcal T_{s_1}\mathcal T_{s_2}.
\end{equation}
Consequently the function $V$ above satisfies
\begin{equation}\label{eq:unequal-cylinder-scalar}
V=L*V^{p_0}<\infty\qquad\hbox{at every point of }\mathcal C.
\end{equation}
\end{lemma}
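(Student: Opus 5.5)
The plan is to prove \eqref{eq:unequal-kernel-factorization} on the Fourier side: on each spherical harmonic subspace of $\mathbb T$, compute the multipliers of both sides and identify them with the $\Gamma$-quotients \eqref{eq:angular-multipliers}. Then \eqref{eq:unequal-cylinder-scalar} follows by composing the two equations in \eqref{eq:unequal-cylinder-system}.

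First the parameter facts. Since $-2s<b<0$ we get $s_2=-b/2>0$ and $s_1=2s+b/2>s>0$. Also $s_1+s_2=2s<1/2$ because $s<1/4$. In particular $0<\delta<s<1-s=h$, where $h=(N-2s)/2=1-s$ for $N=2$. By \eqref{eq:cylinder-kernel-bounds}, the angular integral of $\mathcal K_s$ decays like $e^{-h|t|}$, so both weighted kernels $e^{\pm\delta t}\mathcal K_s$ are integrable on $\mathcal C$. The same holds for $e^{\gamma t}\mathcal K_s$ whenever $|\gamma|<h$. Hence the degree-$\ell$ multiplier $m_{\ell,s}(\zeta)$ of $\mathcal T_s$ (here $\ell=|k|$ for the harmonic $e^{ik\theta}$) extends analytically to the strip $|\operatorname{Im}\zeta|<h$. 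With the convention $\widehat\phi(\xi)=\int e^{-it\xi}\phi$, the multiplier of $\mathcal A_{\pm\delta}$ is $m_{\ell,s}(\xi\pm i\delta)$.

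Next identify this continuation with the right side of \eqref{eq:angular-multipliers}. Put $N_\ell=2+2\ell$, $A=(1+\ell-s)/2$, $B=(1+\ell+s)/2$ and $y=\xi/2$. For real $\xi$, \eqref{eq:angular-multipliers} reads
\[
m_{\ell,s}(\xi)=2^{-2s}\frac{\Gamma(A+iy)\Gamma(A-iy)}{\Gamma(B+iy)\Gamma(B-iy)}.
\]
The right side is analytic in $\xi$ as long as $A\pm iy$ avoids the poles of $\Gamma$, which holds for $|\operatorname{Im}\xi|<2A=1+\ell-s$. That strip contains $|\operatorname{Im}\xi|\le\delta$. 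By uniqueness of analytic continuation, the formula remains valid at $\xi\pm i\delta$. Multiplying the two shifted expressions gives
\[
m_{\ell,s}(\xi+i\delta)\,m_{\ell,s}(\xi-i\delta)
=2^{-4s}\frac{|\Gamma(A-\tfrac\delta2+iy)|^2\,|\Gamma(A+\tfrac\delta2+iy)|^2}
{|\Gamma(B-\tfrac\delta2+iy)|^2\,|\Gamma(B+\tfrac\delta2+iy)|^2}.
\]
Now compare with the orders $s_1=s+\delta$ and $s_2=s-\delta$. Their parameters are
\[
A_1=A-\tfrac\delta2,\quad B_1=B+\tfrac\delta2,\quad A_2=A+\tfrac\delta2,\quad B_2=B-\tfrac\delta2,
\]
and $2^{-2s_1-2s_2}=2^{-4s}$. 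So the product above equals $m_{\ell,s_1}(\xi)\,m_{\ell,s_2}(\xi)$ for every $\ell$ and every real $\xi$.

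The two composed kernels are integrable convolution kernels on $\mathcal C$. They have the same Fourier coefficients in every harmonic and the same time transform, so they agree almost everywhere. Both are continuous off the diagonal, so they agree everywhere there. This gives \eqref{eq:unequal-kernel-factorization} and the identity $(e^{-\delta t}\mathcal K_s)*(e^{\delta t}\mathcal K_s)=L$.

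For \eqref{eq:unequal-cylinder-scalar}, substitute $U=\mathcal A_\delta(V^{p_0})$ into $V=\mathcal A_{-\delta}U$. All integrands are nonnegative, so Tonelli's theorem applies without any integrability assumption and gives $V=L*V^{p_0}$ pointwise. The left side is finite because $v$ is finite everywhere by \eqref{eq:unequal-critical-potentials}.

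I expect the main obstacle to be justifying the analytic continuation of the cited multiplier into the strip. That means checking that the weighted kernels are integrable uniformly in the strip, which comes from $\delta<h$ together with the kernel bounds, and that no $\Gamma$-poles enter, which comes from $\delta<1+\ell-s$. The $\Gamma$-bookkeeping itself is routine.
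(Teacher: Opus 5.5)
Your proposal is correct and follows the paper's proof essentially step by step. Both arguments analytically continue the $\Gamma$-quotient multiplier into the strip $|\operatorname{Im}\zeta|<1-s$, regroup the shifted factors into $m_{\ell,s+\delta}(\xi)\,m_{\ell,s-\delta}(\xi)$, invoke Fourier uniqueness for the $L^1$ kernels on $\mathcal C$, and use Tonelli to compose $U=\mathcal A_\delta(V^{p_0})$ and $V=\mathcal A_{-\delta}U$. Your remark about continuity off the diagonal is harmless but not needed, since almost-everywhere equality of the two kernels already gives $V=L*V^{p_0}$ at every point.
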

\begin{proof}
The inequalities $0<\delta<s$ give $s_1,s_2>0$;
the restriction $s<1/4$ gives $s_1+s_2=2s<1/2$.
For the angular Fourier mode $k\in\mathbb Z$, use
$N=2$, $\ell=|k|$, and $s=\sigma$ in
\eqref{eq:angular-multipliers}. Its analytic continuation is
\[
m_{k,\sigma}(\zeta)=2^{-2\sigma}
\frac{\Gamma((1+|k|-\sigma+i\zeta)/2)
\Gamma((1+|k|-\sigma-i\zeta)/2)}
{\Gamma((1+|k|+\sigma+i\zeta)/2)
\Gamma((1+|k|+\sigma-i\zeta)/2)}.
\]
The kernel decays as $e^{-(1-\sigma)|t|}$, so its transform is
analytic for $|\operatorname{Im}\zeta|<1-\sigma$.
Multiplication by $e^{\pm\delta t}$ replaces the real Fourier
variable $\xi$ by $\xi\pm i\delta$.
These shifts lie in that strip because $\delta<s<1/4$.
In the product $m_{k,s}(\xi-i\delta)m_{k,s}(\xi+i\delta)$,
the four numerator Gamma factors have parameters
$1+|k|-(s+\delta)\pm i\xi$ and
$1+|k|-(s-\delta)\pm i\xi$, divided by two.
The four denominator factors group in the same way with positive
orders. Therefore
\[
m_{k,s}(\xi-i\delta)m_{k,s}(\xi+i\delta)
=m_{k,s+\delta}(\xi)m_{k,s-\delta}(\xi).
\]
Both normalizing factors equal $2^{-4s}$.
All four kernels are in $L^1(\mathcal C)$, so uniqueness of the
Fourier transform in time and the Fourier series in angle proves
\eqref{eq:unequal-kernel-factorization} as a kernel identity.
Apply it first to nonnegative bounded sources with compact support.
For $V^{p_0}$, pass to increasing truncations and use Tonelli's
theorem. Substitution of the first equation of
\eqref{eq:unequal-cylinder-system} into the second then gives
\eqref{eq:unequal-cylinder-scalar}, with finiteness inherited from $V$.
\end{proof}

Continuity of $v$ at zero and \eqref{eq:origin-gradient} give, uniformly in $\theta$,
\begin{equation}\label{eq:unequal-left-decay}
\begin{aligned}
V(t,\theta)&=e^{\beta t}(v(0)+o(1)),\\
V_t(t,\theta)&=e^{\beta t}\bigl(\beta v(e^te_\theta)
+e^te_\theta\cdot\nabla v(e^te_\theta)\bigr),\\
|V(t,\theta)|+|V_t(t,\theta)|&\le Ce^{\beta t}\qquad(t\to-\infty).
\end{aligned}
\end{equation}
Lemma \ref{lem:regularity} also gives smoothness of $V$ on the
cylinder. To exclude monotonicity, we need an estimate for the source
$f:=V^{p_0}$ on unit time strips.
Proposition \ref{prop:source} applies with $q=1$; using
$2-2s-\alpha=\beta$ in its source exponent gives
\[
\int_{B_R}|x|^av^{p_0}\,dx\le CR^\beta\qquad(R>0).
\]
Since $p_0\beta-2=a-\beta$, polar substitution yields, for every
$j\in\mathbb Z$,
\begin{equation}\label{eq:unequal-strip-mass}
\begin{aligned}
\int_{[j,j+1]\times\mathbb T}f
&=\int_{e^j<|x|<e^{j+1}}|x|^{-\beta}|x|^av^{p_0}\,dx\\
&\le e^{-j\beta}\int_{B_{e^{j+1}}}|x|^av^{p_0}\,dx
\le Ce^\beta,\qquad
M:=\sup_{j\in\mathbb Z}\int_{[j,j+1]\times\mathbb T}f<\infty.
\end{aligned}
\end{equation}
In the following proof write $z=(t,\theta)$, $w=(\tau,\varphi)$,
$dz=dt\,d\theta$, and $dw=d\tau\,d\varphi$.

\begin{lemma}\label{lem:unequal-critical-nonmonotone}
For the positive classical entire pair assumed above, its rescaled
component $V$ is not nondecreasing in $t$ for every angle.
\end{lemma}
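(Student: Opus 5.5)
We argue by contradiction, modelling the argument on the monotonicity exclusion in the proof of Theorem \ref{thm:critical-criterion}, but working on the cylinder with the factorized kernel $L$ of Lemma \ref{lem:weighted-cylinder-factorization}. Suppose $V$ is nondecreasing in $t$ for every $\theta$. For $\rho>0$ put $V^\rho(t,\theta)=V(t+\rho,\theta)$ and $Z_\rho=V^\rho-V\ge0$. By \eqref{eq:unequal-cylinder-scalar} and translation invariance of $L$,
\[
Z_\rho=L*\bigl(c_\rho Z_\rho\bigr),\qquad c_\rho\ge p_0V^{p_0-1},
\]
since $p_0>1$ and $V^\rho\ge V$. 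Strict positivity of $Z_\rho$ comes from $L>0$ and the left asymptotics \eqref{eq:unequal-left-decay}: $Z_\rho\approx(e^{\beta\rho}-1)e^{\beta t}v(0)>0$ for $t\to-\infty$. Thus $Z_\rho$ is a positive supersolution of the linearized cylinder equation. Dividing by $\rho$ and letting $\rho\downarrow0$, or simply keeping a fixed $\rho$, the ground-state argument (testing with $\phi^2/Z_\rho$, as in the proof of Theorem \ref{thm:critical-criterion}, now for the positive symmetric kernel $L$ whose inverse has positive multipliers $1/(m_{k,s_1}m_{k,s_2})$) should yield the cylinder stability inequality
\[
p_0\int_{\mathcal C}V^{p_0-1}\phi^2\le\langle\phi,\mathcal P^{-1}\phi\rangle,
\qquad\mathcal P=\mathcal T_{s_1}\mathcal T_{s_2},
\]
for smooth compactly supported $\phi$.

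Next I would exploit that the parameters sit exactly on the threshold of Theorem \ref{thm:cylinder-product-threshold}. Indeed $\sigma=s_1+s_2=2s$ and $2/(p_0+1)=1-4s=1-2\sigma$, so the dilation form is only borderline. Instead of a dilation, I would test the stability inequality with $\phi=\eta_T\,V$. Here $\eta_T(t)$ equals one on $[0,T]$, has logarithmic tails of length $T$, and carries no angular cutoff since $\mathbb T$ is compact. Using $V=\mathcal P(V^{p_0})$, the quadratic form $\langle\eta_TV,\mathcal P^{-1}(\eta_TV)\rangle$ splits into $\int\eta_T^2V^{p_0+1}$ plus a commutator term $\iint(\eta_T(z)-\eta_T(w))^2(\cdots)$, in the spirit of \eqref{eq:critical-cutoff-product}. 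The main term gives $(p_0-1)\int\eta_T^2V^{p_0+1}\le\text{(commutator)}$.

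The lower bound on the left comes from monotonicity. Since $V(t,\theta)\ge V(0,\theta)\ge c>0$ for $t\ge0$, the left side grows at least like $cT$. For the commutator I would use the strip mass bound \eqref{eq:unequal-strip-mass}. Monotonicity also gives $V(t,\cdot)^{p_0}\le\int_t^{t+1}V^{p_0}$, so the angular integrals of $V^{p_0}$ are bounded by $M$ uniformly in $t$. Hence $V$ is uniformly $L^{p_0}$ in angle and, from $V=L*V^{p_0}$ with the kernel bounds \eqref{eq:cylinder-product-kernel-bounds}, bounded on strips. With the exponential time decay of $L$, and since $\eta_T$ has derivative $O(1/(T|t|))$ in the tails, the commutator should be $o(T)$, in fact $O(1)$ up to logarithms. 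This forces $(p_0-1)cT\le o(T)$, a contradiction. This completes the proof of Lemma \ref{lem:unequal-critical-nonmonotone}.

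The main obstacle is making the commutator estimate rigorous for the inverse of a product kernel. $\mathcal P^{-1}$ is a nonlocal operator of order $2\sigma$ that is not given by an explicit positive kernel. I would first try to avoid $\mathcal P^{-1}$ altogether: test the stability inequality in the dual form, with $\psi=\eta_TV^{p_0}$, so that only $\langle\psi,\mathcal P\psi\rangle$ appears. The cutoff error is then $\iint(\eta_T(z)-\eta_T(w))^2V^{p_0}(z)L(z-w)V^{p_0}(w)$, which is controlled by $M$, $\|J_{\mathcal L}\|_1$ and the exponential tails of $L$. If this dual form of the ground-state inequality turns out to be awkward, the fallback is a Lemma \ref{lem:cylinder-identity}-type spectral identity on the truncated source, with a borderline term of size $O(1)$ against a positive growth of order $T$.
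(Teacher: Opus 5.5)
Your last paragraph is, in substance, the paper's proof, and it is the route to take. Drop the $\mathcal P^{-1}$ stability inequality and the commutator splitting from your first two paragraphs: as you suspect, $\mathcal P^{-1}$ comes with no usable positive kernel on which the $\phi^2/Z_\rho$ substitution or a commutator identity could act.

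\textbf{The missing step: the dual inequality.} You leave this open, but it follows directly from the positive kernel $L$. Multiply $Z\ge p_0L*(V^{p_0-1}Z)$ by $V^{(p_0-1)/2}$. This gives $\int B(z,w)H(w)\,dw\le H(z)$ for the symmetric kernel $B(z,w)=p_0V(z)^{(p_0-1)/2}L(z-w)V(w)^{(p_0-1)/2}$ and the positive function $H=V^{(p_0-1)/2}Z$. Now integrate $2g(z)g(w)\le g(z)^2H(w)/H(z)+g(w)^2H(z)/H(w)$ against $B/2$ and use the symmetry of $B$. This yields $\iint B(z,w)g(z)g(w)\,dz\,dw\le\int g^2$. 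Take $g=V^{(p_0+1)/2}\eta_R$ and $f=V^{p_0}$. The inequality becomes $p_0\langle f\eta_R,L*(f\eta_R)\rangle\le E_R=\int V^{p_0+1}\eta_R^2$. Expanding the square in the cutoff error and using $L*f=V$ gives $D_R=2E_R-2\langle f\eta_R,L*(f\eta_R)\rangle$. Hence $(p_0-1)E_R\le\frac{p_0}{2}D_R$. This is your ground-state trick, applied to $L$ rather than to its inverse.

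\textbf{Estimating $D_R$.} The right input is not $\|J_{\mathcal L}\|_1$. The strip bound $M$ controls $f$ only in $L^1$, so you need a pointwise bound on $L(z-w)(\eta_R(t)-\eta_R(\tau))^2$ for $z\in C_j$, $w\in C_k$.
\begin{itemize}
\item Near the diagonal, \eqref{eq:cylinder-product-kernel-bounds} gives $|t-\tau|^2L\le C\rho^{4s}$, with $\rho$ the cylinder distance.
\item Combine this with $|\eta_R'|\le C/R$ and the exponential tails of $L$. The result is the bound $CR^{-2}e^{-\kappa|j-k|}(|j-k|+1)^2$.
\item Summing against $\int_{C_j}f\int_{C_k}f\le M^2$ gives $D_R\le CM^2/R$.
\end{itemize}

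\textbf{Simplifications.} Since $D_R\to0$, you do not need $E_R$ to grow. A fixed positive lower bound, namely the integral of $V^{p_0+1}$ over a unit strip where $\eta_R=1$, already gives the contradiction. So the bound $V\ge c$ on $t\ge0$ and the special cutoff tails are superfluous. Also drop the claim that $V$ is bounded on strips because its angular $L^{p_0}$ norms are bounded. The near-diagonal bound only gives $\sup_\varphi L(t,\cdot)\le C|t|^{4s-2}$, which is not integrable in $t$ for $s<1/4$. So that step does not follow as stated, and the dual argument never needs it.
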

\begin{proof}
Assume the contrary. For $\ell>0$ set
$V_\ell(t,\theta)=V(t+\ell,\theta)$ and $Z=V_\ell-V\ge0$.
Equation \eqref{eq:unequal-left-decay} gives
$Z(t,\theta)=e^{\beta t}((e^{\beta\ell}-1)v(0)+o(1))$, so
$Z\not\equiv0$.
Subtract the original equation from the translated version of
\eqref{eq:unequal-cylinder-scalar} and use
$p_0>1$. Positivity of $L$ and convexity of the power give
\[
Z=L*(V_\ell^{p_0}-V^{p_0})>0,
\qquad Z\ge p_0L*(V^{p_0-1}Z).
\]
Define the symmetric kernel and positive function
\[
B(z,w)=p_0V(z)^{(p_0-1)/2}L(z-w)V(w)^{(p_0-1)/2},
\qquad H(z)=V(z)^{(p_0-1)/2}Z(z).
\]
Multiplying the preceding inequality by $V(z)^{(p_0-1)/2}$ gives
$\int_{\mathcal C}B(z,w)H(w)\,dw\le H(z)$.
For real $g\in C_c^\infty(\mathcal C)$, multiply
\[
2g(z)g(w)\le g(z)^2\frac{H(w)}{H(z)}
+g(w)^2\frac{H(z)}{H(w)}
\]
by $B(z,w)/2$ and integrate. Symmetry and $BH\le H$ imply
\[
\iint_{\mathcal C^2}B(z,w)g(z)g(w)\,dz\,dw
\le\int_{\mathcal C}\frac{g(z)^2}{H(z)}(BH)(z)\,dz
\le\int_{\mathcal C}g^2.
\]
The positive minimum of $H$ on the compact support and $BH\le H$
justify all integrals in this comparison.

Choose $0\le\eta_R\le1$ smooth on $\R$, equal to one on $[-R,R]$,
zero outside $[-2R,2R]$, and satisfying $|\eta_R'|\le C/R$,
where $R\ge1$. Define
\[
E_R=\int_{\mathcal C}V^{p_0+1}\eta_R^2,
\qquad
D_R=\iint_{\mathcal C^2}L(z-w)f(z)f(w)
(\eta_R(t)-\eta_R(\tau))^2\,dz\,dw.
\]
Here $E_R<\infty$ follows from smoothness on a compact strip.
Also $(\eta_R(t)-\eta_R(\tau))^2\le2\eta_R(t)^2+2\eta_R(\tau)^2$
and $L*f=V$ give $D_R\le4E_R<\infty$.
Use $g=V^{(p_0+1)/2}\eta_R$ in the preceding quadratic inequality.
Since $V^{(p_0-1)/2}g=f\eta_R$, it becomes
\[
p_0\langle f\eta_R,L*(f\eta_R)\rangle\le E_R.
\]
Expanding the square defining $D_R$ and substituting $L*f=V$ gives
\[
D_R=2E_R-2\langle f\eta_R,L*(f\eta_R)\rangle.
\]
Eliminating the pairing from these two formulas yields
\begin{equation}\label{eq:unequal-cutoff-energy}
(p_0-1)E_R\le\frac{p_0}{2}D_R.
\end{equation}

We estimate $D_R$ using only the strip bound
\eqref{eq:unequal-strip-mass}.
Partition $\mathcal C$ into $C_j=[j,j+1]\times\mathbb T$.
For the product kernel, \eqref{eq:cylinder-product-kernel-bounds}
with $N=2$ and $\sigma=s_1+s_2=2s$ gives near the diagonal
\[
|t-\tau|^2L(z-w)
\le C\rho^{4s}\le C,
\qquad
\rho^2=(t-\tau)^2+|e_\theta-e_\varphi|^2.
\]
The same bound holds on the remaining compact set with
$|t-\tau|\le3$, away from the diagonal.
Thus, if $|j-k|\le2$, the Lipschitz bound for $\eta_R$ gives
\[
L(z-w)(\eta_R(t)-\eta_R(\tau))^2\le C/R^2
\qquad(z\in C_j,\ w\in C_k).
\]
If $|j-k|\ge3$, then
$|t-\tau|\ge|j-k|-1$ and $|t-\tau|\le|j-k|+1$.
The exponential kernel bound instead gives
\[
L(z-w)(\eta_R(t)-\eta_R(\tau))^2
\le\frac C{R^2}e^{-\kappa|j-k|}(|j-k|+1)^2.
\]
After increasing $C$, this last estimate includes $|j-k|\le2$.
If the difference of cutoffs is nonzero, at least one strip meets
$[-2R,2R]\times\mathbb T$.
Let $\mathcal J_R=\{j:[j,j+1]\cap[-2R,2R]\ne\varnothing\}$;
then $|\mathcal J_R|\le4R+4$.
For each pair of strips, \eqref{eq:unequal-strip-mass} gives
$\int_{C_j}f\int_{C_k}f\le M^2$. Summing the preceding estimates
therefore proves
\begin{equation}\label{eq:unequal-cutoff-error}
D_R\le\frac{CM^2}{R^2}
\sum_{j\in\mathcal J_R}\sum_{k\in\mathbb Z}
 e^{-\kappa|j-k|}(|j-k|+1)^2
\le\frac{CM^2}{R}.
\end{equation}
Finally, $\eta_R=1$ on $[-1,1]$ and
\eqref{eq:unequal-cutoff-energy}--\eqref{eq:unequal-cutoff-error} give
\[
0<(p_0-1)\int_{[-1,1]\times\mathbb T}V^{p_0+1}
\le(p_0-1)E_R\le CM^2/R\longrightarrow0.
\]
This contradiction excludes the assumed monotonicity.
\end{proof}

\begin{proof}[Proof of Theorem \ref{thm:fixed-critical-curve}]
At $(p,q)=(p_0,1)$, the assumed positive entire pair gives the finite
equation \eqref{eq:unequal-cylinder-scalar} by Lemma
\ref{lem:weighted-cylinder-factorization}.
The left-end estimate \eqref{eq:unequal-left-decay} has exponent
$\beta>0$, and Lemma \ref{lem:unequal-critical-nonmonotone} supplies
the required decrease.
Apply Theorem \ref{thm:cylinder-product-threshold} with
\[
m=2,\qquad s_1=2s+b/2,\qquad s_2=-b/2,
\qquad \sigma=s_1+s_2=2s<1/2,\qquad W=V.
\]
The resulting necessary inequality is
\[
\frac2{p_0+1}>1-2\sigma=1-4s.
\]
But the definition $p_0=(1+4s)/(1-4s)$ gives
$2/(p_0+1)=1-4s$. This contradiction proves nonexistence at that
point, without an energy or symmetry assumption on the original pair.

For existence, keep $N,s,a,b$ fixed and let $p>p_0$ on
\eqref{eq:fixed-critical-curve}. Solving that curve for $q$ gives
\[
q+1=\frac{2+b}{2-2s-(2+a)/(p+1)}.
\]
The denominator increases from $(2+b)/2$ at $p=p_0$ to $2-2s$.
Because $-2s<b<0$, it follows that
\[
0<\frac{b+2s}{2-2s}<q<1.
\]
The same curve also gives
\[
\frac1{p+1}+\frac1{q+1}
=\frac{2-2s}{2+b}-\frac{a-b}{(2+b)(p+1)}.
\]
Here $a-b>0$, so the right-hand side increases strictly with $p$.
At $p=p_0$, $q=1$, its value is $1-2s$.
Its limit as $p\to\infty$ is $(2-2s)/(2+b)<1$, since $b>-2s$.
Thus
\[
1-2s<\frac1{p+1}+\frac1{q+1}<1.
\]
The upper inequality is equivalent to $pq>1$; the lower is
\eqref{eq:radial-window}. The fixed critical curve supplies equality
in \eqref{eq:supercritical}, while the chosen weights satisfy
$a,b>-2s$. All hypotheses of Theorem \ref{thm:below-half-exist}
therefore hold. It gives a positive classical entire solution at
every such point of this same fixed curve.
\end{proof}

\section*{Acknowledgements}
\addcontentsline{toc}{section}{Acknowledgements}
We used AI assistance in parts of the proof of the subcritical
fractional H\'enon-Lane-Emden conjecture. We
discovered new nonexistence phenomena on the critical hyperbola
for $0<s<1/2$ and proved further existence and nonexistence results.
\begingroup
\raggedright

\endgroup
\end{document}